\newtheorem{theorem}{Theorem}
\newtheorem{corollary}[theorem]{Corollary}
\newtheorem{conjecture}[theorem]{Conjecture}
\newtheorem{lemma}[theorem]{Lemma}
\newtheorem{proposition}[theorem]{Proposition}
\theoremstyle{definition}
\newtheorem{question}[theorem]{Question}
\newtheorem{algorithm}[theorem]{Algorithm}
\newtheorem{example}[theorem]{Example}
\newtheorem{definition}[theorem]{Definition}
\newtheorem{remark}[theorem]{Remark}
\numberwithin{theorem}{section}
\newcommand*{\D}{\mathbb{D}}
\newcommand*{\e}{\mathrm{e}}
\newcommand*{\pt}{\tilde{p}}
\newcommand*{\gt}{\tilde{g}}
\newcommand{\spx}[1]{%
\if\relax\detokenize{#1}\relax
    \expandafter\@gobble
\else
    \expandafter\@firstofone
\fi
{^{#1}}%
}
\newcommand{\genericdel}[4]{%
    \ifcase#3\relax
        \ifx#1.\else#1\fi#4\ifx#2.\else#2\fi\or
        \bigl#1#4\bigr#2\or
        \Bigl#1#4\Bigr#2\or
        \biggl#1#4\biggr#2\or
        \Biggl#1#4\Biggr#2\else
        \left#1#4\right#2\fi
}
\newcommand{\del}[2][-1]{\genericdel(){#1}{#2}}
\newcommand{\set}[2][-1]{\genericdel\{\}{#1}{#2}}
\newcommand{\sbr}[2][-1]{\genericdel[]{#1}{#2}}
\newcommand{\abr}[2][-1]{\genericdel<>{#1}{#2}}
\let\intoo\del
    \newcommand{\intco}[2][-1]{\genericdel[){#1}{#2}}
\newcommand{\sVert}[1][0]{%
    \ifcase#1\relax
        \rvert\or\bigr|\or\Bigr|\or\biggr|\or\Biggr
    \fi
}
\DeclareMathOperator{\ord}{ord}
\DeclareMathOperator{\cyc}{cyc}
\DeclareMathOperator{\Hyp}{Hyp}
\DeclareMathOperator{\Sat}{Sat}
\DeclareMathOperator{\Prim}{Prim}
\DeclareMathOperator{\MC}{Cyc}
\let\face\sbr
\newcommand{\C}{\mathbb{C}}
\newcommand{\Q}{\mathbb{Q}}
\newcommand{\Z}{\mathbb{Z}}
\newcommand{\R}{\mathbb{R}}
\newcommand{\N}{\mathbb{N}}
\newcommand{\M}{\mathcal{M}}
\newcommand*{\sm}{\setminus}
\newcommand{\ol}{\overline}
\newcommand*{\vp}{\varphi}
\newcommand*{\ra}{\rightarrow}
\newcommand*{\lra}{\longrightarrow}
\newcommand*{\Per}{\mathrm{Per}}
\newcommand*{\cal}{\mathcal}
\def\e{\epsilon}
\newcommand{\itin}{\operatorname{itin}}
\renewcommand*{\O}{\mathcal{O}}
\title{A cell decomposition for marked cycle curves}
\author{Caroline Davis}
\address{Indiana University}
\email{cda1@iu.edu}
\author{Malavika Mukundan}
\address{Boston University}
\email{mmukunda@bu.edu}
\author{Danny Stoll}
\address{University of Michigan}
\email{dastoll@umich.edu}
\author{Giulio Tiozzo}
\address{University of Toronto}
\email{tiozzo@math.utoronto.ca}
\begin{document}
\maketitle

\begin{abstract}
    We describe a family $\MC_p(\mathcal{F})$ of \emph{marked cycle curves} that parameterize the cycles of period $p$ of a given family $\mathcal{F}$ of dynamical systems.
    We produce algorithms to compute a canonical cell decomposition for the marked cycle curves over the family $\Per_1(0)$ of quadratic polynomials as well as over the family  $\Per_2(0)$ of quadratic rational maps with a critical 2-cycle.
    We obtain formulas for the number of $d$-cells in these decompositions, giving rise to e.g. a formula for their genus.
\end{abstract}

\section{Introduction}

A \emph{cycle} of \emph{period} $p$ for a rational map $f : \widehat{\C} \to \widehat{\C}$ is a set $\xi = (z_1, \dots, z_p)$ of $p \ge 1$ distinct points in $\widehat{\C}$ such that $f(z_i) = z_{i+1}$ for $i = 1, \dots, p$, where indices are interpreted mod $p$. Each such cycle lives in the \emph{configuration space} $\textup{Conf}_p$ 
of sets of $p$ distinct points in $\widehat{\C}$.

\begin{definition}
Let $U \subseteq \C$ be a Zariski dense open set, and $\cal{F} = \{ f_u \}_{u \in U}$ be an algebraic family of rational maps. For each $p \geq 1$, 
we define the \emph{affine marked cycle curve} of period $p$ as 
\[
    \MC^+_p(\mathcal{F}) := \set{ (u, \xi) \in U \times \textup{Conf}_p \ : \  \xi \textup{ is a cycle of period $p$ for }f_u }
\]
and let the \emph{marked cycle curve} $\MC_p(\mathcal{F})$ be the normalization of $\MC^+_p(\mathcal{F})$.
\end{definition}

The goal of this paper is to study the curves $\MC_p(\mathcal{F})$ in the following two cases: 

\begin{enumerate}

\item The family $\mathcal{F}_1 = \Per_1(0)$ of quadratic polynomials

\item The family $\mathcal{F}_2 = \Per_2(0)$ of rational maps with a critical cycle of period $2$

\end{enumerate}

In both cases, each $\MC_p(\mathcal{F})$ is a smooth projective curve, hence a Riemann surface.

Our main result concerns algorithmically constructing a cell decomposition for $\MC_p(\mathcal{F})$:

\begin{theorem}\label{main-thm}
    For each $p \geq 1$ and $m = 1,2$, the Riemann surface $\MC_p(\mathcal{F}_m)$ has an explicit cell decomposition $\Sigma_{p, m}$ given by gluing sides of a certain union of polygons,
    with edges labelled by primitive components of period $p$ in the Mandelbrot set, and vertices and faces labelled by canonical combinatorial data associated to each primitive component.
\end{theorem}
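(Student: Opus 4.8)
The plan is to realize $\Sigma_{p,m}$ as the pullback, under the \emph{forget-the-marking} map
\[
\pi\colon \MC_p(\mathcal{F}_m)\lra\widehat{\C},\qquad (u,\xi)\longmapsto u,
\]
of an explicit cell decomposition of the parameter sphere adapted to the primitive period-$p$ components. Since $\MC^+_p(\mathcal{F}_m)$ is cut out in $U\times\textup{Conf}_p$ by the $p$-th dynatomic condition, $\pi$ is a finite holomorphic branched cover of degree $D=D_{p,m}$, the number of exact-period-$p$ cycles of a generic $f_u$ (for $m=1$ this is $\tfrac1p\sum_{d\mid p}\mu(p/d)2^{d}$), and $\MC_p(\mathcal{F}_m)$, being the normalization of an irreducible curve, is a compact connected Riemann surface; so everything reduces to understanding $\pi$ as a branched cover. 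The first and most delicate analytic step is to locate its branch locus. A discriminant computation with dynatomic polynomials shows that $\pi$ is unramified away from $u=\infty$ and the roots $r_H$ of the \emph{primitive} hyperbolic components $H$ of period $p$: at such a root the $p$-cycle is parabolic with multiplier $1$, each of its points is a double zero in $z$ of the $p$-th dynatomic polynomial, and so two distinct exact-period-$p$ cycles collide — a simple branch point of $\pi$. Over a \emph{satellite} period-$p$ root, by contrast, the $p$-cycle instead collapses onto a lower-period parabolic cycle, so the degenerate configuration leaves $\textup{Conf}_p$; it is a puncture of $\MC^+_p$ that the normalization fills in with a single \emph{unramified} sheet. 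The ramification of $\pi$ over $u=\infty$ is read off separately from the leading-order behaviour of period-$p$ points; write $R_\infty$ for the corresponding ramification index.

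Next I would build the decomposition downstairs, taking as its $0$-skeleton exactly $\{\infty\}\cup\{r_H : H\text{ primitive of period }p\}$; as (open) $1$-cells the $2K$ parameter rays $\bray(\theta_H^{\pm})$, each regarded as an arc from $r_H$ to $\infty$ (they land, and land in pairs at the $r_H$, by the Douady--Hubbard landing theorem for $m=1$ and by its $\Per_2(0)$-analogue for $m=2$); and as $2$-cells the $K$ wakes $W_H$ together with the single remaining region $Y=\widehat{\C}\sm\bigl(\{\infty\}\cup\bigcup_H\overline{W_H}\bigr)$. One checks two elementary facts — the wakes of distinct primitive period-$p$ components are pairwise disjoint (meeting only at $\infty$), and the $1$-skeleton is connected since all parameter rays share the endpoint $\infty$ — from which $Y$ is an open disk and the above is a genuine cell decomposition of $\widehat{\C}$. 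Now pull back by $\pi$: because the branch locus lies in the $0$-skeleton, $\pi$ restricts to an honest covering over every open cell, so $\Sigma_{p,m}:=\pi^{-1}(\text{cells})$ is automatically a cell decomposition of $\MC_p(\mathcal{F}_m)$ — each open $1$- or $2$-cell lifts to a disjoint union of arcs, resp.\ disks, and each closed face is a polygon with vertices over $\infty$ and over the roots $r_H$. Counting is then bookkeeping: this decomposition has $2KD$ edges, $(K{+}1)D$ faces, and $K(D-1)+(D-R_\infty)$ vertices, so $\chi\bigl(\MC_p(\mathcal{F}_m)\bigr)=2D-K-R_\infty$ — in agreement with Riemann--Hurwitz applied to $\pi$ — and the genus formula $g = 1 - D + \tfrac12(K+R_\infty)$ follows.

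What remains — the combinatorial heart of the theorem, which is what makes the decomposition \emph{explicit} with the stated labels — is to compute the global monodromy $\rho\colon\pi_1(\widehat{\C}\sm B)\to S_D$, where $B$ is the branch locus. It is generated by a loop around $\infty$ together with one loop around each $r_H$, and the latter maps to the transposition exchanging the two exact-period-$p$ cycles that collide at $r_H$. The point is that this transposition, and more globally the way the $D$ sheets are indexed and permuted, is read off canonically from combinatorial data attached to the center of $H$: its Hubbard tree, equivalently its kneading sequence or internal address, which records the cyclic arrangement of the $p$-cycle relative to the characteristic point of $H$. From $\rho$ one then obtains the polygons (the sheets over $W_H$ and over $Y$), their side-pairings (every edge over $\bray(\theta_H^{\pm})$ separates a face over $W_H$ from a face over $Y$), the labelling of edges by the components $H$, and the labelling of vertices and faces by the associated Hubbard-tree data. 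Effectivity of the resulting algorithm is immediate, since there are finitely many primitive period-$p$ components, each with an explicitly computable Hubbard tree and characteristic angles.

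The main obstacle is therefore twofold. On the analytic side one must rigorously justify the branch-locus claim, the delicate point being the satellite roots, where the distinction between $\textup{Conf}_p$ and the full dynatomic curve is precisely what prevents ramification; this, with the behaviour at $\infty$, pins down $B$ and $R_\infty$. On the combinatorial side — the genuinely hard part — one must prove that crossing the ray pair at a primitive period-$p$ root permutes the cycles \emph{exactly} as predicted by the Hubbard-tree data, i.e.\ that the abstract combinatorial monodromy one writes down is really the monodromy of $\pi$; this is where the dynamics enters essentially, through the bifurcation of periodic points across parameter rays. For $m=2$ there is the additional, largely independent task of first establishing the $\Per_2(0)$-analogue of the relevant Mandelbrot combinatorics — parameter rays, primitive period-$p$ components in the appropriate connectedness locus, their characteristic angles, and the landing theorem — after which the architecture above applies verbatim.
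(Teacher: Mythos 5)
Your overall strategy --- realize $\Sigma_{p,m}$ as the pullback under $\pi:(u,\xi)\mapsto u$ of a cell decomposition of the parameter sphere whose $0$-skeleton contains the branch locus, then read off the gluing from the monodromy --- is exactly the paper's strategy, and your identification of the branch locus (simple branch points over roots of primitive period-$p$ components, no branching over satellite roots because the degenerate configuration leaves $\textup{Conf}_p$, plus branching over $\infty$) agrees with \Cref{per1-edge-branching} and \Cref{lem:faces_are_cyclepairs}. But the specific base decomposition you propose does not exist: the ``elementary fact'' that wakes of distinct primitive period-$p$ components are pairwise disjoint is false, because such wakes can be nested. Already for $p=5$ the component with characteristic angles $(13/31,18/31)$ is primitive (the orbits of $13$ and $18$ under doubling are disjoint), and its wake contains the primitive components with angles $(14/31,17/31)$ and $(15/31,16/31)$ --- these are the three nested real period-$5$ windows. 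Consequently your $2$-cell $W_{(13,18)}$ contains $0$-cells and $1$-cells of your complex (and branch points of $\pi$) in its interior, so the pullback argument breaks down: the preimage of that ``face'' is not a disjoint union of open disks mapping homeomorphically, and your counts of edges, faces, and vertices are wrong whenever nesting occurs. Restricting to maximal wakes does not repair this, since then the nested primitive roots are no longer in the $0$-skeleton and their rays no longer appear as edges, contradicting the theorem's requirement that \emph{every} primitive component label an edge.

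The paper sidesteps this entirely by building the base complex from \emph{veins}: one vertex at the center $0$, one edge for each vein $V(c)$ from $0$ to a primitive root, and a single face $\widehat{\C}\setminus V_p$. The union of veins is a topological tree, so nesting of wakes is irrelevant, and the complement is one disk regardless of the combinatorics. This choice also produces the labels the theorem asserts: vertices (lifts of $0$) are labelled by abstract binary $p$-cycles via the rays landing on $J_0=\mathbb{S}^1$ (\Cref{P:vertices}), and faces (lifts of $\widehat{\C}\setminus\M$) by cycle duos via itineraries/kneading data (\Cref{lem:faces_are_cyclepairs}, \Cref{L:knead}), whereas your vertices sit over $\infty$ and over the roots and carry no such canonical cycle labels. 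Finally, note that you defer the monodromy computation --- which is the actual content of \Cref{algo-per1-angle,algo-per1-knead,algo-per2} and of \Cref{P:vertex-relation} --- as ``remaining work''; that is acceptable in a sketch, but the wake-disjointness step must be replaced (e.g.\ by the vein tree) before the architecture can be carried out at all.
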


\begin{figure}[H]
    \includegraphics[width = 0.5 \textwidth]{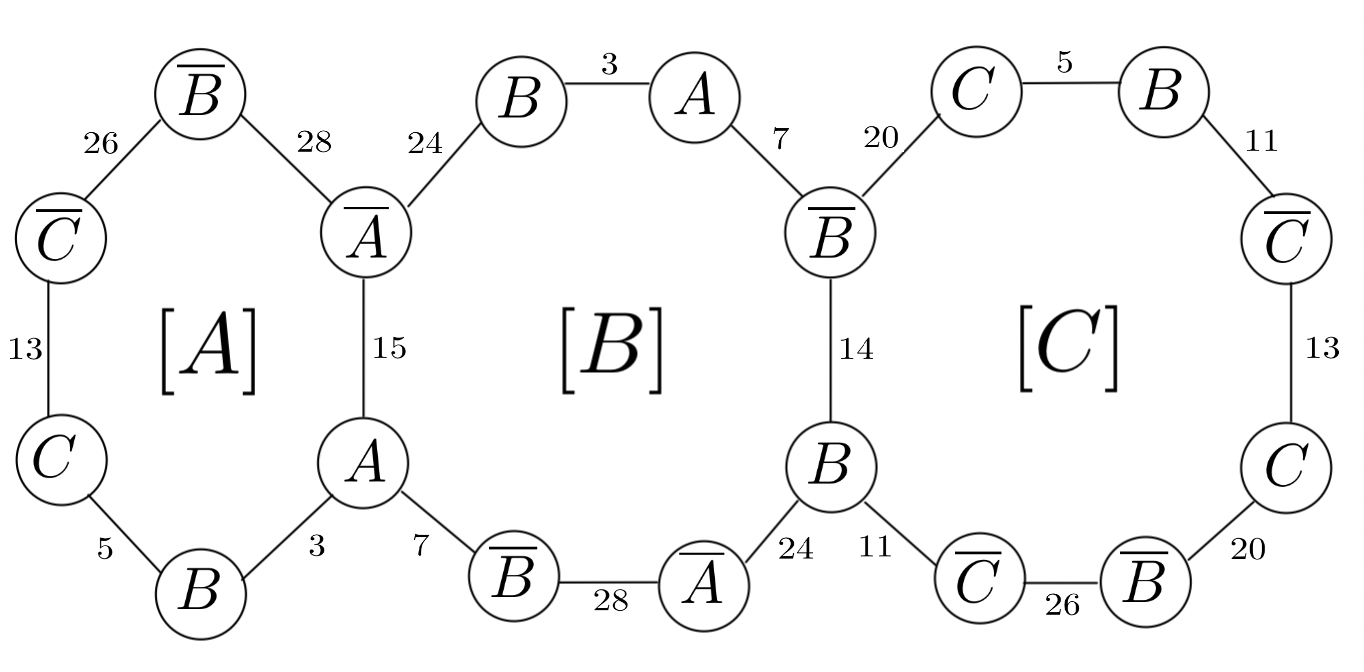}
    \caption{The cell decomposition for $\MC_5(\mathcal{F}_1)$, which has genus $2$.}
\end{figure}

We obtain two algorithms for $\MC_p(\mathcal{F}_1)$, described in \Cref{algo-per1-angle,algo-per1-knead}, and one algorithm for $\MC_p(\mathcal{F}_2)$, 
described in \Cref{algo-per2}.

\begin{figure}[t]
		\includegraphics[height = .9\textwidth, angle=90, origin=c]{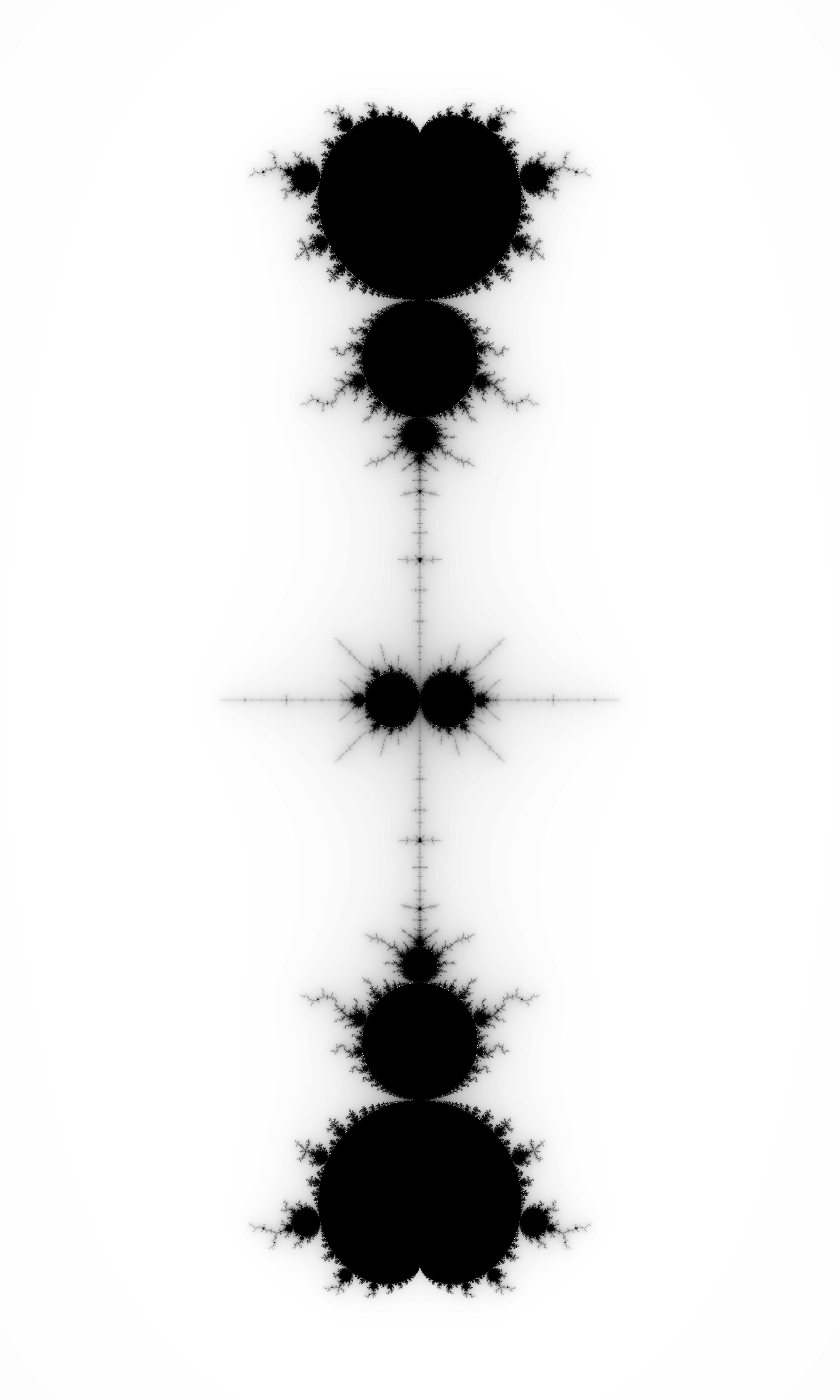}
		\vspace{-2.5cm}
		\caption{ The curve $\MC_3(\mathcal{F}_1)$. The two copies of the Mandelbrot set are joined at the cusp of the airplane component, which is
		the only primitive component of period $3$.} \label{fig:cyc-3}
\end{figure}
\subsection{Marked cycle curves for $\Per_1(0)$} 

To describe the first family of marked cycle curves in greater detail, let us recall that the family of quadratic polynomials is parameterized as 
$$f_c(z) := z^2 + c, \quad c \in \C,$$ 
hence we take $U = \C$. Each element of the family has a superattracting critical fixed point at $\infty$, so we can denote this family as $\Per_1(0)$. 

There are several ways to interpret and compute the curve $\MC_p(\mathcal{F}_1)$, let us see some. 
Recall that each cycle $(z_1, \dots, z_p)$ of period $p$ has an associated \emph{multiplier} $\lambda(c) := (f_c^{\circ p})'(z_1)$. 
Note that different points in the same cycle have the same multiplier. 

The \emph{multiplier function} $c \mapsto \lambda(c)$ plays a central role in holomorphic dynamics: in fact, it is well-known that the multiplier function is the Riemann map of the hyperbolic component for which the given cycle is attracting (\cite[Chapter 14]{Douady_exploringthe}). 
Thus, the maximal domain of extension of the multiplier is closely related to the geometry of hyperbolic components and of limbs in the Mandelbrot set (see e.g. \cite{Levin-multiplier}); moreover, critical points for the multiplier function equidistribute to the harmonic measure on  the Mandelbrot set
(\cite{FIRSOVA2021107591}, \cite{FIRSOVA_GORBOVICKIS_2023}).

Note that the multiplier function cannot be globally defined as a holomorphic function on $\mathbb{C}$; instead, the natural domain of 
the multiplier function for cycles of period $p$ is:

\begin{definition}
    The \emph{marked multiplier curve} of period $p$ is
    \[
        \textup{M}_p := \{ (c, \lambda) \in \C^2 \ : \ \exists \textup{ a cycle of period }p\textup{ and multiplier }\lambda \textup{ for }f_c\}.
    \]
\end{definition}

By considering the map that assigns to each period $p$ cycle its multiplier, we see that the curves $\MC^+_p(\mathcal{F}_1)$ and $\textup{M}_p$ are birationally equivalent, hence $\MC_p(\mathcal{F}_1)$ can also be defined as the normalization of $\textup{M}_p$. 

\begin{figure}
\includegraphics[width= .8\textwidth]{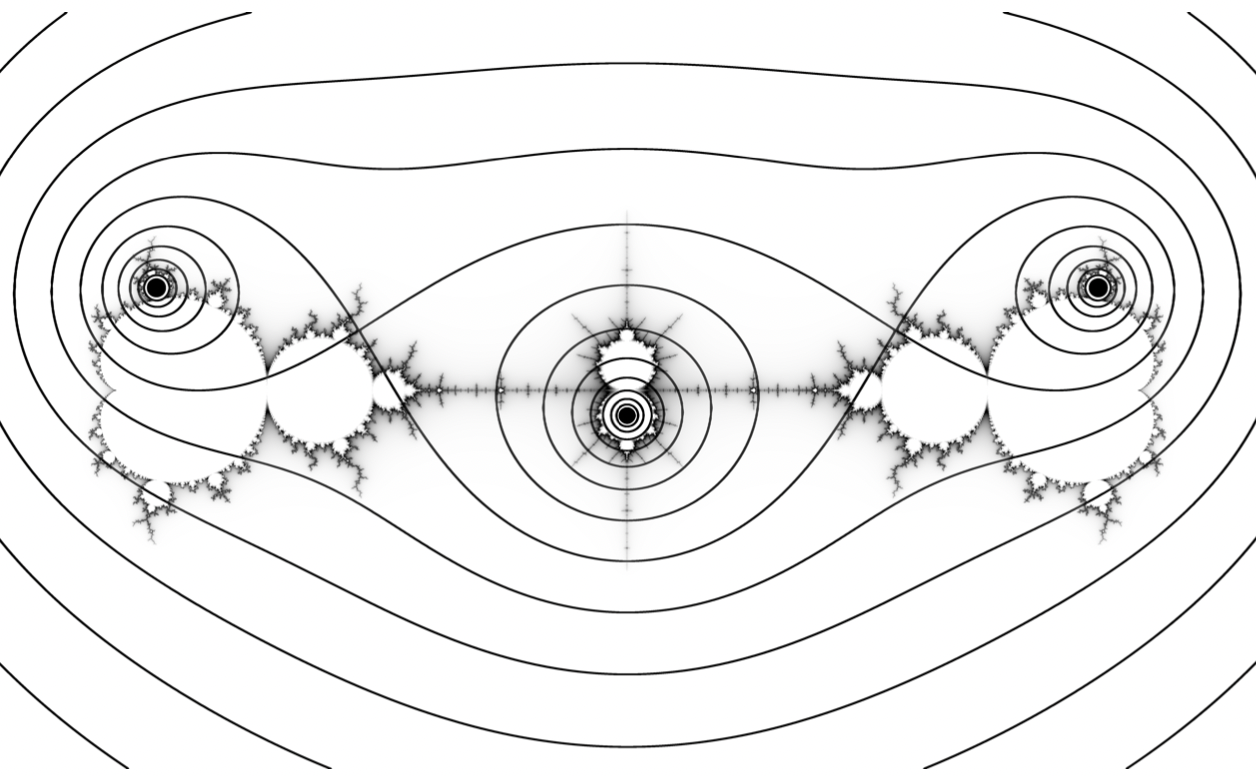}
\caption{The curve $\MC_3(\mathcal{F}_1)$ is a blow-up of $\textup{M}_3$ and is the natural domain of the multiplier function, whose contour lines are drawn here. Note that, as expected, the locus where the multiplier has modulus less than $1$ is the union of three hyperbolic components of period $3$.}
\end{figure}
\label{fig:multiplier}

In order to compute $\textup{M}_p$, let us also introduce the \emph{dynatomic polynomial} of period $p$ as
\begin{align*}
    \textup{Dyn}_p(c, z) & := \prod_{d | p} (f_c^{\circ d}(z) - z)^{\mu(d)}.
\end{align*}
The \emph{dynatomic curve} of period $p$ is then
\begin{align*}
    \textup{D}_p & :=  
                  \{ (c, z)   \in \mathbb{C}^2 \ : \ \textup{Dyn}_p(c, z) = 0 \}
\end{align*}
and can be thought of as the set of pairs $(c, z)$ where $z$ is a periodic point for $f_c(z)$ of period $p$. An affine model for the marked multiplier curve $\textup{M}_p$  is given as the zero set of the resultant
\[
    R(c, \lambda) := \textup{Res}(\textup{Dyn}_p(c, z), (f_c^p)'(z) - \lambda)
\]
hence, the curve $\MC_p(\mathcal{F}_1)$ can be obtained from the zero set of $R(c, \lambda)$ by successive blowups.
See \Cref{S:examples} for some explicit computations.

\subsection{The cell structure of marked cycle curves for $\Per_1(0)$}

Let us consider the branched cover $\pi: \MC_p(\mathcal{F}_1) \to \cal{F}_1=\widehat{\mathbb{C}}$.

Let $R_p \subseteq \mathbb{C}$ be the set of roots of primitive hyperbolic components of period $p$
in the Mandelbrot set $\mathcal{M}$.
For each root $c \in R_p$, let $V(c)$ be the \emph{vein} joining $0$ and $c$ in $\mathcal{M}$. Note that each vein $V(c)$ is a path in $\C$, c.f.\ \cite{veins}.
Let
\[V_p := \bigcup_{c \in R_p} V(c)\]
be the union of the veins joining $0 \in \mathbb{C}$ with the roots of the primitive components.
Topologically, $V_p$ is homeomorphic to a graph.

The Riemann sphere $\widehat{\mathbb{C}}$ has the following cell complex decomposition:
\begin{enumerate}
    \item one vertex, $0 \in \mathbb{C}$;
    \item one edge for each vein $V(c)$;
    \item one face, the complementary region
          \[U_\infty := \widehat{\mathbb{C}} \setminus V_p,\]
          which is homeomorphic to a disk.
\end{enumerate}

We define the cell complex structure of $\MC_p(\mathcal{F}_1)$ by lifting via $\pi$ the above cell complex structure on the sphere.

For each vein $V(c)$, the set $\pi^{-1}(V(c))$ has several connected components. Let us call \emph{branched lift} a connected component that contains a branched point for $\pi$.

\begin{definition}\label{D:Sigma_p}
    Let $p \ge 1$ be a period, and let $\pi : \MC_p(\mathcal{F}_1) \to \widehat{\mathbb{C}}$ be the projection onto the $c$ coordinate. The cell complex structure $\Sigma_{p, 1}$ on $\MC_p(\mathcal{F}_1)$ is defined as follows:
    \begin{enumerate}
        \item the vertices are the set $\pi^{-1}(0)$;
        \item there is one edge for each branched lift of a vein $V(c)$ with $c \in R_p$;
        \item faces are closures of the  connected components of $\pi^{-1}(U_\infty)$.
    \end{enumerate}
\end{definition}

After some preliminaries in \Cref{S:setup}, we will provide in \Cref{S:cell-p1} two algorithms to construct explicitly the cell complex $\Sigma_{p, 1}$.

\subsection{Marked cycle curves for $\Per_2(0)$} 

As one can interpret the moduli space of quadratic polynomials as $\mathcal{F}_1 = \Per_1(0)$, one can analogously define the marked cycle curves $\MC_p(\mathcal{F}_2)$ over $\mathcal{F}_2 = \Per_2(0)$, the moduli space of quadratic rational maps which have a superattracting $2$-cycle.

We consider the parameterization 
$$g_a(z) := \frac{z^2 + a}{1 - z^2}$$
with $a \in \C \setminus \{-1\}$. Similarly as for the polynomial family, we have a projection map $\pi_2 : \MC_p(\mathcal{F}_2) \to \mathcal{F}_2$.

In \Cref{sec:per2}, we generalize the algorithm underlying \Cref{main-thm} to construct an explicit cell decomposition $\Sigma_{p, 2}$ for $\MC_p(\mathcal{F}_2)$, and indicate how $\Sigma_{p,2}$ is related to $\Sigma_{p, 1}$.

In the case of $\Per_2(0)$, the same construction as above can be carried: 
\begin{enumerate}
\item
vertices correspond to lifts of the ``central" parameter $g_0(z) = \frac{z^2}{1-z^2}$, which is conjugate to the basilica quadratic polynomial; 
\item
similarly to veins for the Mandelbrot set, veins in $\Per_2(0)$ from the central component to roots of primitive components of period $p$ can be defined; 
 in the cell decomposition, there is one edge for each branched lift of such a vein;
\item
faces correspond to connected components of $\pi_2^{-1}(\mathcal{E})$, 
where $\mathcal{E}$ is the unbounded hyperbolic component in $\Per_2(0)$, where the Julia set is a quasicircle. 
\end{enumerate}

Finally, in \Cref{sec:combo}, we detail various combinatorial properties of $\Sigma_{p, 1}$ and $\Sigma_{p,2}$ which have interpretations for the structure of the Mandelbrot set.
In particular, we derive an explicit formula for the number of cells of each dimension in $\Sigma_{p, 1}$ and $\Sigma_{p,2}$, giving rise to a formula for the genus.

\subsection{Examples of rational curves} \label{S:examples}

Before discussing the general algorithms, let us look at some low period examples. As we will see in \Cref{thm:genus}, the genus of $\MC_p(\mathcal{F}_1)$ equals $0$ exactly for $2 \leq p \leq 4$;
in these cases, we can construct explicitly rational models for these curves. 

For $p = 2$, since each quadratic polynomial has one cycle of period $2$, the covering map $\pi$ has degree $1$, so $\MC_2(\mathcal{F}_1)$ is just a copy of $\mathcal{F}_1 = \widehat{\mathbb{C}}$. 

\subsection*{Period 3}
Let us denote as $\lambda$ the multiplier of the marked cycle.
By computing the resultant of the polynomials $\frac{f^{\circ 3}_c(z) - z}{f_c(z) -z}$  and $(f^{\circ 3}_c)'(z) - \lambda$ with respect to $z$, the marked multiplier curve of period $3$ is given by
\[\textup{M}_3 = \{ (c, \lambda)  \in \C^2 \ : \ 64 -16 \lambda + \lambda^2 + 64 c - 8 \lambda c + 128 c^2 + 64 c^3 = 0 \}.\]
The singular point on this curve is $(0, 8)$. Hence, by setting $a = \lambda - 8$, $s =  4 c$, we get
\[\textup{M}_3' = \{ (a, s) \in \C^2 \ :  \ a^2 - 2 a s + 8 s^2 + s^3 = 0 \}\]
whose singular point is now at $(0,0)$; now, we perform a blowup by setting $b = \frac{a}{s}$
which yields
\[\textup{M}''_3 = \{ (b, s) \in \C^2 \ : \ 8 - 2 b + b^2 + s = 0 \}\]
hence, this curve is rational, parameterized by
\[c = \frac{s}{4} = - \frac{8 - 2 b + b^2}{4}\]
hence a model for the marked cycle curve $\MC_3(\mathcal{F}_1)$ is the projective closure of
\[ \MC_3^*(\mathcal{F}_1) := \left\{ (c, b) \in \C^2 \ : \ c =  - \frac{8 - 2 b + b^2}{4} \right\}\]
where the map $\MC_3^*(\mathcal{F}_1) \to \textup{M}_3$ is given by $(c, b) \mapsto (c, 4 b c  + 8)$.

\subsection*{Period 4}
For period $4$, we have by computing the appropriate resultant
\begin{align*}
    \textup{M}_4 & = \{ (c, \lambda) \in \C^2 \ : \ -4096 + 768 \lambda - 48 \lambda^2 + \lambda^3 - 8192 c^2 + 256 \lambda c^2 + 16 \lambda^2 c^2 + \\
        & -12288 c^3 - 256 \lambda c^3 - 12288 c^4 - 256 \lambda c^4 - 12288 c^5 - 4096 c^6= 0\}
\end{align*}
and setting $s = 4 c, a = \lambda - 16$ and then $b = \frac{a}{s}$
\[\textup{M}'_4 = \{ (b, s) \in \C^2 \ : \ 256 - 48 b - b^3 + 64 s + 4 b s - b^2 s + 12 s^2 + b s^2 + s^3 = 0 \}\]
which has the singular point $(b,s) = (4, -8)$ (which corresponds to the Chebyshev polynomial), so we set $v = s+8$. Then we do a blowup, setting $r = \frac{b-4}{v}$,
obtaining
\[\textup{M}''_4 = \{ (r, v) \in \C^2 \ : \ 8 + 20 r + 4 r^2 - v - r v + r^2 v + r^3 v = 0 \}\]
which is a rational curve, given by
\[v = \frac{-4 (2 + 5 r + r^2)}{(-1 + r) \del{1 + r}^2}\]
and, since $v = 4 c + 8$ we get
\[c = \frac{- r(3  + 3 r +2 r^2)}{(-1 + r) \del{1 + r}^2}\]
and, if we set $u = \frac{r-1}{r+1}$ sending $-1$ to $\infty$ and $+1$ to $0$,
\[c = -\frac{4 + 3 u + u^3}{4 u}\]
so $\MC_4(\mathcal{F}_1)$ is isomorphic to the projective closure of
\[\MC_4^*(\mathcal{F}_1) := \left\{ (c, u) \in \C^2 \ : \ c = -\frac{4 + 3 u + u^3}{4 u} \right\}.\]

\begin{center}
    \begin{figure}[t]
        \includegraphics[height = 0.9 \textwidth, angle=90, origin=c]{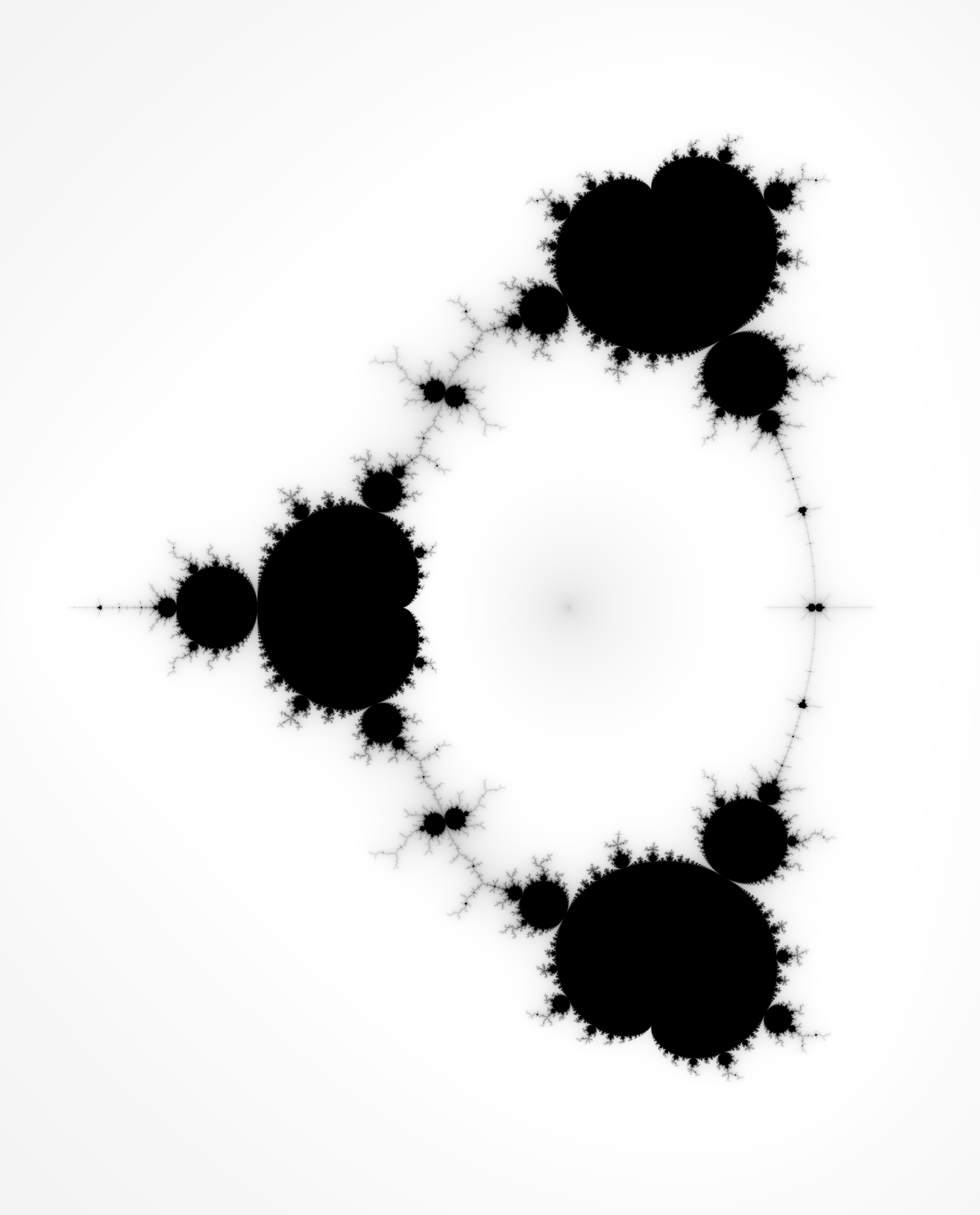}
    \vspace{-1.5 cm}    \caption{The curve $\MC_4(\mathcal{F}_1)$, in a rational parameterization.}
    \end{figure}
\end{center}

\subsection{History}

Dynatomic curves have been studied for several decades. The first proof that the dynatomic curves $\textup{Dyn}_p(c, z)$ are irreducible is due to \cite{Bousch-thesis}, and then to  \cite{LauSchleicher} using the ``monodromy action" on kneading sequences, in a way similar to our discussion here. They also compute the Galois group of dynatomic curves. The genus has been computed by \cite{Morton}. A more recent proof is given in \cite{BuffTanLei}.
For polynomials, a ``monodromy graph" closely related to our cell structure is produced in \cite{ReductionDynatomic}.

\subsection{Acknowledgments}

The authors wish to thank the Mathematical Sciences Research Institute (now SLMath) in Berkeley, where this project started during the semester on 
\emph{Complex Dynamics} in Spring 2022, NSF grant number 1440140. We especially thank Sarah Koch for raising some of this paper's questions and Eriko Hironaka for helpful conversations. 

Fun fact: our \Cref{algo-per1-knead} is nicknamed ``the bar method" as a tribute to the eponymous barre studio in Berkeley frequented by MSRI semester organizer Jasmin Raissy. 

Many figures in this paper were generated in dynamo, a program written by the third author and accessible at \href{https://github.com/dannystoll1/dynamo}{https://github.com/dannystoll1/dynamo}. 
\section{Setup} \label{S:setup}

\subsection{Cycles}
Let $f : X \to X$ be a continuous map. A point $x \in X$ has \emph{(exact) period} $p$ if $f^{\circ p}(x) = x$ and $f^{\circ k}(x) \ne x$ for $1 \leq k < p$.
Let $\textup{Per}(f, p)$ be the set of points of period $p$ for $f$.
A \emph{periodic cycle} of period $p$ is a set of cardinality $p$ on which $f$ acts bijectively and transitively.
Let $\textup{Cyc}(f, p)$ be the set of periodic cycles of period $p$ for $f$.

Let $\sigma : \set{0, 1}^\mathbb{N} \to \set{0, 1}^\mathbb{N}$ be the shift map.

\begin{definition}
    We define the set of \emph{abstract cycles of period $p$}
    as the set, which we denote by $C_p$, of periodic cycles of period $p$ for the shift map $\sigma$.
\end{definition}

Abstract periodic cycles of period $p$ can be labeled by binary sequences of period $p$, up to cyclic
permutation.
For instance, for $p = 4$ there are exactly $3$ abstract cycles:
\[0001, \qquad 0011, \qquad 0111,\]
where with a finite sequence, e.g. $0001$, we mean the periodic sequence $\del{0001}^\infty$.
Given a cycle $x$, we denote its \emph{conjugate} $\overline{x}$ to be the cycle obtained
from the binary expansion of $x$ by swapping the digit $1$ with $0$ and vice versa.

E.g.: for $p = 5$, there are $6$ abstract cycles:
\[
    \begin{array}{ll}
        A:            & 00001 \\
        \overline{A}: & 01111 \\
        B:            & 00011 \\
        \overline{B}: & 00111 \\
        C:            & 00101 \\
        \overline{C}: & 01101
    \end{array}
\]

Abstract periodic cycles appear in several branches of mathematics: for instance, the number of abstract cycles of period $p$, themselves examples of Lyndon words, equals the number of irreducible polynomials of degree $p$
over the finite field $\mathbb{F}_2$, see for example the necklace polynomial introduced in 1872 \cite{moreau}.

\subsection{Cycle duos}

\begin{definition}\label{def:cycle_duo}
    Let us define an equivalence relation on $C_p$ by setting $x \sim y$ if either $x = y$ or $x = \overline{y}$.
    Then we define the set of \emph{abstract cycle duos of period $p$} as
    \[ [C_p] := C_p/\sim,\]
    and we denote as $[\alpha]$ the class of the cycle $\alpha$.
\end{definition}

A cycle duo is \emph{reflexive}, or \emph{unramified}, if its equivalence class contains one element,
and \emph{ramified} if its equivalence class contains two elements.

\medskip
\textbf{Example.} For $p = 4$, there are $2$ cycle duos:
\[[A] =  \{ 0001, 0111 \} \qquad [B] = \{ 0011 \}\]
where $[B]$ is reflexive, while $[A]$ is not.
For $p = 5$, there are $3$ cycle duos:
\[[A] =  \{ 00001, 01111 \} \qquad [B] =  \{ 00011, 00111 \} \qquad [C] =  \{ 00101, 01101 \}\]
and all of them are ramified.
Note that if $p$ is odd, all cycles are ramified, hence
$\#[C_p] = \frac{\# C_p}{2}.$

\subsection{Hyperbolic components and external rays}
The exposition in this subsection is a summary of some material found e.g.\ in \cite{Milnor-External} and \cite{Douady_exploringthe}.

Let us consider the family of quadratic polynomials $f_c(z) := z^2 + c$, $c \in \mathbb{C}$.
The \emph{Mandelbrot set} $\mathcal{M}$ is the set of parameters $c$
for which the orbit of the critical point $z = 0$ is bounded under iteration by $f_c$.
A \emph{hyperbolic component} of period $p$ in the Mandelbrot set is a connected component in the set of parameters for which the critical point converges to a periodic cycle of period $p$.

Several essential results about \emph{external rays} are needed in order to eventually track marked cycles.
The Mandelbrot set $\M$ is connected, and its complement has a Riemann map $\Phi_M : \widehat{\mathbb{C}} \setminus \overline{\mathbb{D}} \to \widehat{\mathbb{C}} \setminus \mathcal{M}$, see \cite{Douady_exploringthe}.
For each $\theta \in \mathbb{R} / \mathbb{Z}$, the \emph{parameter external ray} of angle $\theta$ is the set $R_M(\theta) := \{ \Phi_M( r e^{2 \pi i \theta}), \ r > 1\}$, that is,
the image of a radial line in the complement of $\mathbb{D}$ under the Riemann map.

Similarly, for each $c \in \C$, the \emph{filled Julia set} $K_c$ is the set of $z \in \C$ for which the orbit $(f_c^{\circ n}(z))_{n \geq 0}$ is bounded,
and the \emph{Julia set} is $J_c := \partial K_c$.
If $K_c$ is connected, there is a Riemann map $\Phi_c : \widehat{\C} \setminus \overline{\D} \to \widehat{\C} \setminus K_c$, and similarly we define the \emph{dynamical external ray} at
angle $\theta$ as $R_c(\theta) := \{ \Phi_c( r e^{2 \pi i \theta}), \ r > 1\}$.
If $K_c$ is disconnected, the Riemann map $\Phi_c^{-1}$ may not extend to all of $\hat{\C} \setminus K_c$, but taking absolute values allows us to proceed, defining the Green's function $G_c : \hat{\C} \setminus K_c \to \mathbb{R}$:
\[G_c(z):= \lim\limits_{n\to\infty} \frac{1}{2^n} \log|f_c^{\circ n}(z)|.\]
Note that by construction $G_c(f_c(z)) = 2 G_c(z)$.
For each $t > 0$, we define the \emph{equipotential line} of potential $t$ as the set $\{z \in \mathbb{C} \ : \ G_c(z) = t \}$.
In fact, the inverse Riemann map $\Phi^{-1}_c$ is defined on the set $\{ z  \in \C \ : \  G_c(z) > G_c(0) \}$.
In parameter space, the Green's function for the Mandelbrot set is given by (see \cite{Douady_exploringthe})
\[G_{\M}(c) = G_c(c)\]
for $c \in \C \setminus \M$, and $G_{\M}(c) = 0$ for $c \in \M$.
Given $\theta \in \mathbb{R}/\mathbb{Z}$, the map $r \mapsto \Phi_c(r e^{2 \pi i \theta})$ is well-defined for $r > e^{G_c(0)}$,
and it extends to a maximal interval $(r_0, \infty)$.

Following \cite[Section 8]{Douady_exploringthe}, we give the definition:

\begin{definition}
    Given $\theta \in \mathbb{R}/\mathbb{Z}$, we say that the  ray $R_c(\theta)$ \emph{bifurcates} if the map
    $r \mapsto \Phi_c(r e^{2 \pi i \theta})$
    is only defined on $(r_0, \infty)$ with some $r_0 > 1$ and $\lim_{r \to r_0^+} \Phi_c(r e^{2 \pi i \theta})$ is a critical point of the Green's function.
    On the other hand, the dynamical ray $R_c(\theta)$ is said to $\emph{land}$ if $r_0 = 1$ and $\lim_{r \to 1^+} \Phi_c(r e^{2 \pi i \theta})$ exists.
\end{definition}

Similar definitions hold for parameter rays.

\begin{definition}
    Given two dynamical  rays $R_c(\alpha), R_c(\beta)$ that land at the same point $x$, the set $R_c(\alpha) \cup R_c(\beta) \cup \{x\}$
    disconnects $\C$ into two connected components.
    We define the \emph{wake} $\cal{W}(\alpha, \beta)$ as the connected component  that does not contain $0$.
    An analogous definition holds for parameter rays.
\end{definition}

The following few results are considered folklore but are explicitly formalized in the third author's thesis \cite{danny}, to which we refer for details.

Let $\tau : \mathbb{R}/\mathbb{Z} \to \mathbb{R}/\mathbb{Z}$ be the \emph{doubling map} $\tau(\theta) := 2 \theta \mod 1$.
It is useful to recall that the doubling map acts as a shift on binary sequences: if $\theta = .\theta_1 \theta_2 \dots$ is the binary expansion of $\theta$,
then the binary expansion of $\tau(\theta)$ is $\tau(\theta) = .\theta_2 \theta_3\dots$

When an angle $\theta$ is periodic of exact period $p$ for the doubling map, we say that the associated ray $R_M(\theta)$ is \emph{$p$-periodic}.
For any $p > 1$, the set of external rays of period $p$ is in 2:1 correspondence with the set of \emph{hyperbolic components} of period $p$.
That is, for any hyperbolic component $H$ of period $p > 1$, there are exactly two rays, called \emph{characteristic}, which land at the root of $H$.
Both characteristic rays have period $p$.

\begin{definition}
    A hyperbolic component of period $p$ with root $c$ is \emph{primitive} if
    the root of the critical value component in the Julia set has
    period exactly $p$.
\end{definition}

Equivalently, a hyperbolic component $H$ is primitive when the angles of its characteristic rays are in distinct orbits under the doubling map.
Let $P_p$ denote the set of primitive components of period $p$.

One combinatorial self-similarity feature of the Mandelbrot set is the maps $g$ in the wake of $f\in P_p$ also have a $p$-cycle reflecting $f$ (see for example \cite{BuffTanLei}, \cite{Milnor-External}).
\begin{proposition}
    Suppose $f\in P_p$ has characteristic rays $\theta_0<\theta_1$.
    Then for every parameter $c$ in the wake $\cal{W}(\theta_0, \theta_1)$, the rays $\theta_0$ and $\theta_1$ also land together in $J_c$.
\end{proposition}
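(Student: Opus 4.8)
The plan is to use the standard machinery of parameter and dynamical rays, together with the stability of ray landing under perturbation within a wake. First I would recall the fundamental correspondence: since $f$ is primitive of period $p$ with characteristic rays $\theta_0 < \theta_1$, these two rays land at the root of the hyperbolic component $H$ of $f$ in the Mandelbrot set, and the wake $\mathcal{W}(\theta_0,\theta_1)$ is precisely the parameter wake bounded by the corresponding parameter rays $R_M(\theta_0), R_M(\theta_1)$. At the root parameter $c_0$ itself, by definition of characteristic rays, the dynamical rays $R_{c_0}(\theta_0)$ and $R_{c_0}(\theta_1)$ land at a common point, namely the root of the characteristic Fatou component (the critical value component). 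This gives the base case of the argument: the conclusion holds for $c = c_0$.

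Next I would establish that for \emph{every} $c$ in the wake $\mathcal{W}(\theta_0,\theta_1)$, the two dynamical rays $R_c(\theta_0)$ and $R_c(\theta_1)$ are both defined (do not bifurcate) and land. The key point is that $\theta_0$ and $\theta_1$ are periodic under the doubling map $\tau$ of period exactly $p$, so the rays $R_c(\theta_i)$ land at repelling or parabolic periodic points whenever the orbit of $0$ does not hit the rays, i.e. whenever the critical orbit stays away from the relevant ray pair; this holds throughout the wake by the definition of the parameter wake (the parameter $c$ lies in $R_M(\theta_0)\cup R_M(\theta_1)$'s wake precisely when the dynamical rays of those angles do not crash into the critical orbit). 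More precisely, I would invoke the Douady–Hubbard result (as formalized in \cite{danny}) that periodic dynamical rays land at repelling/parabolic cycles for all $c$ in the appropriate region, so landing is guaranteed; the only issue is whether they land \emph{together}.

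To show they land together throughout the wake, I would use a connectedness/continuity argument. Consider the set $S$ of parameters $c \in \mathcal{W}(\theta_0,\theta_1)$ for which $R_c(\theta_0)$ and $R_c(\theta_1)$ land at the same point. This set is nonempty (it contains $c_0$ and indeed all of the sub-Mandelbrot copy hanging off $H$, by the theory of renormalization / tuning). It is closed in the wake because landing points of periodic rays depend continuously on $c$ as long as they stay repelling, and the wake can be chosen so that the relevant cycle remains repelling or parabolic. It is open because if two periodic rays land together at a repelling periodic point $x(c)$, then by stability of repelling cycles and of ray portraits under perturbation (the landing point moves holomorphically, and nearby rays of the same angles continue to land at the perturbed cycle), they continue to land together for all nearby $c$. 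Since the wake $\mathcal{W}(\theta_0,\theta_1)$ is connected, $S$ is the whole wake. The one subtlety to handle carefully is the parabolic parameters on the boundary of sub-wakes within $\mathcal{W}$, where the landing cycle degenerates; but at such parameters the ray portrait only becomes ``more landed'' (rays that landed separately can merge, never split), so the co-landing persists, which is exactly what keeps $S$ closed.

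The main obstacle I anticipate is making the openness/closedness argument fully rigorous across parabolic parameters inside the wake: one must ensure that as $c$ crosses a parabolic parameter, the co-landing of $\theta_0$ and $\theta_1$ is preserved rather than destroyed. This is handled by the standard fact that the combinatorial rotation number and the ray portrait vary semicontinuously — rays can only collide, not separate, as one moves deeper into a wake — together with the observation that $\theta_0,\theta_1$ already land together at the root and hence at every parameter ``above'' the root in the partial order given by inclusion of wakes. An alternative, perhaps cleaner, route that avoids parabolic case analysis is to appeal directly to the theory of the \emph{wake} as defined via the Yoccoz puzzle or via Kiwi's work on landing patterns: the set of angles landing at a common periodic point is constant on the interior of a parameter wake and the answer is dictated by the combinatorics at the root. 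I would cite \cite{danny} or \cite{Milnor-External} for whichever formulation is cleanest in the paper's conventions and present the connectedness argument as the spine of the proof.
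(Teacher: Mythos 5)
The paper does not actually prove this proposition: it is stated as a folklore self-similarity feature of $\mathcal{M}$, with the preceding sentence pointing to \cite{BuffTanLei} and \cite{Milnor-External} (this is Milnor's stability theorem for orbit portraits, that the portrait with characteristic arc $(\theta_0,\theta_1)$ is realized by a repelling or parabolic cycle exactly on the wake). So there is no in-paper argument to match yours against; your sketch reconstructs the standard proof from the literature, and its skeleton (co-landing at the root, non-bifurcation of $R_c(\theta_0),R_c(\theta_1)$ throughout the wake, then an open--closed argument on the connected wake) is the right one.

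Two points deserve flagging. First, the non-bifurcation step needs the combinatorial fact that the characteristic arc $(\theta_0,\theta_1)$ contains no other angle of the forward orbits of $\theta_0,\theta_1$ under doubling; combined with \Cref{prop:bifurcation-orbit} this is what guarantees that no parameter in the wake has external argument in those orbits, hence that neither dynamical ray bifurcates. You gesture at this ("the critical orbit stays away from the relevant ray pair") but it is the precise place where primitivity/characteristicness of the angles enters, and it should be made explicit. Second, and more seriously, your closedness and openness steps both lean on the assertion that ray portraits "can only collide, never separate" as one moves in the wake, and that "the relevant cycle remains repelling or parabolic throughout the wake." These statements are essentially the content of the theorem being proved (they are the substance of Milnor's stability lemma), so as written the argument is close to circular: a complete proof must actually establish persistence of the co-landing across parabolic parameters inside the wake, which is the hard part. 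A cleaner route using the paper's own toolkit would be to note that $\mathcal{O}_{\theta_0}\cup\mathcal{O}_{\theta_1}$ is disjoint from the open wake (again by the characteristic-arc property), so \Cref{lem:dynam-ray-holomorphic-landing} makes both landing points $\gamma_{\theta_0},\gamma_{\theta_1}$ holomorphic there, and then \Cref{cor:co-landing-implies-primitive} delivers the co-orbit statement directly; but since those results are themselves imported from \cite{danny}, the net effect is the same as the paper's choice to cite the result outright.
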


Conjecturally, every parameter ray lands, and for dynamical rays we have the pair of statements:

\begin{theorem}\label{thm:rat-ray-landing-dynam}
For dynamical rays, the following hold:
    \begin{enumerate}
        \item [(a)] For $c \in \C \setminus \M$, every dynamical ray either bifurcates or lands;
        \item [(b)] For $c \in  \M$, every dynamical ray of rational angle $\theta\in \Q/\Z$ lands.
    \end{enumerate}
\end{theorem}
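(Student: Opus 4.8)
I would prove the two parts by different means, since (a) is essentially the local structure of the Green's function while (b) is the classical Douady--Hubbard landing theorem. For part (a), note first that for $c \in \C \setminus \M$ the orbit of $0$ escapes, so $0 \notin K_c$ and $K_c$ is totally disconnected. I would then identify the critical points of $G_c$ on $\C \setminus K_c$: differentiating the functional equation $G_c \circ f_c = 2 G_c$ gives $\partial G_c(z) = z \cdot \partial G_c(f_c(z))$, so the zero set of the holomorphic function $\partial G_c$ is exactly $\bigcup_{n \ge 0} f_c^{-n}(0)$, which lies entirely in $\C \setminus K_c$ in this case. The ray $R_c(\theta)$ is an integral curve of $\nabla G_c$ (an orthogonal trajectory of the equipotentials), parametrized so that the potential decreases along it; such a curve can fail to extend only by limiting, at some positive potential, to a critical point of $G_c$ --- in which case $r_0 > 1$ and the ray bifurcates, by definition --- or else it extends down to potential $0$, i.e.\ $r_0 = 1$. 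This dichotomy, together with completeness of the gradient flow away from critical points, is precisely the folklore statement formalized in \cite{danny}. In the latter case the impression $\bigcap_{s > 1}\overline{\{\Phi_c(re^{2\pi i\theta}) : 1 < r \le s\}}$ is a nonempty compact connected subset of $\{G_c = 0\} = K_c$, hence a single point because $K_c$ is totally disconnected; this point is the landing point, so the ray lands.

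For part (b), recall that for $c \in \M$ the set $K_c$ is connected and completely invariant, so $G_c(0) = 0$ and $G_c$ has no critical points off $K_c$; thus every dynamical ray $R_c(\theta)$ is defined at all potentials $t > 0$, and by the continuum argument of the previous paragraph its impression $A_\theta$ is a continuum in $J_c$, with $f_c(A_\theta) = A_{2\theta}$ by continuity and properness of $f_c$. The reduction of the strictly preperiodic case to the periodic one is then formal: if $2^k\theta$ is periodic and $R_c(2^k\theta)$ lands at a point $z^*$, then $A_\theta \subseteq f_c^{-k}(z^*)$ is finite and connected, hence a single point. So the crux is that, for $\theta$ of exact period $q$, the continuum $A := A_\theta$, which is invariant under $g := f_c^{\circ q}$ (a map that fixes $R_c(\theta)$ setwise and multiplies its potential by $2^q$), is a single point. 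I would establish this via the standard length--area argument of Douady: the ray pieces $R_c(\theta) \cap \{2^{-(n+1)q}t_0 \le G_c \le 2^{-nq}t_0\}$ are the successive $g$-preimages of a fixed piece and sit inside pairwise disjoint round-like annuli, so if $A$ were nondegenerate these pieces would have comparable positive ``size'', forcing $\{0 < G_c < t_0\}$ to have infinite area --- a contradiction. (Equivalently, one can run the snail lemma at a repelling, parabolic, or attracting fixed point of $g$ located in $A$, the spiralling of the ray being incompatible with each local normal form.) For the precise estimates I would cite \cite{Milnor-External} and \cite{Douady_exploringthe}.

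The main obstacle is the periodic case of part (b): everything else is soft topology once one has the gradient-flow description of rays and the total-disconnectedness-versus-connectedness dichotomy for $K_c$. Within that case, the delicate point is ruling out a nondegenerate $g$-invariant impression, which is exactly where the quantitative length--area inequality (or the snail lemma) does the real work; this is the step I would quote from the literature rather than reprove.
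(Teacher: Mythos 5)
Your proposal is correct and follows essentially the same route as the paper, which simply cites the folklore Cantor-set argument for (a) (to \cite{danny}) and the Douady--Hubbard rational landing theorem for (b) (to \cite{Douady_exploringthe}); you have just written out the standard content of those references (gradient flow of $G_c$ terminating only at critical points, total disconnectedness of $K_c$ collapsing the impression, reduction of preperiodic to periodic angles, and the length--area/snail-lemma estimate for periodic rays). The one step you defer to the literature --- ruling out a nondegenerate invariant impression in the periodic case --- is exactly the step the paper also outsources, so there is no gap relative to the paper's own standard of proof.
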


\begin{proof}
    Part (a) follows from straightforward considerations on Cantor set Julia sets; for details see \cite[Prop. 1.2.13]{danny}. Part (b) is \cite[Prop. 8.4]{Douady_exploringthe}.
\end{proof}

In fact, noting that dynamical plane bifurcation of rays occurs precisely at the critical points of the Green's function (and their iterated preimages),
we can state part (a) of \Cref{thm:rat-ray-landing-dynam} more precisely.
Recall we say that $c\in \C\sm\M$ has external argument $\theta$ when $c$ lies on the parameter external ray $\theta$.

\begin{proposition}\label{prop:bifurcation-orbit}
    If $c\in\C\sm\M$ has external argument $\theta$, $\theta\in \mathbb{S}^1$, then the dynamical ray of $f_c$ at angle $\theta'\in \mathbb{S}^1$ bifurcates if and only if $2^k\theta' = \theta$ for some $k\in\N^*$.
    If this is the case, then the point at which $R_c(\theta')$ bifurcates is an element of $f_c^{\circ - k}(c)$.
\end{proposition}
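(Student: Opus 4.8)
The plan is to analyze the failure of the inverse Riemann map $\Phi_c^{-1}$ to extend along a radial segment in terms of the critical points of the Green's function $G_c$, and then to identify those critical points dynamically. Recall that $\Phi_c^{-1}$ is defined on $\{z : G_c(z) > G_c(0)\}$, and the obstruction to extending $r \mapsto \Phi_c(r e^{2\pi i \theta'})$ to all of $(1, \infty)$ is exactly that the ray runs into a critical point of $G_c$, i.e.\ a point $w$ with $dG_c(w) = 0$ (here we use that bifurcation of a dynamical ray occurs precisely at a critical point of the Green's function, as recalled in the excerpt just before the statement).

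First I would locate the critical points of $G_c$. Since $G_c \circ f_c = 2 G_c$ and $f_c$ is a degree-$2$ branched cover with a single critical point at $0$, the chain rule gives $2\, dG_c(z) = dG_c(f_c(z)) \cdot df_c(z)$; hence $dG_c$ vanishes at $z$ iff either $z = 0$ (the critical point of $f_c$) or $f_c(z)$ is a critical point of $G_c$. By induction, the critical points of $G_c$ in $\C \setminus K_c$ are precisely the points in $\bigcup_{k \ge 1} f_c^{\circ -k}(0)$ that lie outside $K_c$ — equivalently, since $c = f_c(0)$, the points in $\bigcup_{k \ge 0} f_c^{\circ -k}(c)$ outside $K_c$, together with $0$ itself when $0 \notin K_c$; but when $c \notin \M$ we have $0 \notin K_c$ and $f_c(0) = c$, so the full set of critical points of $G_c$ is $\{0\} \cup \bigcup_{k\ge 1} f_c^{\circ -k}(0) = \bigcup_{k \ge 0} f_c^{\circ -k}(c)$ intersected with $\C \setminus K_c$ (all of it, since $K_c$ is a Cantor set here). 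The potentials of these points are $G_c(0) = G_c(c)/2$ and, more generally, $G_c(w) = 2^{-k} G_c(c)$ for $w \in f_c^{\circ -k}(c)$, since $G_c(f_c^{\circ k}(w)) = 2^k G_c(w)$.

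Next, I would track where the ray $R_c(\theta')$ goes. The dynamical ray identity $f_c(R_c(\theta')) = R_c(2\theta')$, valid as long as the rays are defined, together with the fact that $R_c(\theta)$ is the ray through $c$ (as $c \in \C \setminus \M$ has external argument $\theta$ means $c = \Phi_c(e^{2\pi i\, t})$-type... more precisely $c$ lies on $R_c(\theta)$ since $G_{\M}(c) = G_c(c)$ and the external argument is preserved), shows: the ray $R_c(\theta')$ bifurcates iff applying $f_c$ some number of times $k \ge 1$ carries it onto the ray $R_c(2^k \theta')$ which passes through the critical value $c$ and terminates there — i.e.\ iff $2^k \theta' = \theta$ for some $k \in \N^*$, because that is exactly when $R_c(2^{k-1}\theta')$ contains a point of $f_c^{\circ -1}(c)$ that is a critical point of $G_c$ at which the ray stops. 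The "only if" direction uses that if $2^k\theta' \ne \theta$ for all $k$, then no forward iterate of the ray passes through a preimage of $c$, so by the potential computation above the ray avoids all critical points of $G_c$ and extends fully (so it lands, by \Cref{thm:rat-ray-landing-dynam}(a)). When $2^k\theta' = \theta$ with $k$ minimal, the bifurcation point $w$ satisfies $f_c^{\circ k}(w) = c$, i.e.\ $w \in f_c^{\circ -k}(c)$, giving the last sentence.

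The main obstacle I expect is making precise the claim that the ray $R_c(\theta')$, when it bifurcates, bifurcates at the point of $f_c^{\circ -k}(c)$ with $k$ minimal — rather than at some earlier or later preimage — and that it does bifurcate (rather than somehow continue past) when such a preimage is encountered. This requires the structural fact that the ray, followed inward from $r = \infty$, hits critical points of $G_c$ in order of decreasing potential, and that the first one it meets is where the parametrization $r \mapsto \Phi_c(r e^{2\pi i\theta'})$ breaks down; one also must rule out the ray "missing" the preimage by landing elsewhere first. These are exactly the folklore facts attributed to \cite{danny} in the excerpt (in particular \cite[Prop. 1.2.13]{danny} underlying \Cref{thm:rat-ray-landing-dynam}(a)), so I would invoke that reference for the careful topological argument and keep the present proof at the level of the combinatorial/potential-theoretic bookkeeping above.
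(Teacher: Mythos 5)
Your proof is correct and takes essentially the same route as the paper's, which likewise reduces bifurcation to a forward image $R_c(2^k\theta')$ of the ray passing through the critical value $c$ and defers the careful topological details to \cite{danny}; you simply make explicit the location of the critical points of $G_c$ and the potential bookkeeping. One minor indexing slip: the critical points of $G_c$ form the set $\bigcup_{k\ge 1} f_c^{\circ -k}(c) = \bigcup_{k\ge 0} f_c^{\circ -k}(0)$, not $\bigcup_{k\ge 0} f_c^{\circ -k}(c)$ (the critical value $c$ itself is not a critical point of $G_c$), though your concluding argument correctly uses $k\ge 1$ and so the statement's $k\in\N^*$ comes out right.
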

\begin{proof}
    Bifurcation of rays happens at critical points of the Green's function and their iterated preimages.
    Precisely, this is if and only if $R_c(\theta')$ passes through the critical value $c$ for $f_c$.
    This occurs when $2^k\theta' = \theta$, see \cite[Prop. 1.2.11]{danny}  for additional details.
\end{proof}

This lets us define active wakes for a particular rational $\theta$ whereupon the motion of external rays is only obstructed from being holomorphic by ray bifurcation.
\begin{definition}\label{defn:active-per1-wake}
    A wake $W = \cal{W}(\alpha, \beta)$ is \emph{active} at $\theta\in\Q/\Z$ if the orbit of $\theta$ under angle doubling contains either $\alpha$ or $\beta$.
\end{definition}

For a rational angle $\theta$, denote by $\cal{O}_\theta$ the union of the closures of all external rays in the orbit of $\theta$ under angle doubling.
In other words, letting $(\theta)$ denote the orbit of $\theta$ under the doubling map,
\[
    \cal{O}_\theta := \bigcup_{\alpha\in\del{\theta}} \ol{R_M(\alpha)}.
\]

For $c\in\C\sm\cal{O}_\theta$, it follows from \Cref{prop:bifurcation-orbit} and \Cref{thm:rat-ray-landing-dynam} that $R_c(\theta)$ lands at a point
$\gamma_\theta(c)\in J_c$.

\begin{lemma}\label{lem:dynam-ray-holomorphic-landing}
    For $\theta\in\Q/\Z$ fixed, the map $c \mapsto \gamma_\theta(c)$ is holomorphic on $\C\sm\mathcal{O}_\theta$.
\end{lemma}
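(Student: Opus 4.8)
The plan is to show holomorphicity locally near an arbitrary $c_0 \in \C \sm \mathcal{O}_\theta$ by exhibiting $\gamma_\theta$ as the solution of an analytic implicit equation with non-vanishing derivative. First I would recall that, by \Cref{prop:bifurcation-orbit} combined with \Cref{thm:rat-ray-landing-dynam}, for every $c$ in a sufficiently small neighborhood $N$ of $c_0$ the dynamical ray $R_c(\theta)$ does not bifurcate: indeed $c_0 \notin \mathcal{O}_\theta$ means $c_0$ is not on any parameter ray whose angle is in the orbit of $\theta$, so (after possibly also excluding the case $c_0 \in \M$, where (b) of \Cref{thm:rat-ray-landing-dynam} gives landing directly) we may assume $N \cap \mathcal{O}_\theta = \emptyset$, and on $N$ each dynamical ray $R_c(\theta)$ lands at a well-defined point $\gamma_\theta(c) \in J_c$. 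Since $\theta$ is periodic of some exact period $q \mid$ (period issues aside, $\theta$ rational means preperiodic) under doubling, the landing point $\gamma_\theta(c)$ is either a (pre)periodic point of $f_c$; I would use that $\gamma_\theta(c)$ satisfies a polynomial equation $f_c^{\circ m}(\gamma_\theta(c)) = f_c^{\circ n}(\gamma_\theta(c))$ determined by the combinatorics of $\theta$, hence lies in a finite set moving algebraically with $c$.

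Second, I would upgrade ``moves algebraically'' to ``moves holomorphically'' by a continuity-plus-implicit-function argument. The key point is that the landing point depends continuously on $c$ on $N$: this follows from stability of non-bifurcating rays, e.g. the ray $R_c(\theta)$ together with its landing point varies continuously in the Hausdorff topology as long as no bifurcation occurs along the orbit, which is exactly the content of $N \cap \mathcal{O}_\theta = \emptyset$ via \Cref{prop:bifurcation-orbit}. Having a continuous selection $\gamma_\theta$ of roots of the analytic family of polynomials $P_c(z) := f_c^{\circ m}(z) - f_c^{\circ n}(z)$ (or the appropriate dynatomic factor isolating the correct (pre)period), I would invoke the standard fact that a continuous branch of roots of a family of polynomials with analytic coefficients is itself analytic wherever it does not collide with another root; collisions are confined to a discrete set, and at such parameters the landing point would have to change (pre)period, which again forces $c \in \mathcal{O}_\theta$ by the ray combinatorics. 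Alternatively, and perhaps more cleanly, one can argue directly with $\Phi_c$: on $N$ the map $(c,z) \mapsto \Phi_c^{-1}(z)$ is holomorphic off $K_c$ (holomorphic motion of Böttcher coordinates / Green's function), the ray $R_c(\theta)$ is cut out by $\arg \Phi_c^{-1} = 2\pi\theta$, and $\gamma_\theta(c) = \lim_{r \to 1^+}\Phi_c(re^{2\pi i\theta})$ extends holomorphically because the landing point is a repelling (pre)periodic point — repelling since the ray lands and the cycle is non-parabolic for $c$ off the relevant parameter rays — to which one applies the implicit function theorem at the fixed-point equation with multiplier $\ne 1$.

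Third, I would assemble these local statements: holomorphicity is a local property, $c_0$ was arbitrary in $\C \sm \mathcal{O}_\theta$, and the various local branches agree on overlaps because they all equal the (unique) landing point of $R_c(\theta)$; hence $c \mapsto \gamma_\theta(c)$ is holomorphic on all of $\C \sm \mathcal{O}_\theta$. One subtlety to address is the splitting of $\C \sm \mathcal{O}_\theta$ into the part inside $\M$ and the part outside: inside $\M$ the repelling-periodic-point description and implicit function theorem apply provided the relevant periodic orbit is repelling (which holds away from the finitely many parabolic parameters, and those parabolic parameters are themselves root points lying on parameter rays in $\mathcal{O}_\theta$, so they are excluded); outside $\M$ one uses part (a) of \Cref{thm:rat-ray-landing-dynam} together with \Cref{prop:bifurcation-orbit} to guarantee landing rather than bifurcation.

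\textbf{Main obstacle.} The delicate step is establishing \emph{continuity} of the landing point $\gamma_\theta$ across $\C \sm \mathcal{O}_\theta$ — in particular across the boundary $\partial\M$ — since the Böttcher coordinate $\Phi_c$ itself degenerates as $c$ crosses into $\M$ (its domain shrinks) and one cannot naively take limits of $\Phi_c(re^{2\pi i\theta})$ uniformly in $c$. I expect the clean way around this is to avoid $\Phi_c$ on the $\M$-side and instead characterize $\gamma_\theta(c)$ purely dynamically as the unique repelling (pre)periodic point whose combinatorial rotation/itinerary data matches $\theta$ — a characterization that is manifestly preserved under analytic continuation along paths in $\C \sm \mathcal{O}_\theta$ by the implicit function theorem — and then separately check that this dynamical object coincides with the ray landing point on each side, citing the folklore results recorded in \cite{danny}. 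The bookkeeping to confirm that all parameters where the (pre)period could jump (root points, parabolics, Misiurewicz-type collisions) indeed lie in $\mathcal{O}_\theta$ is routine but is where the combinatorial hypotheses of the lemma are actually consumed.
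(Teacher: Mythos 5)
Your route is genuinely different from the paper's. The paper never touches the landing point's dynamical type: it fixes a potential $\e>0$, shows that the point $g_\e(c)=\Phi_c(\e e^{2\pi i\theta})$ at that potential on the ray depends holomorphically on $c$ (pushing forward by $f_c^{\circ N}$ into the region where $\Phi_c^{-1}$ is defined and applying the inverse function theorem), observes that the family $\set{g_\e}_{\e>0}$ omits the values $0$ and $c$ and is therefore normal by Montel, and concludes that the pointwise limit $\gamma_\theta=\lim_{\e\to 0}g_\e$ is a locally uniform limit of holomorphic functions, hence holomorphic. Continuity and holomorphy of the landing point are obtained in one stroke, with no case analysis on whether $c$ is inside or outside $\M$. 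You instead propose the classical route: identify $\gamma_\theta(c)$ as a repelling (pre)periodic point cut out by an algebraic equation, move it by the implicit function theorem, and check that the exceptional parameters (parabolics, collisions) lie in $\mathcal{O}_\theta$. That last bookkeeping point you get right: an angle in the orbit of $\theta$ landing on a parabolic cycle forces the orbit of $\theta$ to contain a characteristic angle of the corresponding component, whose parameter ray lands at that root, so the root lies in $\mathcal{O}_\theta$.

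The soft spot is in your second paragraph: you derive analyticity from \emph{continuity} of $\gamma_\theta$, and you justify continuity by saying the ray and its landing point "vary continuously in the Hausdorff topology as long as no bifurcation occurs," claiming this is "exactly the content of $N\cap\mathcal{O}_\theta=\emptyset$ via \Cref{prop:bifurcation-orbit}." It is not: \Cref{prop:bifurcation-orbit} is a statement about each fixed $c$ separately (the ray does not bifurcate), and upgrading pointwise landing to continuity of the landing point in $c$ — especially across $\partial\M$, where $\Phi_c$ degenerates — is precisely the hard content of the lemma. Your "main obstacle" paragraph diagnoses this correctly and points at the right repair, namely the stability lemma for rays landing at repelling (pre)periodic points (Douady--Hubbard/Schleicher), but that lemma is itself a nontrivial linearization argument that your proposal cites rather than proves. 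So the proposal is a viable alternative proof modulo that standard external input, whereas the paper's Montel argument is self-contained and sidesteps both the repelling/parabolic dichotomy and the stability lemma entirely.
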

\begin{proof}
    We first overview the proof, as we will aim to generalize it later to apply to \emph{bubble rays} in $\Per_2(0)$.
    There are three essential ingredients: First, the construction of a dynamical ray (here from the Green's function).
    Second, understanding when the dynamical rays bifurcate (at preimages of critical points).
    Third, promoting continuity to holomorphicity via Montel's theorem.

    Fix $\theta\in\Q/\Z$, and let $V = \C\sm\cal{O}_\theta$.
    For $\e>0$, define a map
    $ g_\e: V \lra \C $ by
    \[
        g_\e(c) := \Phi_c(\e e^{2 \pi i \theta}) \in \C\sm K_c.
    \]
    Note that $g_\e$ is continuous, since by \Cref{prop:bifurcation-orbit}, $R_c(\theta)$ does not bifurcate for $c\in V$.
    This accomplishes the first two steps, and the rest is about promoting this continuity to holomorphicity.

    First, we claim that $g_\e$ is holomorphic.
    To see this, fix some $c_0\in V$, and choose a neighborhood $U$ of $c$ whose closure $\ol{U}$ is compact and contained in $V$.
    Since the parameter space Green's function $G_\M$ is bounded on the compact set $\ol{U}$ (where we put $G_\M(c)=0$ for $c\in\M$), we may find $N\in\N$ sufficiently large such that
    \[
        2^N \e > G_\M(c)
    \]
    for all $c\in U$.
    It follows that
    \[
        G_c(f_c^{\circ N}(g_\e(c))) = 2^N G_c(g_\e(c)) = 2^N\e > G_\M(c) = G_c(c)
    \]
    for all $c\in U$.
    Thus, $\Phi_c^{-1}$ is well-defined at $f_c^{\circ N}(g_\e(c))$.
     Since $g_\e(c)$ is not an iterated preimage of a critical point of $f_c$ (otherwise by \Cref{prop:bifurcation-orbit}, $c$ would lie on the boundary of an active wake), the inverse function theorem implies that the holomorphic function
    \[
        F(z, c) = (f_c^{\circ N}(z), c)
    \]
    has a holomorphic inverse $F^{-1}$ defined on some neighborhood of $F(g_\e(c_0), c_0)$.
    Since
    \[
        F(g_\e(c), c) = \Phi_c(\exp(2^N( 2 \pi i  \theta + \e)))
    \]
    depends holomorphically on $c$, we have that $g_\e(c)$ also depends holomorphically on $c$ in a neighborhood of $c_0$.
    This proves that $g_\e$ is holomorphic on $V$.

    Now, note that $g_\e(c)$ is nonzero on $V$ since $g_\e(c)$ belongs to $\textup{Im }\Phi_c = \{ z \ : \ G_c(z) > G_c(0) \}$.
    Moreover, we also have $g_\e(c) \ne c$ on $V\sm\set{0}$ since $g_\e(c)$ belongs to the ray $R_c(\theta)$,
    but $c$ does not belong to it since $c \notin \mathcal{O}_\theta$. Thus, Montel's theorem implies the maps $\set{g_\e: \e>0}$
    form a normal family on $\C\sm\set{0}$.
    Since the ray $R_c(\theta)$ lands, for every $c \in V$ the limit $\lim_{\epsilon \to 0} g_\e(c) = \gamma_\theta(c)$ exists.
    Now, using Montel's theorem, the convergence $\gamma_\theta = \lim_{\e\ra 0}g_\e$ is uniform on compact subsets,
    hence the limit is holomorphic on $V\sm\set{0}$.
    Since $\gamma_\theta$ is holomorphic and bounded in a neighborhood of $0$, it follows from classification of singularities that $\gamma_\theta$ extends analytically
    to $V$.
\end{proof}

We present two corollaries useful in the proof of the cell structure, which are proved in \cite{danny}.

\begin{corollary}\label{cor:co-landing-implies-primitive}
    Suppose $\theta,\theta'\in\Q/\Z$ are distinct with period $p$ under angle doubling, and let $\gamma_\theta$, $\gamma_{\theta'}$ be as in \Cref{lem:dynam-ray-holomorphic-landing}.
    Suppose further that $\theta$ and $\theta'$ do not share an orbit.
    For any $c\in\C\sm(\cal{O}_\theta\cup\cal{O}_{\theta'})$, we have that $\gamma_\theta(c)$ and $\gamma_{\theta'}(c)$ share an orbit if and only if $c$ belongs to a primitive wake that is active at $\theta$ and $\theta'$.
\end{corollary}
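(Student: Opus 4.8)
The plan is to use the holomorphicity of the landing maps $\gamma_\theta, \gamma_{\theta'}$ from Lemma \ref{lem:dynam-ray-holomorphic-landing} to reduce the "co-orbit" condition to an analytic identity, then transfer dynamical information across the domain $\C \setminus (\mathcal{O}_\theta \cup \mathcal{O}_{\theta'})$ via the identity theorem. First I would reformulate "$\gamma_\theta(c)$ and $\gamma_{\theta'}(c)$ share an orbit" as: there exists $k$ with $0 \le k < p$ such that $f_c^{\circ k}(\gamma_\theta(c)) = \gamma_{\theta'}(c)$. Since $\gamma_\theta$ has period $p$ as a periodic point for $f_c$ (because $\theta$ has period $p$ under doubling, so $R_c(\theta)$ lands at a periodic point of period dividing $p$; primitivity of the eventual component will force exact period $p$, but I should be careful here and perhaps only claim period dividing $p$), and similarly for $\gamma_{\theta'}$, for each fixed $k$ the set $Z_k := \{c \in \C \setminus (\mathcal{O}_\theta \cup \mathcal{O}_{\theta'}) : f_c^{\circ k}(\gamma_\theta(c)) = \gamma_{\theta'}(c)\}$ is an analytic subset, being the zero set of a holomorphic function. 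Hence on each connected component of the domain, $Z_k$ is either everything or nowhere dense.

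Next I would identify the connected components of $\C \setminus (\mathcal{O}_\theta \cup \mathcal{O}_{\theta'})$: these are cut out by the external rays at angles in the orbits of $\theta$ and $\theta'$ together with their common landing points (which are roots or co-roots of hyperbolic components of period $p$). A component either lies inside some wake $\mathcal{W}(\alpha,\beta)$ bounded by two of these rays landing together, or it is the "outside" component. The combinatorial input — the Proposition preceding Theorem \ref{thm:rat-ray-landing-dynam} about rays continuing to land together in the wake of a primitive component, together with Corollary statements on characteristic angles — tells us exactly which pairs of period-$p$ rays land together at a root, and the classification of primitive versus satellite components tells us when $\theta, \theta'$ (lying in distinct orbits by hypothesis) are the two characteristic angles of a single primitive component. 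So the strategy is: (i) in a primitive wake active at both $\theta$ and $\theta'$, exhibit one parameter $c$ (e.g. the root or a nearby parameter inside the wake, or a Misiurewicz/hyperbolic parameter) where $\gamma_\theta(c)$ and $\gamma_{\theta'}(c)$ provably share an orbit — at the root of a primitive component the two characteristic dynamical rays land at the same point, which certainly lies in a single cycle — and then spread this over the whole component by the identity theorem; (ii) conversely, in any component that is \emph{not} a primitive wake active at both angles, exhibit one parameter where the orbits are provably disjoint (e.g. a parameter far out near infinity, or a center of a hyperbolic component where the cycle structure is transparent), forcing $Z_k$ to be nowhere dense there for every $k$, hence $\gamma_\theta(c), \gamma_{\theta'}(c)$ generically — and in fact always, since being in distinct orbits is an open-and-closed-looking condition once we know the $Z_k$ are the bad sets — have disjoint orbits.

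The main obstacle I anticipate is step (ii), the converse direction: one must rule out that $\gamma_\theta$ and $\gamma_{\theta'}$ accidentally fall into the same orbit on a component that is \emph{not} an active primitive wake, including satellite wakes and components sitting inside larger primitive wakes but not active at \emph{both} angles. Handling satellite wakes requires knowing that at a satellite root the two co-landing rays are in the \emph{same} doubling orbit (this is the "equivalently primitive $\iff$ distinct orbits" characterization stated in the excerpt), so a satellite wake cannot be active at two angles from \emph{distinct} orbits in the relevant way; and handling nested primitive wakes requires tracking how the landing points $\gamma_\theta(c), \gamma_{\theta'}(c)$ move relative to the portion of the cycle structure pinned down by the \emph{inner} primitive component. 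I expect the cleanest route is to locate, in each non-exceptional component, a concrete tuning/renormalization or Misiurewicz parameter whose Julia set combinatorics are explicit enough to read off that $\gamma_\theta$ and $\gamma_{\theta'}$ land on distinct cycles, and then invoke holomorphicity; alternatively, one pushes everything to the "root" parameters and uses that at a primitive root exactly the two characteristic angles co-land (and nothing else of period $p$ in a different orbit does), which is precisely the combinatorial fact that distinguishes the active primitive wakes. Since the corollary is attributed to \cite{danny}, I would follow that reference for the bookkeeping of which period-$p$ angles co-land at which roots, and present the argument above as the conceptual skeleton.
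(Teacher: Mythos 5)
The paper does not actually prove this corollary: it is stated with the remark that it ``is proved in \cite{danny}'', so there is no in-paper argument to compare against. Judged on its own terms, your proposal correctly isolates the two directions and essentially has the easy one: inside a primitive wake active at both angles, the proposition stated just before \Cref{thm:rat-ray-landing-dynam} already says the two characteristic dynamical rays (which are $R_c(2^i\theta)$ and $R_c(2^j\theta')$ for suitable $i,j$) co-land throughout the wake, so $\gamma_\theta(c)$ and $\gamma_{\theta'}(c)$ share an orbit there with no need for the identity theorem.

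The genuine gap is the converse, which you flag but do not close. Two specific problems. First, ``each $Z_k$ is nowhere dense, hence the orbits are always disjoint'' does not follow from the identity theorem alone: a nonconstant holomorphic function can still vanish on a discrete set. To upgrade this you need that the co-landing locus is \emph{open} wherever the common landing point is repelling --- which does hold, because a repelling periodic point is a simple root of $f_c^{\circ p}(z)-z$ and so admits a unique local holomorphic continuation, forcing two holomorphically varying periodic points that agree at one such parameter to agree nearby --- followed by an open--closed--connectedness argument after excising the finitely many parabolic parameters. You gesture at this (``open-and-closed-looking'') without supplying it. Second, even granting all-or-nothing on each component, you must exhibit a parameter with disjoint orbits in \emph{every} component of $\C\sm(\cal{O}_\theta\cup\cal{O}_{\theta'})$ not contained in a cross-orbit primitive wake, including components nested inside satellite wakes and inside primitive wakes active at only one of the two angles; your proposal names this as the main obstacle and offers two candidate routes without carrying either out. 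The standard way to finish is Milnor's orbit-portrait theory (the reference behind \Cref{per1-edge-branching}): if rays from two distinct period-$p$ orbits land on a common periodic orbit of $f_c$, the resulting essential orbit portrait has exactly two angle orbits, is therefore primitive, and its characteristic angles consist of one angle from each orbit, which places $c$ precisely in a primitive wake active at both $\theta$ and $\theta'$. As written, the proposal is a plausible outline rather than a proof.
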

\begin{corollary}\label{cor:unique-ext-arg}
    For a $p$-periodic angle $\theta$, if $c$ does not belong to the closure of any wake active at $\theta$, then $\gamma_c(\theta)$ has exact period $p$, and $\theta$ is its unique external argument.
\end{corollary}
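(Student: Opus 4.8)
The plan is to fix the $p$-periodic angle $\theta$, set $W := \C \setminus \bigcup \overline{\cal{W}}$ where $\cal{W}$ ranges over all wakes active at $\theta$, and prove that for every $c \in W$ the landing point $z := \gamma_\theta(c)$ carries exactly one dynamical ray; the assertion about the exact period is then purely formal. As a warm-up I would record that $W$ is a nice domain: there are only finitely many wakes active at $\theta$, since each one is $\cal{W}_H$ for a period-$p$ hyperbolic component $H$ one of whose two characteristic angles lies in the finite orbit $(\theta)$, and distinct components have disjoint characteristic pairs; hence $W$ is open. It is also connected and contains $0$: the intersection $\M \cap W$ is $\M$ with finitely many limbs removed, hence connected, any exterior point of $W$ is joined to $\M \cap W$ along a parameter ray, and $0$ lies in no wake closure. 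Finally $\cal{O}_\theta \cap W = \emptyset$, because a $p$-periodic parameter ray $R_M(\alpha)$ with $\alpha \in (\theta)$ lands at the root of a period-$p$ component having $\alpha$ as a characteristic angle, so $\overline{R_M(\alpha)}$ sits inside the closure of an active wake. By \Cref{lem:dynam-ray-holomorphic-landing}, $\gamma_\theta$ is holomorphic on $W$.

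Next I would pin down the dynamical nature of $z = \gamma_\theta(c)$ for $c \in W$. Since $f_c^{\circ p}$ fixes the ray $R_c(\theta)$ and a landing point is unique, $z$ is periodic of some period $r \mid p$. Moreover $z$ is \emph{repelling}: if $c \notin \M$ then $f_c$ has no non-repelling cycles; and if $z$ were on a parabolic cycle, then $c$ would be the root of a period-$p$ component $H'$ with $\theta$ lying in the orbit of one of $H'$'s characteristic angles --- because the rays landing at a parabolic cycle have their angles in the orbits of the characteristic angles --- making $\cal{W}_{H'}$ active at $\theta$ and $c \in \overline{\cal{W}_{H'}}$, contradicting $c \in W$. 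I would also use the standard fact (folklore, formalized in \cite{danny}) that the dynamical rays landing at a periodic point of $f_c$ are finite in number and all periodic of the same period --- here necessarily $p$, since $R_c(\theta)$ is among them.

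The heart of the proof is to exclude a second ray $R_c(\theta')$, $\theta' \ne \theta$, landing at $z$; by the previous paragraph $\theta'$ is $p$-periodic. Suppose first that $\theta'$ does not share the orbit of $\theta$. I claim $c \notin \cal{O}_{\theta'}$: if $c$ lay on an open parameter ray of $\cal{O}_{\theta'}$ I would move to a nearby parameter of $W$ off that ray (legitimate since, $z$ being repelling, the co-landing of $R_c(\theta)$ and $R_c(\theta')$ persists under perturbation of $c$), while if $c$ were the landing point of such a ray then $R_c(\theta')$ would land on a parabolic cycle, contradicting that $z$ is repelling. Then \Cref{cor:co-landing-implies-primitive} applies to the pair $\theta, \theta'$, and as $\gamma_\theta(c) = z = \gamma_{\theta'}(c)$ trivially share an orbit, it forces $c$ into a primitive wake active at $\theta$ --- impossible for $c \in W$. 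Suppose instead $\theta' = 2^k\theta$ for some $0 < k < p$; then $\cal{O}_{\theta'} = \cal{O}_\theta$, so $h := \gamma_\theta - \gamma_{2^k\theta}$ is holomorphic on the connected set $W$, with $h(0) = e^{2\pi i\theta} - e^{2\pi i 2^k\theta} \ne 0$, hence its zeros are isolated; but at a zero $c_0 \in W$ the common landing point is repelling, so by stability of landing rays at repelling periodic points $R_c(\theta)$ and $R_c(2^k\theta)$ co-land throughout a neighborhood of $c_0$ --- which lies in the open set $W$ --- and $h \equiv 0$ near $c_0$, a contradiction. Hence $R_c(\theta)$ is the only ray landing at $z$. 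The exact period now follows: $R_c(2^k\theta)$ is the unique ray landing at $f_c^{\circ k}(z)$, so $f_c^{\circ k}(z) = z$ implies $2^k\theta = \theta$, i.e.\ $p \mid k$; thus $z$ has exact period $p$ and $\theta$ is its unique external argument.

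I expect the main obstacle to lie not in the conceptual skeleton --- \Cref{cor:co-landing-implies-primitive} handles the cross-orbit case and a holomorphicity-versus-stability dichotomy handles the same-orbit case --- but in the bookkeeping that keeps everything in the ``generic repelling'' regime: proving $W$ is open, connected and disjoint from $\cal{O}_\theta$; verifying that $\gamma_\theta(c)$ is genuinely repelling for every $c \in W$, including at parabolic parameters that happen not to be active at $\theta$; and absorbing the two edge cases in which $c$ meets $\cal{O}_{\theta'}$. These are precisely the folklore statements about external rays, wakes, and landing points that are collected in \cite{danny}, which I would cite for the details.
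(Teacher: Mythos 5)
The paper does not actually supply a proof of this corollary: it is introduced with ``We present two corollaries \dots which are proved in \cite{danny}'' and deferred to the third author's thesis, so there is no in-paper argument to compare against. Judged on its own, your proof is correct and operates at the same level of rigor the paper adopts elsewhere (freely citing folklore on orbit portraits, landing of rational rays at repelling or parabolic points, and stability of landing at repelling orbits). The skeleton is sound: $W$ is open, connected, contains $0$, and is disjoint from $\cal{O}_\theta$, so \Cref{lem:dynam-ray-holomorphic-landing} applies; a second co-landing ray with angle outside the orbit of $\theta$ is killed by \Cref{cor:co-landing-implies-primitive}, and one inside the orbit is killed by the identity principle against the value at $c=0$; exactness of the period then follows formally from uniqueness of the external argument. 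Two small points of bookkeeping deserve tightening. First, your connectivity argument for $W$ (``any exterior point of $W$ is joined to $\M\cap W$ along a parameter ray'') is loose, since parameter rays need not land; the clean route is that $W\cap(\C\sm\M)$ is a finite union of open sectors in the $\Phi_M$-coordinate, each of whose closures meets the connected set $\M\cap W$, and a connected set union finitely many connected open sets whose closures meet it is connected. Second, the sub-case in which $c$ lies on an open parameter ray of $\cal{O}_{\theta'}$ needs no perturbation: by \Cref{prop:bifurcation-orbit} the dynamical ray $R_c(\theta')$ would then bifurcate and could not land at $z$ at all, so that sub-case is vacuous under your standing assumption. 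Neither point is a gap, only a streamlining.
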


\subsection{Kneading data}
The final tool we will use largely in this paper is kneading data.
Let $f_c$ be a map in a hyperbolic component of period $p$ with characteristic angle $\theta$.
Note that both rays $R_c\left({\theta}/{2}\right)$ and $R_c\left(({\theta+1})/{2}\right)$ land at the same point, the root of the Fatou component containing the critical point of $f_c$.
Hence, the union of external rays
\[\Delta_c:= \overline{R_c\left( \dfrac{\theta}{2}\right)} \cup \overline{R_c\left(\dfrac{\theta+1}{2}\right)}\]
disconnects the complex plane $\C$.
If $\theta \ne 0$, we label the component of $\C \setminus \Delta_c$ which contains the critical value $c$ with a $1$, and the other component with a $0$.
If $\theta = 0$, by convention we label the upper half plane with a $0$ and the lower half plane with a $1$.

\begin{definition}
    Fix some $f_c$, and let $z$ be a point in $J_{c}$.
    The \emph{itinerary} of $z$ with respect to $c$ is the binary string
    \[\textup{itin}(f, z) := (\e_k)_{k \geq 0}\]
    where the $k$th digit records which component of $\C\setminus \Delta_c$ contains $f_c^{\circ k}(z)$.
    If $f^{\circ k}(z)$ is the landing point of either $R_c(\theta/2)$ or $R_c((\theta+1)/2)$, then we record that it is in both components with an $\ast$ instead of a 0 or 1.
\end{definition}

We can further define the kneading sequence of a hyperbolic component:

\begin{definition}
    Let $H$ be a hyperbolic component with characteristic angle $\theta$, and let $c$ be its center.
    Then, the \emph{kneading sequence} of $H$ is the itinerary of $c$ with respect to $f_c$.
    Note that this is independent of the choice of characteristic angle.
\end{definition}

Each such kneading sequence is of the form $\sigma_1\sigma_2\cdots \sigma_{p-1}\ast$.
In this way, we get a function $K: \mathcal{H}_p\to \{0,1,\ast\}$ from the set $\mathcal{H}_p$ of hyperbolic components of period $p$ to binary strings of length $p-1$ with an $\ast$ in the $p$th position, reflecting the ambiguity of which characteristic angle we chose to define the partition.
One last characterization of primitive components is that $H$ is primitive if and only if its kneading sequence produces two binary strings of exact period $p$ when interpreting $\ast$ as 0 or 1.

Much work in this paper will be devoted to interpreting (strictly) $p$ periodic binary strings as external angles, kneading sequences, or elements in $C_p$.
Although in general the correspondence between $p$-binary strings and those obtained as kneading sequences from hyperbolic components is neither surjective nor injective, when $c$ is positive and outside $\M$, we have a simple but useful equivalence between kneading sequences and external angles.

\begin{lemma}\label{KD-EA}
    Let $c\in \R$, $c>1/4$.
    Suppose the ray $R_c(\theta)$ lands at point $z$.
    Then, the itinerary of  $z$ with respect to $c$ is the
    binary expansion of the external angle.
\end{lemma}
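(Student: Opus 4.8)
The plan is to exploit the fact that for $c \in \R$ with $c > 1/4$, the filled Julia set $K_c$ is a Cantor subset of the real line, so the dynamics of $f_c$ on $K_c$ is a genuine one-dimensional picture, and the partition $\Delta_c$ defining the itinerary is governed by the (pre)image of the critical value on the real axis. First I would pin down the geometry: since $c > 1/4$, the critical point $0$ escapes, $K_c \subset \R$ is a Cantor set, and $f_c$ restricted to $K_c$ is conjugate to the one-sided shift on two symbols. The external angle $0$ is the one whose parameter ray contains all real $c > 1/4$, so by \Cref{prop:bifurcation-orbit} the only dynamical ray of $f_c$ that bifurcates is $R_c(0)$ (and its preimages), which bifurcates through the critical value $c$. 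In particular the two rays $R_c(1/2)$ and $R_c(1)=R_c(0)$ — wait, more carefully: the rays landing at the two "ends" of the gap in $K_c$ created by removing a neighborhood of $0$ are $R_c(1/4+\tfrac14\cdot 0)$-type rays; the relevant partition $\Delta_c$ is $\overline{R_c(1/2)}\cup \overline{R_c(1)}$ for the characteristic angle $\theta = 0$, whose landing points are the two preimages of the $\beta$-fixed point (the rightmost point of $K_c$). By the $\theta = 0$ convention in the definition of itinerary, the label $0$ is assigned to the upper half plane and $1$ to the lower half plane; since $K_c \subset \R$, what actually matters is which side of the real gap a point lies on, and I would identify the $0$-labelled region with $\{z : \Re z < 0\}\cap(\text{nbhd of }K_c)$ and the $1$-region with $\{\Re z > 0\}$ — equivalently the two halves $K_c^- = K_c \cap \R_{<0}$ and $K_c^+ = K_c \cap \R_{>0}$ of the Cantor set under $f_c$.

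The heart of the argument is then a symbolic bookkeeping step. On one hand, the binary expansion of an angle $\theta = .\theta_1\theta_2\cdots$ is, by definition, the itinerary of $\theta$ under the doubling map $\tau$ with respect to the partition of the circle into $[0,1/2)$ and $[1/2,1)$; since $\tau$ acts as the shift on binary sequences (as recalled in the excerpt), the $k$th digit $\theta_{k+1}$ of $\theta$ records whether $\tau^{\circ k}(\theta) \in [0,1/2)$ or $[1/2,1)$. On the other hand, $f_c$ maps $R_c(\theta')$ to $R_c(2\theta')$, so the landing point $z = \gamma_\theta(c)$ satisfies $f_c^{\circ k}(z) = \gamma_{\tau^k(\theta)}(c)$; thus the itinerary digit $\e_k$ records which side of $0$ the point $\gamma_{\tau^k\theta}(c)$ lies on. So the lemma reduces to the single assertion: for $c > 1/4$ real and any angle $\phi$, $\gamma_\phi(c)$ lies in the $0$-region (i.e. has $\Re < 0$, equivalently lies in $K_c^-$) iff $\phi \in [0,1/2)$, and in the $1$-region iff $\phi \in [1/2,1)$. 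I would prove this by noting that the circle $\{0\}\cup\{1/2\}$ separating $[0,1/2)$ from $[1/2,1)$ is exactly the pair of angles $0$ and $1/2$ whose rays together with their common landing point (the leftmost preimage configuration — actually $R_c(0)$ lands at $\beta_c$, the rightmost point of $K_c$, and $R_c(1/2)$ lands at its preimage $-\beta_c$... I will need to be careful here) bound the partition, and then invoke continuity of $c' \mapsto \gamma_\phi(c')$ together with a check at one convenient parameter — e.g. $c \to +\infty$, where $f_c$ degenerates and $K_c^\pm$ are tiny intervals near $\pm\sqrt{-c}$, making the side of each $\gamma_\phi(c)$ readable directly from whether $\phi$ is in the upper or lower semicircle — so that it holds for all $c > 1/4$ by the connectedness of the half-line $(1/4,\infty)$ in parameter space together with \Cref{lem:dynam-ray-holomorphic-landing}.

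The main obstacle I anticipate is getting the bookkeeping of the partition exactly right — in particular reconciling the "$\theta = 0$ convention" (upper/lower half plane $\leftrightarrow 0/1$) with the real-line picture where $K_c$ sits on the axis and the natural partition is left/right of the critical point. One has to check that the external ray $R_c(0)$ and $R_c(1/2)$ are precisely the two rays whose landing points are the endpoints of the "gap" in $K_c$ straddling $0$, so that the upper/lower labelling of the complementary half-planes induces on $K_c\subset\R$ exactly the left/right ($\Re<0$ vs $\Re>0$) partition, consistently with the symbolic dynamics; and one must confirm the orientation, i.e. that $[0,1/2)$ corresponds to the $0$-side and not the $1$-side. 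Once that normalization is fixed, the rest is the routine "shift-respecting" argument sketched above, made rigorous by the holomorphic motion of landing points from \Cref{lem:dynam-ray-holomorphic-landing} and evaluation at a single accessible parameter. A clean alternative to the $c\to\infty$ check would be to evaluate at $c$ slightly greater than $1/4$, or to argue directly from the fact that for $c>1/4$ the conjugacy $K_c \to \{0,1\}^{\N}$ is monotone with respect to the natural orders, so it must be the identity on symbol sequences once the two fixed/period-one points are matched up.
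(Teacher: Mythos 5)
Your overall strategy --- identify the partition $\Delta_c$ for $\theta=0$, use the shift-equivariance $f_c^{\circ k}(z)=\gamma_{\tau^k\theta}(c)$ to reduce the lemma to the single claim ``$\gamma_\phi(c)$ lies in the $0$-region iff $\phi\in(0,1/2)$'', and then verify that claim by a continuity-plus-one-parameter check --- is sound and is essentially a fleshed-out version of the paper's very short argument. But the geometric input you feed into it is wrong, and wrong in a way that would derail the proof as written. For real $c>1/4$ the filled Julia set is \emph{not} a Cantor subset of the real line; in fact $K_c\cap\R=\emptyset$. (Every real orbit satisfies $x^2+c-x=(x-\tfrac12)^2+c-\tfrac14>0$, so it increases monotonically to $+\infty$.) You are conflating this case with $c<-2$, where $J_c$ really is a real Cantor set. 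Consequently your sets $K_c^{\pm}=K_c\cap\R_{\lessgtr 0}$ are empty, the proposed ``left/right of the critical point'' partition codes nothing, and your anticipated ``main obstacle'' of reconciling the upper/lower convention with a real-line picture is based on a false premise --- there is no real-line picture to reconcile.

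The correct picture, which is what the paper uses, is the opposite one: since $c$ lies on the parameter ray of angle $0$, the partition is $\Delta_c=\overline{R_c(0)}\cup\overline{R_c(1/2)}$, which for real $c>1/4$ is contained in the real axis, so the two complementary regions are the upper and lower half planes; and it is precisely because $J_c$ misses $\R$ that every point of $J_c$ receives a well-defined symbol. The claim you need then becomes: $\gamma_\phi(c)$ lies in the upper half plane iff $\phi\in(0,1/2)$, which follows from the conjugation symmetry $\overline{R_c(\phi)}=R_c(-\phi)$ of a real polynomial (or from your limit check, correctly interpreted: $\pm\sqrt{-c}=\pm i\sqrt{c}$ are on the \emph{imaginary} axis, one in each half plane). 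With that substitution --- half planes and $\Im z\gtrless 0$ in place of $\Re z\gtrless 0$, and the convention that the upper half plane is labelled $0$ --- your bookkeeping argument goes through and matches the paper's proof.
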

\begin{proof}
    For $c\not\in \M$, we may still define its kneading sequence via the partition $\Delta_c=R(\theta/2)\cup R((\theta+1)/2)$ according to the external ray $\theta$ on which $c$ lies.
    When $c\in \R$, $c>1/4$, the partition $\Delta_c$ is the upper and lower half plane.
    Note that in this case the Julia set does not intersect the real line, hence the coding of the landing point of any ray $R_c(\theta)$
    is well-defined.
\end{proof}

\section{The cell complex structure of marked cycle curves} \label{S:cell-p1}

\subsection*{Vertices}

Vertices are defined as lifts of the parameter $0 \in \mathcal{M}$, that is as lifts of the polynomial $f_0(z) = z^2$.
We denote each vertex as $(0, \zeta)$, where $\zeta$ is a cycle of period $p$ for $f_0$.

The following proposition justifies our combinatorial parameterization of vertices.

\begin{proposition} \label{P:vertices}
    The lifts of $0 \in \mathcal{M}$ under the branched cover $\pi: \MC_p(\mathcal{F}_1) \to \widehat{\mathbb{C}}$ are in bijective
    correspondence with the set of abstract cycles $C_p$.
\end{proposition}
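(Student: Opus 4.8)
The plan is to analyze the map $f_0(z) = z^2$ directly and count its cycles of period $p$, then match this count — with the correct bijection — to the set $C_p$ of abstract cycles for the shift. First I would recall that $f_0(z) = z^2$ acts on the unit circle $\{|z|=1\}$ as the doubling map: writing $z = e^{2\pi i t}$, we have $f_0(z) = e^{2\pi i (2t)}$. Moreover, the only periodic points of $f_0$ in $\C$ lie on the unit circle, since points with $|z| < 1$ converge to $0$ and points with $|z| > 1$ escape to $\infty$. Hence the periodic cycles of period $p$ for $f_0$ are exactly the periodic cycles of period $p$ for the doubling map on $\R/\Z$, which in turn (via binary expansion, as recalled in the excerpt) are exactly the binary sequences of period $p$ up to cyclic permutation, i.e.\ the elements of $C_p$. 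This already gives a natural bijection $\MC^+_p(\mathcal{F}_1) \cap \pi^{-1}(0) \leftrightarrow C_p$ at the level of the affine curve.

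The remaining point — and the one I expect to be the main obstacle — is to check that nothing is lost or gained when passing to the normalization $\MC_p(\mathcal{F}_1)$: a priori a singular point of $\MC^+_p(\mathcal{F}_1)$ lying over $c = 0$ could have several preimages in the normalization, or the fiber over $0$ could behave badly. So the key step is to show that $c = 0$ is not a branch point of $\pi$ and that $\MC^+_p(\mathcal{F}_1)$ is smooth (or at least unibranch) over $c=0$. For this I would compute the multipliers: if $\zeta = (z_1,\dots,z_p)$ is a period-$p$ cycle of $f_0$, then $z_j$ lies on the unit circle and $\lambda(\zeta) = (f_0^{\circ p})'(z_1) = \prod_j 2 z_j = 2^p \prod_j z_j$, which has modulus $2^p \ne 1$; in particular distinct cycles can be separated and, since $f_0$ is a generic enough member, the implicit function theorem applied to the dynatomic equation $\mathrm{Dyn}_p(c,z) = 0$ (whose partial in $z$ is nonzero at each such $z_j$ because the period-$p$ points of $f_0$ are simple roots) shows $\MC^+_p(\mathcal{F}_1)$ is smooth at each point over $c = 0$ and $\pi$ is a local isomorphism there. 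Hence normalization changes nothing over $c = 0$, and $\pi^{-1}(0)$ in $\MC_p(\mathcal{F}_1)$ is in bijection with the cycles counted above.

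Finally I would assemble these pieces: the fiber $\pi^{-1}(0)$ consists of pairs $(0,\zeta)$ with $\zeta$ ranging over period-$p$ cycles of $f_0$; these cycles are identified with period-$p$ cycles of the doubling map on $\R/\Z$; and those are identified with $C_p$ via binary expansions (cyclic-permutation classes of period-$p$ binary words), exactly as in the abstract-cycle formalism of \Cref{S:setup}. This produces the asserted bijection, and it is canonical in that the $\ast$-free binary word attached to $\zeta$ is the itinerary/rotation data of the corresponding circle cycle. The one subtlety to handle carefully in writing this up is the degenerate case behavior at the fixed point / low period and making sure the simplicity-of-roots claim for $\mathrm{Dyn}_p(0,z)$ is justified (it follows because the period-$p$ points of $z \mapsto z^2$ are the primitive $(2^p-1)$-th roots of unity after the conjugacy, which are distinct), but this is routine.
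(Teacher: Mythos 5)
Your proposal is correct and follows essentially the same route as the paper: identify the period-$p$ cycles of $f_0(z)=z^2$ with cycles of the doubling map on the circle, and then with $C_p$ via binary expansions (the paper phrases this through the landing of external rays, which for $f_0$ is the same identification). Your extra verification that normalization does not alter the fiber over $c=0$ --- via the multiplier $\lambda=2^p\prod_j z_j$ having modulus $2^p\ne 1$, hence simple roots of the dynatomic polynomial and smoothness of $\MC^+_p(\mathcal{F}_1)$ there --- is a point the paper leaves implicit, and is a welcome addition; just note that the correct reason for simplicity is $\lambda\neq 1$, not merely distinctness of the periodic points.
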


\begin{proof}
    By definition, lifts of $0$ correspond to distinct pairs $(0,\zeta)$ where $\zeta$ is a $p$-cycle for $f_0(z) = z^2$.
    Every non-fixed periodic point of $f_0$ is repelling hence in the Julia set, which is the unit circle. Moreover, on each point of the circle lands exactly one external ray.

    Each $p$-periodic point $x \in J_0$ is the landing point of a unique dynamic ray $R_0(\theta)$, where $\theta$ is of least period $p$ under doubling: let $\theta = .\overline{\theta_0 \theta_1\cdots \theta_{p-1}}$ be its binary expansion.
    Since for $x', x\in \zeta$, the sequence $\varphi(x)$ is a rotation of $\varphi(x')$, the map $\varphi$ descends to a map $\Phi$ from
    $p$-cycles for $f_0$ to abstract cycles $C_p$.
    The map $\Phi$ is surjective since every $p$-periodic binary sequence determines an external ray, and that ray lands at a $p$-periodic point.
    Moreover, $\Phi$ is injective since $\varphi(x)$ is a cyclic permutation of $\varphi(x')$ if and only if $x$ and $x'$ belong to the same orbit
    for $f_0$.
 \end{proof}

E.g.: for $p = 5$, there are $6$ vertices:
\[
    \begin{array}{ll}
        A:            & 00001 \\
        \overline{A}: & 01111 \\
        B:            & 00011 \\
        \overline{B}: & 00111 \\
        C:            & 00101 \\
        \overline{C}: & 01101
    \end{array}
\]

\subsection*{Faces}
Faces are defined as closures of connected components of the lifts of $\widehat{\C} \setminus V_p$.
\begin{remark}
    Recall that $V_p$ denotes the union of all veins in $\M$ connecting $0$ to the roots of primitive hyperbolic components of period $p$.
    Since $V_p$ is a topological tree, $\widehat{\C}\sm V_p$ is simply connected and it contains $\widehat{\C}\sm \M$, which is also simply connected.
    Thus, by homotopy extension, we can replace lifts of $\widehat{\C}\sm V_p$ with lifts of $\widehat{\C}\sm \M$ in our definition of faces.
\end{remark}
We show in the following lemma that faces are labelled by pairs of conjugate period $p$ cycles.

\begin{lemma}\label{lem:faces_are_cyclepairs}
    The lifts of $\widehat{\mathbb{C}} \setminus \mathcal{M}$ under the branched cover $\pi: \MC_p(\mathcal{F}_1) \to \widehat{\mathbb{C}}$ are in bijective
    correspondence with the set of cycle duos $[C_p]$.
    Moreover:
    \begin{itemize}
        \item if the cycle duo is reflexive, then the restriction of $\pi$ to the associated disc is a homeomorphism;
        \item
              if the cycle duo is not reflexive, the restriction of $\pi$ on the associated disc has local degree $2$, branched at the unique lift of $\infty$.
    \end{itemize}
\end{lemma}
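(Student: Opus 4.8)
The plan is to understand the restriction of $\pi$ over $\widehat{\C} \setminus \M$ by following $p$-cycles along the complement of the Mandelbrot set, using the holomorphic motion of external rays developed in \Cref{lem:dynam-ray-holomorphic-landing} and its corollaries. First I would note that $\widehat{\C} \setminus \M$ is connected and simply connected, and that for each $c$ in this region the filled Julia set $K_c$ is a Cantor set; every $p$-cycle of $f_c$ consists of repelling periodic points, each of which is the landing point of at least one dynamical ray of $p$-periodic angle. So a point of $\pi^{-1}(\widehat{\C}\setminus\M)$ lying over $c$ is a $p$-cycle $\xi$ of $f_c$, and I want to attach to it a combinatorial label in $[C_p]$. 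The natural candidate is the set of external angles of the points of $\xi$: if $z\in\xi$ is the landing point of $R_c(\theta)$ with $\theta$ of period $p$, then $f_c(z)$ is the landing point of $R_c(2\theta)$, so the angles of the points of $\xi$ form a cycle $\alpha \in C_p$ — \emph{provided} each point of $\xi$ has a unique external angle. This is exactly the content of \Cref{cor:unique-ext-arg} when $c$ lies outside the closures of all wakes active at the relevant angles; in general a point of $K_c$ a Cantor set can be the landing point of several rays, but for $c\notin\M$ the rays landing at a given periodic point are permuted by $f_c$ and, by \Cref{cor:co-landing-implies-primitive}, two $p$-periodic angles land together at a point sharing an orbit only inside a primitive wake. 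The key structural observation is that a point of $\xi$ can carry at most two external angles of period $p$, and when it carries two they are conjugate (swapping $0\leftrightarrow 1$), because the partition $\Delta_c$ separating the two rays at the root Fatou component corresponds dynamically to the two halves of the Julia set. This gives a well-defined map from $\pi^{-1}(\widehat\C\setminus\M)$ to $[C_p]$: send $\xi$ to the class $[\alpha]$ where $\alpha$ is the angle-cycle of $\xi$ (well-defined even in the ambiguous case, since the two possible angle-cycles are conjugate).

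Next I would prove this map is a bijection. Surjectivity: given $\alpha=.\overline{\theta_0\cdots\theta_{p-1}}\in C_p$, pick a base parameter $c_0\in\R$ with $c_0>1/4$, which lies outside $\M$ and outside every wake; by \Cref{KD-EA} and \Cref{thm:rat-ray-landing-dynam}(a), the rays $R_{c_0}(2^k\theta)$ land and their landing points form a genuine $p$-cycle $\xi_0$ of $f_{c_0}$, giving a point of $\pi^{-1}(c_0)$ over $\alpha$; since $\widehat\C\setminus\M$ is connected, its lift containing $(c_0,\xi_0)$ surjects onto $\widehat\C\setminus\M$, so every fiber is hit. Injectivity: if $(c,\xi)$ and $(c,\xi')$ lie over $c$ with the same label $[\alpha]$, I connect $c$ to the base point $c_0$ by a path in $\widehat\C\setminus\M$ and use that $\gamma_\theta$ is holomorphic (hence continuous) in $c$ along the path to conclude $\xi$ and $\xi'$ both continue to the same cycle $\xi_0$ at $c_0$ whose angle-cycle is $\alpha$ or $\bar\alpha$ — but the landing points of $R_{c_0}(2^k\theta)$ for $c_0$ real $>1/4$ determine $\xi_0$ uniquely, and the conjugate cycle $\bar\alpha$ yields the \emph{conjugate} points, which in the real Cantor set case are genuinely distinct unless $\alpha$ is reflexive. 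Here I would be careful: I should show that the monodromy of $\pi^{-1}(\widehat\C\setminus\M)\to\widehat\C\setminus\M$ over the (simply connected!) base is trivial on each lift, so that each lift is a single-valued branch; simple connectivity of $\widehat\C\setminus\M$ is what makes this work, and the remark preceding the lemma lets me replace $V_p$ by $\M$.

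For the degree statement I would analyze the behavior at $\infty\in\widehat\C$. At $c=\infty$ the map $f_c$ degenerates; the point $\infty$ of parameter space is the landing point of the parameter ray at angle $0$, and all parameter rays of period $p$ "converge" there. Concretely: a lift component $D_{[\alpha]}$ is a disc (it is a connected open subset of the smooth curve $\MC_p(\mathcal F_1)$ mapping properly to the disc $\widehat\C\setminus\M$), and $\pi|_{D_{[\alpha]}}$ is a proper holomorphic map of discs, hence a branched cover of some degree $d$ branched only over $\infty$ (since over $\C\setminus\M$ the map is a covering by the bijection already established — wait, I must instead argue: over $\C\setminus\M$ the local count of cycles with angle-cycle in $\{\alpha,\bar\alpha\}$ is constant). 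The degree $d$ equals the number of $p$-cycles of $f_{c_0}$, for $c_0$ near $\infty$, whose angle-cycle lies in $[\alpha]$: if $\alpha$ is reflexive ($\alpha=\bar\alpha$) there is exactly one such cycle for real $c_0>1/4$ and, by the monodromy argument, globally, so $d=1$ and $\pi$ restricts to a homeomorphism; if $\alpha\neq\bar\alpha$, then $\alpha$ and $\bar\alpha$ give two distinct cycles for $c_0$ real large but these must be exchanged by the monodromy around $\infty$ inside the \emph{single} lift $D_{[\alpha]}$ — equivalently, $\widehat\C\setminus\M$ is simply connected but $\widehat\C\setminus(\M\cup\{\infty\})$ is an annulus (once-punctured disc), and the two branches $\gamma_\alpha,\gamma_{\bar\alpha}$ get swapped going around the puncture $\infty$, because conjugation of angles corresponds to the deck transformation over that puncture. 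Hence $d=2$ with the single branch point the unique lift of $\infty$. The main obstacle I anticipate is precisely this last point: rigorously identifying the local monodromy at $\infty$ and showing that it swaps a cycle with its conjugate exactly when the duo is ramified; the honest way is to examine $f_c$ for $|c|$ large, where after rescaling $z=\sqrt{c}\,w$ one gets $w\mapsto w^2 + 1/\sqrt c\to w^2+0$-type behavior but with a square root in $c$, so a loop around $c=\infty$ induces $\sqrt c\mapsto -\sqrt c$, i.e. $w\mapsto -w$, which on the circle $|w|=1$ is the rotation by $1/2$ of angles — precisely the map $\theta\mapsto\theta+1/2$, which sends $\theta_0\theta_1\cdots$ to $\bar\theta_0\theta_1\cdots$, i.e. conjugates the leading symbol; iterating shows this realizes the conjugation $\alpha\mapsto\bar\alpha$ on angle-cycles. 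I would make this precise and conclude.
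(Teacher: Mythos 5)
Your overall architecture matches the paper's: anchor everything at a real basepoint $c_0>1/4$ where \Cref{KD-EA} identifies symbolic data with external angles, and identify the local degree over $\infty$ by showing the monodromy of the loop around $\M$ realizes the conjugation $\alpha\mapsto\overline{\alpha}$ on cycles. However, there is a genuine gap in the labelling itself. You label a cycle $\xi$ over $c\in\C\setminus\M$ by the external angles of its points, but this assignment is not locally constant on $\C\setminus\M$: the map $c\mapsto\gamma_\theta(c)$ is only defined and holomorphic off $\mathcal{O}_\theta$ (\Cref{lem:dynam-ray-holomorphic-landing}), and the closed rays in $\mathcal{O}_\theta$ cut $\C\setminus\M$ into sectors, so any path from $c$ to $c_0$ whose endpoints have different external arguments must cross rays on which $R_c(\theta)$ bifurcates; after such a crossing the ray of angle $\theta$ generally lands at a \emph{different} periodic point than the analytic continuation of the one you were tracking. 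Your injectivity argument (``follow $\gamma_\theta$ continuously along the path'') therefore does not go through as stated. The patch you offer --- that a periodic point carries at most two period-$p$ angles and that these are conjugate under the bit-flip --- is false: for $c\notin\M$ inside the primitive wake with characteristic angles $3/31$ and $4/31$, the co-landing angles have expansions $00011$ and $00100$, whose rotation classes are $B$ and $A$ respectively, and $[A]\ne[B]$ as cycle duos; so your label in $[C_p]$ is genuinely ambiguous exactly where it matters. The paper's proof avoids all of this by labelling with the \emph{itinerary} relative to the moving partition $\Delta_{\arg c}$, which is locally constant on all of $\C\setminus\M$ simply because the Cantor Julia set never meets the partition curve; the angle and the itinerary agree only at the real basepoint, which is precisely the content of \Cref{KD-EA}.

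A secondary issue is the monodromy computation at $\infty$. The rescaling $z=\sqrt{c}\,w$ gives $w\mapsto\sqrt{c}\,(w^2+1)$, not a perturbation of $w^2$, and ``add $1/2$ to each angle of the cycle'' does not produce the conjugate cycle (for $p=3$ it does not even produce a set of periodic angles). The correct mechanism --- which is the paper's --- is that the loop around $\M$ swaps the two inverse branches $\pm\sqrt{w-c}$, equivalently the two components of the complement of $\Delta_{\arg c}$ (the ``half turn of the diagonal''), and since the monodromy commutes with the dynamics this flips \emph{every} symbol of the itinerary simultaneously, giving $\alpha\mapsto\overline{\alpha}$ on cycles. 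Your conclusion (reflexive duo $\Rightarrow$ degree $1$; ramified duo $\Rightarrow$ degree $2$ branched at the lift of $\infty$) is correct, but the justification needs to be rebuilt on the itinerary labelling for the argument to close.
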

\begin{proof}
    Fix an equipotential line $\gamma \subseteq \mathbb{C} \setminus \M$, and let $c_+$ denote the unique point in $\R^{+} \cap \gamma$.
    For every $c \in \gamma$,
    the Julia set $J(f_c)$ is a Cantor set.
    Consider the bundle
    \[\mathcal{J} := \set{ (c, z) \in (\mathbb{C} \setminus \M) \times \C\ : \ z \in J(f_c) }\]
    which is a fiber bundle over $\mathbb{C} \setminus \M$ with Cantor sets as fibers. Let $z$ be a periodic point for $f_{c_+}$.
    For each $\theta \in \intco{0, 2 \pi}$, we can follow the periodic point $z$ to a point $z(\theta)$ in the Julia set for $f_c$ with $c = \Phi_M(r e^{2 \pi i \theta})$, along $\gamma$.
    Let $R_c(\theta)$ denote the external ray at angle $\theta$ in $J_c$.
    Since each Julia set is a Cantor set which does not intersect the diagonal $\Delta_\theta := R_c(\frac{\theta}{2}) \cup R_c(\frac{\theta+1}{2})$, the itinerary  $\itin(f_c, z(\theta))$ is constant for $\theta \in [0, 2 \pi)$.

    However, the diagonal $\Delta_\theta $ completes only a half turn as $\theta$ makes one full turn in $\R/{2\pi \Z}$.
    Hence, the monodromy induced by $\gamma$ interchanges the two
    complementary regions of $\Delta_\theta$.
    So,
    \[\lim_{\theta \to 2\pi^-} z(\theta) = -z,\]
    and the itinerary of $-z$ with respect to $c_+$ is obtained from  $\itin(f_{c_+},z)$ by flipping the bits. By varying $r$, we also see that $\itin(f_c,z)$ is independent of $\gamma$.

    We have a natural map $\varphi: \MC_p(\mathcal{F}_1) \setminus \pi^{-1}(\mathcal{M}) \ra [C_p]$ which sends a point $(c,\zeta)\in \MC_p(\mathcal{F}_1)$ to $[\nu] \in [C_p]$ where $\nu = \itin(f_c, z)$ for some point $z \in \zeta$.
    By the above discussion, $\varphi$ is constant on lifts of $\widehat{\C} \setminus \mathcal{M}$.
    We thus obtain a well-defined map $\vp$ from the lifts of  $\widehat{\C} \setminus \mathcal{M}$ to the set $[C_p]$.

    Recall $c_+$ is the (unique) point in $\R^+\cap \gamma$.

    Let $[\nu]\in [C_p]$.
    Then, interpret $\nu$ as the binary expansion of some $p$ periodic angle $\theta$, and let $\zeta$ be the periodic cycle in $J_{c_+}$ formed by the orbit of $\theta$.
    Using \Cref{KD-EA}, the face of $\Sigma_{p, 1}$ containing $(c_+, \zeta)$ is sent to $[\nu]$ under the map $\varphi$. It follows that $\varphi$ is surjective.

    Now we show that $\varphi$ is injective. Suppose that $\vp(c_+, \zeta_0)=\vp(c_+, \zeta_1)$ for two lifts $(c_+, \zeta_0), (c_+, \zeta_1) \in$  $\MC_p(\mathcal{F}_1)$.
    We show both lifts are in the same face to conclude $\vp$ is injective.
    From our hypothesis, there exist $z_0 \in \zeta_0$ and $z_1 \in \zeta_1$ such that one of the following conditions are satisfied:
    \begin{align*}
        \itin(f_{c_+}, z_0)=\itin(f_{c_+}, z_1) \text{ or } \itin(f_{c_+}, z_0)=\overline{\itin(f_{c_+}, z_1)}.
    \end{align*}
    In the former case, \Cref{KD-EA} shows both $\zeta_0$ and $\zeta_1$ corresponding to the same external angle and hence, $\zeta_0 = \zeta_1$.
    In the latter case, following the monodromy of $z_0$ once around $\gamma$, we obtain a new $p$-periodic point $\widehat{z}_0$ of $f_{c_+}$ such that $\itin(f_{c^+}, \widehat{z}_0) = \overline{\itin(f_{c^+}, z_0)}$.  Thus,  the cycle $\widehat{\zeta}_0$ containing $\widehat{z}_0$ coincides with $\zeta_1$.
    We conclude that $(c_+,\zeta_1)$ and $(c_+,\zeta_0)$ are in the same lift of $\widehat{\C} \setminus \mathcal{M}$.

    For the final dichotomy of the lemma, we note that, for $\nu\in C_p$, with $\zeta$ the corresponding $p$-cycle in the dynamical plane of $f_{c^+}$,  the pre-images of $c^+$ under $\pi$ are  $(c^+,\zeta)$ and $(c^+,\ol{\zeta})$ . Thus, the degree of $\pi $ is the cardinality of the set $\set{\nu, \overline{\nu}}$.
\end{proof}
\noindent
In our example for period $5$, we have $3$ faces:
\[
    [A] = \{ 00001, 01111 \}, \qquad [B] =  \{ 00011, 00111 \}, \qquad [C] =  \{ 00101, 01101 \}.
\]

\subsection*{Edges}

Let $P_p$ be the set of primitive hyperbolic components of period $p$.
Both external angles landing at a primitive component of period $p$ are of the form $\theta = \frac{k}{2^p-1}$.
Hence, let us label the primitive components by choosing one of the two angles landing at the
root of the component, and writing down the value of $k$.

For instance, we can write
\[P_5 = \left\{ 3, 5, 7, 11, 13, 14, 15, 20, 24, 26, 28 \right \}\]
where $3$ represents the component of external ray $\frac{3}{31}$ (in this case, the other external ray landing on it is $\frac{4}{31}$).

\begin{proposition} \label{per1-edge-branching}
    The map $\pi : \MC_p(\mathcal{F}_1) \to \mathcal{F}_1$ is branched precisely over the roots of the primitive components of period $p$,
    as well as over $\infty$.
    At each root of a primitive component, the local degree equals $2$.
\end{proposition}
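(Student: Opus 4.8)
The plan is to analyze the branching of $\pi : \MC_p(\mathcal{F}_1) \to \widehat{\mathbb{C}}$ at each parameter $c$ by counting the fibers $\pi^{-1}(c)$ and comparing with the generic fiber size. Since $\MC_p(\mathcal{F}_1)$ is the normalization of $\MC^+_p(\mathcal{F}_1)$, the fiber over $c$ consists of the period-$p$ cycles of $f_c$ (with multiplicity accounting for singularities of the affine model); the total count, for a generic $c$, is $\#C_p$ by \Cref{P:vertices} (the fiber over $c=0$ has exactly $\#C_p$ points, and by properness of $\pi$ and upper semicontinuity the generic fiber also has this cardinality, with $c=0$ itself unbranched). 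Branching over $\infty$ was already established in \Cref{lem:faces_are_cyclepairs}: the monodromy around an equipotential sends a cycle $\zeta$ to $\overline{\zeta}$, so $\infty$ has a unique preimage in each face corresponding to a ramified cycle duo, hence $\pi$ is branched there.

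The heart of the matter is the behavior over parameters $c \in \C$. I would argue as follows. Fix a $p$-periodic angle $\theta$ with binary expansion a strictly $p$-periodic string. By \Cref{lem:dynam-ray-holomorphic-landing}, the landing point $\gamma_\theta(c)$ varies holomorphically on $\C \setminus \mathcal{O}_\theta$, and by \Cref{cor:unique-ext-arg}, as long as $c$ avoids the closures of all wakes active at $\theta$, the point $\gamma_\theta(c)$ has exact period $p$ and $\theta$ is its unique external argument. These landing points assemble, as $\theta$ ranges over strictly $p$-periodic angles, into the period-$p$ cycles of $f_c$; distinctness of the cycles for such $c$ follows because distinct angles give distinct landing points (unique external argument) and the cycles are the orbits of these landing points. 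Consequently, over any $c$ not lying in the closure of an active wake — in particular over a neighborhood of any point of $(0,\infty) \cap \C$ far from $\mathcal{M}$, and indeed over all of $\widehat{\mathbb{C}}$ minus the union of such closed wakes and $\{\infty\}$ — the fiber $\pi^{-1}(c)$ has the full cardinality $\#C_p$, so $\pi$ is unbranched there. Thus branching in $\C$ can only occur over the (finite union of) roots of wakes active at some strictly $p$-periodic angle, and I would then use \Cref{cor:co-landing-implies-primitive}: two distinct $p$-periodic angles $\theta, \theta'$ in distinct orbits have $\gamma_\theta(c)$ and $\gamma_{\theta'}(c)$ sharing an orbit precisely when $c$ lies in a primitive wake active at both. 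At the root $c$ of such a primitive component, exactly two $p$-cycles (the ones carrying $\theta$ and $\theta'$) collide into one, and no other collision occurs, so the local degree is exactly $2$; at roots of non-primitive active wakes the two characteristic angles share an orbit, so no two distinct $p$-cycles are involved and $\pi$ stays unbranched.

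The main obstacle I anticipate is making the collision count at a primitive root precise: one must show that at the root $c_0$ of a primitive component $H \in P_p$, *exactly one* pair of period-$p$ cycles merges (giving local degree exactly $2$, not higher), and that *every* branch point in $\C$ arises this way. For the first part I would use that a primitive component has exactly two characteristic rays, both of period $p$ and in distinct orbits (the primitivity criterion recalled before \Cref{per1-edge-branching}), so by \Cref{cor:co-landing-implies-primitive} precisely the two cycles through those two orbits of angles coincide at $c_0$ while all other $p$-periodic landing points remain distinct and of exact period $p$ there — the latter because $c_0$ lies on the boundary of only one primitive $p$-wake and the analysis of \Cref{cor:unique-ext-arg, cor:co-landing-implies-primitive} localizes the degeneracy. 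For the second part, I would observe that any $c \in \C$ over which $\pi$ branches must exhibit either a coincidence of two period-$p$ landing points — forcing $c$ into a primitive active wake's root by \Cref{cor:co-landing-implies-primitive} — or a drop in the period of some landing point, which by \Cref{prop:bifurcation-orbit} and \Cref{cor:unique-ext-arg} again places $c$ on the boundary of an active wake, and a short orbit-counting argument rules out non-primitive wakes contributing genuine ramification. Finally I would note the degree-$2$ local model is automatic once exactly two sheets meet transversally at a normal point of the smooth curve $\MC_p(\mathcal{F}_1)$, completing the proof.
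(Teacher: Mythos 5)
Your strategy---tracking the period-$p$ cycles as orbits of the landing points $\gamma_\theta(c)$ and using \Cref{cor:co-landing-implies-primitive} and \Cref{cor:unique-ext-arg} to locate collisions---is more ambitious than the paper's own proof, which over $\C$ simply invokes Milnor's orbit-portrait analysis and Buff--Tan Lei. Unfortunately there is a genuine gap at the pivotal step. You establish that the fiber has full cardinality $\#C_p$ for every $c$ outside the union of the \emph{closures} of the active wakes, and then conclude that branching in $\C$ ``can only occur over the roots of wakes active at some strictly $p$-periodic angle.'' This does not follow: the active wakes are open two-dimensional regions, so your argument leaves the entire interior of every period-$p$ wake unexamined, not just the roots. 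Worse, the angle bookkeeping genuinely breaks down there: for $c$ in the interior of a period-$p$ hyperbolic component the attracting $p$-cycle lies in the Fatou set and is not the orbit of any $\gamma_\theta(c)$, while the two characteristic angle-orbits land on a single repelling cycle, so the assignment from $C_p$ to cycles of $f_c$ is neither injective nor surjective inside wakes and counting landing-point orbits undercounts the fiber (it would wrongly suggest branching at, say, the center of $H$). Some argument that sees cycles inside $\mathcal{M}$---counting roots of the dynatomic polynomial, or the citation the paper uses---is unavoidable here.

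Two further steps need real arguments rather than the gestures you give. First, the satellite case: at the root of a satellite component of period $p$ the marked $p$-cycle degenerates onto a cycle of strictly lower period, and showing that the local monodromy on \emph{cycles} is nevertheless trivial (so the normalization is unbranched there) is precisely the nontrivial content of the Milnor/Buff--Tan Lei analysis; ``a short orbit-counting argument'' does not discharge it. Second, at a primitive root, knowing that exactly two cycles of the fiber come together does not by itself give local degree $2$ on the normalization: two sheets could also meet in a node, which the normalization separates into two unramified points. One must in addition show that the monodromy around the root actually exchanges the two merging cycles, which is again part of what the cited references (and, in the paper, the vein-lifting argument of \Cref{P:vertex-relation}, which however relies on this proposition) provide.
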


\begin{proof}
    The map $\pi$ is branched over $\infty$ as all cycles collide.
    In $\C$, this follows from Milnor's work on external rays and orbit portraits, e.g.\ \cite[Lemma 2.7]{Milnor-External}, Buff-Lei \cite[Proposition~4.4]{BuffTanLei}.
\end{proof}

\subsection*{Vertex relations: ray connections}

We now describe the vertex relations in the cell complex $\Sigma_{p, 1}$. To ease notation, we will sometimes denote $\Sigma_{p, 1}$ 
simply as $\Sigma_p$. 
Recall that vertices correspond to abstract cycles of period $p$, while edges correspond to primitive hyperbolic
components, also of period $p$. Given an angle $\theta$ of period $p$ under angle doubling, denote as $b(\theta)$
the abstract cycle associated to the binary expansion of $\theta$, modulo cyclic permutation.

\begin{proposition} \label{P:vertex-relation}
    Two vertices in $\Sigma_p$ are connected by an edge if and only if there exists a primitive hyperbolic component whose pair of characteristic angles' binary expansions agree with the abstract cycles  corresponding to the vertices.
\end{proposition}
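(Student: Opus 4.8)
The plan is to put the edges of $\Sigma_p$ in bijection with primitive components of period $p$, and then pin down the two vertices at the ends of each edge by following dynamical rays along the associated vein. First I would record the reduction. By \Cref{per1-edge-branching} the projection $\pi$ is branched over $\C$ exactly at the roots of primitive period-$p$ components, with local degree $2$ there; fix $H\in P_p$, let $c_H$ be its root, $\theta_0<\theta_1$ its characteristic angles, and $\widetilde c_H$ the ramification point of $\pi$ over $c_H$. The vein $V(c_H)$ contains no primitive period-$p$ root besides $c_H$: such a root $r$ would separate $0$ from $c_H$ in $\M$, so $c_H$ would lie in the wake of $r$'s component — impossible, since the wake of a primitive period-$p$ component contains no other period-$p$ component. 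Hence $\pi^{-1}(V(c_H))$ has a unique branched lift, the edge $e_H$ attached to $H$, and as every edge of $\Sigma_p$ is some $e_H$ it suffices to show $e_H$ joins $(0,b(\theta_0))$ and $(0,b(\theta_1))$; both directions of the proposition follow from this, the ``only if'' because an edge between two prescribed vertices is of this form.

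Next I would describe $e_H$ concretely. Since $\pi$ is unbranched over $V(c_H)\setminus\{c_H\}$ and has local degree $2$ over $c_H$, exactly two sheets of $\pi^{-1}(V(c_H)\setminus\{c_H\})$ accumulate on $\widetilde c_H$; thus $e_H$ is an arc through $\widetilde c_H$, its other two ends lie over $0$, and along its two sides one continues the two period-$p$ cycles of $f_c$ that collapse to the parabolic $p$-cycle of $f_{c_H}$ as $c\to c_H$. To name the two ends I would first verify that $V(c_H)\setminus\{c_H\}$ is disjoint from $\mathcal{O}_{\theta_0}$, from $\mathcal{O}_{\theta_1}$, and from the closure of every wake active at $\theta_0$ or at $\theta_1$: inside $\M$ these sets lie in the limbs based at roots of period-$p$ components active at $\theta_0$ or $\theta_1$, the only such root on $V(c_H)$ is $c_H$ (any other would put $c_H$ in the corresponding wake, where — the component being either primitive hence equal to $H$, or a satellite in whose wake $R_c(\theta_0)$ or $R_c(\theta_1)$ lands at a periodic point of period $<p$ — the ray $R_{c_H}(\theta_0)$ or $R_{c_H}(\theta_1)$ could not land at the characteristic parabolic point of $H$, which has exact period $p$), and a path in $\M$ enters a limb only through its base root while $V(c_H)$ reaches $c_H$ from outside $\mathcal{W}(\theta_0,\theta_1)$. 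Granting this, \Cref{lem:dynam-ray-holomorphic-landing} and \Cref{cor:unique-ext-arg} show that for each $c\in V(c_H)\setminus\{c_H\}$ the point $\gamma_{\theta_0}(c)$ is a repelling point of exact period $p$ with unique external argument $\theta_0$, depending holomorphically on $c$; I let $\xi_0(c)$ be its cycle, define $\xi_1(c)$ from $\theta_1$ in the same way, note that both vary continuously on $V(c_H)\setminus\{c_H\}$ (no branch point of $\pi$ lies over it), and apply \Cref{cor:co-landing-implies-primitive} — no primitive wake active at both $\theta_0$ and $\theta_1$ meets $V(c_H)\setminus\{c_H\}$ — to conclude $\xi_0\ne\xi_1$ there.

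It remains to evaluate these cycles at the two ends of the vein. At $c=0$ the Julia set of $f_0(z)=z^2$ is the unit circle and $\gamma_{\theta_0}(0)=e^{2\pi i\theta_0}$, so $\xi_0(0)$ is the orbit of $e^{2\pi i\theta_0}$ under doubling, which by \Cref{P:vertices} is exactly the vertex labelled $b(\theta_0)$; likewise $\xi_1(0)$ is $b(\theta_1)$. As $c\to c_H$ along $V(c_H)$ the landing point $\gamma_{\theta_0}(c)$ tends to the landing point $z^{\ast}$ of $R_{c_H}(\theta_0)$, which is the characteristic parabolic point of $H$; hence $\xi_0(c)$ converges to the parabolic $p$-cycle, so the closure of the arc $\xi_0$ contains the point of $\MC_p(\mathcal{F}_1)$ over $c_H$ carrying that cycle, namely $\widetilde c_H$, and the same holds for $\xi_1$. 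Therefore $\xi_0$ and $\xi_1$ are the two sheets accumulating on $\widetilde c_H$, so $e_H=\xi_0\cup\xi_1\cup\{\widetilde c_H\}$ is an arc from $(0,b(\theta_0))$ through $\widetilde c_H$ to $(0,b(\theta_1))$, as required.

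The step I expect to be the real obstacle — everything else being combinatorics of wakes and formal properties of branched covers — is the limit $\gamma_{\theta_0}(c)\to z^{\ast}$ as $c\to c_H$ along $V(c_H)$, i.e.\ continuity of the landing point of the periodic dynamical ray $R_c(\theta_0)$ as $c$ approaches the parabolic parameter $c_H$. Since $\gamma_{\theta_0}$ is holomorphic and bounded on $V(c_H)\setminus\{c_H\}$, which near $c_H$ lies in a slit neighbourhood of $c_H$ in $\C\setminus\mathcal{O}_{\theta_0}$, Lindelöf's theorem reduces this to checking the limit along a single path into $c_H$ lying in the wake $\mathcal{W}(\theta_0,\theta_1)$, where $\gamma_{\theta_0}=\gamma_{\theta_1}$ is the characteristic repelling periodic point and its convergence to the characteristic parabolic point as $c\to c_H$ is standard (cf.\ \cite{Milnor-External}, \cite{danny}). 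The other ingredient, invoked several times above, is the combinatorial fact that the wake of a primitive period-$p$ hyperbolic component contains no other period-$p$ component.
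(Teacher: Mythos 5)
Your strategy is the same as the paper's: lift the vein, follow the landing points of the two characteristic rays, check the two cycles stay distinct off the root and collide at the root, and conclude that the lift through $\widetilde c_H$ is an arc joining $(0,b(\theta_0))$ and $(0,b(\theta_1))$. Most of your extra rigor (the active-wake disjointness, \Cref{cor:unique-ext-arg}, the Lindel\"of step at the parabolic parameter) is welcome and goes beyond what the paper writes down. However, the ``combinatorial fact'' you single out as a key ingredient --- that the wake of a primitive period-$p$ hyperbolic component contains no other period-$p$ component --- is false. Already for $p=5$: the primitive component with characteristic rays $13/31$ and $18/31$ contains in its wake the primitive components with rays $(14/31,17/31)$ and $(15/31,16/31)$ (these are the three real primitive period-$5$ components, with nested wakes accumulating toward $c=-2$). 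Consequently your claims that ``$V(c_H)$ contains no primitive period-$p$ root besides $c_H$'' and that ``$\pi^{-1}(V(c_H))$ has a unique branched lift'' both fail: the vein from $0$ to the root of the $(15/31,16/31)$ component passes through the roots of the other two, so $\pi$ has branch points over interior points of that vein and $\pi^{-1}$ of it has several branched components, only one of which is branched over the endpoint.

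The damage is real but localized. Your construction of the arc through $\widetilde c_H$ does not actually need the false fact: the disjointness of $V(c_H)\setminus\{c_H\}$ from the wakes \emph{active at $\theta_0$ or $\theta_1$} is a statement about the orbits of $\theta_0,\theta_1$ (in the example above, the wake of $(13/31,18/31)$ is not active at $15/31$ or $16/31$, even though its root lies on the vein), you argue it by a separate orbit-portrait consideration, and together with \Cref{cor:unique-ext-arg} it shows the sections $c\mapsto(c,\xi_i(c))$ avoid every branch point of $\pi$ over the open vein; hence they really are connected components of $\pi^{-1}(V(c_H)\setminus\{c_H\})$ and $e_H=\xi_0\cup\{\widetilde c_H\}\cup\xi_1$ is the arc you claim. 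What collapses is your reduction for the ``only if'' direction, which rests on the false uniqueness of the branched lift. The repair is to read the edge attached to $H$ as the component of $\pi^{-1}(V(c_H))$ through the branch point over the \emph{endpoint} $c_H$ (this is how the paper uses \Cref{D:Sigma_p} in the proof of \Cref{P:algo1works}), and to get the ``only if'' direction directly from \Cref{per1-edge-branching}: every branch point of $\pi$ over $\C$ lies over the root of some primitive component $H$, and the branched-at-that-point lift is the $e_H$ you have already identified. You should delete the false lemma entirely rather than cite it as an ingredient.
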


\begin{proof}
    More precisely, given two abstract $p$-cycles $\nu_1$, $\nu_2$, we need to show that there exists an edge in $\Sigma_p$ connecting
    $\nu_1$ and $\nu_2$ if and only if there exists a primitive component $H$, with characteristic angles $(\theta_1, \theta_2)$,
    whose binary expansion satisfies $b(\theta_1) = \nu_1$ and $b(\theta_2) = \nu_2$.

    First, let us show that the two characteristic angles of a primitive hyperbolic component $H$ are joined by an edge in $\Sigma_p$.
    Consider two external angles $\theta_1, \theta_2$ of rays landing at the root $c_1$ of a primitive component of period $p$ in the Mandelbrot set.

    For each parameter $c$ on the vein joining $0$ and $c_1$ and $i = 1, 2$, let $z_i(c)$ denote the landing point of $R_c(\theta_i)$.
    For $c$ as above, since $c$ is not in the wake of $c_1$, we have $z_1(c)\ne z_2(c)$.
    For each $i$, the point $z_i(c)$ has period $p$ under iteration of $f_c$, and, since the landing point of $R_c(\theta_i)$ is repelling for $c \ne c_1$, the function $c \mapsto z_i(c)$ is continuous.

    Consider the two landing points $z_1, z_2$ of the rays at angle $\theta_1$, $\theta_2$ in the dynamical plane of $f_0(z) = z^2$.

    As one move the parameter $c$ along the vein from $c= 0$ to $c = c_1$, one can follow the two periodic points, obtaining $z_1(c)$ and $z_2(c)$ the landing points
    of $R_c(\theta_1)$ and $R_c(\theta_2)$.
    As $c\to c_1$ along the vein, the cycles $\zeta_1(c)$ and $\zeta_2(c)$, respectively containing $z_1(c)$ and $z_2(c)$, collide.
    Thus, the unique lift of the vein from $0$ to $c_1$ that starts at the point $(0, \zeta_1(0)) \in \MC_p(\mathcal{F}_1)$ ends at the point $(c_1, \zeta_1(c_1)) = (c_1, \zeta_2(c_1)) \in \MC_p(\mathcal{F}_1)$.
    Similarly, going back along the vein from $c_1$ to $0$ and lifting starting from $(c_1, \zeta_1(c_1))$, yields a path
    in $\MC_p(\mathcal{F}_1)$ that ends at the point $(0, \zeta_2(0))$.
    Thus, the lift of the vein segment from $0$ to $c_1$ is a path from $(0, \zeta_1(0))$
    to $(0, \zeta_2(0))$ that maps with degree $2$ onto its image.
    This is an edge in our cell complex structure for $\MC_p(\mathcal{F}_1)$.

    For the other direction, we note that by \Cref{per1-edge-branching}, the only branching points of $\pi$ are at the roots of primitive components.
\end{proof}

\medskip
\subsection*{Face relations: kneading sequences}

Recall that each hyperbolic component $H$ of period $p$ has a kneading sequence of the form $k(H) = \sigma_1 \sigma_2 \dots \sigma_{p-1} *$,
with $\sigma_i \in \{0, 1\}$ for each $i$.
From each such sequence we can produce the two abstract cycles:
\[k_0(H) := \sigma_1 \sigma_2 \dots \sigma_{p-1} 0 \qquad \textup{and} \qquad k_1(H) := \sigma_1 \sigma_2 \dots \sigma_{p-1} 1.\]
We say that $k_0(H)$ and $k_1(H)$ are \emph{perturbations} of $k(H)$.

\begin{lemma} \label{L:knead}
    For any primitive component $H$ of period $p$, the edge labeled by $H$ in $\Sigma_p$ bounds the faces labeled under the correspondence of \Cref{lem:faces_are_cyclepairs}
    as the cycle duos $[k_0(H)]$ and $[k_1(H)]$.
\end{lemma}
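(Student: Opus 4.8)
The plan is to identify, for a primitive component $H$ of period $p$ with characteristic angles $\theta_1 < \theta_2$, the two faces of $\Sigma_p$ adjacent to the edge labeled by $H$, and to show their labels under \Cref{lem:faces_are_cyclepairs} are exactly $[k_0(H)]$ and $[k_1(H)]$. Recall from the proof of \Cref{P:vertex-relation} that the edge labeled by $H$ is the lift of the vein $V(c_1)$ from $0$ to the root $c_1$ of $H$, traversed so as to interchange the two cycles $\zeta_1, \zeta_2$ carrying the landing points of $R_c(\theta_1), R_c(\theta_2)$. The two faces adjacent to this edge are the lifts of $\widehat{\C}\sm\M$ reached by perturbing $c$ slightly off the vein to either side, near the root $c_1$; equivalently, they are the two lifts of $\widehat{\C}\sm\M$ whose closures contain the edge. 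The key point is to compute the itinerary data labeling these two faces by working in the dynamical plane of a parameter $c$ just outside $\M$ near $c_1$, where $J_c$ is a Cantor set and \Cref{lem:faces_are_cyclepairs} applies.

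The main steps, in order. \textbf{(1)} Fix a parameter $c^*$ on the vein very close to $c_1$ but still in $\M$ (so $c^*$ lies in the wake $\cal{W}(\theta_1,\theta_2)$ and $\theta_1,\theta_2$ land together at a single repelling periodic point $w$ of period $p$, by the Proposition preceding \Cref{thm:rat-ray-landing-dynam}); the cycle $\zeta$ of $w$ is the collision cycle $\zeta_1(c_1)=\zeta_2(c_1)$ pushed slightly back along the vein. Since $c^*$ is close to $c_1$, the critical value $c^*$ lies in the wake of $w$, which is cut off by $\overline{R_{c^*}(\theta_1)}\cup\overline{R_{c^*}(\theta_2)}\cup\{w\}$. \textbf{(2)} Relate this ``$\theta_1,\theta_2$'' partition at $w$ to the kneading partition $\Delta_{c^*}=\overline{R_{c^*}(\theta_1/2)}\cup\overline{R_{c^*}(\theta_2/2+\cdots)}$ used to define itineraries: observe that the critical point $0$ lands in a specific complementary region, and that the itinerary of $w$ with respect to $\Delta_{c^*}$ is, by primitivity, a strictly $p$-periodic string, equal (by the standard identification of the kneading sequence of $H$ with the itinerary of its center, extended to nearby parameters) to one of the two perturbations $k_0(H)$ or $k_1(H)$ of $k(H)=\sigma_1\cdots\sigma_{p-1}*$ — the digit in the $p$th slot being $0$ or $1$ according to whether $w$ itself lands in the $0$-region or the $1$-region of $\Delta_{c^*}$. \textbf{(3)} Now push $c$ just outside $\M$, to $c_\pm$ on the two sides of the vein near $c_1$: the repelling cycle $\zeta$ persists, and its itinerary (now a genuine Cantor-set itinerary, as in the proof of \Cref{lem:faces_are_cyclepairs}) equals the itinerary of $w$ computed in step (2) for the appropriate side. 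Crossing from one side of the vein to the other moves $c$ across one of the rays $R_M(\theta_i/2)$, hence flips the relevant coordinate of the partition as seen by the critical value, which is exactly the operation taking $k_0(H)$ to $k_1(H)$ (and toggling the $p$th digit of the cycle carrying $w$). Thus the two faces flanking the edge carry labels $[k_0(H)]$ and $[k_1(H)]$.

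\textbf{The main obstacle} I anticipate is step (2): making precise the claim that the itinerary of the continued periodic point $w$ at a parameter $c^*$ near $c_1$ — with respect to the kneading partition $\Delta_{c^*}$ determined by the \emph{same} characteristic angle $\theta_1$ (or $\theta_2$) — is literally a perturbation $k_\epsilon(H)$ of the kneading sequence $k(H)$, and in tracking exactly which of the two perturbations appears on which side of the vein. This requires care because $k(H)$ is defined via the itinerary of the \emph{center} of $H$, not of a boundary parameter, and because the $*$ in $k(H)$ records that the critical value's orbit hits the root of the critical Fatou component; one must verify that perturbing $c$ off the center toward $c_1$ and then off $\M$ resolves this $*$ consistently. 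I would handle this by a continuity/monodromy argument: the itinerary of the continued cycle is locally constant in $c$ away from the rays $\mathcal{O}_{\theta_i}$, it is strictly $p$-periodic by primitivity (one last characterization of primitive components in the excerpt), and the monodromy around the portion of $\widehat{\C}\sm\M$ near $c_1$ flips exactly the $p$th digit — mirroring the half-turn monodromy computation in the proof of \Cref{lem:faces_are_cyclepairs}. Matching this against the two perturbations $k_0(H), k_1(H)$ then closes the argument; the remaining bookkeeping (which abstract cycle in $C_p$ corresponds to which characteristic angle, and reflexive versus ramified) follows directly from \Cref{lem:faces_are_cyclepairs} and \Cref{P:vertex-relation}.
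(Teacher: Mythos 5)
Your overall strategy --- identify the two faces of $\Sigma_p$ flanking the edge over $H$, and show that the itineraries of the marked cycles in those faces are the two perturbations of the kneading sequence --- is the right one, and it is the same skeleton as the paper's proof. But the step you yourself flag as ``the main obstacle'' is precisely the content of the lemma, and your sketch of how to resolve it does not work as stated. First, a local error: a parameter $c^*$ on the vein close to the root $c_1$ is \emph{not} in the wake $\cal{W}(\theta_1,\theta_2)$ (the vein approaches $c_1$ from outside the wake; by \Cref{cor:co-landing-implies-primitive} the rays $\theta_1,\theta_2$ do not even land on the same orbit there). Second, and more seriously, your mechanism in step (3) is wrong: passing between the two sides of the vein while staying in $\C\sm\M$ does not ``move $c$ across one of the rays $R_M(\theta_i/2)$'' (those parameter rays do not land at $c_1$); the short path goes through the wake of $H$ and crosses \emph{both} $R_M(\theta_1)$ and $R_M(\theta_2)$, and one must then argue why the net effect on the itinerary of the continued cycle is to flip exactly the $p$th digit exactly once (rather than twice, or to flip some other digit). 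That bookkeeping is the whole lemma, and the proposal defers it to an unspecified ``continuity/monodromy argument.''

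The paper avoids this monodromy computation with a device your proposal is missing: it compares the \emph{two different cycles} at a \emph{single} parameter, rather than one cycle at two parameters. Concretely, it takes the basilica tuning of $H$, whose characteristic angles $\theta'<\eta'$ land (for any $c''\in\C\sm\M$ in the sub-wake $\cal{W}(\theta',\eta')$) at a point $z'$ on the continuation of the attracting cycle of $H$, while the characteristic angles $\theta<\eta$ of $H$ land at a point $z$ on the other cycle merging at $c_1$. These are exactly the two marked cycles in the two faces adjacent to the edge. The first $p-1$ itinerary digits of $z$ and $z'$ agree, and the $p$th digits differ because the nesting $\theta'<\theta''<\eta$ forces $\theta'/2<\theta''/2<\eta/2<(\theta''+1)/2$, so the partition curve $\Delta_{\theta''}$ separates $f_{c''}^{\circ(p-1)}(z)$ from $f_{c''}^{\circ(p-1)}(z')$. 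Without this (or an equivalently explicit computation), your argument does not establish which labels the two faces carry, so the proof is incomplete.
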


\begin{proof}

    Let $c$ be the root of $H$, and let $\theta, \eta$ with $\theta < \eta$ be its characteristic angles, and $c_0$ be the center of $H$.
    Let $f_{c'}$ be the tuning of $f_{c_0}$ by the basilica, whose corresponding external rays are $\theta', \eta'$ with $\theta' < \eta'$.
    Let $c''$ be a point in $\C \setminus \M$ in the wake $\cal{W}(\theta', \eta')$.
   
    Consider the dynamical plane for $f_{c''}$.
    To determine the itinerary of a point, we have to look at the partition of the plane determined by the curve $\Delta_{\theta''} := \ol{R_{c''}(\frac{\theta''}{2}) \cup R_{c''}(\frac{\theta''+1}{2})}$.
    Let $z, z'$ be the landing points of $\theta$ and $\theta'$, respectively, which are repelling in $J_{c''}$.
    Note, since $c''$ is in the wake of both $c$ and $c'$, that $z, z'$ are also the landing points of $\eta$ and $\eta'$.

    Now, $f_{c''}^{\circ k}(z)$ and $f_{c''}^{\circ k}(z')$ lie on the same side of the curve $\Delta_{\theta''}$ for any $k < p - 1$.
    Let us look at the $p$th digit of their itinerary.
    Note that we have $\theta' < \theta'' < \eta$, which implies  $\theta'/2 < \theta''/2 < \eta/2 < \frac{\theta''+1}{2}$.
    Thus, $f_{c''}^{\circ (p-1)}(z)$ and $f_{c''}^{\circ (p-1)}(z')$ lie on two distinct sides of the partition, hence the two periodic points $z, z'$ have itineraries, respectively, $k_0(H)$ and $k_1(H)$, with respect to $c''$.

    We now conclude the proof.
    Consider the pair of hyperbolic components $H^\pm $ in $\MC_p(\mathcal{F}_1)$ whose mutual root is a branch point mapping to $c$.
    Since the projection $\pi$ has local degree $2$ at $c$, the two lifts $H^\pm$ lie on opposite sides of the edge corresponding to $H$.
    Note that in one face the cycle corresponding to $z$ is marked, while in the other face the marked cycle is the one corresponding to $z'$;
    thus, the faces adjacent to the edge $H$ are labelled by the itineraries of $z$ and $z'$, hence by the cycle duos $[k_0(H)]$ and $[k_1(H)]$.
\end{proof}

\begin{figure}
    \centering
    \begin{overpic}[scale=.5]{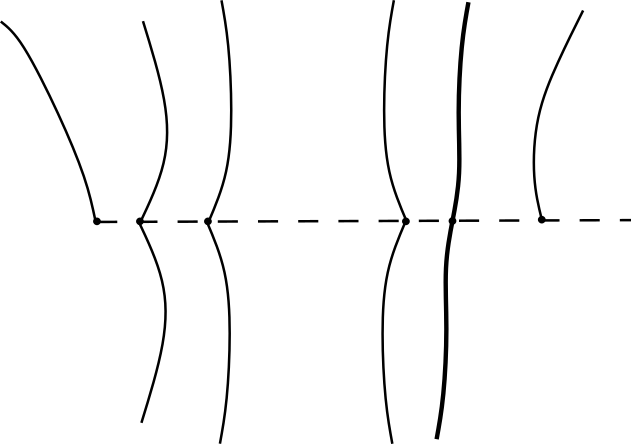}
        \put(14,30){$c''$}
        \put(25,38){$z'$}
        \put(31,30){$z$}
        \put(40,38){$f^{\circ(p-1)}(z)$}
        \put(74,38){$0$}
        \put(84,30){$f^{\circ(p-1)}(z')$}
        \put(6,61){$\theta''$}
        \put(25,61){$\theta'$}
        \put(25,8){$\eta'$}
        \put(37,61){$\theta$}
        \put(37,8){$\eta$}
        \put(62,61){\large $\frac{\eta}{2}$}
        \put(56,8){\large $\frac{\theta}{2}$}
        \put(74,61){\large $\frac{\theta''}{2}$}
        \put(72,8){\large $\frac{\theta''+1}{2}$}
        \put(92,61){\large $\frac{\theta'}{2}$}
    \end{overpic}
    \caption{The angles in the proof of \Cref{L:knead}.}
\end{figure}

\subsection{The algorithms}

Given this setup, we are now in a position to algorithmically construct the cell decomposition for $\MC_p(\mathcal{F}_1)$.
In fact, we will now describe two algorithms, which we will then show to be equivalent.

\begin{algorithm}[``Telephone algorithm", based on external rays]\label{algo-per1-angle}\mbox{}

    \begin{enumerate}

        \item Label each pair of characteristic angles of a primitive component with their associated pair of cycles in $C_p$.
              For instance, if the characteristic angles are $(\frac{3}{31}, \frac{4}{31})$, this is written in binary as $(00011, 00100)$,
              hence one associates to this pair the label $(B, A)$.

        \item Let us pick a cycle, for instance $A$.
              We will describe how to obtain the face associated to the cycle duo $[A]$.

        \item Start at the external angle $\theta = 0$ in the Mandelbrot set, and, proceeding counterclockwise, look for the first ray pair $(\theta_1, \theta_2)$ for which either $\theta_1$ or $\theta_2$ has label $A$.

        \item When you see a ray pair $(\theta_1, \theta_2)$ where one of the two labels is $A$, write down an edge
              with label $(\theta_1, \theta_2)$.
              Now, mark the vertex corresponding to the other label of this ray pair.
              For instance, let us suppose that the other label is $B$.

        \item Continue in counterclockwise order up until you see a ray pair where one of the two labels is $B$.

        \item Continue in this way until you go back to finding $A$ and the next edge you put in at $A $ is something you've already entered.
    \end{enumerate}

\end{algorithm}

\begin{example}
    Let consider $p = 5$, and let us start with the $A$ cycle.
    Since all external angles involved are of the form $\frac{k}{31}$, for ease of notation we will drop the denominators
    when referring to external angles. For instance, we will write $(3, 4)$ to refer to the ray pair $(\frac{3}{31}, \frac{4}{31})$.
    \begin{enumerate}
        \item
              Starting at the angle $\theta  =0$ and going in counterclockwise order, the first ray pair with an $A$ label is the pair $(3, 4)$, which has label $(A, B)$.
              Thus, write down the edge $(3, 4)$ and the new vertex is $B$.
        \item
              Then the next pair containing $B$ is $(5, 6)$, labeled $(B, C)$.
              Hence, write down the edge $(5, 6)$ and the new vertex is $C$.
        \item
              The next pair containing $C$ is $(13, 18)$ with label $(C, \overline{C})$.
              Hence, write down the edge $(13, 18)$ and the new label is $\overline{C}$.
        \item
              The next pair containing $\overline{C}$ is $(25, 26)$ with label $(\overline{B}, \overline{C})$. Hence, add the edge is $(25, 26)$ and the new label is
              $\overline{B}$.
        \item
              The next pair containing $\overline{B}$ is $(27, 28)$.
              Hence, add the edge $(27, 28)$ and the new vertex label is $\overline{A}$.
        \item
              Finally, the next pair containing $\overline{A}$ is $(15, 16)$, with label $(A, \overline{A})$. Hence, write down the edge $(15, 16)$ and the new vertex label is
              $\overline{A}$.
              If one keeps going counterclockwise, the next vertex labeled with $A$ is $(3, 4)$, which we already encountered, so the process stops.
    \end{enumerate}
    As a result, we have constructed the $\face{A}$ face:
    \[
        \begin{tikzcd}[column sep=0.6cm, row sep=0.6cm]
            & B \ar{dr}{5} & \\
            A \ar{ur}{3} & & C \ar{d}{13} \\
            \overline{A} \ar{u}{15} & & \overline{C} \ar{dl}{26} \\
            & \overline{B} \ar{ul}{28} &
        \end{tikzcd}
    \]
    (note that, for ease of visualization, in the picture we labeled edges with only one of the two angles).
    Similarly, we get the $\face{B}$ face:
    \[
        \begin{tikzcd}[column sep=0.6cm, row sep=0.6cm]
            & \overline{A} \ar{r}{28} & \overline{B} \ar{dr}{7} & \\
            B \ar{ur}{24} & & & A \ar{d}{15} \\
            \overline{B} \ar{u}{14} & & & \overline{A} \ar{dl}{24} \\
            & A \ar{ul}{7} &B \ar{l}{3} &
        \end{tikzcd}
    \]
    and the $\face{C}$ face:
    \[
        \begin{tikzcd}[column sep=0.6cm, row sep=0.6cm]
            & \overline{B} \ar{r}{26} & \overline{C} \ar{dr}{11} & \\
            C \ar{ur}{20} & & & B \ar{d}{14} \\
            \overline{C} \ar{u}{13} & & & \overline{B} \ar{dl}{20} \\
            & B \ar{ul}{11} & C \ar{l}{5} &
        \end{tikzcd}
    \]
    Note that each edge appears exactly twice, with opposite orientation; to obtain the desired cell decomposition, we now glue the edges with the same number
    so that vertices match up.
\end{example}

\begin{proposition} \label{P:algo1works}
    The cell complex constructed by \Cref{algo-per1-angle} is isomorphic to the cell complex $\Sigma_p$ defined in \Cref{D:Sigma_p}.
    As a consequence, the surface constructed by the algorithm is homeomorphic to $\MC_p(\mathcal{F}_1)$.
\end{proposition}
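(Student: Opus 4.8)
The plan is to match each structural ingredient of the cell complex produced by \Cref{algo-per1-angle} with the corresponding ingredient of $\Sigma_p$ from \Cref{D:Sigma_p}, using the combinatorial descriptions already established in \Cref{P:vertices}, \Cref{lem:faces_are_cyclepairs}, \Cref{per1-edge-branching}, \Cref{P:vertex-relation}, and \Cref{L:knead}. Concretely, I would argue that: (i) the vertex set produced by the algorithm is exactly $C_p$, the abstract cycles; (ii) the edge set is exactly the set of primitive components $P_p$, each edge incident to the two vertices prescribed by \Cref{P:vertex-relation}; and (iii) the face boundary walks produced by the algorithm coincide with the boundary walks of the faces of $\Sigma_p$. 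Items (i) and (ii) are essentially immediate from the cited propositions: the algorithm's vertices are the labels in $C_p$, its edges are the characteristic ray pairs of primitive components, and step (4) attaches an edge between precisely the two cycles that are the binary expansions of the two characteristic angles, which is the incidence relation of \Cref{P:vertex-relation}. So the content is really in (iii), the face structure.

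For (iii), the key geometric input is the monodromy computation inside the proof of \Cref{lem:faces_are_cyclepairs}: as the parameter $c$ traverses a large equipotential loop in $\C\setminus\M$ counterclockwise, the partition $\Delta_{\theta}$ makes a half-turn, so the marked cycle of period $p$ sitting over a face is acted on, and the boundary of a face of $\pi^{-1}(U_\infty)$ projects to a loop in $\widehat\C$ that runs along the veins $V(c)$, $c\in R_p$, encountered in counterclockwise angular order. I would formalize this by fixing the face labelled $[\nu]$ and following its boundary: starting from a vertex over $0$ carrying the cycle $\nu$, the lift of the first vein whose root is a primitive component with $\nu$ among its characteristic cycles is an edge (by the lift computation in \Cref{P:vertex-relation}), and traversing it swaps the marked cycle to the other characteristic cycle of that component, which is exactly what step (4)--(5) of the algorithm records. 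Iterating, the sequence of edges one reads off going counterclockwise around the face in $\MC_p(\mathcal{F}_1)$ is precisely the sequence the algorithm enumerates; the loop closes (step (6)) exactly when the marked cycle returns to $\nu$ in the same "angular slot," which is when $\pi$ restricted to that face has completed its degree $1$ or $2$ cover of $U_\infty$, as dictated by \Cref{lem:faces_are_cyclepairs}. Finally, \Cref{L:knead} identifies the face adjacent to a given edge on each side with the cycle duos $[k_0(H)]$ and $[k_1(H)]$, confirming that the polygons the algorithm builds glue along matching edge labels in the way $\Sigma_p$ prescribes; since every edge of a primitive component appears on exactly two faces with opposite orientation, the gluing data agree.

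The main obstacle is step (iii), and within it the bookkeeping that the counterclockwise order in which the algorithm visits ray pairs of $P_p$ genuinely matches the cyclic order in which the boundary of a face of $\pi^{-1}(U_\infty)$ meets the veins $V(c)$. This requires knowing that the veins $V(c)$ for distinct primitive roots $c\in R_p$ emanate from the region $U_\infty$ in the angular order of their characteristic external angles, i.e.\ that the combinatorial order on $R_p$ induced by external angles agrees with the planar order along $\partial U_\infty$ — a fact about the topology of $V_p\subseteq\M$ that one extracts from the landing behavior of parameter rays and the structure of veins (c.f.\ \cite{veins}). Once this angular-order compatibility is in hand, the correspondence of face walks is a direct consequence of the monodromy argument, and the identification of the two cell complexes — hence the homeomorphism of the underlying surfaces with $\MC_p(\mathcal{F}_1)$ — follows.
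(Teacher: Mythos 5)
Your proposal is correct and follows essentially the same route as the paper: reduce to matching the face boundary walks, use the lifted-vein/monodromy picture from \Cref{lem:faces_are_cyclepairs} and \Cref{P:vertex-relation} to read off the cyclic sequence of edges around each face, and check it agrees with the algorithm's counterclockwise enumeration. The angular-order compatibility you flag as the main obstacle is exactly the point the paper handles (largely implicitly) by working with an equipotential curve $\gamma$ and the cyclic order it induces on the branched lifted veins bounding each face.
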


\begin{proof}
    By \Cref{P:vertices}, the vertices from \Cref{algo-per1-angle} are in one-one correspondence with vertices of $\Sigma_p$.  Moreover, by \Cref{P:vertex-relation}, every edge produced by the algorithm corresponds to an  edge of $\Sigma_p$. Thus, it suffices to show that \Cref{algo-per1-angle}  produces exactly the faces of $\Sigma_p$.

    The surface $\MC_p(\mathcal{F}_1)$ is a Riemann surface, whose orientation is given by pulling back the standard orientation on $\widehat{\C}$ by the holomorphic map $\pi : \MC_p(\mathcal{F}_1) \to \widehat{\C}$.
    Let $\cal{R}$ be the tubular neighbourhood of $V_p$ bounded by an equipotential line $\gamma$. Each face of $\Sigma_p$ contains a unique lift of $\partial \cal{R}$ under $\pi$, and each such lift is contained in some face. Moreover, pulling back the standard orientation on $\partial \cal{R} = \gamma$, we induce an orientation on each lift of $\gamma$. 

    Let $c^+ := \gamma \cap \mathbb{R}^+$. Given an abstract cycle $\nu \in C_p$, let $(c^+, \zeta)$ be the lift of $c^+$ to $\MC_p(\mathcal{F}_1)$ that corresponds to $\nu$.
    The point  $(c^+, \zeta)$ lies in the interior of a face $F$ of $\Sigma_p$, which by definition is the closure $F = \overline{U}$ of some lift $U$ of $\widehat{\C} \setminus V_p$. Since $\widehat{\C} \setminus V_p$  has boundary $V_p$, the boundary of $U$ is a union
    $\partial U = \bigcup_{i \in I} e_i$, where each $e_i$ is a lift of some vein from $0$ to a root of a primitive hyperbolic component.
    Some $e_i$'s contain a branch point (hence we call them \emph{branched lifted veins}) and some of them do not.
    Note that $\partial F$, which is the boundary of the \emph{closure} of $U$, is the union
    \[\partial F = \bigcup_{\stackrel{i \in I}{e_i \textup{ branched} }} e_i\]
    of all the branched lifted veins in $\partial U$.

    Recall that, by \Cref{P:vertex-relation}, any branched lifted vein $e_i$ is a  path between vertices $v_i =(0, \zeta_i)$ and $v_{i+1}=(0, \zeta_{i+1})$ where $\zeta_i$ and $\zeta_{i+1}$ each correspond to different elements $[\theta_i], [\theta_{i+1}]$ in $C_p$, and $\theta_i, \theta_{i+1}$ are the external rays landing at the branch point of $\pi(e_i)$.

    Given a branched lifted vein $e_i$, let $e_{i+1}$ be the next one in the cyclic order given by the face $F$.
    Then $e_{i+1}$ is the first lifted vein with respect to the cyclic order induced by $\gamma$ that is branched, so
    $\pi(e_{i+1})$ is the first vein after $\pi(e_i)$ in the cyclic order for which one of its associated external rays lies in $[\theta_{i+1}]$.
    Thus, $e_{i+1}$ corresponds to a pair of angles $[\theta_{i+1}], [\theta_{i+2}]$, and we repeat the process until we obtain again $e_1$.

    This produces all sides of the face $F$ associated to $[\nu]$, showing that the order of combinatorial edges produced by \Cref{algo-per1-angle} coincides with the cyclic order of edges of $F$ in $\Sigma_p$.

    Finally, note that by construction the algorithm produces every face starting from all different lifts of the positive real axis:
    since every face of $\Sigma_p$ intersects a lift of the positive real axis, the algorithm produces all faces of $\Sigma_p$.
    Moreover, since every edge of $\Sigma_p$ lies on the boundary of some face, the algorithm also produces every edge.
\end{proof}

\begin{algorithm}[``Bar method", based on kneading sequences]\label{algo-per1-knead}\mbox{}
    \begin{enumerate}
        \item
              List all the kneading sequences of the primitive hyperbolic components of period $p$

        \item
              Each kneading sequence is of the form $\sigma_1 \sigma_2 \dots \sigma_{p-1} *$.
              From each such sequence we can produce two sequences:
              \[\sigma_1 \sigma_2 \dots \sigma_{p-1} 0 \qquad \textup{and} \qquad \sigma_1 \sigma_2 \dots \sigma_{p-1} 1.\]
              To each of the sequences above, we assign the corresponding cycle in $C_p$.
              When $p = 5$, for instance, we obtain the following table:

              \[
                  \begin{array}{|l|l|l|}
                      \hline
                      \textup{angle} & \textup{kn.seq.} & \textup{cycles}            \\
                      \hline
                      3 \sim 28      & 1110*            & \overline{A}, \overline{B} \\
                      5 \sim 26      & 1101*            & \overline{A}, \overline{C} \\
                      7 \sim 24      & 1100*            & B, \overline{B}            \\
                      11 \sim 20     & 1010*            & C, \overline{C}            \\
                      13             & 1011*            & \overline{C}, \overline{A} \\
                      14             & 1001*            & C, \overline{B}            \\
                      15             & 1000*            & A, B                       \\
                      \hline
                  \end{array}
              \]

        \item
              Let us fix a cycle duo, say $[A]$. We list  all the angles whose kneading sequences correspond to $A$ or $\overline{A}$. In order to keep track of the difference between $A$ and $\overline{A}$, we accent the angles corresponding to  $\overline{A}$  with a ``bar".

              In our example, the angles  corresponding to $A$ or $\overline{A}$ are:
              \[\overline{3}, \overline{5}, \overline{28}, \overline{26}, \overline{13}, 15 \]

        \item
              In this list, first write the barred numbers in ascending order, and then the unbarred ones in ascending order.
              In our example, we obtain the ordered list
              \[\textup{sides}([A]) ;= (\overline{3}, \overline{5}, \overline{13}, \overline{26}, \overline{28}, 15)\]

        \item We define the combinatorial face corresponding to $[A]$ as a polygon whose sides are labelled by the elements of  of the list  $ \textup{sides}([A])$ above, with counterclockwise order coinciding with the list order.

        \item We do this for every other cycle duo in $[C_p]$.
              In particular, we obtain
              \[\textup{sides}([B]) = (\overline{3}, \overline{7}, \overline{14}, \overline{24}, \overline{28}, 7, 15, 24)\]
              \[\textup{sides}([C]) = (\overline{5}, \overline{11}, \overline{13}, \overline{20}, \overline{26}, 11, 14, 20)\]

        \item Lastly we form a surface  from the combinatorial faces defined above, by gluing together sides with the same numerical label (regardless of whether they have a ``bar" ).
    \end{enumerate}
\end{algorithm}

\begin{proposition}
    The cell complexes produced by \Cref{algo-per1-angle,algo-per1-knead} are isomorphic.
\end{proposition}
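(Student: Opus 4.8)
The plan is to show that \Cref{algo-per1-knead} also produces a cell complex isomorphic to $\Sigma_p$; since \Cref{P:algo1works} identifies the output of \Cref{algo-per1-angle} with $\Sigma_p$, this yields the stated isomorphism. Since a cell decomposition of a connected closed orientable surface is determined by its set of edges together with, for each face, the cyclic word in the oriented edges along its boundary (each edge occurring in exactly two face-slots with compatible orientations), I would reduce the claim to matching, for every cycle duo $[\nu]\in[C_p]$, the polygon with boundary word $\textup{sides}([\nu])$ from \Cref{algo-per1-knead} against the face $F_{[\nu]}$ of $\Sigma_p$ (\Cref{D:Sigma_p}, \Cref{lem:faces_are_cyclepairs}), and then noting that both algorithms glue copies of the same edge-label $H\in P_p$ to a single edge.

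The easy parts come first. The vertex set is $C_p$ on both sides (\Cref{P:vertices}) and the edge set is $P_p$ on both sides (\Cref{per1-edge-branching}, \Cref{P:vertex-relation}). For the edge multiset of a face: by \Cref{L:knead} the edge labelled by a primitive component $H$ borders precisely the faces $[k_0(H)]$ and $[k_1(H)]$, so $H$ occurs along $\partial F_{[\nu]}$ once for each $i\in\{0,1\}$ with $k_i(H)\in\{\nu,\overline\nu\}$; and for each such $i$, Step~(3) of \Cref{algo-per1-knead} contributes one copy of $H$ to $\textup{sides}([\nu])$, barred if $k_i(H)=\overline\nu$ and unbarred if $k_i(H)=\nu$. (Since $k_0(H)$ and $k_1(H)$ differ in a single bit, at most one of them is $\nu$ and at most one is $\overline\nu$, so this is well-defined.) Hence the two faces carry the same edge multiset, including the barring.

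The substance is the cyclic order. Following the proof of \Cref{P:algo1works}, I would work with the equipotential $\gamma=\partial\mathcal R$, oriented by increasing external angle $\theta$, and with the unique connected lift $\tilde\gamma\subset F_{[\nu]}$: by \Cref{lem:faces_are_cyclepairs} the map $\pi|_{\tilde\gamma}$ has degree $1$ when $[\nu]$ is reflexive and degree $2$, branched only over the lift of $\infty$, when $[\nu]$ is ramified. Thus traversing $\partial F_{[\nu]}$ once amounts to sweeping $\theta$ once (reflexive case) or twice (ramified case) around the circle, and in the ramified case the monodromy about $\infty$ interchanges the two sweeps, replacing the marked cycle $\nu$ by $\overline\nu$. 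During the sweep on which the marked cycle is $\mu\in\{\nu,\overline\nu\}$, the branched lifted veins met on $\partial F_{[\nu]}$ are — by a version of \Cref{L:knead} read off along $\gamma$ rather than inside a single wake — exactly those labelled by edges $H$ having a perturbation $k_i(H)$ equal to $\mu$, and they are met in the order in which the parameter wakes $\mathcal W(\theta_H,\eta_H)$ are entered as $\theta$ increases. I would then identify this with the ascending order of the labels of \Cref{algo-per1-knead}: for primitive components of period $p$ the characteristic angles are of the form $k/(2^p-1)$, and the labelling of $P_p$ is chosen so as to respect how the wakes are swept along $\gamma$, whence the $\mu=\overline\nu$ sweep lists the barred angles in ascending order and the $\mu=\nu$ sweep lists the unbarred angles in ascending order. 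After reconciling the starting point and orientation so that the $\overline\nu$-sweep comes first, concatenation gives exactly $\textup{sides}([\nu])=(\text{barred angles, ascending})\,(\text{unbarred angles, ascending})$; the common endpoints of successive lifted veins supply the vertex identifications, so that the gluing in Step~(7) recovers $\Sigma_p$.

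I expect the main obstacle to be the cyclic-order claim of the previous paragraph, which really splits into two. The first is an angular refinement of \Cref{L:knead}: one must show that the vein $V(c_H)$ is reached from the unbounded region $\widehat{\C}\setminus\M$ precisely through the gap between the parameter rays $R_M(\theta_H)$ and $R_M(\eta_H)$, and that the side of the edge $H$ adjacent to $F_{[\nu]}$ along a given sweep is the one labelled by the perturbation of $H$ that equals the marked cycle on that sweep. The second is the combinatorial statement that ordering period-$p$ primitive components by their chosen representative $k$ reproduces the order in which their wakes are swept along $\gamma$, including the nested configurations $\mathcal W(\theta_{H'},\eta_{H'})\subset\mathcal W(\theta_H,\eta_H)$ (here one should also check that such nesting never occurs among the edges met on a single monodromy-sweep of a single face, so that ``ascending'' is unambiguous). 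Both reduce to book-keeping around the $\theta=0$ convention for the partition $\Delta_c$, the counterclockwise orientation, and the choices of representative angles and of when to ``bar'' — but that book-keeping carries essentially all the content of the equivalence.
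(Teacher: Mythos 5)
Your proposal is correct and follows essentially the same route as the paper: reduce to matching faces, traverse the lifted equipotential, and use that the itinerary is locally constant along it except that it gets conjugated upon crossing the lift of the positive real axis, which reproduces the ``barred ascending, then unbarred ascending'' order of the bar method. The book-keeping you defer at the end is exactly what the paper's proof carries out, as a two-case ``next edge'' induction according to whether the arc of $\widetilde{\gamma}$ between consecutive branched lifted veins meets a lift of $\mathbb{R}^+$.
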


\begin{proof}
    It is sufficient to prove that the faces produced by \Cref{algo-per1-knead} are the same as the faces of $\Sigma_p$.

    Consider the map
    $\psi: \MC_p(\mathcal{F}_1) \setminus \pi^{-1}(\mathcal{M} \cup \mathbb{R}^+) \ra C_p$ which sends a point $(c,\zeta)\in \MC_p(\mathcal{F}_1)$ to $\nu \in C_p$ where $\nu = \itin(f_c, z)$ for some point $z \in \zeta$. Since this map is continuous, it is constant on connected components of its domain.

    Fix a cycle duo $[\nu]$, with $\nu \in C_p$, and let $F$ be the face constructed by the algorithm corresponding to $[\nu]$.
    Let $\Theta_\nu$ be the set of external angles whose kneading sequences can be perturbed to $\nu$.

    By the algorithm, the edges of $F$ correspond to primitive hyperbolic components whose kneading sequences can be perturbed
    to either $\nu$ or $\overline{\nu}$.

    Let $\gamma$ be some equipotential line in $\mathbb{C} \setminus \mathcal{M}$,
    and let $e$ be an edge of $F$; then $e$ corresponds to some edge $\hat{e}$ in $\Sigma_p$.
    By \Cref{L:knead}, the edge $\hat{e}$ is adjacent to two faces, one of which contains a pair $(c, \zeta)$ where $c \in \gamma$ such that either $\psi(c, \zeta) = \nu$
    or  $\psi(c, \zeta) = \overline{\nu}$.

    Let $\hat{F}$ be the face of $\Sigma_p$ containing such $(c, \zeta)$, and assume without loss of generality that $\psi(c, \zeta) = \nu$.

    It suffices to show that the next edge in $F$, let us call it $e'$, and the next edge in $\hat{F}$, let us call it $\hat{e}'$, correspond to the same hyperbolic component.
    Let $\theta_1 < \theta_2$ be the characteristic angles of $e$, and $\theta'_1 < \theta'_2$ be the characteristic angles of $e'$.

    Let $\widetilde{\gamma}$ be the lift of $\gamma$ to $\MC_p(\mathcal{F}_1)$ inside $\hat{F}$.
    The next edge of $\hat{F}$ is given by following $\tilde{\gamma}$ starting at $(c, \zeta)$ until it intersects a lift of an external ray of angle $\varphi$ in $\Theta_{\nu} \cup \Theta_{\overline{\nu}}$.
    There are two cases:

    \begin{enumerate}

        \item the arc of $\widetilde{\gamma}$ in between the lifts of $R_M(\theta_2)$ and $R_M(\theta_1')$ does not intersect any lift of the positive real axis;
              then the point $(c_1, \zeta_1)$ of intersection between $\widetilde{\gamma}$ and the lift of $R_M(\theta_1')$ has itinerary $\nu$, and $\theta_1'$ is the smallest
              angle larger than $\theta_2$ in $\Theta_\nu$. This shows that $\varphi = \theta_1'$.

        \item the arc of $\widetilde{\gamma}$ in between the lifts of $R_M(\theta_2)$ and $R_M(\theta_1')$ intersects a lift of the positive real axis;
              then the point $(c_1, \zeta_1)$ of intersection between $\widetilde{\gamma}$ and the lift of $R_M(\theta_1')$ has itinerary $\overline{\nu}$.
              Then $\theta_2$ is the largest angle in $\Theta_\nu$, and $\theta_1'$ is the smallest angle in $\Theta_{\overline{\nu}}$,
              showing that $\varphi = \theta_1'$.

    \end{enumerate}

    In both cases, $e'$ and $\hat{e}'$ correspond to the same hyperbolic component, hence the two cell complexes are isomorphic.

\end{proof}

\section{The case of $\Per_2(0)$}\label{sec:per2}

\subsection{Setup}
Let us now consider the space $\text{Per}_2(0)$ of quadratic rational maps with a critical cycle of period $2$.
\begin{align*}
    \text{Per}_2(0) & = \{g: \hat{\C}  \rightarrow \hat{\C} \ |\  g \text{ is rational of degree 2 and has a critical 2-cycle }\}/\sim
\end{align*}
where $g \sim g'$ if there exists $\phi \in \text{Aut}(\hat{\C})$ such that $g = \phi \circ g' \circ \phi^{-1}$.
Given $g \in \Per_2(0)$, $g$ is conjugate to either $g_\infty(z)=\frac{1}{z^2}$, or to a rational map of the form
\[g_a(z)=\frac{z^2+a}{1-z^2},\]
where $a\ne -1$.
Note that the critical points of $g_a$ are $0$ and $\infty$, its critical 2-cycle is $g_a(\infty) = -1, g_a(-1) = \infty$ and its free critical value is $a$.

\begin{remark}
    Topologically, $\Per_2(0)$ is a once-punctured sphere, where the point $a = -1$ is a puncture.
    For $t\in \C^*$, the map $h_t(z)=t+\frac{1}{z^2-t^2}$ is conformally conjugate to $g_a$ with $a=t^{-3} -1 $, via the conjugacy
    \[
        -\frac 1 t h_t(z) = g_a\del{-\frac z t}.
    \]
    As $a\ra\infty$, and so $t\ra 0$, note that $h_t$ converges pointwise to the map $g_{\infty}(z)=z^{-2}$, which is also an element of $\Per_2(0)$.
    These coordinates show that $g_\infty$ is an orbifold point of $\Per_2(0)$ with isotropy group $\Z/3\Z$.
    Compare Milnor \cite[Section 9, page 62]{milnor-qrm}.
\end{remark}

Mating is well-studied in the literature and we refer to external sources for additional context (e.g. \cite{wittner, Luo, Dudko}), but roughly we sketch the construction of the mating $f\sqcup g$ of two geometrically finite polynomials $f, g$. 
We say that $z\in J_f$, $w\in J_g$ are in the same \emph{ray equivalence class} if there is some ray of angle $\theta$ landing at $z$ such that the ray of angle $-\theta$ lands at $w$. The \emph{topological mating} of $f$ and $g$ is defined as the quotient $K_f \sqcup K_g/\sim$ by the ray equivalence relation. Due to the existence of multiaccessible points in $J_f$ or $J_g$, which are associated to multiple external angles, determining ray equivalence classes quickly becomes complicated. However, as long as there are no sphere-separating equivalence classes, the quotient is a sphere and the topological mating is combinatorially equivalent to a rational map, which we call the \emph{geometric mating} $f\sqcup g$ of $f$ and $g$.

Let $K_B = K_{f_{-1}}$ be the filled Julia set of the basilica polynomial, and $J_B = J_{f_{-1}}$ be the Julia set.
Let $\mathcal{L}_B \subset \mathcal{M}$ be the basilica limb, and define  $\mathcal{M}^*$ to be the set $\mathcal{M}-\mathcal{L}_B$, and  $K_B^*$ to be the connected component of $K_B-\{\alpha_B\}$ containing the critical point, where by $\alpha_B \in \C$ we denote the $\alpha$-fixed point of the basilica polynomial.

This mating construction, so far only defined between two dynamical systems, can be transferred to parameter space, 
and facilitates a thorough understanding of $\Per_2(0)$ as a \emph{mating} of a partial Mandelbrot set $\mathcal{M}^*$ and a partial basilica $K_B^*$, \cite{wittner, Luo, Dudko}. Here, the process is the same: take two objects carrying external rays ($\M^*$ and $K_B^*$), and quotient their disjoint union by ray equivalence classes. As $\Per_2(0)$ is a parameter space, each point in the induced mating $\M^*\sqcup K_B^*$ corresponds to a particular dynamical system: points in $\M^*$ correspond to particular disjoint type maps (i.e., maps whose critical points have disjoint orbits), points in $\mathring K_B^*$ correspond to particular capture type or bitransitive type maps (i.e. in which one critical point maps to the other).

Care must be taken about local connectivity: $\M$ has not been proven to be locally connected, and there are parameters in $\M$ whose Julia sets are not locally connected. To get around this, we leverage \emph{laminations}, collections of arcs inscribed in $\overline \D$ which define an equivalence relation modelling the topology of the Julia set, see \cite{Dudko} for futher context. $K_B$ has a simple lamination $\cal{L}_B$ (all of whose elements are preimages of the arc $(1/3,2/3)$ in $\overline{\D}$) at which angles $\M$ is locally connected, and we define the mating $\M^*\sqcup K_B^*$ by first embedding $\cal{L}_B$ into $\Per_1(0)-\M$, and then quotienting. For maps in $\M^*$ which are not locally connected, a good account of puzzles in $\Per_2(0)$ still allows us to view them as matings with the basilica, see \cite[Main Theorem]{AY}, \cite[Theorem 1.4]{Dudko}.

Define the \emph{non-escaping set} $\M_2$ as the set of parameters for which the free critical point does not converge to the $2$-cycle $\{ \infty, -1 \}$.

\begin{theorem}[Wittner, Luo, Dudko]\label{thm:per2-structure}
We have homeomorphisms $\Per_2(0)\simeq\mathcal{M}^*\sqcup K_B^*$ and $\mathcal{M}_2\simeq \mathcal{M}^*\sqcup J_B^*$.
\end{theorem}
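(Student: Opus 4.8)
The plan is to reduce the statement to the mating constructions of Wittner \cite{wittner}, Luo \cite{Luo}, and Dudko \cite{Dudko}; below I sketch how I would assemble it. Everything is organized around the dichotomy on $\Per_2(0)$ induced by the free critical orbit (the basic topology of $\Per_2(0)$ and this dichotomy go back to Milnor \cite{milnor-qrm}): for a map $g_a$ either $g_a^{\circ n}(0)$ converges to the superattracting $2$-cycle $\{\infty,-1\}$ --- the \emph{escape} case --- or it does not, i.e.\ $a\in\M_2$. This splits $\Per_2(0)$ into the closed non-escape locus $\M_2$ and the open escape locus $\mathcal{B}:=\Per_2(0)\sm\M_2$. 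The two homeomorphisms then amount to: (i) identifying $\M_2$ with $\M^*\sqcup J_B^*$, (ii) identifying $\mathcal{B}$ with the interior $\mathring K_B^* = K_B^*\sm J_B^*$, and (iii) checking these fit together along the common boundary $\partial\M_2=\partial\mathcal{B}$ to give $\Per_2(0)\simeq\M^*\sqcup K_B^*$; the first homeomorphism then follows from the second by restriction, since $K_B^*\sm\mathring K_B^* = J_B^*$.

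\emph{The non-escape locus.} For $a\in\M_2$ the free critical point has bounded orbit staying away from the $2$-cycle, and the core claim is that $g_a$ is the geometric mating $f_c\sqcup f_{-1}$ of the basilica with a unique quadratic polynomial $f_c$. When $g_a$ is postcritically finite this is classical (Rees--Shishikura--Tan Lei), and the general case --- crucially, parameters whose Julia sets are not locally connected, and the infinitely renormalizable locus --- is exactly the content of \cite[Main Theorem]{AY} and \cite[Theorem 1.4]{Dudko}, where the failure of local connectivity of $\M$ and of the relevant Julia sets is handled using Yoccoz puzzles adapted to $\Per_2(0)$ together with the basilica lamination $\cal{L}_B$. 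The constraint $c\in\M^* = \M\sm\cal{L}_B$ arises because $f_c\sqcup f_{-1}$ is mateable precisely when $c$ does not lie in (the conjugate of) the basilica limb, i.e.\ when no ray-equivalence class is sphere-separating. This produces a map $\chi:\M_2\to\M^*\sqcup J_B^*$, and I would check it is a continuous bijection: injectivity because the puzzle data recovers $c$ uniquely from $g_a$, surjectivity from the reversibility of the mating, every $f_c$ with $c\in\M^*$ being mateable with $f_{-1}$. Since $\M_2$ is compact and the target is Hausdorff, $\chi$ is then automatically a homeomorphism.

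\emph{The escape locus and the gluing.} Here both critical points of $g_a$ are attracted to $\{\infty,-1\}$, so $g_a$ lies either in the unbounded hyperbolic component $\mathcal{E}$ (Julia set a quasicircle) or in a capture component (the free critical orbit eventually lands in the immediate basin of the $2$-cycle). Using the B\"ottcher/linearizing coordinates of the superattracting cycle --- the parameter-space analogue of the Douady--Hubbard uniformization of the escape region of $\Per_1(0)$ --- I would build a conformal model of $\mathcal{B}$ identifying $\mathcal{E}$ with the Fatou component of $f_{-1}$ containing its critical point, and the capture components with the other Fatou components inside $K_B^*$, with local degrees matching; in particular the orbifold point $g_\infty$ maps to the center $0$ of that component. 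The boundary behaviour is governed by the embedding of $\cal{L}_B$ into $\Per_1(0)\sm\M$ described in the text: escape parameters approaching $\partial\mathcal{B}$ correspond to basilica points approaching $J_B^*$, and the limiting identifications of parameter rays are exactly those defining the mating quotient, so that the model $\mathcal{B}\cong\mathring K_B^*$ patches with $\chi$ along $\partial\mathcal{B}\leftrightarrow J_B^*$ to yield $\Per_2(0)\simeq\M^*\sqcup K_B^*$.

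The main obstacle is the non-escape step: proving that \emph{every} $g_a$ with $a\in\M_2$ is a mating with the basilica --- including non-locally-connected and infinitely renormalizable parameters --- and that the resulting $\chi$ is genuinely continuous (not merely bijective) across the boundary between the renormalizable and non-renormalizable regimes. This is precisely where the lamination and puzzle technology of \cite{AY,Dudko} is indispensable, which is why the theorem is credited to those works rather than established by a short external-ray argument.
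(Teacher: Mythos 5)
The paper does not prove this theorem: it is stated as a quoted result of Wittner, Luo, and Dudko, with no argument beyond the citation (and a pointer to \cite[Theorem 1.2]{Dudko} for the puzzle-preserving property), so there is no in-paper proof to compare yours against. Your sketch is a faithful reconstruction of how the cited works establish the homeomorphisms --- splitting $\Per_2(0)$ into the non-escape locus $\M_2$ and the escape locus, realizing $\M_2$ as matings with the basilica via the puzzle/lamination technology of \cite{AY,Dudko}, and modeling the escape locus on $\mathring K_B^*$ --- and you correctly identify that the substantive difficulty (non-locally-connected and infinitely renormalizable parameters, and the continuity of the resulting correspondence) is exactly what those references supply; the only caveat is that your final ``compact-to-Hausdorff'' upgrade presupposes compactness of $\M_2$ and continuity of $\chi$, both of which already require the structure theory you are importing, so the reduction is circular as a standalone argument but consistent with treating the theorem as a black box, which is precisely what the paper does.
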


Note that the above homeomorphisms preserve puzzle pieces, see \cite[Theorem 1.2]{Dudko}. In addition to showing that every hyperbolic PCF map in $\mathcal{M}_2$ is a mating of the basilica with a hyperbolic PCF map $g\in \mathcal{M}^*$, this theorem also shows that the complement of $\mathcal{M}_2$ is the union of infinitely many hyperbolic components with a touching structure isomorphic to $K^*_B$. One application of this is the generalization of \emph{external rays} from $\Per_1(0)$ to \emph{bubble rays} in $\Per_2(0)$.

The idea of bubble rays was introduced by \cite{Luo} in his thesis and developed by many authors since. We now recall some context about bubble rays, in particular following  \cite{timorin}, \cite{AY}, \cite{Dudko}.
\texttt{}
First observe that  when $g_a$ is not in the unbounded hyperbolic component $\cal{E}$ of $\Per_2(0)$, then $J(g_a)$ is not a quasi-circle. In fact, the boundaries of the Fatou components containing the points $\infty$ and $-1$ touch at a single point $\alpha_a$, which is fixed.

Recall that $g_0(z) = \frac{z^2}{1-z^2}$ is conjugate to the basilica quadratic polynomial $f_{-1}(z) = z^2 -1$.
The important fact of each $g_a\not\in \cal{E}$ is that each map $g_a$ has ``basilica dynamics",
as stated in the following proposition. 

\begin{proposition}\label{prop:basilica-dynamics}
    Let $K_\infty(a)$ denote the basin of $\infty$ for $g_a$ together with the grand orbit of $\alpha_a$.
    Then for any $a \notin \cal{E}$, there is a unique continuous semi-conjugacy $\vp_a$ such that the following diagram commutes:
    \[
        \begin{tikzcd}
            {K_\infty(0)} & {} & {K_\infty(a)} \\
            {K_\infty(0)} && {K_\infty(a)}
            \arrow["{\vp_a}", from=1-1, to=1-3]
            \arrow["{g_0}"', from=1-1, to=2-1]
            \arrow["{\vp_a}"', from=2-1, to=2-3]
            \arrow["{g_a}", from=1-3, to=2-3]
        \end{tikzcd}
    \]
    and which is conformal in the interior of $K_\infty(0)$.
\end{proposition}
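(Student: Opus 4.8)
The plan is to build $\vp_a$ by promoting a combinatorial semi-conjugacy on the ``bone structure'' of $K_\infty(0)$ (the union of boundaries of bounded Fatou components together with the grand orbit of $\alpha_0$) to a continuous map on all of $K_\infty(0)$, and then to recognize it as conformal on the interior. Concretely, I would first recall that $g_0$ is conjugate to the basilica polynomial $f_{-1}$, so $K_\infty(0)$ is (up to this conjugacy) the union of the basin of $\infty$ with the grand orbit of the $\alpha$-fixed point, a set whose complement consists of the bounded Fatou components, each of which is a Jordan domain, arranged in the ``bubble'' touching pattern. By \Cref{thm:per2-structure} and the fact that the homeomorphism $\Per_2(0)\simeq\M^*\sqcup K_B^*$ preserves puzzle pieces, the map $g_a$ with $a\notin\cal E$ is a mating of the basilica with some hyperbolic-or-worse quadratic polynomial in $\M^*$; in particular the basin of $\infty$ of $g_a$ together with the grand orbit of $\alpha_a$ carries exactly the combinatorial structure coming from the $K_B^*$ factor of the mating, and this is literally (a copy of) the bubble structure of the basilica. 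This identification of combinatorial data is the hook: it gives a canonical bijection between bounded Fatou components of $K_\infty(0)$ and the corresponding ``bubbles'' of $g_a$, compatible with the dynamics and with the touching pattern.

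Next I would construct $\vp_a$ Fatou component by Fatou component. Each bounded Fatou component $U$ of $K_\infty(0)$ eventually maps onto the immediate basin of $\infty$; on the immediate basin of $\infty$ we use the Böttcher coordinates of $g_0$ and $g_a$ (both are conjugate near the superattracting $2$-cycle to $z\mapsto z^2$ via the second iterate, so there is a canonical conformal conjugacy between the two immediate basins matching Böttcher coordinates), and then pull this back along the dynamics to each $U$ using the bubble bijection to keep the choices consistent. This defines $\vp_a$ as a conformal isomorphism on each bounded Fatou component, and we set it equal to the appropriate boundary limit on $\partial K_\infty(0)$: boundaries of Fatou components are Jordan curves on both sides, so each conformal isomorphism $U\to\vp_a(U)$ extends to a homeomorphism of closures by Carathéodory. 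The touching points (the grand orbit of $\alpha_0$, together with the points where consecutive bubbles meet) are where distinct closed Fatou components are glued; one checks these gluings are respected because the bubble bijection preserves the touching pattern, so the piecewise definition is consistent and yields a well-defined map $\vp_a:K_\infty(0)\to K_\infty(a)$. Continuity at points of $\partial K_\infty(0)$ that are not on the boundary of any single Fatou component (accumulation points of infinitely many bubbles) follows because the Euclidean diameters of the bubbles of $g_a$ tend to $0$ — this is where I would invoke local connectivity of $J(g_0)$ (the basilica Julia set is locally connected) together with the puzzle-piece-preserving homeomorphism of \Cref{thm:per2-structure}, which transports a shrinking-puzzle-pieces statement from the $g_0$ side to the $g_a$ side. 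The commuting square is immediate on each Fatou component from the construction (Böttcher coordinates conjugate the dynamics to $z\mapsto z^2$ on both sides, and the bubble bijection is dynamics-equivariant), and then extends by continuity to all of $K_\infty(0)$. Conformality in the interior is built in, and conformality is preserved under the gluing since it was already imposed componentwise.

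For uniqueness: suppose $\vp_a,\vp_a'$ are two such semi-conjugacies. On the immediate basin of $\infty$ they are both conformal conjugacies intertwining $g_0^{\circ 2}$ and $g_a^{\circ 2}$ and fixing the superattracting point, hence differ by a Böttcher-coordinate rotation; rigidity of the superattracting local model (the multiplier/first nonlinear term is pinned down) forces them to agree there, or at worst up to the finite symmetry of $g_0^{\circ 2}$ which is then killed by requiring the semi-conjugacy to be compatible with $g_0$ itself (not just its square). Equivariance under the dynamics then propagates the equality to every bounded Fatou component, and continuity propagates it to $\partial K_\infty(0)$, which is the accumulation set of those components.

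\textbf{Main obstacle.} I expect the genuine difficulty to be continuity at the "bad" boundary points — the points of $\partial K_\infty(0)$ that are not on the closure of any one Fatou component — which reduces to showing that the bubbles of $g_a$ shrink (have diameters $\to 0$ along any nested sequence). For hyperbolic $g_a$ this is standard hyperbolic-contraction, but the proposition is stated for all $a\notin\cal E$, including non-hyperbolic and even non-locally-connected parameters, so I would need to route this through the puzzle-piece-preserving homeomorphism of \Cref{thm:per2-structure} and the known shrinking-puzzles results for the basilica side (and its matings), e.g. \cite{AY}, \cite{Dudko}; making that transfer precise, rather than the componentwise construction, is the crux.
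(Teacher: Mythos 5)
There is a genuine gap, and it sits exactly where the proposition stops being a conjugacy. Note first that the paper does not construct $\vp_a$ at all: its proof observes that for $a\in\M_2$ the map is in fact a conjugacy by continuity of mating, and otherwise cites Proposition 4.2 of \cite{Dudko} and Proposition 4.6 of \cite{AY}. Your construction builds $\vp_a$ as a conformal \emph{isomorphism} on each bounded Fatou component of $K_\infty(0)$, glued along Jordan-curve boundaries by Carath\'eodory via a dynamics-equivariant bijection of bubbles; if that worked, it would produce a homeomorphism, i.e.\ a genuine conjugacy, for \emph{every} $a\notin\cal{E}$. That is false precisely for the capture parameters $a\in\Per_2(0)\sm(\M_2\cup\cal{E})$: there the free critical point $0$ of $g_a$ converges to the $2$-cycle, so it lies in $K_\infty(a)$, inside some strictly preperiodic Fatou component $V_0$ of the basin, and $g_a|_{V_0}:V_0\to g_a(V_0)$ is proper of degree $2$; by contrast $g_0$ has no free critical point in its basin of $\{\infty,-1\}$ (its free critical point $0$ is a superattracting fixed point with its own basin), so every non-immediate bubble of $K_\infty(0)$ maps conformally under $g_0$. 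If $\vp_a$ were a conformal bijection from some component $U_0$ onto $V_0$ and from $g_0(U_0)$ onto $g_a(V_0)$, then $\vp_a\circ g_0|_{U_0}$ would be injective while $g_a\circ\vp_a|_{U_0}$ has degree $2$, contradicting the commuting square. So no equivariant bubble bijection with componentwise conformal isomorphisms can exist; $\vp_a$ must fold (and ``conformal in the interior'' has to be read as holomorphic, not univalent). Your framework has no mechanism for this folding, and this --- not the shrinking-diameter issue you flag as the crux --- is why the statement is only a semi-conjugacy and why the paper routes $a\notin\M_2$ through \cite{AY}.

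Two secondary points. First, your appeal to Carath\'eodory requires that the boundary of every bubble of $g_a$ be a Jordan curve for all $a\notin\cal{E}$, including parameters in $\M_2$ whose Julia sets may fail to be locally connected; this is a nontrivial theorem about $\Per_2(0)$ (part of what \cite{AY} and \cite{Dudko} establish), not a freebie. Second, for $a\in\M_2$ your outline is essentially the standard construction and, suitably completed (Jordan boundaries, shrinking nests of bubbles via puzzles), does yield the conjugacy; the uniqueness argument via rigidity of the B\"ottcher coordinate at the superattracting $2$-cycle and equivariant propagation is also sound in that regime. To repair the general case you would need to replace the bubble bijection by a degree-respecting correspondence in which the components of $K_\infty(0)$ lying over the grand orbit of $V_0$ are mapped by proper degree-$2$ branched covers, which is exactly the content of the cited results.
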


\begin{proof}
	When $a\in \M_2$, the proposition is clear from the continuity of mating in $\Per_2(0)$, and $\varphi_a$ is in fact a conjugacy.
	Generally, this follows from Proposition 4.2 in \cite{Dudko}. See also Proposition 4.6 in \cite{AY} for a more detailed account of the semi-conjugacy when $a\not\in \M_2$.
\end{proof}

As a consequence, the semi-conjugacy $\varphi_a$ lets us use chains of touching Fatou components to parameterize points on the Julia set of $g_a$. 
Let $\cal{A}_\infty(0)$ denote the immediate basin of $\infty$ for $g_0$ (the reader familiar with the Mandelbrot set would expect $\cal{A}_\infty(0)$ to be the basin of 0 for the basilica, but under our coordinates $g_0$ is a map in $\Per_2(0)$, and $\infty$ is the critical point in the 2-cycle).
The construction of bubble rays originates from the fact that points in $J(g_0)$ can be parameterized either from ``the outside" via external rays or from ``the inside" as a limit of chains of Fatou components emanating from $\cal{A}_\infty(0)$.
\begin{definition}[Bubble Rays]\label{defn:bubble-rays}
	For any $z\in \partial K_\infty(0)$, there is a unique (finite or infinite) sequence of Fatou components $(B_i)_{i\in \cal{I}}\subset K_\infty(0)$ such that
	\begin{enumerate}
	\item [(a)]  $B_0$ is the Fatou component containing $\infty$; 
		\item [(b)] The intersection of $\partial B_i$ and $\partial B_{i+1} $ consists of a unique point $\alpha^i_0$, which is in the grand orbit of $\alpha_0$; and
		\item [(c)] Either $\cal{I} = \{0, \dots, n\}$, 
		in which case $z\in \partial B_n$  and we let $\alpha^n_0 = z$, or $\cal{I} = \mathbb{N}$, 
		in which case the Hausdorff limit of $(B_i)$ is $\{ z\}$.
	\end{enumerate}
	For $\theta \in \mathbb{R}/\mathbb{Z}$, let $z \in  \partial K_\infty(0)$ be the landing point of the external ray of angle $\theta$, 
	and let $(B_i)$ be the sequence constructed above. 
	Each $B_i$ is itself a Fatou component equipped with internal rays, which will comprise the bubble ray $\cal{B}_0(\theta)$, which we can define inductively.
	Start with $b_0$  be the internal ray from $\infty$ to $\alpha^0_0$.
	Then for $i>0$, append to $b_{i-1}$ two internal rays within $B_i$: first the internal ray connecting $\alpha^{i-1}_0$ to the center of $B_i$, and then second the internal ray connecting the center of $B_i$ to $\alpha^{i}_0$.
	Then we define $\cal{B}_0(\theta) := \bigcup_{i \in \mathcal{I}} b_i$ as the bubble ray of angle $\theta$.
\end{definition}

\begin{definition}
For $g_a \in \Per_2(0) \setminus \cal{E}$, we define the \emph{bubble ray of angle $\theta$} for $g_a$ as 
$$\cal{B}_a(\theta) := \varphi_a(\cal{B}_0(\theta)).$$
\end{definition}

One of the essential ingredients in describing the motion of rational external rays within $\Per_1(0)$ was that we understood when they bifurcated, at the iterated preimages of critical points of the Green's function. Specifically, we saw in $\Per_1(0)$, that if $c\in \C\setminus \M$ lies on the parameter ray at angle $\theta$, then the dynamical ray $R_c(\theta)$ passes through the critical value of $f_c$. That is, the bifurcation structure we saw was really a condition on the free critical value $c$, or equivalently its free critical point at 0. Recall in $\Per_2(0)$, the free critical point is the critical point \emph{not} in the 2-cycle.
\begin{lemma}
	A bubble ray $\cal{B}_a(\theta)$ in the dynamical plane of $g_a\in \Per_2(0)\setminus \cal{E}$ bifurcates if and only if $\cal{B}_a(\theta)$ hits the free critical point
	or its iterated preimages. If a finite bubble ray does not bifurcate, it lands.
\end{lemma}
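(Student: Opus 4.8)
The plan is to transplant the proof scheme of \Cref{prop:bifurcation-orbit} and \Cref{lem:dynam-ray-holomorphic-landing}, with the Böttcher coordinate at $\infty$ for $f_c$ replaced by a Böttcher coordinate at the superattracting $2$-cycle $\{\infty,-1\}$ of $g_a$, and with external rays replaced by concatenations of internal rays across bubbles. First I would record the \emph{intrinsic} description of a bubble ray inside the dynamical plane of $g_a$ itself. The immediate basin $B_0$ of $\infty$ carries a Böttcher coordinate conjugating $g_a^{\circ 2}|_{B_0}$ to $w\mapsto w^2$, hence a foliation by internal rays; every bubble occurring in the chain of \Cref{defn:bubble-rays} eventually maps onto $B_0$ or its partner under iteration of $g_a$, so inherits internal rays by pulling this structure back along the appropriate branches of $g_a^{-1}$. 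Assembling $\cal{B}_a(\theta)$ then becomes an inductive pull-back: having built the partial ray out to the touching point $\alpha^i_a\in\partial B_i$, the next bubble $B_{i+1}$ and the internal ray across it are produced by the branch of $g_a^{-1}$ dictated by the basilica combinatorics, and by the conformality of $\varphi_a$ on the interior of $K_\infty(0)$ (\Cref{prop:basilica-dynamics}) this intrinsic recipe agrees with $\varphi_a(\cal{B}_0(\theta))$ wherever those branches are univalent.

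Next I would pin down when the inductive step is unambiguous. By the inverse function theorem, exactly as in the proof of \Cref{lem:dynam-ray-holomorphic-landing}, the relevant branch of $g_a^{-1}$ is a local homeomorphism and the construction continues uniquely, precisely as long as the pull-back never has to cross the free critical value $a=g_a(0)$; equivalently, as long as no bubble of the chain contains the free critical point $0$; equivalently, as long as the partial bubble ray avoids $\bigcup_{k\ge 0} g_a^{\circ -k}(0)$. Conversely, if the partial ray reaches a point $w$ with $g_a^{\circ k}(w)=0$ for some $k$, the preimage of the next internal ray splits into two, and the construction bifurcates there; and this is the only way bifurcation can occur. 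This yields the stated equivalence.

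For the landing statement, take $\theta$ whose bubble chain $\cal{I}=\{0,\dots,n\}$ is finite and assume $\cal{B}_a(\theta)$ does not bifurcate. Then every pull-back used to produce $B_1,\dots,B_n$ together with the internal rays across them is univalent, so $B_0,\dots,B_n$ are honest Fatou components of $g_a$ touching in the prescribed pattern along the grand orbit of $\alpha_a$, and the terminal internal ray ends at $\alpha^n_a\in\partial B_n$. Hence $\cal{B}_a(\theta)$ is a finite union of arcs with a well-defined endpoint: it lands. (Equivalently, $\varphi_a$ carries the internal rays of the finitely many bubbles $B_0,\dots,B_n$ of the postcritically finite map $g_0$, whose chain visibly lands, to those of $g_a$.)

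I expect the crux to be the first two steps: making precise the intrinsic pull-back construction of $\cal{B}_a(\theta)$ in the plane of $g_a$, and proving it coincides with $\varphi_a(\cal{B}_0(\theta))$ away from the grand orbit of $0$, so that bifurcation --- a phenomenon about the pull-back dynamics of $g_a$ --- is detected exactly by incidence with $\bigcup_k g_a^{\circ-k}(0)$. This is where one has to lean on the conformality of $\varphi_a$ and on the mating structure of \Cref{thm:per2-structure} (preservation of puzzle pieces) to exclude any competing obstruction, such as an unexpected collision of bubbles or a branch of $g_a^{-1}$ failing to be defined. Once the dynamical picture is in place, the finite-ray landing claim is essentially formal, finiteness removing the genuine convergence difficulty present for infinite bubble rays.
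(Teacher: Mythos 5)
The first thing to say is that the paper does not actually prove this lemma: its entire ``proof'' is the citation ``This is [Dudko, Lemma 4.4].'' So there is no internal argument to compare yours against, and any self-contained attempt is by definition a different route. Your route is the natural one, and it is the same in spirit as what the cited reference does: set up Böttcher/internal-ray structure on the bubbles by pulling back from the immediate basin of the critical $2$-cycle, and show that the pullback branches exactly at iterated preimages of the free critical point, in analogy with \Cref{prop:bifurcation-orbit} for $\Per_1(0)$.

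That said, as it stands your text is a plan rather than a proof, and the gap sits exactly where you say the crux is. Two points in particular are asserted rather than established. First, the equivalence ``the inductive pullback continues unambiguously iff the partial ray avoids $\bigcup_k g_a^{\circ -k}(0)$'': the forward implication (hitting a preimage of $0$ forces the next pullback to branch) is clear, but the converse requires ruling out every other failure mode --- that consecutive pulled-back bubbles really do meet in a single point of the grand orbit of $\alpha_a$, that the touching point never degenerates or collides with another bubble, and that the internal ray across each new bubble is well defined. None of this is automatic from the inverse function theorem alone; it is precisely the content of Dudko's lemma. Second, your identification of the intrinsic pullback construction with $\varphi_a(\mathcal{B}_0(\theta))$ leans on boundary values of $\varphi_a$ at the points $\alpha^i_a$, but \Cref{prop:basilica-dynamics} only gives conformality of $\varphi_a$ on the \emph{interior} of $K_\infty(0)$, and the paper explicitly presents the extension of $\varphi_a$ to Fatou component boundaries as an \emph{upshot} of this very lemma --- so you must be careful not to argue circularly. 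Your treatment of the finite-ray landing claim is fine once the first two steps are in place. To turn this into a proof you would either have to carry out the bubble-by-bubble analysis in detail (essentially reproving Dudko's Lemma 4.4) or, as the paper does, simply cite it.
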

\begin{proof}
	This is \cite[Lemma 4.4]{Dudko}.
\end{proof}

An upshot is that we can extend $\varphi_a$ to the boundary of every Fatou component in $K_\infty(a)$.

The notion of bubble rays is unnecessary for maps $g_a$, $a\in \cal{E}$ as these maps have quasi-circle Julia sets which in particular are locally connected. Moreover, \cite{timorin} shows how the circular notion of angle for maps in $g_a$ is compatible with the bubble ray notion of angle for maps in $\partial \cal{E}$. 

\begin{figure}[t]\label{fig:basilica-lam}
    \begin{subfigure}{0.48\textwidth}
        \centering
        \begin{overpic}[width=\textwidth]{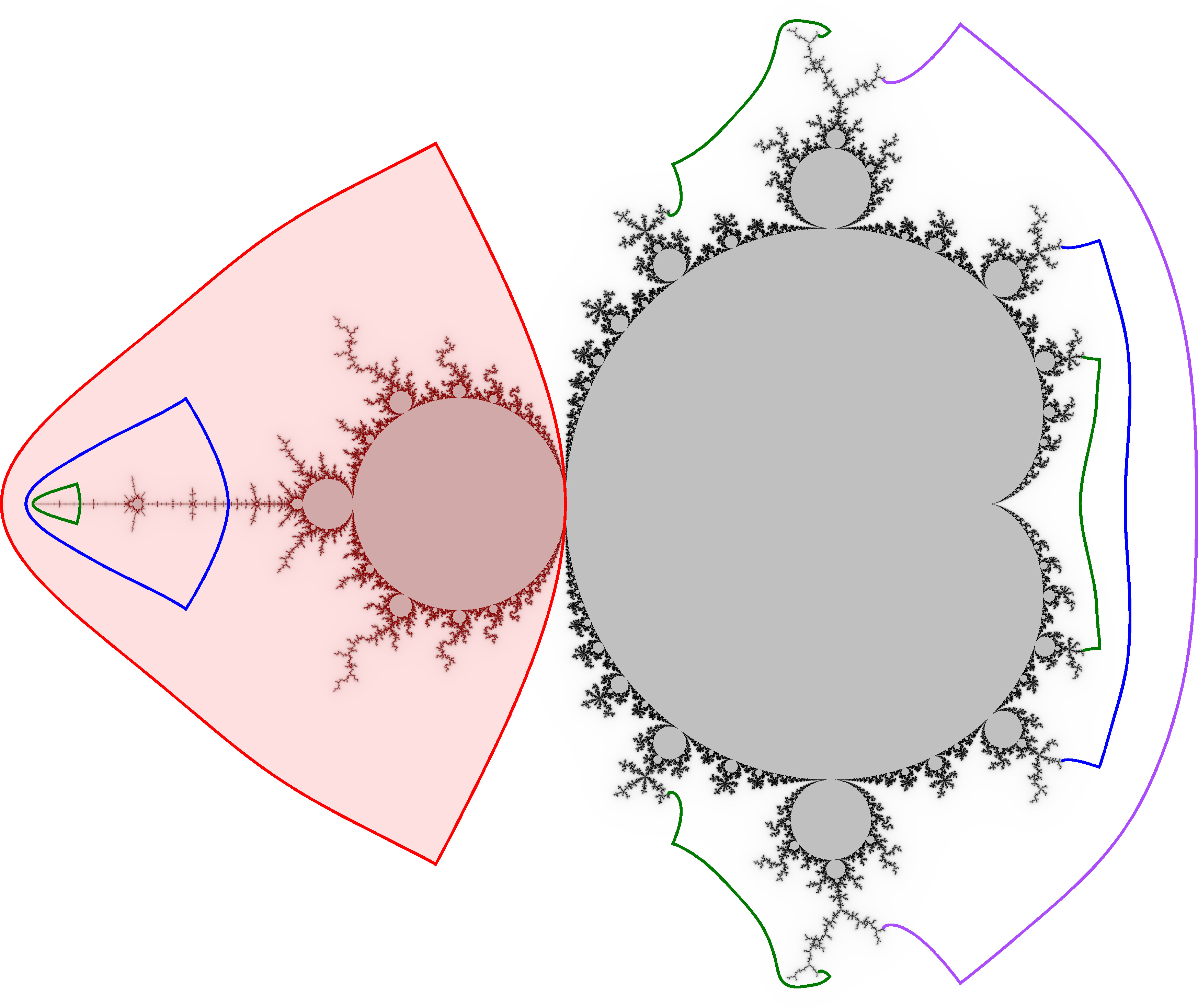}
            \put(11,22){\makebox(0,0){%
                    \contour{black}{%
                        \textcolor{red}{$W_B$}}%
                }
            }
        \end{overpic}
        \caption{%
            $\M$ with the basilica lamination embedded on the outside.
            The shaded region $W_B$ contains the $1/2$-limb.
        }
    \end{subfigure}
    \hfill
    \begin{subfigure}{0.48\textwidth}
        \centering
        \includegraphics[width=0.65\textwidth]{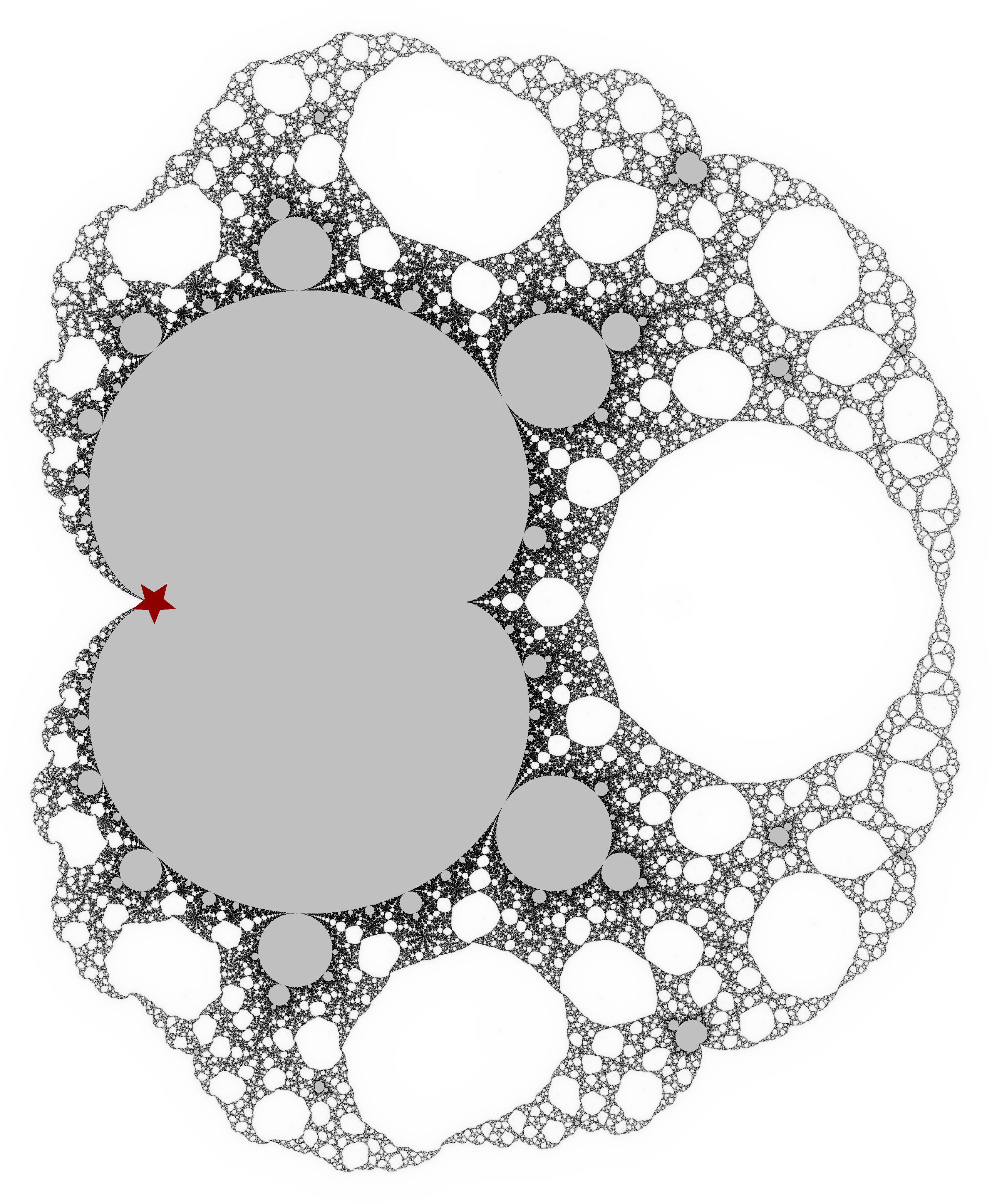}
        \caption{%
            Bifurcation locus in $\Per_2(0)$ in the coordinates $g_a(z)=\frac{z^2+a}{1-z^2}$.
            The puncture, marked with a red star, corresponds to the quotient of the region $W_B$ to a point.
        }
    \end{subfigure}
    \label{fig:per2_lamination}
\end{figure}

Recall from \Cref{thm:per2-structure} that $\Per_2(0)$ is also a ``mating" with the basilica, $\Per_2(0)= \M^*\sqcup K_B^*$, where the capture components in $\Per_2(0)\setminus \M_2$ correspond to bounded Fatou components in $K_B^*$. 
It follows we can analogously define parabubble rays in $\Per_2(0)$, cf \cite{Luo}, \cite[Section 5.3]{Dudko}, \cite{AY}. For an angle $\theta$ landing at $z\in J_B$ with associated bubble sequence $(B_i)$ as in \Cref{defn:bubble-rays}, then there is a corresponding sequence of touching capture components $\cal{H}_{B_i}$ in $\Per_2(0)\setminus \M_2$. The union of the appropriate arcs of $\cal{H}_{B_i}$ comprise $\cal{B}_{\M_2}(\theta)$, the parabubble ray of angle $\theta$.

\begin{proposition}\label{prop:parabubble}
	Every rational parabubble ray lands, and rational rays of odd denominator land at the root of a hyperbolic component in $\M_2$. Conversely, 
   let $g$ be the center of a hyperbolic component in $\M_2$. Then $g=f_{-1}\sqcup f$ for some PCF hyperbolic quadratic polynomial $f\in \M$.
    Moreover, for this $f$, if $f$ has characteristic external rays of angles $\theta_f^\pm$, then $g$ has characteristic bubble rays of angles $-\theta_f^\pm$.
\end{proposition}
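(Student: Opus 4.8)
The plan is to reduce every assertion to the mating description of $\Per_2(0)$ furnished by \Cref{thm:per2-structure}, together with the classical picture of external and internal rays in the basilica, transporting statements between the dynamical plane of $g_0$ and the parameter space $\Per_2(0)$ along the puzzle-piece-preserving homeomorphisms. For the landing of rational parabubble rays, recall that $\cal{B}_{\M_2}(\theta)$ is by construction a chain of touching capture components $\cal{H}_{B_i} \subset \Per_2(0) \sm \M_2$ indexed by the bubble sequence $(B_i)$ of the angle $\theta$ in $K_B$. The homeomorphism $\Per_2(0) \sm \M_2 \to \mathring{K}_B^*$ of \Cref{thm:per2-structure} carries capture components to bounded Fatou components and preserves the touching pattern, hence carries $\cal{B}_{\M_2}(\theta)$ onto the dynamical bubble ray $\cal{B}_0(\theta)$ of the basilica. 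Since the basilica Julia set is locally connected, its rational external rays land, and by \Cref{defn:bubble-rays} the bubble ray $\cal{B}_0(\theta)$ co-lands with $R_{g_0}(\theta)$ — either along a finite non-bifurcating chain or along an infinite chain whose components shrink geometrically. Therefore $\cal{B}_{\M_2}(\theta)$ lands, and under the homeomorphism its landing point corresponds to the landing point $z_\theta \in J_B$ of $R_{g_0}(\theta)$.

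For the odd-denominator case: a reduced fraction has odd denominator exactly when it is periodic under doubling, in which case $z_\theta$ is a repelling periodic point of $g_0$. Under the identification $\M_2 \simeq \M^* \sqcup J_B^*$, such a point is the root of a hyperbolic component of $\M_2$; equivalently, periodic parabubble rays land at roots of hyperbolic components of $\M_2$, the parameter-space counterpart of Douady--Hubbard's theorem that periodic parameter rays land at roots of hyperbolic components of $\M$ (see \cite{Dudko} for the parameter-space structure). This proves the first sentence of the proposition.

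For the converse, let $g$ be the center of a hyperbolic component of $\M_2$. Then $g$ is hyperbolic and postcritically finite, so by \Cref{thm:per2-structure} and its consequence recorded above — every hyperbolic PCF map in $\M_2$ is a mating of the basilica with a hyperbolic PCF quadratic polynomial in $\M$ — there is a hyperbolic PCF $f \in \M$ with $g = f_{-1} \sqcup f$, and the mating semiconjugacies carry the free critical orbit of $f$ onto that of $g$; in particular the Fatou component of $g$ containing its free critical value is the image of the characteristic Fatou component of $f$. By the definition of ray equivalence recalled above, an external ray of $f$ of angle $\eta$ is glued to the external ray of the basilica of angle $-\eta$; hence the root $r_f$ of the characteristic Fatou component of $f$, the common landing point of $R_f(\theta_f^+)$ and $R_f(\theta_f^-)$, is identified in $g = f_{-1}\sqcup f$ with the common landing point in $K_B$ of $R_{g_0}(-\theta_f^+)$ and $R_{g_0}(-\theta_f^-)$. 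Since in the basilica the bubble ray of angle $\eta$ co-lands with $R_{g_0}(\eta)$, applying the semiconjugacy $\varphi_a$ (writing $g = g_a$) shows that $\cal{B}_a(-\theta_f^+)$ and $\cal{B}_a(-\theta_f^-)$ land together at the image of $r_f$, which is exactly the root of the characteristic Fatou component of $g$. Thus $-\theta_f^\pm$ are the characteristic bubble-ray angles of $g$.

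The main obstacle is the careful passage through the mating at the relevant periodic rays. One must know that the geometric mating $f_{-1}\sqcup f$ genuinely realizes the gluing $\eta \leftrightarrow -\eta$ even when $\M$, or the Julia set $K_f$, fails to be locally connected — this is precisely where the laminations and the puzzle-piece-preserving clause of \Cref{thm:per2-structure} are needed — and that at $r_f$ exactly two external rays land on each side, so that the glued point carries exactly two characteristic bubble rays and the orientation reversal is consistently angle negation and not some other involution. The same care underlies the assertion that periodic parabubble rays land at roots (rather than, say, at Misiurewicz-type parameters): it is the parameter-space counterpart of the dynamical landing statement, valid because the parabubble-ray correspondence has been set up, as above, to match ray equivalence classes.
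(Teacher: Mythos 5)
The paper itself offers no proof of \Cref{prop:parabubble}: the statement is presented as imported from the mating literature (Wittner, Luo, Dudko, Aspenberg--Yampolsky) that is cited throughout the surrounding discussion, so there is no argument in the paper to compare yours against. Judged on its own, your derivation from \Cref{thm:per2-structure} is sound in outline and matches how the authors evidently intend the statement to be justified: transport the landing of basilica bubble rays through the parameter-space homeomorphism $\Per_2(0)\simeq\M^*\sqcup K_B^*$, use the Douady--Hubbard landing theorem on the $\M^*$ side for the odd-denominator case, and read off the characteristic angles from the ray-equivalence gluing $\eta\leftrightarrow-\eta$.

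One step is stated too strongly. You assert that the root $r_f$ of the characteristic Fatou component of $f$ "is identified with the common landing point in $K_B$ of $R_{g_0}(-\theta_f^+)$ and $R_{g_0}(-\theta_f^-)$." The rays at angles $-\theta_f^+$ and $-\theta_f^-$ need not co-land in $K_B$; in general the ray equivalence class of $r_f$ is a chain containing $r_f$ together with the two (possibly distinct) basilica landing points $z_{-\theta_f^\pm}$, and possibly further points of $J_f$ and $J_B$ reached by alternating gluings. The two bubble rays co-land only after the quotient by ray equivalence, and the conclusion that $-\theta_f^\pm$ are \emph{the} characteristic angles requires knowing that this equivalence class does not pick up additional periodic bubble rays — precisely the "exactly two rays land" issue you flag in your final paragraph but do not resolve. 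This is where one genuinely needs the unobstructedness of the mating (no sphere-separating classes, $f$ outside the $1/2$-limb) and the finiteness/structure of periodic ray equivalence classes from \cite{Dudko} or \cite{AY}; absent that, the last sentence of the proposition is not fully established. The remainder of your argument (landing of rational parabubble rays, the odd-denominator case, and $g=f_{-1}\sqcup f$) is correct as written.
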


Wakes in $\Per_2(0)$ also exist; intuitively, these are the images of wakes in $\M$ under the mating process, but see Section 6 of \cite{AY} for a proper treatment. Hence wakes can also be said to be \emph{active} in the sense of \Cref{defn:active-per1-wake}, recall:

\begin{definition}\label{active-per2-wake}
    A wake $W = \cal{W}(\alpha, \beta)$ is \emph{active} at $\theta\in\Q/\Z$ if the orbit of $\theta$ under angle doubling contains either $\alpha$ or $\beta$.
\end{definition}

\begin{proposition}[\cite{Luo}, Propositions 6.3-6.4]\label{prop:rational_rays_behavior}
If $\theta$ is rational and $g_a \in \Per_2(0)$, then the dynamical bubble ray $\cal{B}_a(\theta)$ either bifurcates or lands. 
\end{proposition}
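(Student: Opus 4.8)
The plan is to reduce to the basilica map $g_0$ through the semiconjugacy of \Cref{prop:basilica-dynamics}, and then to invoke the already-quoted \cite[Lemma 4.4]{Dudko} to close the dichotomy. First I would dispose of $a\in\cal{E}$: there $J(g_a)$ is a quasicircle, hence locally connected, and under the identification of bubble angles with the circular parametrization of such Julia sets (\cite{timorin}) every bubble ray is an arc that lands, while bifurcation cannot occur since the free critical orbit is captured by the superattracting $2$-cycle. (This also covers $g_\infty$.) So from now on I assume $a\notin\cal{E}$ and fix a rational $\theta$.

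The first step is to record that for $g_0$ the bubble ray $\cal{B}_0(\theta)$ always lands. Since $g_0$ is conformally conjugate to the basilica polynomial $f_{-1}=z^2-1\in\M$, \Cref{thm:rat-ray-landing-dynam}(b) shows that the external ray of angle $\theta$ lands at a point $z\in\partial K_\infty(0)$; by \Cref{defn:bubble-rays} this $z$ is the terminus (in the finite case) or the Hausdorff limit (in the infinite case) of the associated bubble sequence $(B_i)$, and since $\cal{B}_0(\theta)\subseteq\bigcup_i\overline{B_i}$, a union accumulating only at $z$, the ray $\cal{B}_0(\theta)$ lands at $z$.

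Next I would transport this through $\varphi_a$, splitting on the length of the chain. If $(B_i)$ is finite, then $\cal{B}_a(\theta)=\varphi_a(\cal{B}_0(\theta))$ is a finite bubble ray of $g_a$, and \cite[Lemma 4.4]{Dudko} gives the dichotomy directly: either it meets the free critical point of $g_a$ or one of its iterated preimages, hence bifurcates, or else it lands. If $(B_i)$ is infinite, I would argue as follows. By \cite[Lemma 4.4]{Dudko} again, $\cal{B}_a(\theta)$ bifurcates exactly when it meets the free critical point or an iterated preimage of it; if it does, we are done. Otherwise $\varphi_a$ should be injective along $\cal{B}_0(\theta)$ (for $a\in\M_2$ it is in fact a conjugacy; for capture parameters the locus along which $\varphi_a$ collapses is precisely the obstruction ruled out in the previous sentence), and, being continuous on the compact closure of $\bigcup_i B_i$, it carries the shrinking chain to a shrinking chain, so $\operatorname{diam}\varphi_a(B_i)\to 0$ and $\cal{B}_a(\theta)$ lands at $\varphi_a(z)\in J(g_a)$.

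The crux, and the step I expect to be the main obstacle, is this last transport for infinite rays. One must (i) match precisely the condition ``$\cal{B}_a(\theta)$ meets the free critical point'' with ``$\varphi_a$ collapses along $\cal{B}_0(\theta)$'' when $g_a$ is a capture map and $\varphi_a$ is a genuine, noninjective semiconjugacy, and (ii) extend $\varphi_a$ continuously beyond the Fatou components of $K_\infty(0)$ to the landing point $z\in J(g_0)$, since a priori \Cref{prop:basilica-dynamics} and the discussion after it supply $\varphi_a$ only on $K_\infty(0)$ and on boundaries of its Fatou components. Both points belong to the mating picture of \Cref{thm:per2-structure} and are carried out in \cite{Dudko} (Proposition 4.2, Lemma 4.4) and \cite{AY}. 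A minor additional subtlety: for $a\in\M_2$ the map $g_a$ need not be hyperbolic, so the geometric control on the bubbles $B_i$ must be imported from $g_0$ rather than read off in the dynamical plane of $g_a$.
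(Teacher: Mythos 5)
The paper does not actually prove this proposition: it is stated as a quotation of Propositions 6.3--6.4 of Luo's thesis, so there is no internal argument to compare against. Judged on its own terms, your sketch handles the easy parts correctly --- the case $a\in\mathcal{E}$, the landing of $\mathcal{B}_0(\theta)$ for the basilica, and the finite-chain case via the already-quoted \cite[Lemma 4.4]{Dudko} --- but the infinite-chain case, which is the entire content of the proposition, has a genuine gap, and the mechanism you propose there cannot be repaired as stated.

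The problem is twofold. First, the argument is circular at its crux: you want to conclude landing from ``$\varphi_a$ is continuous on the compact closure of $\bigcup_i B_i$,'' but $\varphi_a$ is supplied by \Cref{prop:basilica-dynamics} only on $K_\infty(0)$, which does not contain the accumulation point $z$ of an infinite chain; extending $\varphi_a$ continuously to $z$ is not a technical preliminary one can import --- it is precisely equivalent to the landing statement being proved. Second, and more tellingly, your infinite-case argument never uses the rationality of $\theta$: the basilica Julia set is locally connected, so $\mathcal{B}_0(\theta)$ lands for \emph{every} angle, and your transport argument would therefore show that every non-bifurcating bubble ray of every $g_a$ lands. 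That is false --- there are parameters in $\mathcal{M}_2$ with non-locally-connected Julia sets, for which $\varphi_a$ does not extend continuously to all of $J(g_0)$ and irrational bubble rays need not land. The correct proof (as in Luo) must exploit that for rational $\theta$ the bubble chain is eventually periodic, so the candidate landing point is a repelling or parabolic periodic point, and then run a linearization/expansion argument in the style of the Douady--Hubbard rational-ray landing theorem; uniform continuity of the semiconjugacy is not the right tool and cannot substitute for this.
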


The parabubble structure lets us state the previous lemma more concretely, in terms of angles. Recall that $a\in \Per_2(0)-\M_2$ lives in a hyperbolic component, which by \Cref{thm:per2-structure}, corresponds to a Fatou component in $\mathring{K}_B^*$. Suppose $\phi(a)=\varphi_a(a)$ is the point in $\mathring{K}_B^*$ corresponding to $a$, in the Fatou component $U$. Then, $\phi(a)$ lies along some internal ray of angle $\eta$. Let $z$ be the point in $\partial U$ which is the end point of the internal ray of angle $\eta$. Let $\theta$ be the external ray landing at $z$. To conclude the parallel constructions to $\Per_1(0)$, we set $\arg_B(a)=\theta$.

\begin{proposition}\label{prop:per2-bifurcation}
Moreover, if $a\in \Per_2(0)\setminus\M_2$ has argument $\theta$, then the dynamical bubble ray of $g_a$ at angle $\theta'$ bifurcates if and only if $2^k\theta'=\theta$ for some $k\in \N^*$. 
\end{proposition}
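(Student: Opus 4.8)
The strategy is to replay the proof of \Cref{prop:bifurcation-orbit} in the $\Per_2(0)$ setting, with the bubble-ray bifurcation criterion stated above --- a bubble ray of $g_a$ bifurcates exactly when it meets the free critical point $0$ or one of its iterated preimages --- taking the place of the fact that polynomial dynamical rays bifurcate precisely at iterated preimages of the critical point. Three inputs drive the argument. \emph{(i) Preimages of the critical value:} since $a \ne -1$, the equation $g_a(z) = a$ collapses to $(1+a)z^2 = 0$, so $g_a^{-1}(a) = \{0\}$; the free critical point is the unique preimage of the free critical value. \emph{(ii) The bubble ray through the critical value:} unwinding the definition of $\arg_B(a)$, write $\phi(a)$ for the point of $\mathring{K}_B^*$ corresponding to $a$ under $\varphi_a$; then $\phi(a)$ lies on the internal ray of some angle $\eta$ in a model Fatou component $U$, and $\theta = \arg_B(a)$ is the external angle landing at the endpoint $z \in \partial U$ of that internal ray. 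By \Cref{defn:bubble-rays}, the model bubble ray $\cal{B}_0(\theta)$ has $U$ as its terminal Fatou component and exits through precisely this internal ray, so it passes through $\phi(a)$; hence $\cal{B}_a(\theta) = \varphi_a(\cal{B}_0(\theta))$ passes through $a$, with $a$ on its terminal bubble. Conversely, reading off the terminal Fatou component and exit point of a bubble ray that reaches $a$ on its terminal bubble recovers $\theta$; so $\theta$ is characterized, among all angles, by this property. \emph{(iii) Naturality:} the dynamics carries bubble rays to bubble rays, the angles transforming by the basilica's angle-doubling, with the bookkeeping that one application of $g_a$ re-bases the ray from $\infty$ to $-1$ and two applications return it to $\infty$; this is the bubble-ray calculus of \cite{Luo}, \cite{Dudko}, \cite{AY}, and it underlies the criterion above. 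In particular, for any angle $\psi$ and any $j \ge 1$, an appropriate iterate of $g_a$ maps $\cal{B}_a(\psi)$ onto an initial segment of a bubble ray of angle $2^j\psi$.

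Granting (i)--(iii), the equivalence is bookkeeping. \emph{``If'':} suppose $2^k\theta' = \theta$ with $k \ge 1$; we show $\cal{B}_a(\theta')$ bifurcates, by induction on $k$. For $k = 1$, the ray $\cal{B}_a(\theta')$ is part of a $g_a$-preimage of $\cal{B}_a(\theta)$, hence contains a preimage of the point $a \in \cal{B}_a(\theta)$, which by (i) is the free critical point $0$; so $\cal{B}_a(\theta')$ bifurcates at $0$. For $k > 1$, apply the inductive hypothesis to $2\theta'$ (noting $2^{k-1}(2\theta') = \theta$): $\cal{B}_a(2\theta')$ bifurcates, terminating at an iterated preimage of $0$; pulling back once by $g_a$ places an iterated preimage of $0$ on $\cal{B}_a(\theta')$, which therefore bifurcates. \emph{``Only if'':} suppose $\cal{B}_a(\theta')$ bifurcates; by the criterion its terminal point $y$ lies in $g_a^{-m}(0)$ for some $m \ge 0$. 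Applying $g_a^{\,m+1}$ and using (iii), the terminal bubble of $\cal{B}_a(\theta')$ is carried onto the bubble containing $g_a^{\,m+1}(y) = g_a(0) = a$; thus the bubble ray of angle $2^{m+1}\theta'$ reaches $a$ on its terminal bubble, so by the characterization in (ii) we get $2^{m+1}\theta' = \theta$, and $k = m+1 \in \N^*$ works. Tracing the same chain of preimages, exactly as in \Cref{prop:bifurcation-orbit}, shows in addition that the bifurcation point of $\cal{B}_a(\theta')$ lies in $g_a^{-k}(a)$.

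The part requiring the most care is (iii): one must set up the $g_a$-action on bubble rays in the presence of the critical $2$-cycle, so that re-basing at $\infty$ versus $-1$ is tracked correctly and naturality is read as an inclusion of segments of possibly re-based bubble rays rather than a clean equality, and one must absorb the folding of $g_a$ at the centers of the Fatou components threaded by a bubble ray. A secondary subtlety lies in (ii): two external angles land at the touching point $z$, so one must check that it is $\cal{B}_0(\theta)$ itself --- not the bubble ray attached to the other Fatou component at $z$, nor a longer bubble ray passing through $z$ --- that is singled out by the property ``$a$ lies on the terminal bubble''. Finally, post-critically finite parameters, where $\phi(a)$ is the center of its Fatou component and $\arg_B(a)$ as literally defined is ambiguous, should be handled separately via \Cref{prop:parabubble}, which provides the two characteristic bubble rays; the statement then holds with either characteristic angle playing the role of $\theta$.
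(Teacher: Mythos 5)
The paper's own ``proof'' of this proposition is a one-line citation to Theorem~5.1 of \cite{Dudko}; you are attempting a self-contained argument modeled on \Cref{prop:bifurcation-orbit}. The architecture is the right one, and two of your inputs are solid: the computation $g_a^{-1}(a)=\{0\}$ (so $g_a^{-k}(a)=g_a^{-(k-1)}(0)$, which feeds directly into the bifurcation criterion ``$\cal{B}_a(\theta')$ bifurcates iff it meets $0$ or an iterated preimage''), and the forward/backward bookkeeping in the final two directions. The genuine gap is your step (ii), which is asserted rather than proved and is in fact the entire content of the cited theorem. The point $\phi(a)$ is the image of the \emph{parameter} $a$ under the homeomorphism $\Per_2(0)\simeq\M^*\sqcup K_B^*$ of \Cref{thm:per2-structure} (the notation $\varphi_a(a)$ in the paper is an abuse), so $\arg_B(a)=\theta$ is a statement about where $a$ sits in parameter space relative to parabubble rays. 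The statement you need --- that the free critical \emph{value} $a$ lies on the \emph{dynamical} bubble ray $\cal{B}_a(\theta)=\varphi_a(\cal{B}_0(\theta))$ --- identifies the parameter-space model point $\phi(a)\in K_B^*$ with the dynamical-plane model point $\varphi_a^{-1}(a)\in K_\infty(0)$. This is the $\Per_2(0)$ analogue of the Douady--Hubbard identity $\Phi_\M(c)=\Phi_c(c)$, which in $\Per_1(0)$ is a theorem and not a definition; your ``unwinding the definition of $\arg_B$'' silently conflates the two models and thereby assumes exactly what Dudko's Theorem~5.1 supplies.

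A second, smaller gap sits in the ``only if'' direction, which you flag but do not close. A point on the internal ray of $U$ from its center to $z$ lies on \emph{every} bubble ray whose chain threads through $U$ and exits via $z$, not only on the one terminating at $z$; so ``$\cal{B}_a(2^{m+1}\theta')$ contains $a$'' does not by itself force $2^{m+1}\theta'=\theta$. You need the stronger statement that the bubble containing the bifurcation point of $\cal{B}_a(\theta')$ maps under $g_a^{m+1}$ onto the \emph{terminal} bubble of $\cal{B}_a(2^{m+1}\theta')$ with $a$ on the exit internal ray, together with the fact that at most the two angles co-landing at $z$ satisfy this --- and then an argument ruling out (or accepting, as the $\Per_1(0)$ convention does) the second co-landing angle. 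Since item (iii), which you yourself identify as the hardest part, is also only described and not carried out, the proposal should be regarded as a correct plan whose central inputs still rest on the same external result the paper cites.
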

\begin{proof}
	This follows from Theorem 5.1 in \cite{Dudko}. 
\end{proof}

So, we are in a position to recover the similar lemma from $\Per_1(0)$ about continuity of external rays away from active wakes to continuity of bubble rays away from active wakes.

\begin{definition}
    For $\theta\in \R/\Z$, define
    \[
        \O_{\theta, 2} := \bigcup_{k\ge 0}\overline{ \mathcal{B}_{\cal{M}_2}(2^k\theta)} \subset \Per_2(0).
    \]
    In particular, if $\theta$ is rational, then the union is finite, so since rational rays land, $\O_{\theta, 2}$ has empty interior.
\end{definition}

 For $\theta\in\Q/\Z$, we denote by $\beta_a(\theta)$ the landing point of the dynamical ray $\mathcal{B}_a(\theta)$ (assuming it is well-defined). One version of the following result is given in \cite[Lemma~5.6]{Dudko}, but we produce a proof here for reference.  
 
\begin{lemma}\label{lem:per2-dynam-ray-holomorphic-landing}
    Fix $\theta\in\Q/\Z$. The map $g_a \mapsto \beta_a:=\beta_a(\theta)$ is holomorphic on $ \Per_2(0)\sm\O_{\theta, 2}$.
\end{lemma}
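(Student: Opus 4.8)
The plan is to follow the structure of the proof of \Cref{lem:dynam-ray-holomorphic-landing} verbatim, replacing each ingredient from the $\Per_1(0)$ setting by its $\Per_2(0)$ counterpart, all of which have been assembled above. The three ingredients are: (1) a way to construct the dynamical bubble ray $\mathcal{B}_a(\theta)$ together with approximating curves parameterized by a ``potential''; (2) a criterion for when the bubble ray bifurcates, namely \Cref{prop:per2-bifurcation} (bifurcation at $a$ with $\arg_B(a)$ in the forward orbit of $\theta$ under doubling), which guarantees no bifurcation on $\Per_2(0)\sm\O_{\theta,2}$; and (3) Montel's theorem to upgrade continuity to holomorphicity.

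First I would fix $\theta\in\Q/\Z$ and set $V := \Per_2(0)\sm\O_{\theta,2}$. For the approximating curves, I would use the semi-conjugacy $\varphi_a$ of \Cref{prop:basilica-dynamics}: the bubble ray $\mathcal{B}_0(\theta)$ in the model dynamical plane of $g_0$ is parameterized by a ``bubble depth'' (or, after transporting through the conjugacy of $g_0$ with the basilica, by the potential of the corresponding external ray of the basilica), and one takes $g_\e(a) := \varphi_a(\text{point at depth }\e\text{ along }\mathcal{B}_0(\theta))$, so that $g_\e(a)$ lies on $\mathcal{B}_a(\theta)$ and $g_\e(a)\to\beta_a(\theta)$ as $\e\to 0$ since rational bubble rays land (\Cref{prop:rational_rays_behavior}). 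By \Cref{prop:per2-bifurcation}, for $a\in V$ the ray $\mathcal{B}_a(\theta)$ does not bifurcate, and since $\varphi_a$ extends continuously to the boundary of every Fatou component of $K_\infty(a)$ (the upshot noted after the bubble-ray bifurcation lemma), each $g_\e$ is continuous on $V$. To see $g_\e$ is holomorphic near a fixed $a_0\in V$: since $g_\e(a)$ sits at finite bubble depth, after finitely many applications of $g_a$ it lands in the immediate basin $\mathcal{A}_\infty(a)$, where the Böttcher/internal-ray coordinate is holomorphic in both $z$ and $a$; because $g_\e(a)$ is not an iterated preimage of the free critical point (else $a\in\O_{\theta,2}$ by \Cref{prop:per2-bifurcation}), the inverse function theorem applied to $F(z,a)=(g_a^{\circ N}(z),a)$ gives a holomorphic local inverse, exhibiting $a\mapsto g_\e(a)$ as holomorphic near $a_0$. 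Hence $g_\e$ is holomorphic on $V$.

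Then I would run the normal-families argument: on the open set where $g_\e(a)$ avoids both the free critical point and the points of the critical $2$-cycle $\{\infty,-1\}$ (which it does, since $g_\e(a)$ lies on a dynamical ray in $K_\infty(a)$ disjoint from these, away from the excluded locus), the family $\{g_\e : \e>0\}$ omits at least three values and so is normal by Montel. Uniform-on-compacts convergence of $g_\e\to\beta_a(\theta)$ as $\e\to 0$ then forces $\beta_\bullet(\theta)$ to be holomorphic there. Finally, as in the $\Per_1(0)$ proof, any remaining punctures (the orbifold point $g_\infty$, or a point where $g_\e$ would pass through $\infty$ or $-1$) are isolated and $\beta_a(\theta)$ is locally bounded near them, so by removability of singularities $\beta_\bullet(\theta)$ extends holomorphically to all of $V$.

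The main obstacle is the first ingredient: making precise the ``bubble depth'' parameterization of $\mathcal{B}_0(\theta)$ and verifying that $g_\e(a)=\varphi_a(\cdot)$ depends holomorphically on $a$ rather than merely continuously. In the $\Per_1(0)$ case this was transparent because $g_\e(c)=\Phi_c(\e e^{2\pi i\theta})$ is manifestly holomorphic via the Green's function; here one must instead exploit that $\varphi_a$ is conformal on the interior of $K_\infty(0)$ and that, at finite depth, a suitable forward iterate of $g_\e(a)$ lies in the immediate basin $\mathcal{A}_\infty(a)$ where a holomorphic Böttcher-type coordinate (varying holomorphically with $a$) is available — this is where the bulk of the care goes, and where one leans on the holomorphic motion/mating-continuity statements underlying \Cref{thm:per2-structure} and \Cref{prop:basilica-dynamics}. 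Once holomorphicity of the approximants is in hand, the Montel and removable-singularity steps are routine adaptations of the $\Per_1(0)$ argument.
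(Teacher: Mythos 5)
Your proposal is correct in outline, but the paper's own proof takes a different and substantially shorter route, and the difference lands exactly on what you identify as your ``main obstacle.'' Instead of building a continuous family of approximants $g_\e(a)$ at potential $\e$ along the bubble ray (which forces you to prove that the semi-conjugacy $\varphi_a$, or equivalently the B\"ottcher/internal-ray coordinate of $\mathcal{A}_\infty(a)$, depends holomorphically on $a$, and then to run the inverse-function-theorem step), the paper uses the \emph{discrete} sequence of bubble touching points $\alpha^i_a$ from \Cref{defn:bubble-rays}. These are iterated preimages of the repelling fixed point $\alpha_a$, so each $a\mapsto\alpha^i_a$ is holomorphic on a neighborhood $W\subseteq\Per_2(0)\sm\O_{\theta,2}$ for free (they solve algebraic equations and, away from $\O_{\theta,2}$, never collide with the free critical point), and they omit the three values $\infty,+1,-1$. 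Montel then gives a locally uniformly convergent subsequence whose holomorphic limit must agree with the pointwise limit $\beta_a$, and one is done --- no potential parameterization, no holomorphic dependence of $\varphi_a$ on $a$, and no removable-singularity step are needed. Your approach buys a closer formal parallel with \Cref{lem:dynam-ray-holomorphic-landing} and would work if carried out, but the holomorphicity of your approximants is genuinely the hardest part and is not established anywhere in the paper as stated, whereas the paper's choice of approximants makes that step automatic. (Both your write-up and the paper's proof leave the finite-bubble-ray case and parameters in $\overline{\cal E}$ implicit, so I do not count that against you.)
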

\begin{proof}
It is clear that $\beta_a$ is well-defined for all $g_a \in \Per_2(0)\sm \O_{\theta, 2} $.
Let $W$ be a neighborhood of $a_0$ that is contained in $\Per_2(0) \setminus \O_{\theta, 2}$.  Note that for each $i$, the function $g_a \mapsto \alpha^i_a$ is holomorphic on $W$, and avoids the points $\{\infty, +1, -1\}$. 
	Consider a map $g_{a_0}\in\Per_2(0)\setminus \overline{\cal{E}}$.
	Without loss of generality suppose that $\theta$ corresponds to an infinite bubble ray.
	Then, there is a sequence $\alpha^i_{a_0}$ of preimages of the fixed point $\alpha_{a_0}$ which converges to $\beta_{a_0}$.  
	By Montel's theorem, there exists a subsequence of maps $g_a \mapsto \alpha^{i_k}_a$ that converges locally uniformly to a holomorphic map $G: W \to \widehat{\C}$. On the other hand, for each $g_a \in W$, we have $\alpha^{i_k}_a \to \beta_a$. Combining these statements, we see that $G(g_a) = \beta_a$ for all $a \in W$, finishing the proof.
    \begin{figure}
       \begin{overpic}[width=0.6\textwidth]{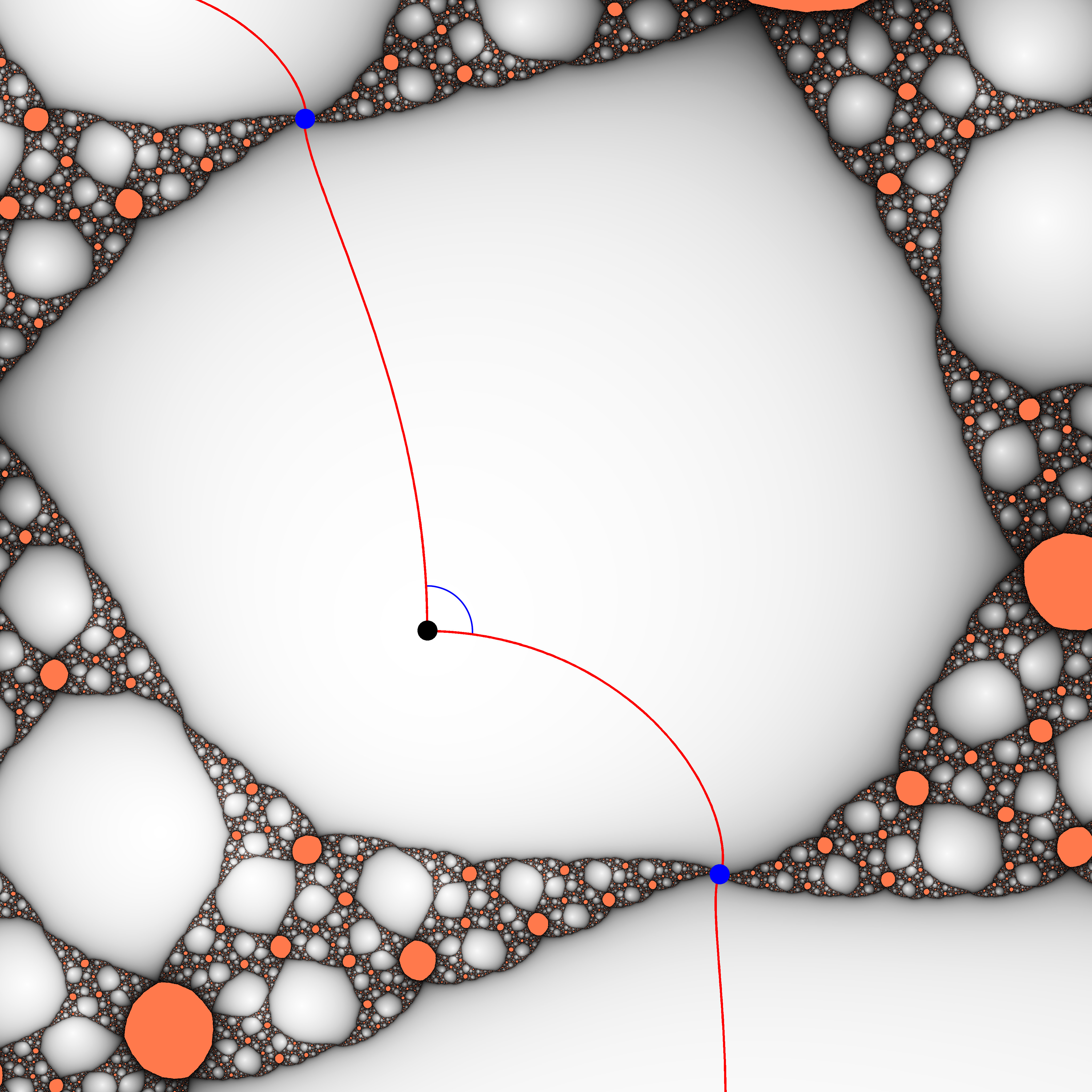}
            \put(61,23){\textcolor{blue}{\(\alpha^2_a\)}}
            \put(42,46){{\textcolor{blue}{\(\hat\theta_3=-\frac{1}{4}\)}}}
            \put(29,85){\textcolor{blue}{\(\alpha^3_a\)}}
        \end{overpic}
        \caption{Segment of the dynamical bubble ray at angle $\theta=11/15$ for $g_a(z)=\frac{z^2+a}{1-z^2}$ with $a\approx 1.052+1.659i$.}
    \end{figure}
\end{proof}

Just as in the case of $\Per_1(0)$, we have the following key corollary:
\begin{corollary}\label{cor:per2-co-landing-implies-primitive}
    Suppose $\theta,\theta'\in\Q/\Z$ are distinct angles with period $p$ under angle doubling, and let $\beta_a(\theta)$, $\beta_a(\theta')$ be as in \Cref{lem:per2-dynam-ray-holomorphic-landing}.
    Suppose further that $\theta$ and $\theta'$ do not share an orbit.
    For any $a\in\C\sm(\cal{O}_{\theta, 2} \cup\cal{O}_{\theta', 2})$, the points $\beta_a(\theta)$ and  $\beta_a(\theta')$ share an orbit if and only if $a$ belongs to a primitive wake that is active at $\theta$ and $\theta'$.
\end{corollary}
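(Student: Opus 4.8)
The plan is to follow the strategy of \Cref{cor:co-landing-implies-primitive} in the $\Per_1(0)$ setting, transferring everything through the mating structure of \Cref{thm:per2-structure} and the bifurcation results \Cref{prop:per2-bifurcation} and \Cref{lem:per2-dynam-ray-holomorphic-landing}. The key point is that, away from $\cal{O}_{\theta,2}\cup\cal{O}_{\theta',2}$, the landing points $\beta_a(\theta)$ and $\beta_a(\theta')$ vary holomorphically, and the (open) locus where they share an orbit is therefore detected by analytic continuation.

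First I would establish the ``only if'' direction. Since $\theta,\theta'$ have period $p$ under doubling and do not share an orbit, at the central parameter $g_0$ (which carries basilica dynamics) the cycles of $\beta_0(\theta)$ and $\beta_0(\theta')$ are distinct: by \Cref{prop:parabubble} the bubble-ray combinatorics at $g_0$ match the external-ray combinatorics of the basilica polynomial $f_{-1}$, and distinct doubling orbits of period $p$ land at distinct cycles in $J_B$. Now suppose $a$ lies in a connected component $V$ of $\C\sm(\cal{O}_{\theta,2}\cup\cal{O}_{\theta',2})$ containing $g_0$ in its closure, on which $\beta_a(\theta),\beta_a(\theta')$ are holomorphic (\Cref{lem:per2-dynam-ray-holomorphic-landing}); the condition ``$g_a^{\circ j}(\beta_a(\theta))=\beta_a(\theta')$ for some $0\le j<p$'' is then an analytic condition, so the set where it fails is open and dense unless it holds identically. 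Hence the two landing points can only share an orbit after $a$ has crossed the boundary of some wake active at $\theta$ or $\theta'$ — and by the parallel of \Cref{cor:co-landing-implies-primitive}'s argument (using that the two characteristic bubble rays of a \emph{non-primitive} component share a doubling orbit, whereas those of a primitive one do not), co-landing of rays from distinct orbits forces the active wake to be primitive. I would import the $\Per_1(0)$ argument here essentially verbatim, noting that the only inputs are the holomorphic motion of ray landing points and the combinatorial self-similarity of wakes, both of which hold in $\Per_2(0)$ by \Cref{lem:per2-dynam-ray-holomorphic-landing}, \Cref{prop:per2-bifurcation}, and the mating description of wakes following \cite{AY}.

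For the ``if'' direction, suppose $a$ lies in a primitive wake $W=\cal{W}(\alpha,\beta)$ active at both $\theta$ and $\theta'$. Activity at $\theta$ means the doubling orbit of $\theta$ contains $\alpha$ or $\beta$; since the two characteristic angles of a primitive component are in the \emph{same} $\Per_2(0)$-wake but distinct doubling orbits, and since the rays $\alpha,\beta$ co-land at the root of $W$ for every parameter in $W$ (the transferred version of the $\Per_1(0)$ statement that characteristic rays co-land throughout the wake), pulling back this co-landing along the dynamics shows that the preimages of the co-landing point hit the bubble rays in the orbits of $\theta$ and $\theta'$; concretely, the landing point of $\cal{B}_a(\theta)$ and the landing point of $\cal{B}_a(\theta')$ are forced into the same grand orbit, and then periodicity (both have exact period $p$, by the analogue of \Cref{cor:unique-ext-arg}) upgrades ``same grand orbit'' to ``same cycle''.

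The main obstacle I anticipate is making the transfer rigorous at the points $a\notin\M_2$ and on $\partial\cal{E}$, where the Julia set of $g_a$ need not be locally connected and the semiconjugacy $\varphi_a$ of \Cref{prop:basilica-dynamics} is only a semiconjugacy; one must check that the landing-point maps $\beta_a(\theta)$ extend continuously across these loci and that the combinatorial relations (co-landing, orbit identification) are preserved. This is where I would lean on \cite[Theorem 5.1]{Dudko}, \cite[Lemma 5.6]{Dudko}, and the puzzle-piece compatibility in \cite{AY}, rather than reproving local connectivity; the argument is then structurally identical to the $\Per_1(0)$ case, with ``external ray'' replaced by ``bubble ray'' throughout.
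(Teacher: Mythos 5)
Your proposal follows exactly the route the paper intends: the paper states this corollary without proof, presenting it as the direct analogue of \Cref{cor:co-landing-implies-primitive} (which is itself only proved in the reference \cite{danny}), and your transfer of that argument through the mating structure, the holomorphy of bubble-ray landing points (\Cref{lem:per2-dynam-ray-holomorphic-landing}), and the bifurcation criterion (\Cref{prop:per2-bifurcation}) is the expected one. The only step to tighten is the inference from ``the orbit-sharing condition is analytic'' to ``it can only change by crossing a wake boundary'': an analytic condition could a priori hold at an isolated parameter of a component where it is generically false, so you should add that a merger of two distinct period-$p$ orbits forces a parabolic root of ray period $p$, and every such root active at $\theta$ or $\theta'$ already lies on $\cal{O}_{\theta,2}\cup\cal{O}_{\theta',2}$ and is excluded from the domain, making the condition genuinely locally constant on each complementary component.
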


\subsection{Underlying combinatorics of the cell structure.}
The goal of this section is to generalize and relate the algorithm from \Cref{S:cell-p1} to construct a cell structure for $\MC_{p}(\mathcal{F}_2)$, the marked cycle curve of period $p$ over $\mathcal{F}_2 = \Per_2(0)$.

The following definition will be justified in \Cref{lem:bub-ray-props}.
\begin{definition}
    A PCF hyperbolic map $g\in \M_2$ is \emph{primitive} if $g=g_0\sqcup f$ for some PCF hyperbolic primitive polynomial $f\in \M$.
\end{definition}

Let $R_{p,2}$ be the set of roots of primitive hyperbolic components in $\M_2$.
For each $a \in R_{p,2}$, let $V_2(a)$ denote the vein from the basilica polynomial $g_0$ to $a$. 
Its existence is nontrivial due to the complicated topology of the non-escaping set in $\cal{M}_2$. 
However, \Cref{thm:per2-structure} plus understanding the lamination of $g_0$ means we can guarantee that the vein $V_2(a)$ does not intersect itself.

\begin{lemma}
    If an angle $\theta \in \mathbb{S}^1 \setminus [1/3, 2/3]$ is biaccessible in the basilica lamination, then it is not biaccessible in $\M$.
\end{lemma}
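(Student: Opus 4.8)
The plan is to reduce everything to a statement about the dynamical plane of a single Misiurewicz parameter and argue by contradiction. First, unpack the hypotheses. By construction the leaves of the basilica lamination $\cal{L}_B$ are the iterated doubling-preimages of the leaf $\{1/3,2/3\}$, and $\{1/3,2/3\}$ is itself fixed by $\tau$ (being a $2$-cycle), so an angle is biaccessible in $\cal{L}_B$ precisely when some iterate $\tau^k(\theta)$ equals $1/3$ or $2/3$. Combined with $\theta\notin[1/3,2/3]$, this makes $\theta$ strictly preperiodic under $\tau$, and gives $\tau^n(\theta)=1/3$ for some $n\ge 1$ (replacing $n$ by $n+1$ if the first such iterate equals $2/3$). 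Now suppose, for contradiction, that $\theta$ is also biaccessible in $\M$, i.e.\ $R_M(\theta)$ shares its landing point with another parameter ray. Since $\theta$ is strictly preperiodic, $R_M(\theta)$ lands at a Misiurewicz point $c_0\in\M$, and the parameter rays landing at $c_0$ are exactly the dynamical rays of $f_{c_0}$ landing at the critical value $c_0$ (cf.\ \cite{Douady_exploringthe,danny}); so it is enough to show that $R_{c_0}(\theta)$ is the \emph{only} dynamical ray of $f_{c_0}$ landing at $c_0$.

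The key is to understand one repelling periodic point of $f_{c_0}$. Because $\theta\notin[1/3,2/3]$ and parameter rays are pairwise disjoint, $R_M(\theta)$ avoids the wake $\cal{W}(1/3,2/3)$, and its landing point is not the root $-3/4$ of the period-$2$ component — the only point at which $R_M(1/3)$ lands, and there it lands together only with $R_M(2/3)$. Hence $c_0\notin\overline{\cal{W}(1/3,2/3)}$; moreover, the only wake active at the angle $1/3$ in the sense of \Cref{defn:active-per1-wake} is $\cal{W}(1/3,2/3)$, since such a wake must be bounded by $R_M(1/3)$ or $R_M(2/3)$. So $c_0$ lies in the closure of no wake active at $1/3$, and \Cref{cor:unique-ext-arg} applies: the landing point $x_n$ of $R_{c_0}(1/3)$ is a repelling point of exact period $2$ whose unique external argument is $1/3$.

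Next I would propagate this uniqueness backward along the critical orbit. Let $x_k$ be the landing point of $R_{c_0}(\tau^k\theta)$, so that $x_0=c_0$, $f_{c_0}(x_k)=x_{k+1}$, and $x_k=f_{c_0}^{\circ(k+1)}(0)$; in particular $x_k\ne 0$ for every $k$, since $c_0$ being Misiurewicz means the critical point $0$ is strictly preperiodic. I claim by downward induction on $k=n,n-1,\dots,0$ that $\tau^k\theta$ is the unique external argument of $x_k$. The only genuinely dynamical content is the inductive step: any ray $R_{c_0}(\psi)$ landing at $x_k$ is mapped by $f_{c_0}$ to a ray landing at $x_{k+1}$, so $\tau\psi=\tau^{k+1}\theta$ and hence $\psi\in\{\tau^k\theta,\ \tau^k\theta+1/2\}$; and $x_k$ cannot carry both of these angles, because $f_{c_0}$ is a local homeomorphism at the non-critical point $x_k$ and so cannot identify two distinct rays landing at $x_k$ (they would both be sent to the single ray landing at $x_{k+1}$). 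Evaluating the claim at $k=0$ shows that only $R_{c_0}(\theta)$ lands at $c_0$, contradicting biaccessibility of $\theta$ in $\M$.

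The genuinely dynamical part is the short local-injectivity observation at the end; the step I expect to need the most care is the parameter-to-dynamics dictionary used throughout — that $c_0$ is a Misiurewicz point whose parameter rays match the critical-value rays of $f_{c_0}$, and that $\cal{W}(1/3,2/3)$ is the \emph{only} wake active at $1/3$, which itself rests on knowing precisely that $R_M(1/3)$ lands only at $-3/4$ and only together with $R_M(2/3)$. Both are classical facts about the Mandelbrot set (see \cite{Douady_exploringthe,Milnor-External,danny}); once they are invoked, the argument is essentially bookkeeping together with \Cref{cor:unique-ext-arg}.
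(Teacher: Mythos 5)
Your proof is correct, but it is organized differently from the paper's. The paper works directly in the dynamical plane of an arbitrary polynomial $f_c$ witnessing the biaccessibility of $\theta$ in $\M$ (two angles $\theta\ne\theta'$ co-landing at the critical value) and splits into two cases according to whether the doubling orbits of $\theta$ and $\theta'$ eventually merge. If they do not merge, then $2^n\theta\in\{1/3,2/3\}$ co-lands with the distinct angle $2^n\theta'$, forcing the leaf $(1/3,2/3)$ into the lamination of $f_c$ and hence $c$ into the $1/2$-limb, which is excluded by $\theta\notin[1/3,2/3]$; if they do merge, the two rays are forced to collide at the critical point, making the critical orbit periodic and contradicting preperiodicity. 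You instead fix the Misiurewicz landing point $c_0$ of $R_M(\theta)$, apply \Cref{cor:unique-ext-arg} (after checking that $\cal{W}(1/3,2/3)$ is the only wake active at $1/3$ and that $c_0$ avoids its closure) to get uniqueness of the external argument at the period-$2$ end of the critical orbit, and propagate uniqueness backward by local injectivity of $f_{c_0}$ off the critical point. The two arguments use the same two dynamical facts, just traversed in opposite directions: your base case is essentially the contrapositive of the paper's first case (the $1/2$-limb exclusion, here packaged into \Cref{cor:unique-ext-arg}), and your inductive step is the paper's second case (angles differing by $1/2$ cannot co-land away from the critical point --- which you could also deduce from the symmetry $R_{c_0}(\psi+\tfrac12)=-R_{c_0}(\psi)$ instead of local injectivity). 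What your route costs is the extra parameter-to-dynamics dictionary for Misiurewicz points, which the paper avoids by taking the dynamical-plane formulation of biaccessibility in $\M$ as its starting point; what it buys is a uniform backward induction in place of the case analysis.
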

\begin{proof}
    If $\theta$ is biaccessible in the basilica lamination,
    there exists $n \ge 0$ such that $2^n \theta = 1/3$ or
    $2^n \theta = 2/3$.
    If $\theta$ is also biaccessible in the Mandelbrot set $\M$, then there exists
    a quadratic polynomial $f_c$ and $\theta' \ne \theta$ such that $\theta, \theta'$ both land at the critical value,
    or at the root of the Fatou component containing the critical value.
    
    Now, we have two cases: if $2^n \theta \neq 2^n \theta'$, then $\theta_1 = 2^n \theta \in \{1/3, 2/3\}$ is biaccessible, since the ray at angle $2^n \theta'$ also lands on the same point.
    Since $\theta_1$ is biaccessible and the angle of another ray landing at the same point must have the same period, and the only two angles of period $2$ are $1/3, 2/3$, then the leaf $(1/3, 2/3)$ belongs to the polynomial lamination.
 But the leaf $(1/3, 2/3)$ belongs to a quadratic lamination if and only if the polynomial lives in the $1/2$-limb, which we excluded.
    
    If, on the other hand, $2^n \theta = 2^n \theta'$, then there exists $j \ge 0$ such that $2^j \theta = 2^j \theta' + 1/2$.
    This implies that the landing point of the external ray $R_{c}(2^j \theta)$ must be $0$, the critical point.
    Hence, the critical point lies in the Julia set,
    hence the Julia set is a dendrite and
    the ray pair $(\theta, \theta')$ lands on the critical value $c$. Thus, there exists
    $j$ such that $f^j(c) = 0$, so the critical point is purely periodic. This contradicts the fact that
    $c$ lies in the Julia set, concluding the argument.
\end{proof}

\begin{corollary}\label{cor:veins-embed}
For each $p \geq 3$, $\mathcal{V}_p$ embeds into $\Per_2(0)$. 
\end{corollary}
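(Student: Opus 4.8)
The plan is to move the entire family of veins onto the Mandelbrot side of the mating picture of $\Per_2(0)$, where it becomes a union of ordinary Mandelbrot veins, and then to verify that the quotient map defining the mating does not fold this union onto itself; the previous lemma supplies exactly the input for the no-folding step. Throughout, write $\mathcal{V}_p=\bigcup_{a\in R_{p,2}}V_2(a)$.

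\emph{Transport.} First I would use the homeomorphism $\M_2\simeq\M^*\sqcup J_B^*$ of \Cref{thm:per2-structure}, which respects hyperbolic components, their roots, and puzzle pieces: it sends $g_0$ to $f_0(z)=z^2$, the center of the main cardioid, and, by \Cref{prop:parabubble}, sends each $a\in R_{p,2}$ to the root $c_a$ of a primitive hyperbolic component of period $p$ inside $\M^*$, carrying the vein $V_2(a)$ to the usual Mandelbrot vein $V(c_a)$ from $0$ to $c_a$. Because $c_a\notin\cal{L}_B$ while $-3/4$ is a cut point of $\M$ separating $\cal{L}_B$ from the component containing $0$ and $c_a$, the arc $V(c_a)$ avoids $-3/4$ and lies inside $\M^*$. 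Hence $\mathcal{V}_p=q(T)$, where $T:=\bigcup_a V(c_a)\subseteq\M^*$ and $q\colon\M^*\sqcup K_B^*\to\Per_2(0)$ is the mating quotient.

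\emph{Reduction.} Next I would record two facts. First, $T$ is an embedded finite tree in $\M^*$: each $V(c_a)$ is an arc (\cite{veins}), and any two of them share a common initial sub-arc out of $0$ since $\M$ is tree-like along its veins. Second, every point of $T$ lying on $\partial\M$ — be it a branching or attaching point in the interior of a vein or an endpoint $c_a$ — is a cut point of $\M$, hence biaccessible there (at least two parameter rays land on it); and, since $T\subseteq\M^*=\M\sm\cal{L}_B$ with $-3/4\notin T$, every external angle of a point of $T$ lies in $\mathbb{S}^1\sm[1/3,2/3]$. Now the crucial point: $q$ can identify two distinct points of $\M^*$ only when they lie on $\partial\M$ and some external angle of one of them is biaccessible in the basilica lamination $\cal{L}_B$, this being how the mating is built — $\cal{L}_B$ is laid along external rays of $\M$ and collapsed, possibly through ray-equivalence chains. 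So if $q(x)=q(x')$ with $x\ne x'$ in $T$, then some external angle $\psi$ of $x$ is biaccessible in $\cal{L}_B$; by the two facts just recorded, $\psi$ is also biaccessible in $\M$ and lies outside $[1/3,2/3]$, contradicting the previous lemma. Thus $q|_T$ is injective, and so $\mathcal{V}_p=q(T)$ is a continuous injective image of a compact tree in a Hausdorff space, hence a homeomorphism onto its image, i.e.\ an embedded topological tree. (The hypothesis $p\ge 3$ enters precisely here: a period-$p$ characteristic angle of $c_a$ cannot be an endpoint of an $\cal{L}_B$-leaf, since every such endpoint eventually maps under doubling to the period-$2$ pair $\{1/3,2/3\}$ — which is also what makes the veins $V_2(a)$ well-defined.)

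I expect the main obstacle to be the statement used at the crucial point: that the mating quotient identifies points only in the manner dictated by $\cal{L}_B$, including through ray-equivalence chains and, more delicately, across the parameters of $\M$ that are not locally connected. This is where one must invoke the lamination model of $\Per_2(0)$ from \cite{Dudko} and its compatibility with puzzle pieces; the auxiliary claim, that interior vein points are genuinely biaccessible in $\M$ even through non-locally-connected regions, should rest on the structure theory of veins (\cite{veins}) rather than on local connectivity of $\M$ itself.
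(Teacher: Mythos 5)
Your proposal is correct and follows essentially the same route as the paper: the corollary is deduced from the preceding lemma exactly as you describe, by transporting the veins into the $\M^*$ side of the mating model of $\Per_2(0)$ and observing that the quotient can only identify distinct points of $\M^*$ through an angle that is biaccessible in $\cal{L}_B$, while interior boundary points of the veins are biaccessible in $\M$ with angles outside $[1/3,2/3]$, which the lemma forbids. The paper gives no further detail than this (it leans on \Cref{thm:per2-structure} and the structure of veins for the same facts you flag as delicate), so your write-up is a faithful, if more explicit, version of the intended argument.
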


\noindent Denote the image of this embedding by 
$$\cal{V}_{p,2}:=\bigcup_{a\in R_{p,2}} V_2(a),$$ 
the union of all veins from the basilica component containing $g_0$ to all roots of a primitive component of period $p$.

The complement of $\mathcal{V}_{p,2}$ in $\Per_2(0)$ is open, connected and simply connected.
Similar to the $\Per_1(0)$ case, we can define the tessellation on $\MC_p(\mathcal{F}_2)$ by pulling back a tessellation on $\Per_2(0)$. 
Consider the following tessellation of  $\Per_2(0)$: 

\begin{enumerate}
    \item [(a)] One vertex for $g_0$;
    \item [(b)] One edge for each vein $V_2(a)$;
    \item [(c)] One face containing the external component $\mathcal{E}$ of $\Per_2(0)\sm\M_2$.
\end{enumerate}

Let $\pi_2: \MC_p(\mathcal{F}_2)\to \mathcal{F}_2$ be the covering map forgetting the cycle data.
For each vein $V_2(a)$, the set $\pi_2^{-1}(V_2(a))$ has several connected components. Let us call a component \emph{branched} if that component contains a branch point for $\pi_2$.

\begin{definition}
    Consider $p\geq 1, p\neq 2$. The cell complex $\Sigma_{p, 2}$ is defined as follows:
    \begin{enumerate}
        \item [(a)] Vertices at $\pi_2^{-1}(g_0)$;
        \item [(b)] One edge for each branched lift of a vein in $V_2(a)$ with $a\in R_{p,2}$;
        \item [(c)] Faces are closures of connected components of $\pi_2^{-1}(\mathcal{E})$.
    \end{enumerate}
\end{definition}

In the next sections we will see how to enumerate all $0$, $1$, and $2$-cells of the complex  $\Sigma_{p, 2}$.

\subsubsection{Ternary cycle classes}
As in the previous section, the combinatorics of a shift map plays a central role.
Whereas $f_0(z)=z^2$ underpinned the combinatorics of $\Per_1(0)$,
the special map in $\Per_2(0)$ is the map specified by the orbifold point at infinity, $z\mapsto z^{-2}$.

Let $\sigma_2: \set{0,1,2}^\N\to \set{0,1,2}^\N$ be the left-shift map.

\begin{definition}
    An \emph{abstract ternary $p$-cycle} is an element $\xi$ of $\set{0,1,2}^\N / \sigma_2$ with exact period $p$;
    that is to say, satisfying $\#\xi = p$.

    An abstract ternary $p$-cycle $\xi=(x_0,x_1,\dots)$ is \emph{admissible} if some (or equivalently every) element $\mathbf{x}=(x_0,x_1,\dots)\in\xi$ satisfies $x_i \ne x_{i+1}$ for all $i$ modulo $n$.

    We denote by $C_{p,2}$ the set of admissible abstract ternary $p$-cycles.
\end{definition}

For instance, for $p=4$ there are exactly three admissible abstract ternary cycles:
\begin{align*}
	0102 \hspace{2cm} 1210 \hspace{2cm} 2021.
\end{align*}

\begin{proposition}\label{prop:anti-doubling-coding}
    $C_{p,2}$ is in canonical bijection with the set of $p$-cycles of the anti-doubling map $-2:  \mathbb{S}^1 \ra \mathbb{S}^1 $ given by
    \begin{align*}
        \theta  & \mapsto-2\theta \mod 1,
    \end{align*}
    or equivalently, the set of $p$-cycles of $z\mapsto z^{-2}$ excluding $\set{0,\infty}$.
\end{proposition}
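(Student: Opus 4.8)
The plan is to exhibit an explicit correspondence at the level of symbolic sequences. A point $\theta \in \mathbb{S}^1$ other than those we need to exclude has a base-$3$ expansion $\theta = .x_0 x_1 x_2 \dots$ once we fix a convention (say the expansion avoiding eventually constant tails where possible); the key observation is that if we write $\theta$ in base $3$ then $-2\theta \bmod 1$ has base-$3$ expansion obtained by a shift \emph{plus} a digit transformation, and the admissibility condition $x_i \ne x_{i+1}$ is exactly what makes this digit transformation reduce to the pure shift $\sigma_2$. Concretely, I would first verify that under the anti-doubling map the digits of the expansion satisfy a recursion, and that restricting to sequences with no two consecutive equal digits makes the map act as $(x_0, x_1, x_2, \dots) \mapsto (x_1, x_2, \dots)$; this is the computational heart of the argument and the one place where a short but genuine calculation (tracking carries in base $3$ under multiplication by $-2$) is needed.

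Next I would set up the bijection itself. Given an admissible ternary $p$-cycle $\xi$, pick a representative $\mathbf{x} = (x_0, x_1, \dots)$, form the periodic base-$3$ number $\theta(\mathbf{x}) := \sum_{k \ge 0} x_k 3^{-k-1}$, and observe that changing the representative by a cyclic shift replaces $\theta(\mathbf{x})$ by $-2\theta(\mathbf{x})$ or an iterate, so the orbit $\{\theta, -2\theta, \dots\}$ is well-defined from $\xi$; I would check it has exact period $p$ using that $\#\xi = p$, i.e.\ that the underlying sequence is not periodic with smaller period, which transfers directly because the map on sequences is the shift. Conversely, a $p$-cycle of the anti-doubling map gives a periodic sequence of base-$3$ digits; the content is to see that this sequence is automatically admissible, i.e.\ no two consecutive digits are equal — this holds because a point $\theta$ whose expansion has $x_i = x_{i+1}$ for some $i$ would, after applying $-2$ the appropriate number of times, land on $0$ (the digit transformation degenerates), so such $\theta$ cannot lie on a periodic cycle disjoint from the excluded set. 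This shows the two constructions are mutually inverse.

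For the final equivalence with $p$-cycles of $z \mapsto z^{-2}$ on $\mathbb{S}^1$, I would use the standard identification $\mathbb{S}^1 = \R/\Z$ via $\theta \mapsto e^{2\pi i \theta}$, under which $z \mapsto z^{-2}$ corresponds precisely to $\theta \mapsto -2\theta \bmod 1$; the excluded fixed points $\set{0,\infty}$ of $z \mapsto z^{-2}$ correspond to excluding the relevant degenerate angles, and since a genuine $p$-cycle with $p \ge 1$ of $z\mapsto z^{-2}$ never contains $0$ or $\infty$ (these are the two critical points forming a $2$-cycle in the $\widehat\C$ picture, but on $\mathbb{S}^1$ they are simply not periodic of the right type), the two sets coincide. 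I would also note the bijection is canonical: it does not depend on the choice of representative, only on the digit-to-angle dictionary, which is the point of the word ``canonical'' in the statement.

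The main obstacle I anticipate is pinning down the base-$3$ digit recursion under multiplication by $-2$ cleanly, in particular handling the boundary cases where a base-$3$ expansion is non-unique (expansions ending in repeating $2$'s versus terminating) and confirming that admissibility precisely excludes exactly the sequences that would collide with these ambiguous or degenerate expansions. Once that bookkeeping is done, the bijection and the exact-period claim follow formally from the fact that the map has become the honest shift $\sigma_2$ on $\set{0,1,2}^\N$.
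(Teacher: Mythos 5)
Your proposal has a genuine error at what you yourself identify as its computational heart: the claim that, for admissible sequences, the base-$3$ expansion of $-2\theta \bmod 1$ is the shift of the base-$3$ expansion of $\theta$. This is false, and no bookkeeping of carries will repair it: the shift on base-$3$ digits is conjugate to the \emph{tripling} map $\theta \mapsto 3\theta \bmod 1$, not to $\theta \mapsto -2\theta \bmod 1$. Concretely, take $p=4$ and $\theta = 4/15$, whose base-$3$ expansion is $0.0\overline{2101}$ (digits $0,2,1,0,1,2,1,0,1,\dots$, which happen to be admissible). Shifting these digits yields $0.\overline{2101}_3 = 4/5$, whereas $-2\theta \equiv 7/15 \pmod 1$; moreover the expansion is not even purely periodic of period $4$, so it cannot encode the $4$-cycle $\set{0102}$. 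The conflation here is between two different ternary codings of $\theta$ that agree only in their zeroth digit: the base-$3$ expansion records which third of the circle $3^j\theta$ lies in, while the coding relevant to the proposition records which third $(-2)^j\theta$ lies in.

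The paper's proof uses the latter coding, and the structure you want is a Markov partition rather than a digit expansion. Setting $I^{(m)} = \bigl(\tfrac{m}{3}, \tfrac{m+1}{3}\bigr)$, one checks that $(-2)\overline{I^{(m)}} = \mathbb{S}^1 \setminus I^{(m)}$, i.e.\ each closed arc maps homeomorphically onto the union of the closures of the other two. This single observation gives both directions cleanly: the itinerary $b_j$ of a periodic $\theta$ (defined by $(-2)^j\theta \in I^{(b_j)}$) automatically satisfies $b_{j+1} \ne b_j$ because $(-2)I^{(m)} \cap I^{(m)} = \emptyset$ — not because a repeated digit would force the orbit onto $0$, as you suggest; and conversely, given an admissible word $b_0\cdots b_{p-1}$, composing the inverse branches $\varphi_j : \overline{I^{(b_j)}} \to \overline{I^{(b_{j-1})}}$ around the cycle yields a strict contraction of a closed arc into itself, hence a unique periodic point with that itinerary. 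Your final paragraph (identifying $z \mapsto z^{-2}$ on $\mathbb{S}^1$ with $\theta \mapsto -2\theta$, and the naturality of the correspondence on cycle classes) is fine, but it rests on the incorrect digit recursion, so the argument as proposed does not go through.
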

\begin{proof}
  Let $\theta$ be an angle that is $p-$periodic under the anti-doubling map. 
  For $m \in \{0,1,2\}$, let $I^{(m)} := (\frac{m}{3}, \frac{m+1}{3})$. Note that for each $m$, $(-2)\overline{I^{(m)}} = \mathbb{S}^1 \setminus I^{(m)}$. 
  
    The angle $\theta$  can be associated uniquely with the sequence of bits $b_0b_1b_2\cdots$ with $b_j\in\{0,1,2\}$ for $j=0,1,2,3,\cdots$, with $(-2)^j\theta \in I^{(b_j)}$. Note that $b_0b_1b_2\cdots \in C_{p,2}$ since $(-2)I^{(m)} \cap I^{(m)} = \emptyset$ for all $m$. 
    
Conversely, let $b_0b_1b_2\cdots b_{p-1} \in C_{p,2}$. Then for each $j \in \{0,1,\cdots,p-1\}$, there exists a map $\varphi_j: \overline{I^{(b_j)}} \to \overline{I^{(b_{j-1})}}$  such that $\varphi_j (-2x) = x$ for all $x \in \overline{I^{(b_{j-1})}}$.  Letting $\varphi = \varphi_1 \circ \varphi_2 \circ \cdots \circ \varphi_p$ and $W = \varphi(\overline{I^{(b_j)}})$, we see that $W$ is a closed interval with the property that $\varphi(W) \subsetneq W$. 
This shows that there exists a unique $ \theta \in W$ such that $(-2)^p\theta = \theta$. This $\theta$ is associated with the sequence $b_0b_1b_2\cdots b_{p-1}$. 
\end{proof}

Given a cycle $x$, we denote its \emph{rotation} $\rho(x)$ to be the cycle obtained from the expansion of $x$ by sending each element $x_i$ to $x_i + 1\pmod{3}$.

\begin{definition}\label{def:cycle_trio}
    Let us define an equivalence relation on $C_{p,2}$ by setting $x\sim y$ if $x= \rho^i(y)$
    for $i = 0, 1, 2$.
    Then, we define the set of \emph{abstract cycle trios} of period $p$ as
    \[ [C_{p,2}] := C_{p,2}/\sim. \]
    We denote as $\sbr{\alpha}$ the class of the cycle $\alpha \in C_{p,2}$.
\end{definition}

A cycle trio is \emph{rotationally invariant}, or \emph{unramified}, if its equivalence class contains one element, and \emph{ramified} if its equivalence class contains three elements.

\medskip
\begin{example}
    In period $p=4$, there is only one cycle trio, namely
    \[
        [A] = \set{0102, 1210, 2021}.
    \]
    For $p=5$, there are two cycle trios:
    \begin{align*}
        \sbr{01021} = \set{01021, 12102, 20210}, \\
        \sbr{02012} = \set{02012, 10120, 20201}.
    \end{align*}
\end{example}
The first unramified cycle trio for $p > 3$ appears in $p=9$, namely $\sbr{010121202}$.
Note that if 3 does not divide $p$, then all cycles are ramified, hence $\#\sbr{C_{p,2}} = \#C_{p,2}/3$.

\subsection{The cell complex structure of $\MC_p(\mathcal{F}_2)$}

We apply the previous subsection to defining the components of our cell complex.

\subsubsection{Vertices}
Recall that over $\Per_1(0)$, vertices of $\Sigma_p$ correspond to lifts of the origin parameter $z^2$. Over $\Per_2(0)$, the basilica polynomial $g_0$ is the analogue of $z^2$, so that vertices of $\Sigma_{p, 2}$ are lifts of the basilica polynomial in $\Per_2(0)$, equivalence classes of the form $[(g_0,\zeta)]$.
Let 
$$\Sigma_{adm} := \{ \omega \in \{0, 1, 2 \}^\mathbb{N} \ : \ \omega_i \neq \omega_{i+1} \textup{ for all }i \}$$ 
be the space of infinite admissible ternary sequences, and let $\sigma : \Sigma_{adm} \to \Sigma_{adm}$ be the left shift.  
It is immediate to see that elements of $C_{p, 2}$ are in bijective correspondence with cycles of period $p$ for $(\Sigma_{adm} , \sigma)$. 

To state the next lemma, let $J_{g_0}^{PF} := \{ z \in J_{g_0} \ : \ \exists n \geq 0 \ g_0^{\circ n}(z) = g_0^{\circ n+1}(z) \}$ be the set of prefixed points for $g_0$. 
Given angles $\alpha, \beta$, let $R(\alpha, \beta) \subseteq J_{g_0}$ be the set of landing points of  external rays of angles $\theta$ for $\alpha < \theta < \beta$. 

\begin{lemma} \label{lem:semi-conj}
There is a continuous semiconjugacy 
$$\Phi: (\Sigma_{adm}, \sigma) \to (J_{g_0}, g_0).$$
For any $p \geq 3$, this semiconjugacy induces a bijective correspondence between the set 
$C_{p, 2}$ and the set of cycles of period $p$ for $g_0$. 

Then the inverse map to $\Phi$, denoted as 
$$\psi: (J_{g_0} \setminus J_{g_0}^{PF},  g_0) \to (\Sigma_{adm}, \sigma),$$
is given by taking the symbolic coding with respect to the following partition. 
Let $a_n := \frac{1}{3 \cdot 2^n}$, $\overline{a}_n := 1 - \frac{1}{3 \cdot 2^n}$ for $n \geq 0$.
Set 
\begin{align*} 
J^{(0)} & := \bigcup_{n \geq 0} R(a_{2n +1}, a_{2n}) \cup R(\overline{a}_{2n+1}, \overline{a}_{2n + 2}) \\
J^{(1)} & := R(a_0, \overline{a}_0) \\ 
J^{(2)} & := \bigcup_{n \geq 0} R(a_{2n +2}, a_{2n+1}) \cup R(\overline{a}_{2n}, \overline{a}_{2n+1})
\end{align*}
This partition of $J_{g_0}$ is shown in \Cref{fig:bas-codeshop}. Then the map is $\psi(z) := (\omega_i)_{i \geq 0}$ where $\omega_i = k$ if $g_0^{\circ i}(z) \in J^{(k)}$.
\end{lemma}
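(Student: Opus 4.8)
The plan is to build the semiconjugacy $\Phi$ from the ``inside'' using the dynamics on $J_{g_0}$ together with the external ray structure, and then to exhibit its inverse explicitly via the stated partition. First I would recall that $g_0$ is conformally conjugate to the basilica polynomial $f_{-1}(z)=z^2-1$, whose Julia set is locally connected; hence $J_{g_0}$ is the continuous image of the circle under the Carath\'eodory loop $\gamma\colon\mathbb{S}^1\to J_{g_0}$, which semiconjugates the doubling map $\tau$ to $g_0$. The idea is to precompose this with a surjection $h\colon(\Sigma_{adm},\sigma)\to(\mathbb{S}^1,\tau)$ that reads off the external angle of a point in terms of which partition element $J^{(0)},J^{(1)},J^{(2)}$ the orbit visits. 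Concretely, the partition $\{J^{(0)},J^{(1)},J^{(2)}\}$ is designed so that its preimages under $\gamma$ are the three arcs $I^{(0)},I^{(1)},I^{(2)}$ of $\mathbb{S}^1$ cut out by the grand orbit of the $\alpha$-fixed point (whose external angles are $1/3,2/3$ together with their doubling preimages $a_n,\overline a_n$): $J^{(1)}$ corresponds to the arc $(1/3,2/3)$ (the ``critical value side''), while $J^{(0)},J^{(2)}$ split the arc $(-1/3,1/3)$ at $0$, further subdivided dyadically at the $a_n,\overline a_n$ so that the doubling map moves each dyadic subinterval correctly. Checking that $\tau$ maps these arcs according to the admissibility rule $\omega_i\ne\omega_{i+1}$, and that admissible itineraries are exactly the ones realized, is the bookkeeping heart of the construction; it mirrors \Cref{prop:anti-doubling-coding} but for the doubling map on the basilica rather than the anti-doubling map.

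Next I would verify that $\Phi:=\gamma\circ(\text{itinerary inverse})$ is well defined and continuous. The cleanest route is: define $\psi$ first on $J_{g_0}\setminus J_{g_0}^{PF}$ by the symbolic coding in the statement (this is unambiguous precisely off the prefixed points, where orbits never hit the partition boundaries $\partial J^{(k)}$, which are exactly the grand orbit of the $\alpha$-fixed point together with $0$'s grand orbit), show $\psi$ is injective with image all of $\Sigma_{adm}$, and then define $\Phi$ as the continuous extension of $\psi^{-1}$ to all of $\Sigma_{adm}$. Continuity and surjectivity of $\Phi$ follow because cylinder sets in $\Sigma_{adm}$ correspond to the puzzle pieces for $g_0$ (the regions between pairs of external rays landing at the grand orbit of $\alpha$), whose diameters shrink to $0$ by local connectivity of $J_{g_0}$; this is the standard ``Markov partition from a Yoccoz puzzle'' argument. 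The semiconjugacy relation $\Phi\circ\sigma=g_0\circ\Phi$ is then immediate from the fact that $g_0$ shifts itineraries.

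For the period-$p$ statement, I would argue that for $p\ge 3$ the semiconjugacy $\Phi$ restricts to a bijection on period-$p$ cycles. Surjectivity: every period-$p$ cycle in $J_{g_0}$ consists of repelling points (for $p\ge 2$, since $g_0$ has its only attracting/neutral cycles being the critical $2$-cycle and, in these basilica coordinates, no other), each of which is the landing point of at least one periodic external ray, yielding a period-$p$ itinerary; the itinerary is admissible because a periodic point cannot lie on $\partial J^{(k)}$ for $p\ge 3$ (the boundary points are eventually fixed, i.e.\ in $J_{g_0}^{PF}$, not periodic of period $\ge 3$). Injectivity: two distinct period-$p$ cycles have distinct itineraries, because $\Phi$ is injective off $J_{g_0}^{PF}$ as established above. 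Finally I would note that the count matches: $\#C_{p,2}$ equals the number of period-$p$ cycles of $(\Sigma_{adm},\sigma)$, which by the bijection equals $\#\mathrm{Cyc}(g_0,p)$. The main obstacle I anticipate is the careful verification that the specific dyadic partition $J^{(0)},J^{(1)},J^{(2)}$ given by the $a_n,\overline a_n$ is genuinely Markov for $g_0$ with transition rule exactly $\omega_i\ne\omega_{i+1}$ — i.e.\ that $g_0(J^{(m)})\supseteq J^{(k)}$ for every $k\ne m$ and $g_0(J^{(m)})\cap \mathrm{int}\,J^{(m)}=\emptyset$ — and tracking how the infinitely many dyadic subintervals near the $\alpha$-fixed point and near $0$ get permuted; once that combinatorial lemma is in hand, the rest is the standard puzzle-piece shrinking argument.
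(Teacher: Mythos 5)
Your proposal is correct in outline, but it takes a genuinely different route from the paper. The paper first proves a separate result (\Cref{lem:twist}): an explicit ``twisting'' homeomorphism $\varphi$ conjugating the basilica map $g$ (the quotient of angle doubling by the basilica lamination) to the anti-basilica $-g$ (the quotient of anti-doubling), built by flipping the short arc subtended by each leaf in the nested sequence of leaves separating a point from the center of the disk. The semiconjugacy $\Phi$ is then the composition $(\Sigma_{adm},\sigma)\to(\mathbb{S}^1,z^{-2})\to(J,-g)\overset{\varphi^{-1}}{\to}(J,g)\to(J_{g_0},g_0)$, where the first arrow is the itinerary coding for the thirds partition $\mathcal{I}$ from \Cref{prop:anti-doubling-coding}, and the partition $\mathcal{J}$ is identified as the image of $\mathcal{I}$ under this composition. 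You instead work directly with the doubling map via the Carath\'eodory loop and propose to verify by hand that $\mathcal{J}$ is Markov for $g_0$ with transition rule $\omega_i\ne\omega_{i+1}$, followed by the standard puzzle-piece shrinking argument. Both routes work, and the dyadic bookkeeping you flag as the crux does check out: doubling sends $R(a_{n+1},a_n)$ to $R(a_n,a_{n-1})$ and $R(\overline a_n,\overline a_{n+1})$ to $R(\overline a_{n-1},\overline a_n)$, so the pieces form two towers feeding into $J^{(1)}$ along which the symbols $0$ and $2$ alternate, which yields exactly the no-repeat rule. What the paper's approach buys is conceptual economy: the twist lemma explains why $\mathcal{J}$ is a ``twisted'' copy of $\mathcal{I}$ and reuses the anti-doubling coding already needed for faces (\Cref{lem:per2-faces}). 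What your approach buys is self-containedness, avoiding \Cref{lem:twist} at the cost of carrying out explicitly the combinatorial verification that the paper largely delegates to \Cref{fig:bas-codeshop}. One caution: your Markov condition $g_0(J^{(m)})\supseteq J^{(k)}$ for $k\ne m$ holds only at the level of the three-symbol blocks, not piecewise --- an individual piece of $J^{(0)}$ maps onto either all of $J^{(1)}$ or onto a single piece of $J^{(2)}$ --- so the realizability of all admissible itineraries must be argued with the finer partition into the pieces $R(a_{n+1},a_n)$, $R(\overline a_n,\overline a_{n+1})$; for exact period $p\ge3$ every admissible cycle contains the symbol $1$, so one can always enter the appropriate tower at the appropriate depth, and the argument goes through.
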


\begin{figure}
\includegraphics[width=\textwidth]{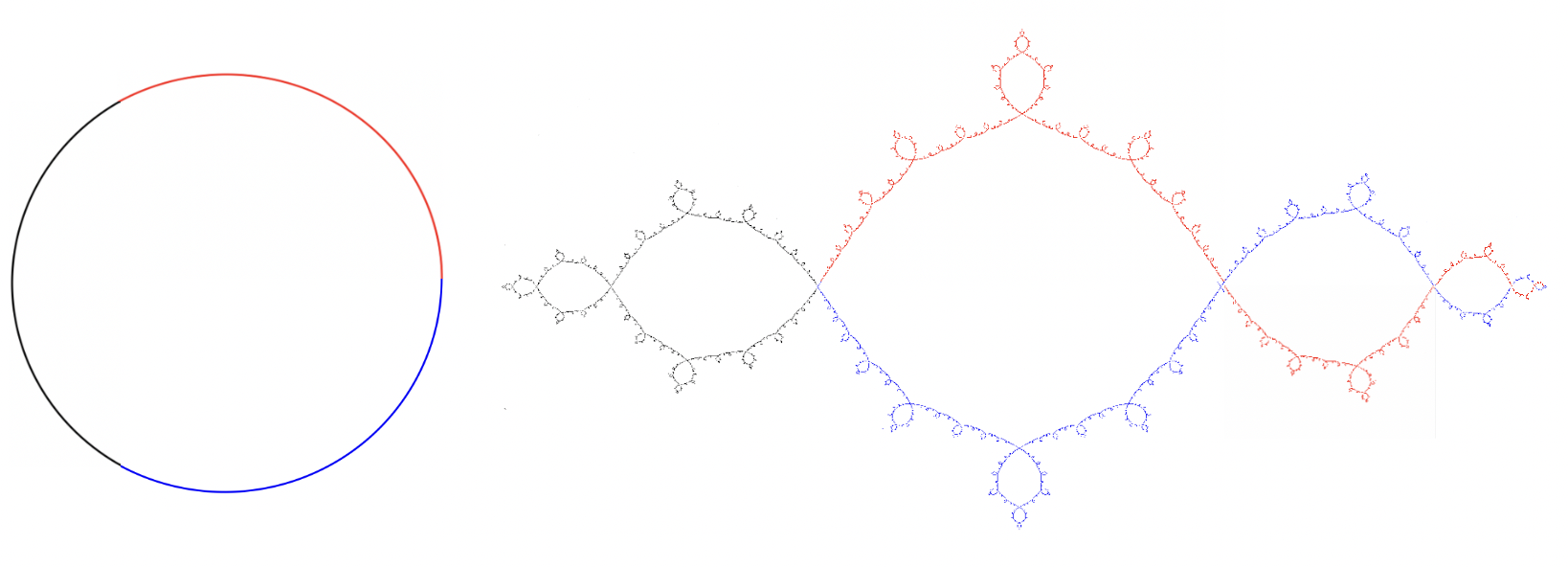}
\caption{ Left: the partition $\mathcal{I} = \{I^{(k)}\}$ of the circle used to encode $p$-cycles for $z^{-2}$ with $C_{p, 2}$ in \Cref{prop:anti-doubling-coding}. 
Right: the partition $\mathcal{J} = \{ J^{(k)} \}$ of the basilica Julia set described in \Cref{lem:semi-conj}, used to encode $p$-cycles with $C_{p,2}$.
By moving in parameter space across the puncture $a = -1$ along the negative real axis, the quasicircle Julia set gets ``twisted" into a Julia set homeomorphic to the basilica, and the partition $\mathcal{I}$ transforms into the partition $\mathcal{J}$. } 
 \label{fig:bas-codeshop}
\end{figure}

To prove \Cref{lem:semi-conj}, consider the basilica lamination $\mathcal{L}$, and let $J := \mathbb{S}^1/\mathcal{L}$ be 
the quotient, homeomorphic to the basilica Julia set. 
The doubling map $x \mapsto 2 x$ on $\mathbb{S}^1$ descends to a map $g$ on $J$.
Moreover, since the lamination $\mathcal{L}$ is invariant under the reflection $x \mapsto -x$, the anti-doubling map $x \mapsto - 2 x$ descends to an (orientation reversing) map $-g$ on $J$, 
that we call the \emph{anti-basilica}. 

\begin{lemma} \label{lem:twist}
There is a topological conjugacy $\varphi : (J, g) \to (J, -g)$.
\end{lemma}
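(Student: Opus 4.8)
## Proof proposal for Lemma \ref{lem:twist}

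\textbf{The plan is to} construct the conjugacy $\varphi$ explicitly on the circle $\mathbb{S}^1$ first, as an orientation-reversing homeomorphism that intertwines the doubling map $x \mapsto 2x$ with the anti-doubling map $x \mapsto -2x$ after passing to an appropriate coordinate, and then to check that it respects the basilica lamination $\mathcal{L}$ so that it descends to the quotient $J = \mathbb{S}^1/\mathcal{L}$. The natural candidate is built from the symbolic dynamics already in play: the partition $\mathcal{I} = \{I^{(0)}, I^{(1)}, I^{(2)}\}$ of $\mathbb{S}^1$ into thirds used in \Cref{prop:anti-doubling-coding} to code cycles of $x \mapsto -2x$, versus the partition $\mathcal{J} = \{J^{(0)}, J^{(1)}, J^{(2)}\}$ of the basilica Julia set described in \Cref{lem:semi-conj}. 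Both partitions are three-element generating partitions; the anti-doubling map sends $\overline{I^{(m)}}$ onto $\mathbb{S}^1 \setminus I^{(m)}$ (as noted in the proof of \Cref{prop:anti-doubling-coding}), and the basilica map $g$ on $J$ sends $\overline{J^{(m)}}$ onto $J \setminus J^{(m)}$ by the construction of the $J^{(k)}$. Since each map is a degree-two covering of the circle/dendrite-circle that is a homeomorphism on the closure of each partition piece and realizes the same combinatorial transition data, a standard argument produces a homeomorphism conjugating them.

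\textbf{The key steps, in order, are:} (1) Verify that the partition $\mathcal{J}$ of $J$ has exactly the same combinatorics under $g$ as the partition $\mathcal{I}$ of $\mathbb{S}^1$ under $-2$: namely each piece maps onto the complement of itself, the pieces are cyclically ordered compatibly with orientation (reversed), and the boundary points $a_n, \overline{a}_n$ — which are the preimages of the $\alpha$-fixed point structure — match the dyadic rational endpoints $\frac{m}{3}$ and their anti-doubling preimages under the coding. (2) Define $\varphi$ on $\mathbb{S}^1$ by declaring $\varphi$ to send the point with anti-doubling itinerary $(\omega_i)$ to the point of $J$ with basilica itinerary $(\omega_i)$; equivalently, conjugate $-2$ to $g$ via the common symbolic model $(\Sigma_{adm}, \sigma)$. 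Concretely: the map $\Phi$ of \Cref{lem:semi-conj} gives a semiconjugacy $(\Sigma_{adm},\sigma)\to(J,g)$, and \Cref{prop:anti-doubling-coding} gives the coding $(\mathbb{S}^1/\mathcal{L}, -g) \to (\Sigma_{adm}, \sigma)$ — wait, more carefully: we use that $x\mapsto -2x$ on $\mathbb{S}^1$ descends to $-g$ on $J$, and the coding map for $-g$ relative to the descended partition $\mathcal{I}$ agrees symbolically with the coding map for $g$ relative to $\mathcal{J}$. So $\varphi$ is the composition (coding for $-g$)${}^{-1}$ followed by (coding for $g$), once we know both codings are bijections off the grand orbit of the relevant fixed points. (3) Check $\varphi$ is a well-defined homeomorphism: the coding maps are injective away from the countable set of prefixed/preperiodic boundary points, continuous, and on the exceptional set one extends by the explicit endpoint correspondence $a_n \leftrightarrow$ (anti-doubling preimage of $\frac13$), using that both maps are local homeomorphisms there in a one-sided sense. (4) Finally verify the intertwining $\varphi \circ g = (-g) \circ \varphi$ on a dense set and extend by continuity, or — cleaner — note it holds by construction since both sides have the same symbolic description ($\sigma$ on itineraries).

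\textbf{The main obstacle} will be step (3): showing that the symbolic coding gives an honest homeomorphism rather than merely a semiconjugacy, which requires care at the boundary points of the partitions (the grand orbit of the $\alpha$-fixed point and of $\infty$ in the basilica picture, i.e. the points $a_n, \overline{a}_n$). These are exactly the points where itineraries are non-unique (the $\ast$-ambiguity, or two admissible codings), and one must check that the ambiguity is the \emph{same} on both sides — i.e. the lamination $\mathcal{L}$ glues together precisely the pairs of circle points whose anti-doubling itineraries are identified, matching the gluings on the $g$-side. This is where the invariance of $\mathcal{L}$ under $x \mapsto -x$ (already used to define $-g$) does the real work: reflection symmetry of $\mathcal{L}$ means the leaf structure seen by $-2$ is identical to that seen by $2$, so the quotient $J$ carries both $g$ and $-g$ compatibly with the \emph{same} identifications, and the homeomorphism $\varphi$ is really the identity on $J$ composed with the orientation-reversing reflection's effect on codings. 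I would keep the write-up short by invoking the standard fact (e.g. as in \cite{Dudko} or the theory of quotients of the shift) that a degree-two orientation-reversing circle covering preserving $\mathcal{L}$ and realizing the stated transition combinatorics is topologically conjugate to $-g$, and spend the bulk of the proof on verifying the combinatorial data matches and on the endpoint bookkeeping.
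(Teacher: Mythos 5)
Your overall strategy---conjugate $(J,-g)$ and $(J,g)$ by matching symbolic codings with respect to the partitions $\mathcal{I}$ and $\mathcal{J}$---is genuinely different from the paper's proof, which builds $\varphi$ directly as a convergent infinite composition of ``flips'' $\varphi_{\widetilde\alpha_n(x)}\circ\cdots\circ\varphi_{\widetilde\alpha_1(x)}$ of nested lamination arcs and verifies $\varphi\circ g=(-g)\circ\varphi$ by induction on the depth of the nesting. However, as written your argument has a structural circularity: you invoke the coding map $\Phi$ and the partition $\mathcal{J}$ of \Cref{lem:semi-conj}, but in the paper the semiconjugacy of \Cref{lem:semi-conj} is \emph{deduced from} \Cref{lem:twist} (the partition $\mathcal{J}$ is obtained as the image of $\mathcal{I}$ under the composite $(\mathbb{S}^1,z^{-2})\to(J,-g)\xrightarrow{\varphi^{-1}}(J,g)$). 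To make your route non-circular you would have to verify directly, from the explicit ray-interval description $J^{(0)}=\bigcup_n R(a_{2n+1},a_{2n})\cup R(\overline a_{2n+1},\overline a_{2n+2})$, etc., that $\mathcal{J}$ is a Markov partition for $g_0$ with each piece mapping onto the union of the other two, with the correct cyclic boundary identifications. That interleaved-interval computation is precisely the substantive content here, and your step (1) announces it without carrying it out; so the key work of the lemma is deferred rather than done.

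A second, smaller gap is in your step (3): knowing that $\mathcal{L}$ is invariant under $x\mapsto -x$ only guarantees that $-g$ descends to $J$; it does not by itself show that the fibers of the coding $(\mathbb{S}^1/\mathcal{L},-g)\to\Sigma_{adm}$ (lamination identifications together with partition-boundary ambiguities at the grand orbit of $\{0,\tfrac13,\tfrac23\}$) coincide with the fibers of the coding of $(J,g)$ by $\mathcal{J}$ (ambiguities at the grand orbit of the $\alpha$-fixed point). Matching fibers is exactly what upgrades the two semiconjugacies to a conjugacy between the quotients, and it requires the same explicit bookkeeping as the Markov verification above. If you complete those two verifications the symbolic approach does go through (compactness then gives continuity of $\varphi$ and $\varphi^{-1}$ for free), and it would prove \Cref{lem:semi-conj} simultaneously; but in its present form the proposal assumes what it needs to prove.
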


The idea is to ``twist" the basilica Julia set at each cut point, i.e. at each point in the grand orbit of the $\alpha$-fixed point. 

\begin{proof}
For each leaf $\alpha$ of $\mathcal{L}$, let $I_\alpha$ be the shorter arc of the circle delimited by $\alpha$. Let $x \in J$ be the equivalence class of the end-points of $\alpha$. For any map $h : J \longrightarrow J$, we let $h(\alpha)$ denote the unique leaf of $\mathcal{L}$ whose end-points have equivalence class $h(x) \in J$.   
Define for each $\alpha$ the map $\varphi_\alpha : J \to J$ which flips the interval $I_\alpha$ and leaves everything else fixed. 

For each point $x \in \mathbb{S}^1$, let $(\alpha_n(x))_{n \leq N(x)}$ be the set of leaves that separate $x$ from the center of the disk, ordered so that 
$$I_{\alpha_1(x)} \supseteq I_{\alpha_2(x)} \supseteq \dots \supseteq I_{\alpha_n(x)} \dots$$
and 
$$x \in \bigcap_{n \leq N(x)} I_{\alpha_n(x)}$$
Note that $N(x)$ may be either finite or $\infty$, and in the latter case we have 
$\{ x \} = \bigcap_{n \leq N(x)} I_{\alpha_n(x)}$.

Let us set 
$$\widetilde{\alpha}_1(x) := \alpha_1(x), \qquad \widetilde{\alpha}_n(x) := \varphi_{\widetilde{\alpha}_{n-1}(x)} \circ \dots \circ \varphi_{\widetilde{\alpha}_1(x)}(\alpha_n(x))$$ 
Note also that $\textup{diam }I_{\widetilde{\alpha}_n(x)} \leq 2^{-n}$ for any $n \leq N$; 
so we can define the map
$$\varphi(x) := \lim_{n} \varphi_{\widetilde{\alpha}_n(x)} \circ \varphi_{\widetilde{\alpha}_{n-1}(x)} \circ \dots \circ \varphi_{\widetilde{\alpha}_1(x)}(x)$$
We claim that $\varphi : J \to J$ is a homeomorphism, and it conjugates $g$ to $-g$.
That is, we want to prove that 
\begin{equation} \label{E:conj}
\varphi(g(x)) = - g(\varphi(x)) \qquad \textup{for any }x \in J.
\end{equation} 
Let us observe that, for any leaf $\alpha \in \mathcal{L}$, we have 
$g \circ \varphi_\alpha = \varphi_{g(\alpha)} \circ g$
and also 
\begin{equation} \label{E:sym}
- g \circ \varphi_\alpha = \varphi_{-g(\alpha)} \circ (-g).
\end{equation}

 Let $A_0 :=\left(\frac{1}{6}, \frac{1}{3} \right) \cup \left( \frac{2}{3}, \frac{5}{6} \right)$, $A_1 := \left(\frac{1}{3}, \frac{2}{3} \right)$ and $A_2 := \left( \frac{5}{6}, \frac{1}{6} \right)$, so that $g(A_0) = A_1$. Denote as $\alpha_0$ the leaf that delimits $\left( \frac{1}{3}, \frac{2}{3} \right)$. 

There are three cases. Suppose that $x \in A_0$, and its associated nested sequence of leaves is $\alpha_1(x) < \alpha_2(x) < \dots < \alpha_n(x) < \dots$. 
Then, $g(x) \in A_1$, and its associated nested sequence of leaves is 
\begin{align*}
\alpha_1(g(x)) & = \alpha_0, \qquad \alpha_{n+1}(g(x)) = g(\alpha_n(x)) \qquad \textup{for any }n \geq 1.
\end{align*}
We claim that 
\begin{equation}  \label{E:claim}
\widetilde{\alpha}_{1}(g(x)) =  \alpha_0, \qquad \widetilde{\alpha}_{n+1}(g(x)) =  - g (\widetilde{\alpha}_n(x))
\qquad \textup{for any }n \geq 1
\end{equation}
To prove the claim, let us proceed by induction. By definition, 
\begin{align*} 
\widetilde{\alpha}_{n+1}(g(x)) & = \varphi_{\widetilde{\alpha}_n(g(x))} \circ \dots \circ \varphi_{\widetilde{\alpha}_1(g(x))} (\alpha_{n+1}(g(x))) \\
\intertext{and by inductive hypothesis} 
 & = \varphi_{- g (\widetilde{\alpha}_{n-1}(x))} \circ \dots \circ \varphi_{-g (\widetilde{\alpha}_1(x))} \circ \varphi_{\alpha_0} (g (\alpha_n(x)))\\
\intertext{and, noting that, if $x \in A_0$, we have $I_{\alpha_n(x)} \subset A_0$, so $I_{g( \alpha_n(x)) }\subset A_1$ and $\varphi_{\alpha_0}(g( \alpha_n(x))) = - g (\alpha_n(x))$,} 
& = \varphi_{- g (\widetilde{\alpha}_{n-1}(x))} \circ \dots \circ \varphi_{-g (\widetilde{\alpha}_1(x))} (-g(\alpha_n(x)))\\
\intertext{and using \eqref{E:sym},}
&  = - g \circ \varphi_{ \widetilde{\alpha}_{n-1}(x)} \circ \dots \circ \varphi_{ \widetilde{\alpha}_1(x)} (\alpha_n(x)) = -g (\widetilde{\alpha}_n(x))
\end{align*}
proving the claim \eqref{E:claim}. 

\noindent Now, to prove that $\varphi$ is a conjugacy, we write
\begin{align*}
\varphi(g(x)) & = \lim_{n} \varphi_{\widetilde{\alpha}_{n+1}(g(x))} \circ \dots \circ \varphi_{\widetilde{\alpha}_2(g(x))} \circ \varphi_{\widetilde{\alpha}_1(g(x))}(g(x)) \\
\intertext{and,  since if $x \in A_0$, we have $g(x) \in A_1$ and $\varphi_{\alpha_0}(g(x)) = - g(x)$, so using \eqref{E:claim},} 
& = \lim_{n} \varphi_{-g (\widetilde{\alpha}_n(x))} \circ \dots \circ \varphi_{-g (\widetilde{\alpha}_1(x))}(-g(x)) \\
\intertext{and then using \eqref{E:sym},}
& = \lim_{n} - g \circ \varphi_{ \widetilde{\alpha}_n(x)} \circ \dots \circ \varphi_{ \widetilde{\alpha}_1(x)}(x) \\
& =  - g \circ \left( \lim_{n} \varphi_{ \widetilde{\alpha}_n(x)} \circ \dots \circ \varphi_{ \widetilde{\alpha}_1(x)}(x)\right) =  - g(\varphi(x))
\end{align*}
proving \eqref{E:conj} in the case $x \in A_0$. 
The cases of $x \in A_1$ and $x \in A_2$ are analogous, so we will not write out all the details. 
\end{proof}

As a consequence of the above semiconjugacy, we obtain a way to parameterize periodic cycles for the basilica 
using admissible ternary sequences $C_{p, 2}$. 

\begin{proof}[Proof of \Cref{lem:semi-conj}]
Recall that considering the itinerary for the partition $I^{(0)} = \left( 0, \frac{1}{3} \right)$, $I^{(1)} = \left(  \frac{1}{3}, \frac{2}{3} \right)$, and $I^{(2)} = \left(  \frac{2}{3}, 1 \right)$
of the unit circle, one obtains a continuous semiconjugacy 
$$(\Sigma_{adm}, \sigma) \to (\mathbb{S}^1, z^{-2}).$$
Moreover, as $(J, -g)$ is a quotient of $(\mathbb{S}^1, z^{-2})$ and $(J, g)$ is conjugate to $(J_{g_0}, g_0)$, we obtain, by composing with the conjugacy $\varphi$ from \Cref{lem:twist}, a semiconjugacy 
$$(\Sigma_{adm}, \sigma) \to (\mathbb{S}^1, z^{-2}) \to (J, -g) \overset{\varphi^{-1}}{\to} (J, g) \to (J_{g_0}, g_0),$$
proving the first part of the lemma. 
Now, the inverse map is defined by taking the itinerary under the basilica map $g_0$ with respect to the partition 
which is the image of the partition $\mathcal{I} = \{I^{(k)}\}_{0 \leq k \leq 2}$ under the semiconjugacy $(\mathbb{S}^1, z^{-2}) \to (J_{g_0}, g_0)$.
This is precisely the partition $\mathcal{J} = \{J^{(k)}\}_{0 \leq k \leq 2}$ shown in  \Cref{fig:bas-codeshop}.
\end{proof}

The above coding comes from the fact that the hyperbolic components containing $z^{-2}$ and the basilica are adjacent in parameter space, both containing the (unique) puncture of $\Per_2(0)$ on their boundary. Tracking how the Julia set varies while passing from the external component containing $z^{-2}$ into the component containing the basilica appears to change the \Cref{prop:anti-doubling-coding} coding of the circle to the one from \Cref{lem:semi-conj} according to \Cref{fig:bas-codeshop}: $I^{(1)}$ becomes a twisted interval, and $I^{(0)}$ and $I^{(2)}$ alternatingly intertwine.

In precise terms, consider the negative arc $(-\infty, 0]$ of the real line in $\Per_2(0)$. 
For each $a < -1$, the Julia set $J_{g_a}$ is a quasi-circle and $g_a$ has three fixed points, that disconnect the Julia set into three arcs. 
Let us label these three arcs with $0, 1, 2$ in counterclockwise order, and so that the real fixed point separates the arcs labeled by $0$ and $2$. 
Let us denote this partition as $\mathcal{I}_a := \{ I_a^{(k)}, k = 0, 1,2 \}$. By using this partition, we label each periodic point for $g_a$ 
with an element of $C_{p, 2}$. 
By the implicit function theorem, each such periodic point of period $p \geq 3$ can be analytically continued as the parameter $a$ moves along the arc $(-\infty, 0]$, obtaining  a periodic point for $g_0$. 
Moreover, by \Cref{lem:semi-conj} the periodic cycles for $g_0$ are also encoded by $C_{p, 2}$, by considering the partition $\mathcal{J} = \{ J^{(k)},  k = 0, 1, 2 \}$. 
We conjecture: 

\begin{conjecture} \label{C:continuation}
Given a periodic cycle of period $p \geq 3$ for $g_a$ with $a < -1$ with itinerary $\omega \in C_{p, 2}$ with respect to the partition 
$\mathcal{I}_a = \{ I_a^{(k)}\}$, by analytically continuing the cycle across the arc $[a, 0]$ in parameter space, we obtain a periodic cycle of period $p$ 
for $g_0$ and its itinerary with respect to the partition $\mathcal{J} = \{ J^{(k)} \}$ equals $\omega$. 
\end{conjecture}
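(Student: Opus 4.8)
The plan is to localize the whole question at the puncture $a=-1$ and to match the combinatorial surgery there with the twist homeomorphism of \Cref{lem:twist}. First I would establish a stability statement on each side of the puncture. For $a\in(-\infty,-1)$ every $g_a$ lies in the unbounded hyperbolic component $\cal{E}$, so the maps are quasiconformally conjugate on neighbourhoods of their (quasicircle) Julia sets; hence the three fixed points and the partition $\mathcal{I}_a$ vary continuously, and since any period-$p$ cycle with $p\ge 3$ is repelling and disjoint from the fixed points, its $\mathcal{I}_a$-itinerary is constant on $(-\infty,-1)$. Symmetrically, $(-1,0]$ lies in the single hyperbolic component centred at $g_0$ (along it the free critical point converges to an attracting fixed point), the external rays defining $\mathcal{J}$ land at repelling (pre)periodic points and move holomorphically over this component, and a period-$p$ cycle with $p\ge 3$ stays disjoint from them, so the $\mathcal{J}$-itinerary of the continued cycle at $g_0$ does not depend on which $a\in(-1,0]$ one passes through. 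Thus the conjecture reduces to comparing the two codings as $a\to-1$ from the two sides.

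Second, I would recast both codings as symbolic dynamics for $z\mapsto z^{-2}$. For $a<-1$ the system $(J_{g_a},g_a)$ is topologically conjugate to $(\mathbb{S}^1,z^{-2})$, and among all such conjugacies there is a \emph{unique} one carrying the labelled partition $\mathcal{I}_a$ to the labelled partition $\mathcal{I}=\{I^{(k)}\}$ — unique because the automorphisms of $(\mathbb{S}^1,z^{-2})$ permute the three fixed points, hence the three labelled arcs, and only the identity preserves the labelling. Under this conjugacy the $\mathcal{I}_a$-itinerary of a period-$p$ point is exactly the $C_{p,2}$-word of the corresponding cycle of $z^{-2}$ (\Cref{prop:anti-doubling-coding}). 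On the other side, the construction in \Cref{lem:semi-conj} — which passes through the twist $\varphi$ of \Cref{lem:twist} via $(\mathbb{S}^1,z^{-2})\to(J,-g)\overset{\varphi^{-1}}{\to}(J,g)\cong(J_{g_0},g_0)$ — presents the $\mathcal{J}$-itinerary of a period-$p$ point of $g_0$ as the $C_{p,2}$-word of the corresponding cycle of $z^{-2}$. Hence \Cref{C:continuation} becomes the assertion that the analytic continuation of a period-$p$ cycle $(p\ge 3)$ from $g_a$, $a<-1$, to $g_0$ intertwines these two identifications with cycles of $z^{-2}$; equivalently, that the continuation acts as the identity on the associated angles.

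Third — the step I expect to be the main obstacle — comes the surgery at the puncture. The idea is that, by \Cref{thm:per2-structure} and the puzzle-piece-preserving homeomorphism $\Per_2(0)\simeq\M^*\sqcup K_B^*$ of \cite{Dudko}, crossing $a=-1$ along the negative real axis collapses the region $W_B$ carrying the $1/2$-limb of $\M$ and simultaneously ``grows'' the basilica lamination $\mathcal{L}$ on the quasicircle Julia set; since $\mathcal{L}$ is invariant both under $\theta\mapsto2\theta$ and under $\theta\mapsto-\theta$, the anti-doubling map descends to $\mathbb{S}^1/\mathcal{L}$, where it is conjugate to the doubling map precisely via $\varphi$. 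Because the homeomorphism preserves puzzle pieces, and the partitions $\mathcal{I}_a$ and $\mathcal{J}$ are adapted to these puzzle pieces on the two sides, the continuation transports $\mathcal{I}_a$-itineraries to $\mathcal{J}$-itineraries; one still has to check that the particular conjugacy so obtained is $\varphi$ and not another of the finitely many conjugacies between $(J,g)$ and $(J,-g)$, which I would do by tracking the real structure — the real fixed point of $g_a$ ($a<-1$), the landing point of the fixed ray of angle $0$, stays real along all of $(-\infty,0]$ and continues to the real repelling fixed point of $g_0$, which is again the landing point of the ray of angle $0$ in the twisted coding — together with an orientation check forced by the fact that $\varphi$ reverses orientation while conjugating $2\theta$ to $-2\theta$.

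The genuine difficulty, and the reason the statement remains conjectural, is carrying this out rigorously across the puncture, where \emph{both} the quasicircle and the basilica Julia set degenerate as $a\to-1^{\mp}$ and the family $g_a$ itself collapses to a constant at $a=-1$. A complete proof would require either an explicit model of this degeneration or a direct extraction of the compatibility from the rigidity results of \cite{Dudko,AY}, together with the (routine) verifications that a period-$p$ cycle with $p\ge 3$ persists through $a=-1$ — it stays repelling, hence bounded away from parabolic collisions, on either side, while $a=-1$ is a branch point only for two of the three fixed points and the critical $2$-cycle — and never meets a partition boundary along $(-\infty,0]$, as that segment enters no wake of a component of period $\ge 3$.
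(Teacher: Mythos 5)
This statement is labelled a \emph{conjecture} in the paper (``We conjecture: \dots''), and the paper supplies no proof of it; the only supporting material is the informal discussion around \Cref{fig:bas-codeshop} (the ``twist'' heuristic) together with \Cref{lem:twist,lem:semi-conj}. So there is no argument of the authors' to compare yours against, and the question is only whether your outline actually closes the gap. It does not, and to your credit you say so explicitly. Your first two steps are sound and essentially reproduce the paper's framing: the $\mathcal{I}_a$-itinerary is locally constant on $(-\infty,-1)$ because the three fixed points and the repelling $p$-cycle ($p\ge 3$) move holomorphically without collision there, and \Cref{prop:anti-doubling-coding} and \Cref{lem:semi-conj} already recast both codings as $C_{p,2}$-words for $z^{-2}$, the latter via the twist $\varphi$ of \Cref{lem:twist}. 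This reduction is exactly where the paper also stops.

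The remaining step is the entire content of the conjecture, and your proposal leaves it open. Two concrete issues sit there. First, the arc $[a,0]$ passes through $a=-1$, which is a \emph{puncture} of $\Per_2(0)$: $g_{-1}$ is the constant map $-1$, so the family degenerates and even the existence and single-valuedness of the analytic continuation of the cycle through $a=-1$ (which the paper asserts via the implicit function theorem) requires an argument — e.g.\ working in the $h_t$ coordinates $a=t^{-3}-1$ or treating the periodic points as roots of a polynomial family in $(a,z)$ and checking they neither collide nor escape to the degenerate locus as $a\to-1$. Second, even granting the continuation, one must show that the induced identification between $\mathcal{I}_a$-codings and $\mathcal{J}$-codings is realized by the \emph{specific} conjugacy $\varphi$ of \Cref{lem:twist} rather than one of its finitely many competitors; your proposed anchors (the real fixed point, the ray of angle $0$, orientation) are the right invariants to track, but invoking the puzzle-preserving homeomorphism of \cite{Dudko} does not by itself produce this matching, since that theorem concerns $\M_2\simeq\M^*\sqcup J_B^*$ and says nothing directly about how the partition $\mathcal{I}_a$ on a quasicircle Julia set deforms into $\mathcal{J}$ as the parameter crosses the puncture. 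In short: a correct and honest strategy, consistent with the authors' heuristic, but the decisive degeneration analysis at $a=-1$ is missing, so the statement remains a conjecture.
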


\subsubsection{Faces}
Faces are lifts of the external component $\cal{E}$ of $\Per_2(0)-\mathcal{M}_2$.  Let us consider a new parameterization of $\Per_2(0)$ centered at $\cal{E}$. 

Let $\D_r$ be the disk of radius $r>0$ centered at the origin, and for $\epsilon \in \D_r$ define: 
\[p_{\epsilon}(z) = \frac{\epsilon z^2+1}{z^2 - \epsilon^2}.\]
The points $0$, $\infty$ are the critical points of this map, and $\infty \mapsto \epsilon \mapsto \infty$. So $[p_{\epsilon}] \in \Per_2(0)$. 
Let $\omega$ be a third root of unity.

\begin{lemma}\label{lem:symmetry-of-new-param}
	The $p_\e(z)$ parameterization of $\Per_2(0)$ has a $\Z_3$-symmetry in the sense
	$[p_{\epsilon}] = [p_{\epsilon'}]$ if and only if $\epsilon' = \omega^j \epsilon$ for $j\in \{0,1,2\}$.
\end{lemma}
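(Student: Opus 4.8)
The plan is to analyze directly the M\"obius transformations conjugating the maps $p_\epsilon$ to one another, using that the free critical point and the critical $2$-cycle of $p_\epsilon$ are dynamically canonical and hence respected by any conjugacy.

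For the ``if'' direction I would exhibit the conjugacy explicitly. Taking $\phi(z) = \omega^j z$ for $j \in \{0,1,2\}$ (with $\omega$ a primitive cube root of unity), a short computation gives
\begin{align*}
\phi \circ p_\epsilon \circ \phi^{-1}(z) &= \omega^j\, p_\epsilon(\omega^{-j}z) = \frac{\omega^j\epsilon\, z^2 + \omega^{3j}}{z^2 - \omega^{2j}\epsilon^2} \\
&= \frac{(\omega^j\epsilon)\, z^2 + 1}{z^2 - (\omega^j\epsilon)^2} = p_{\omega^j\epsilon}(z),
\end{align*}
using $\omega^{3j} = 1$. Hence $[p_\epsilon] = [p_{\omega^j\epsilon}]$ in $\Per_2(0)$, which is one of the two implications.

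For the converse, suppose $[p_\epsilon] = [p_{\epsilon'}]$, so that $p_{\epsilon'} = \phi \circ p_\epsilon \circ \phi^{-1}$ for some M\"obius $\phi$. I would first dispose of the orbifold value $\epsilon = 0$ (where $p_0(z) = z^{-2}$ and the statement is immediate, since any small $\epsilon' \neq 0$ yields a map with a genuinely free critical point and hence cannot be conjugate to $p_0$), and shrink $r$ so that $\epsilon^3 \neq -1$ throughout $\D_r$; then each $p_\epsilon$ has degree $2$ with $\mathrm{Crit}(p_\epsilon) = \{0,\infty\}$, with $\{\infty,\epsilon\}$ a $2$-cycle on which $\infty$ is the only critical point, and with $0$ not periodic of period $\le 2$ (a direct computation gives $p_\epsilon^{\circ 2}(0) = \frac{\epsilon}{1-\epsilon^3} \neq 0$ for $\epsilon\neq 0$). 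Since a conjugacy carries critical points to critical points and $k$-cycles to $k$-cycles, $\phi$ must map the critical point $\infty$ of $p_\epsilon$ — which lies on a $2$-cycle — to a critical point of $p_{\epsilon'}$ lying on a $2$-cycle, and the only such point is $\infty$ (as $0$ is not $2$-periodic); hence $\phi(\infty) = \infty$, so $\phi(0) = 0$, and $\phi$ carries the $2$-cycle $\{\infty,\epsilon\}$ to $\{\infty,\epsilon'\}$, forcing $\phi(\epsilon) = \epsilon'$. A M\"obius map fixing $0$ and $\infty$ has the form $\phi(z) = \mu z$, so $\epsilon' = \mu\epsilon$; and re-running the computation above with $\mu$ in place of $\omega^j$ gives $\phi\circ p_\epsilon\circ\phi^{-1}(z) = \frac{\mu\epsilon z^2 + \mu^3}{z^2 - \mu^2\epsilon^2}$, which agrees with $p_{\mu\epsilon}(z) = \frac{\mu\epsilon z^2 + 1}{z^2 - \mu^2\epsilon^2}$ only when $\mu^3 = 1$. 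Thus $\mu \in \{1,\omega,\omega^2\}$ and $\epsilon' = \omega^j\epsilon$.

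The one step requiring genuine care is the assertion that the marking of $p_\epsilon$ — its free critical point together with its critical $2$-cycle and the distinguished critical point on that cycle — is canonical, so that $\phi$ must fix both $0$ and $\infty$; this is exactly where the degree-$2$ hypothesis $\epsilon^3 \neq -1$ (automatic for small $r$) and the separate, easy treatment of $\epsilon = 0$ come in. Everything else reduces to the two elementary rational-function identities above.
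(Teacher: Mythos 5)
Your proof is correct and takes essentially the same approach as the paper: the forward direction is the same explicit identity $\phi\circ p_\epsilon\circ\phi^{-1}=p_{\omega^j\epsilon}$ for $\phi(z)=\omega^jz$, and the converse reduces to the observation that any conjugating M\"obius map must preserve the critical set $\{0,\infty\}$, followed by a coefficient comparison forcing $\mu^3=1$. The only difference is that the paper also works out the case where $\phi$ \emph{swaps} $0$ and $\infty$ (via $\phi(z)=a/z$, showing it still yields $\epsilon'=\omega^j\epsilon$), whereas you rule that case out a priori using the critical $2$-cycle to distinguish $\infty$ from the free critical point $0$; both treatments are valid.
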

\begin{proof}
	Necessity follows from the straightforward identity $p_{\omega^2 \epsilon}(z) = \frac{1}{\omega} p_{\epsilon}(\omega z)$.
	
	For the other direction, assume that $p_{\epsilon'}=p_\epsilon$ in $\Per_2(0)$. That is, there exists $\psi \in \text{Aut}(\hat{\C})$ such that $p_{\epsilon'} = \psi \circ p_\epsilon \circ \psi^{-1}$. Since $0,\infty$ are critical points, $\psi$ must fix them setwise. There are two possibilities. 
	
	First, if $\psi$ interchanges $0$ and $\infty$, then $\psi(z) = \frac{a}{z}$ for some $a \ne 0$, and $\psi^{-1}(z) = \frac{a}{z}$. Simple  computation shows
	\[\frac{\epsilon'z^2+1}{z^2 - (\epsilon')^2}  = \frac{a((\frac{a}{z})^2 - \epsilon^2)}{\epsilon (\frac{a}{z})^2 +1} = \frac{- a\epsilon^2z^2 + a^3 }{z^2 + a^2\epsilon }\]
	and by comparing coefficients,
	$\epsilon'  = -a\epsilon^2$.
	Since $\psi = \psi^{-1}$, by symmetry we also have
	$\epsilon  = -a(\epsilon')^2$
	hence
	$ \epsilon  = -a^3 \epsilon^4$.
	This implies $\epsilon = 0$ or $a = \frac{-\omega^j}{ \epsilon}$ for some $j$, implying $\epsilon' = \omega^j\epsilon$.
	
	Otherwise, $\psi$ must fix both $0$ and $\infty$. Then, $\psi(z) =az$ for some $a \ne 0$, and $\psi^{-1}(z) = \frac{z}{a}$. Hence we have the following computation,
	\[
	\frac{\epsilon'z^2+1}{z^2 - (\epsilon')^2}  = \frac{a(\epsilon(\frac{z}{a})^2 + 1)}{(\frac{z}{a})^2  - \epsilon^2} = \frac{a\epsilon z^2 + a^3}{z^2 - a^2\epsilon^2}.
	\]
	By comparing coefficients, we have both
	\begin{align*}
		\epsilon'    & = a\epsilon     \\
		a^3 & = 1 
	\end{align*}
	so that $a = \omega^j$ and $\epsilon' = \omega^j\epsilon$.
\end{proof}

The next lemma says that faces are labelled by $[C_{p,2}]$, trios of conjugate period $p$ cycles:
\begin{lemma}\label{lem:per2-faces}
	The lifts of the external complement $\cal{E}$ of the non-escaping set $\M_2$ under the branched cover $\pi_2:\MC_p(\mathcal{F}_2) \to \cal{F}_2$ are in bijective correspondence with the set of cycle trios $[C_{p,2}]$. Moreover:
	\begin{itemize}
		\item if the cycle trio is ramified, then the restriction of $\pi_2$ to the associated disc is a homeomorphism;
		\item if the cycle trio is unramified, the restriction of $\pi_2$ to the associated disc has local degree 3, branched at the unique lift of $z^{-2}$.
	\end{itemize}
\end{lemma}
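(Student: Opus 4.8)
The plan is to mirror the proof of \Cref{lem:faces_are_cyclepairs}, with the order-two symmetry of $\Per_1(0)$ at $\infty$ (the half-turn undergone by the diagonal $\Delta_\theta$) replaced by the order-three symmetry of $\Per_2(0)$ at $z^{-2}$ recorded in \Cref{lem:symmetry-of-new-param}. In outline I would reduce the statement to a covering-space computation over $\cal{E}\setminus\set{z^{-2}}$ together with a monodromy calculation performed in the $p_\e$ coordinate near $\e=0$.

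\textbf{The covering over $\cal{E}$.} For every $g_a\in\cal{E}\setminus\set{z^{-2}}$ the Julia set is a quasicircle on which every period-$p$ cycle is repelling, hence simple, so $\MC^+_p(\mathcal{F}_2)\to\mathcal{F}_2$ is unbranched over $\cal{E}\setminus\set{z^{-2}}$ (this is the $\Per_2(0)$ analogue of \Cref{per1-edge-branching}: branching is confined to $z^{-2}$ and to the roots of primitive components of $\M_2$, which lie outside $\cal{E}$), and $\pi_2$ therefore restricts to an unramified covering of $\cal{E}\setminus\set{z^{-2}}$. To identify its fibre I would pass to the $p_\e$ chart near $\e=0$: the period-$p$ cycles of $p_\e$ continue holomorphically to $\e=0$, where they are exactly the period-$p$ cycles of $p_0(z)=z^{-2}$, which are canonically in bijection with $C_{p,2}$ by \Cref{prop:anti-doubling-coding}.

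\textbf{Monodromy and the face correspondence.} By \Cref{lem:symmetry-of-new-param} the chart $\e\mapsto[p_\e]$ is three-to-one and branched exactly over $z^{-2}$, so a loop once around $z^{-2}$ in $\cal{E}$ lifts to an arc $\e_0\rightsquigarrow\omega\e_0$ in the $\e$-disk. Along this arc the conjugacy $p_{\omega\e}=M\circ p_\e\circ M^{-1}$ with $M(z)=\omega z$ transports each period-$p$ cycle to its $M$-image, and since $M$ conjugates $p_0(z)=z^{-2}$ to itself while rotating the partition $\set{I^{(k)}}$ of \Cref{prop:anti-doubling-coding} by $\tfrac13$, on $C_{p,2}$ this transport is the rotation $\rho$ of \Cref{def:cycle_trio}. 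Hence the generator of $\pi_1(\cal{E}\setminus\set{z^{-2}})$ acts on the fibre $C_{p,2}$ by $\rho$, so the connected components of $\pi_2^{-1}(\cal{E}\setminus\set{z^{-2}})$ are in bijection with the $\rho$-orbits on $C_{p,2}$, namely with the cycle trios $[C_{p,2}]$. Since $\cal{E}$ is simply connected, each such component is an annulus whose closure in $\MC_p(\mathcal{F}_2)$ is obtained by filling in a single lift of $z^{-2}$; these closures are the faces, which yields the claimed bijection with $[C_{p,2}]$.

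\textbf{The degree dichotomy, and the main obstacle.} Over a base parameter $g_{a_0}\in\cal{E}\setminus\set{z^{-2}}$ (taken close to $z^{-2}$), the preimages lying in the face labelled $[\nu]$ are exactly the period-$p$ cycles of $g_{a_0}$ whose itinerary lies in $[\nu]$, one for each distinct element of the trio, so $\pi_2$ restricted to that face has degree $\#[\nu]$. Thus it is a homeomorphism on the faces labelled by rotationally invariant (unramified) trios, and a degree-three branched cover of a disk on the faces labelled by three-element (ramified) trios; in the latter case the three sheets over $\e\neq0$ are cyclically permuted by $\e\mapsto\omega\e$ and glued to the unique lift of $z^{-2}$, so that lift is the single ramification point and smoothness of $\MC_p(\mathcal{F}_2)$ forces the face to be a disk. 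The step I expect to be the main obstacle is precisely this monodromy computation: one must verify that the identification of the covering fibre with $C_{p,2}$ is genuinely $\Z/3\Z$-equivariant and that the loop around $z^{-2}$ acts by the nontrivial rotation $\rho$ rather than trivially. This is the exact analogue of the ``half-turn of $\Delta_\theta$'' step in \Cref{lem:faces_are_cyclepairs}, now driven by the explicit symmetry of \Cref{lem:symmetry-of-new-param}; the remaining verifications are routine adaptations of that proof.
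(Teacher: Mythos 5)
Your argument is correct and follows essentially the same route as the paper: both work in the $p_\e$ chart near $\e=0$ and reduce everything to the $\Z/3$ symmetry of \Cref{lem:symmetry-of-new-param}; where the paper constructs an explicit local uniformization $\Psi : \D_r \to V_r$ with $(\pi_2|_{V_r})\circ\Psi = \Omega$ and computes $\deg\Psi\in\{1,3\}$ by analytically continuing a cycle $\zeta_0$ of $p_0$, you package the same computation as the monodromy action of $\pi_1(\cal{E}\sm\set{z^{-2}})$ on the fibre $C_{p,2}$, which is an equivalent (and on the bijection with $[C_{p,2}]$, somewhat more explicit) formulation. One point worth flagging: your final dichotomy --- homeomorphism over rotationally invariant (unramified) trios, degree $3$ branched cover over three-element (ramified) trios --- is the opposite of the lemma's literal wording, but it is the dichotomy actually established by the paper's own proof ($\deg\Psi=3$, hence $\deg(\pi_2|_{V_r})=1$, exactly when $\zeta_0=\omega^2\zeta_0$), and it is the one consistent with the $\Per_1(0)$ analogue (\Cref{lem:faces_are_cyclepairs}) and with the face count in \Cref{thm:genus}; the two bullet points in the statement of the lemma appear to have been transposed, so your version is the correct one.
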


A way to think about this correspondence is the following. 
For each $a \in \mathcal{E}$ the Julia set $J_{g_a}$ is a quasi-circle, and $g_a$ has three fixed points on it, 
that disconnect the Julia set into three arcs. By labeling the three complementary arcs with $0, 1, 2$ in counterclockwise order 
and taking the symbolic itinerary with respect to this partition and then its associated cycle trio, we obtain a map from
$\MC_p(\mathcal{E}) := \pi_2^{-1}(\mathcal{E}) \subseteq \MC_p(\mathcal{F}_2)$ that we denote
\begin{equation} \label{E:face}
\Phi_\mathcal{E}: \MC_p(\mathcal{E})\to [C_{p, 2}]
\end{equation}
and is locally constant on each face. Note that the choice of labeling the arcs with $0, 1$, or $2$ is not canonical, but different labelings 
(as long as they are in counterclockwise order, that is indeed canonical since $\Per_2(0)$ is oriented) give rise 
to the same class in $[C_{p, 2}]$.

\begin{proof}	
	Recall that a cycle trio $[\alpha]\in[C_{p,2}]$ is $\alpha\in C_{p,2}$ mod the action of $R(x) := x+\frac{2}{3} (\text{mod 1})$.
	
	Let us consider the parameterization $p_\e(z)$.
	As $\epsilon \rightarrow 0$, $p_{\epsilon}(z) \rightarrow p_0(z) =  \frac{1}{z^2}$.
	For $r$ sufficiently small, $\{[p_{\epsilon}]: \epsilon \in \D_r\}$ is an open neighborhood $U_r$ of $[\frac{1}{z^2}]$ entirely contained in $\cal{E}$. 
	By \Cref{lem:symmetry-of-new-param}, the function $\Omega: \D_r \rightarrow U_r$ mapping $\epsilon$ to $[p_{\epsilon}]$ is a degree $3$ branched cover ramified at $\epsilon = 0$.
	
	Let $U$ be a connected component of $\pi_2^{-1}(\cal{E})$.
	The map $\pi_2|U : U \rightarrow \cal{E}$ is a cover, possibly branched, with the only possible branch point being $[\frac{1}{z^2}]$.

	We now construct a branched cover $\Psi: \D_r \rightarrow V_r$, where $V_r$ is an open subset of $U$ such that $(\pi_2|V_r) \circ \Psi = \Omega$. 
	
	Let $[(\frac{1}{z^2},\zeta_0)] \in U$ be the pre-image of $[\frac{1}{z^2}]$ under $\pi_2$.
	Since, $p\ge 3$, any $p-$cycle for $p_0(z) = \frac{1}{z^2}$ is repelling, it can be perturbed to give $p-$cycles $\zeta_\epsilon$ for maps $p_\epsilon$ close to $p_0$.
	In other words, if $r$ is chosen to be small, then for any $\epsilon \in \D_r$, there exists a cycle $\zeta_\epsilon$ such that $[(p_\epsilon,\zeta_\epsilon)] \in U$.
	
	Define $\Psi(\epsilon) = [(p_\epsilon, \zeta_\epsilon)]$. Observe that $\Psi$ is unique up to pre-composition by $\omega$. It is easy to see that $\Psi$ is a branched cover: for any $\epsilon_0 \ne 0$ and point $[(p_{\epsilon_0}, \zeta_{\epsilon_0})] \in V_r$, the map $[(p_\epsilon
	, \zeta_\epsilon)] \rightarrow \epsilon$ is locally well-defined, and an inverse to $\Psi$.
	By \Cref*{lem:symmetry-of-new-param} if $\Psi(\epsilon) = \Psi(\epsilon')$ for  distinct non-zero $\epsilon,\epsilon'$, then we must have $\epsilon = \omega^j\epsilon'$ for some $j \in \{1,2\}$.
	That is, the deck transformation group of $\Psi|\D_r^*$ is either trivial or the cyclic group generated by $\epsilon \mapsto \omega \epsilon$, and we have $\deg\Psi \in \{1, 3\}$. 
	
	The degree of $\Psi$ is determined by $\zeta_0$: $\Psi$ has degree 3 if and only if $\zeta_0 = \omega^2\zeta_0$. Indeed for  $\Psi(\epsilon) = [(p_\epsilon, \zeta_\epsilon)]$, we observe
	\begin{align*}
		p_{\omega^2 \epsilon}(\omega^2 z) = \frac{1}{\omega}p_{\epsilon}( z)  = \omega^2p_{\epsilon}(z).
	\end{align*}
	That is, $[(p_\epsilon, \zeta_\epsilon)] = [(p_{\omega^2 \epsilon}, \omega^2 \zeta_\epsilon)]$ for all $\epsilon \in \D_r$.
	
	If $\zeta_0 \ne \omega^2 \zeta_0$ and $\Psi$ has degree 3, then $\Psi(\omega^2 \epsilon) = \Psi(\epsilon)$ for all $\epsilon$.
	This implies that $\omega^2 \zeta_\epsilon =  \zeta_{\omega^2\epsilon}$ for all $\epsilon \ne 0$.
	As we let $\epsilon \rightarrow 0$, this gives a contradiction.
	In other words, if $\zeta_0 \ne \omega^2 \zeta_0$, $\Psi$ must have degree 1.
	On the other hand, if $\zeta_0 = \omega^2\zeta_0$, since $\zeta_\epsilon$ is defined to be the unique analytic continuation of $\zeta_0$, we must have $\omega^2 \zeta_\epsilon = \zeta_{\omega^2\epsilon}$.
	Thus, $\Psi(\omega^2\epsilon) = \Psi(\epsilon)$, and $\deg\Psi = 3$.
\end{proof}

For example, for $p=5$ we have two faces:
\[[A] = \{ 02021, 02101, 02121\}, \hspace{2cm} [B]=\{02012, 01212, 01201\}.\]

\subsubsection{Edges}
Recall mating with the basilica is a way to form a rational map in $\M_2$ from a polynomial in $\M$ outside the $(1/3, 2/3)$ limb, and every map in $\M_2$ is a mating.
\begin{lemma}\label{lem:bub-ray-props}
    Suppose $g=f_{-1}\sqcup f_c$ is a center of a hyperbolic component $\cal{H}_{g}$ of period $p$ in $\M_2$ with bubble rays $\cal{B}^\pm_g$ landing at the root of $\cal{H}_g$
   Then,
    \begin{enumerate}
        \item [(a)] each $\cal{B}^\pm_g$ corresponds to a unique element $\nu^\pm \in C_{p,2}$;
        \item [(b)] $g$ is primitive if and only if $\nu^\pm$ correspond to different elements of $C_{p,2}$;
        \item [(c)] the projection $\pi_2: \MC_p(\mathcal{F}_2) \to \mathcal{F}_2$ is branched over the root of $\cal{H}_g$ if and only if $g$ is primitive.
    \end{enumerate}
\end{lemma}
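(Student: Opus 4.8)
The plan is to combine the mating description of $g$ (from \Cref{thm:per2-structure} and \Cref{prop:parabubble}) with the symbolic model for periodic points of $g_0$ (from \Cref{lem:semi-conj}), and to import the ramification count from $\Per_1(0)$ (\Cref{per1-edge-branching}) through the mating. First, the angles: by \Cref{thm:per2-structure} and \Cref{prop:parabubble} we may write $g = f_{-1}\sqcup f_c$ with $f_c$ a PCF hyperbolic quadratic polynomial; since the homeomorphisms of \Cref{thm:per2-structure} preserve puzzle pieces, $f_c$ is the center of a period-$p$ hyperbolic component of $\M^*$, with characteristic angles $\theta_f^+<\theta_f^-$ of exact period $p$ under doubling, and by the last sentence of \Cref{prop:parabubble} the characteristic bubble rays $\cal{B}^\pm_g$ have angles $\theta^\pm:=-\theta_f^\pm$, again of exact period $p$ under doubling. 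By definition $g$ is primitive iff $f_c$ is, which (as recalled after the definition of primitivity in \Cref{S:setup}) holds iff $\theta_f^+,\theta_f^-$ — equivalently $\theta^+,\theta^-$ — lie in distinct orbits of the doubling map.

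For (a) and (b) I would attach to $\cal{B}^\pm_g$ the landing point $\beta_0(\theta^\pm)$ of the bubble ray $\cal{B}_0(\theta^\pm)$ in the dynamical plane of $g_0$ (the angle being intrinsic to the basilica model common to all $g_a\notin\cal{E}$). Since $p\ge 3$, no forward iterate of $\theta^\pm$ under doubling equals $1/3$ or $2/3$ (those have period $2$), so $\theta^\pm$ is not in the grand orbit of the cut leaf of the basilica lamination; consequently $\beta_0(\theta^\pm)$ is a uniaccessible point of $J_{g_0}$ of exact period $p$, in particular not prefixed. By \Cref{lem:semi-conj} its $g_0$-cycle corresponds to a unique $\nu^\pm\in C_{p,2}$ (its $\mathcal{J}$-itinerary), which gives (a). Moreover \Cref{lem:semi-conj} makes the $\mathcal{J}$-coding a bijection between period-$p$ cycles of $g_0$ and $C_{p,2}$, and (again because $p\ge 3$) every period-$p$ point of the basilica is uniaccessible, so two period-$p$ angles land at the same $g_0$-cycle iff they share a doubling orbit. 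Combined with the previous paragraph, $\nu^+=\nu^-$ iff $\theta^+,\theta^-$ share a doubling orbit iff $g$ is not primitive, which is (b).

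For (c) I would let $\ell$ be the subarc of the vein $V_2(a_0)$ from a point near $g_0$ to the root $a_0$ of $\cal{H}_g$. For $a\in\ell\setminus(\O_{\theta^+,2}\cup\O_{\theta^-,2})$, \Cref{lem:per2-dynam-ray-holomorphic-landing} gives holomorphically varying landing points $\beta_a(\theta^\pm)$, lying in period-$p$ cycles $\zeta^\pm(a)$, which collide as $a\to a_0$ (both bubble rays then land at the characteristic point of $g_{a_0}$). If $g$ is primitive, then $\theta^+,\theta^-$ lie in distinct orbits and near $g_0$ the parameter lies outside every primitive wake active at $\theta^\pm$, so by \Cref{cor:per2-co-landing-implies-primitive} $\zeta^+(a)\ne\zeta^-(a)$ there; hence the two local holomorphic sections $a\mapsto(g_a,\zeta^\pm(a))$ of $\pi_2$ are distinct but meet over $a_0$, so $\pi_2$ is ramified over $a_0$. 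If $g$ is not primitive, $\theta^+,\theta^-$ share a doubling orbit, so $\zeta^+(a)=\zeta^-(a)$ for all $a\in\ell$, and this single section extends holomorphically, hence unramifiedly, across $a_0$. That the local degree is exactly $2$ in the primitive case, and that no other period-$p$ cycles of $g_a$ collide over $a_0$ in either case, follows by transporting \Cref{per1-edge-branching} through the mating of \Cref{thm:per2-structure}: the period-$p$ cycles of $g_a$ split into those carried from the fixed basilica factor (independent of $a$) and those carried from the $f_c$-factor, whose collision behaviour over $a_0$ is precisely the one described by \Cref{per1-edge-branching}.

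The step I expect to be the main obstacle is this last point of (c) — ruling out extra ramification over $a_0$ and pinning the local degree to $2$ — which requires a careful description of the period-$p$ cycle set of the mating $f_{-1}\sqcup f_c$ in terms of its two factors, relying on the puzzle-piece preservation in \Cref{thm:per2-structure}; by contrast, the symbolic bookkeeping in the earlier steps is routine once one observes that $p\ge 3$ forbids the cut-leaf orbit and hence forces exact period $p$ throughout.
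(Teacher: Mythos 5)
Your proposal is correct and follows essentially the same route as the paper: label the bubble rays via the landing points of the characteristic angles in the basilica Julia set and the semiconjugacy of \Cref{lem:semi-conj} for (a) and (b), then deduce (c) from the $\Per_1(0)$ branching statement (\Cref{per1-edge-branching}) transported through the mating. Your write-up is considerably more detailed than the paper's three-line proof — in particular your explicit collision argument for (c) via \Cref{lem:per2-dynam-ray-holomorphic-landing} and \Cref{cor:per2-co-landing-implies-primitive} fills in exactly what the paper leaves implicit — but the underlying argument is the same.
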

\begin{proof}
    Item (a): Suppose $f_c$ is a $p$-cycle in $\M$.
    The landing point $x^\pm\in J_{f_{-1}}$ defines a $p$-cycle in $J_{f_{-1}}$, whose $C_{p,2}$ labelling is given by \Cref{lem:semi-conj}.

    Item (b): By \Cref{lem:semi-conj}, $g$ has two distinct labels in $C_{p,2}$ if and only if $f_c$ has two distinct labels in $C_p$.
    But then, by definition, $f_c$ has two distinct labels if and only if $f_c$ is primitive.

    Item (c): Follows from (b) plus \Cref{per1-edge-branching}.
\end{proof}

\begin{proposition}\label{prop:Per2-branching}
    If $p\geq 3$, then $\pi_2: \MC_p(\mathcal{F}_2)\to \mathcal{F}_2$ is branched precisely at the roots of primitive hyperbolic components.
    For $p=1$, the map $\pi_2$ is also branched at the lift of the puncture in $\Per_2(0)$.
    For $p=2$, the map $\pi_2$ is a homeomorphism.
\end{proposition}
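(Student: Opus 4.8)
The plan is to treat the three cases separately. The case $p\ge 3$ is the substantive one and will be assembled from the structural results already established — \Cref{lem:bub-ray-props}, \Cref{lem:per2-faces}, and \Cref{per1-edge-branching} (transported across the mating of \Cref{thm:per2-structure}) — while $p=1,2$ are handled by a direct analysis of the explicit family $g_a(z)=\frac{z^2+a}{1-z^2}$.

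\textbf{Case $p\ge 3$.} First note that $\pi_2$ extends to a branched cover of $\widehat{\C}$, and that it is unbranched over any parameter $a_0$ for which all period-$p$ cycles of $g_{a_0}$ are repelling and distinct: applying the implicit function theorem to $g_a^{\circ p}(z)=z$, a period-$p$ cycle whose multiplier is $\ne 1$ continues holomorphically in $a$, and two such continuations stay disjoint unless they collide. Hence the branch locus of $\pi_2$ is contained in the union of the roots of period-$p$ hyperbolic components, the orbifold point $z^{-2}$, and the puncture $a=-1$. Over the roots of hyperbolic components (those in $\M_2$ as well as those of the capture components of $\Per_2(0)\setminus\M_2$), \Cref{lem:bub-ray-props}(c) says $\pi_2$ is branched precisely at the roots of the \emph{primitive} components, with local degree $2$. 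Over the external component $\cal{E}$, \Cref{lem:per2-faces} describes $\pi_2$ on each face and shows it is a local homeomorphism away from $z^{-2}$. It remains to dispose of the puncture and of $z^{-2}$. Near $a=-1$ the rational map degenerates to a constant and the period-$p$ cycles escape, so $a=-1\notin\Per_2(0)$ contributes no branch point; at $z^{-2}$ one passes to the orbifold chart $h_t(z)=t+\frac{1}{z^2-t^2}$, $a=t^{-3}-1$ (a $3{:}1$ parameterization near $z^{-2}$), in which every period-$p$ cycle of $h_0=z^{-2}$ — all of which are repelling for $p\ge 3$ — continues holomorphically; thus the coarse degree-$3$ behavior of $\pi_2$ over $z^{-2}$ recorded by \Cref{lem:per2-faces} is exactly the ramification forced by the $\Z/3$-isotropy of $\Per_2(0)$ there, so $z^{-2}$ is not a branch point of $\pi_2$ as a map of orbifolds. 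Combining these observations proves the claim for $p\ge 3$.

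\textbf{Cases $p=1,2$.} For $p=1$, the finite fixed points of $g_a$ are the roots of $z^3+z^2-z+a=0$ (and $\infty$ is never fixed), so $\MC_1^+(\mathcal{F}_2)$ is the graph $a=-z^3-z^2+z$, $\MC_1(\mathcal{F}_2)\cong\widehat{\C}$, and $\pi_2$ is the degree-$3$ map $z\mapsto -z^3-z^2+z$; its critical points $z=-1$ and $z=1/3$ lie over $a=-1$ and $a=5/27$, and there is a further total ramification of order $3$ at $z=\infty$ over $a=\infty=z^{-2}$ (in agreement with Riemann--Hurwitz: $2=3\cdot 2-(1+1+2)$). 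Here $a=5/27$ is the root of the unique primitive period-$1$ hyperbolic component of $\M_2$ (the one centered at the basilica $g_0=g_{a=0}$), $a=-1$ is the puncture, and — as above — the order-$3$ ramification over $z^{-2}$ matches the $\Z/3$-isotropy and is not an orbifold branch point, which is exactly the asserted conclusion. For $p=2$, the map $g_a^{\circ 2}$ has $4+1=5$ fixed points counted with multiplicity, of which $3$ are fixed points of $g_a$, leaving exactly $2$ points of exact period $2$; these form a single $2$-cycle, which must be the critical cycle $\{\infty,-1\}$ since $\infty$ has exact period $2$, and this cycle is present for every $a\in\Per_2(0)$. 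Hence $\pi_2$ is a holomorphic bijection, so $\MC_2(\mathcal{F}_2)\cong\Per_2(0)$ and $\pi_2$ is a homeomorphism.

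\textbf{Main obstacle.} The delicate step is the analysis at the orbifold point $z^{-2}$: one must check that the degree-$3$ local behavior of $\pi_2$ over it — visible both in the $a$-coordinate and in \Cref{lem:per2-faces} (for ramified cycle trios), and in the totally ramified point at infinity of the $p=1$ curve — is precisely the ramification imposed by the $\Z/3$-isotropy of $\Per_2(0)$ at $z^{-2}$, so that it yields no genuine (orbifold) branch point. The reparameterization $h_t$, together with the persistence of the repelling cycles of $z^{-2}$, is what makes this precise. A secondary point is confirming that the branch locus over $\M_2$ is exhausted by the primitive roots, but this is exactly the content of \Cref{lem:bub-ray-props}(c), which in turn rests on the $\Per_1(0)$ statement \Cref{per1-edge-branching}.
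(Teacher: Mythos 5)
Your overall architecture matches the paper's: the branching at primitive roots is delegated to \Cref{lem:bub-ray-props}(c) (hence ultimately to \Cref{per1-edge-branching}), and your explicit computations for $p=1,2$ and your orbifold-chart discussion at $z^{-2}$ are correct and in fact more careful than the paper's proof, which does not address the orbifold point at all and disposes of $p=1$ with ``we can observe directly.'' Your IFT localization of the branch locus is also a sound (if implicit in the paper) preliminary step.

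There is, however, one genuine gap: your treatment of the puncture for $p\ge 3$. The sentence ``the period-$p$ cycles escape, so $a=-1\notin\Per_2(0)$ contributes no branch point'' is a non sequitur. The proposition's claim ``branched \emph{precisely} at the roots of primitive components'' requires showing that the monodromy of the period-$p$ cycles around a small loop about $a=-1$ is trivial, and the mere fact that the puncture is excluded from the parameter space does not give this --- your own $p=1$ computation exhibits nontrivial monodromy (local degree $2$) around that very puncture, and degeneration of the family is exactly the mechanism that tends to produce such monodromy. Nor is ``the cycles escape'' justified or, even if true, sufficient: cycles escaping to a common limit can still be permuted. The paper closes this step differently: for $p>1$ the finitely many period-$p$ parabubble rays (the sets $\O_{\theta,2}$) stay a bounded distance from the puncture, so by \Cref{lem:per2-dynam-ray-holomorphic-landing} the landing points $\beta_a(\theta)$ marking each cycle vary holomorphically, hence return to themselves, along a sufficiently small loop around $a=-1$. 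You need an argument of this kind (or an explicit analysis in the $h_t$-chart near $t=\infty$) to complete the $p\ge3$ case; everything else in your write-up stands.
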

\begin{proof}
	The proof for that $\pi_2$ branches precisely at the roots of primitive components, in light of \Cref{lem:bub-ray-props}, follows exactly as it does in the $\Per_1(0)$ setting.
    The existence of punctures is novel in $\Per_2(0)$.
     For $p=1$, we can observe directly $\pi_2$ branches. For $p>1$, there are finitely many $p$-cycles, each a bounded distance away from the puncture.
    From this it follows from \Cref{lem:per2-dynam-ray-holomorphic-landing} that taking a sufficiently small loop around the puncture to avoid the active $p$ cycles, that $\gamma_\theta(c)$ varies continuously.
\end{proof}

\subsection{Algorithm for $\Sigma_{p, 2}$}

Finally, we see the payoff of our careful setup above to mirror the construction over $\Per_1(0)$, as we are in the position to immediately generalize the Telephone \Cref{algo-per1-angle} to a ``bubble ray" algorithm for constructing a cell complex $\Sigma_{p, 2}$ for $\MC_p(\mathcal{F}_2)$.
First we present the vertex relations.

\begin{lemma}
    Let $v_1, v_2$ be two vertices in $\Sigma_{p, 2}$.
    Then $v_1$ and $v_2$ are joined by an edge if and only if there exists a primitive hyperbolic component in $\M_2$ whose bubble rays produce sequences in $C_{p,2}$ agreeing with $v_1, v_2$.
\end{lemma}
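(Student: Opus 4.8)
The plan is to follow the proof of \Cref{P:vertex-relation} almost verbatim, substituting bubble rays for external rays and invoking the $\Per_2(0)$-analogues established above: \Cref{lem:per2-dynam-ray-holomorphic-landing} (holomorphic motion of bubble-ray landing points away from active wakes), \Cref{cor:per2-co-landing-implies-primitive} (co-landing of two such points detects primitive wakes), \Cref{lem:bub-ray-props} (the bubble rays of a primitive center are labelled by elements of $C_{p,2}$, distinctly iff the map is primitive, with $\pi_2$ branched there), and \Cref{prop:Per2-branching} (the branch locus of $\pi_2$ is exactly the set of roots of primitive components, with local degree $2$ at each). I will also use that, by \Cref{lem:semi-conj} together with the conjugacy between $g_0$ and the basilica polynomial $f_{-1}$, each $g_0$-cycle $\zeta$ carries a canonical label $\nu(\zeta)\in C_{p,2}$, and that these labels coincide with the ones attached to bubble rays in \Cref{lem:bub-ray-props}.

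For the ``if'' direction, given a primitive component $\cal{H}$ of period $p$ in $\M_2$ with root $a_0$ and characteristic bubble rays $\cal{B}_{a_0}(\theta_1)$, $\cal{B}_{a_0}(\theta_2)$ producing the $C_{p,2}$-sequences $\nu_1,\nu_2$ agreeing with $v_1,v_2$, I would take the vein $V_2(a_0)\subseteq\cal{V}_{p,2}$ (which exists and embeds in $\Per_2(0)$ by \Cref{cor:veins-embed}) and track, for $a$ along this vein, the two landing points $\beta_a(\theta_i)$. By \Cref{lem:per2-dynam-ray-holomorphic-landing} these vary holomorphically, and for $a\ne a_0$ on the vein --- where $a$ lies outside the wake of $\cal{H}$ --- \Cref{cor:per2-co-landing-implies-primitive} shows they sit on two distinct period-$p$ cycles $\zeta_1(a),\zeta_2(a)$; at $a_0$ the two characteristic bubble rays co-land, so $\beta_{a_0}(\theta_1)=\beta_{a_0}(\theta_2)$ and these cycles collide. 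Lifting the vein through $\MC_p(\mathcal{F}_2)$ then gives a path from $(g_0,\zeta_1(g_0))$ up to the branch point over $a_0$ --- a branch point of local degree $2$ by \Cref{prop:Per2-branching} and \Cref{lem:bub-ray-props} --- and back down to $(g_0,\zeta_2(g_0))$; this is precisely a branched lift of $V_2(a_0)$, i.e. an edge of $\Sigma_{p,2}$, whose endpoints have labels $\nu(\zeta_i(g_0))=\nu_i$ by \Cref{lem:bub-ray-props}, hence equal $v_1$ and $v_2$.

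For the converse, an edge $e$ of $\Sigma_{p,2}$ is by definition a branched lift of some vein $V_2(a_0)$ with $a_0\in R_{p,2}$ the root of a primitive component $\cal{H}$; since $e$ contains a branch point of $\pi_2$, \Cref{prop:Per2-branching} places that point over $a_0$ with local degree $2$. Reversing the lifting argument, the two endpoints $(g_0,\zeta_1),(g_0,\zeta_2)$ of $e$ are the two cycles that collide at $a_0$, which by \Cref{cor:per2-co-landing-implies-primitive} (using that the vein enters the wake of $\cal{H}$ only at its root) are exactly the cycles through the landing points of the two characteristic bubble rays $\cal{B}_{g_0}(\theta_i)$ of $\cal{H}$; by \Cref{lem:bub-ray-props} their labels $\nu(\zeta_i)$ are the $C_{p,2}$-sequences produced by these bubble rays, so $\cal{H}$ is the desired primitive component.

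The step I expect to require the most care is the behaviour of the tracked cycles along $V_2(a_0)$: one must know that the vein meets the wake of $\cal{H}$ only at $a_0$ (so that \Cref{cor:per2-co-landing-implies-primitive} keeps the two cycles disjoint right up to the root), and that exactly two period-$p$ cycles merge at $a_0$, which both pins down the unique branched lift and certifies the local degree $2$. In the $\Per_1(0)$ setting these are standard facts about wakes and parabolic bifurcation; here they are inherited from the mating description $\M_2\simeq\M^*\sqcup K_B^*$ of \Cref{thm:per2-structure} and from \Cref{lem:bub-ray-props}. Once these topological inputs are in place, the combinatorial bookkeeping is identical to the $\Per_1(0)$ case and the proof closes.
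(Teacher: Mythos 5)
Your proposal is correct and takes essentially the same route as the paper, which simply states that the proof of \Cref{P:vertex-relation} carries over using the embedded veins of \Cref{cor:veins-embed}; you have merely spelled out the substitutions (bubble rays for external rays, \Cref{lem:per2-dynam-ray-holomorphic-landing}, \Cref{cor:per2-co-landing-implies-primitive}, \Cref{lem:bub-ray-props}, \Cref{prop:Per2-branching}) that the paper leaves implicit.
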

\begin{proof}
    This follows from the proof of \Cref{P:vertex-relation}, using veins $V_{2}(c)$ in $\M_2$, which are embedded by \Cref{cor:veins-embed}.
\end{proof}

Then, before we present the algorithm, as $\M_2$ is not simply connected, the notion of ``order" of going around $\M_2$ is not immediately sensible.
However, as \Cref{thm:per2-structure} gives an isomorphism  $\varphi: \M^*\sqcup K_B^*\to \Per_2(0)$, we understand the counterclockwise turning around $\M_2$ as the image of that order around $\M^*$ under $\varphi$.
Formally, this shows

\begin{lemma}\label{lem:ccw}
	There is a natural counterclockwise order around $\M_2$.
\end{lemma}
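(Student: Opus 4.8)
The plan is to exploit the homeomorphism $\varphi : \mathcal{M}^* \sqcup K_B^* \to \Per_2(0)$ from \Cref{thm:per2-structure} to transport the orientation and cyclic structure of $\partial \mathcal{M}^*$ onto $\M_2$. First I would recall that $\mathcal{M}^*$, being a subset of the Mandelbrot set $\M$ bounded by two external rays (the boundary of the wake $W_B$ containing the basilica limb), inherits a well-defined counterclockwise orientation from the standard orientation on $\C \supseteq \M$; equivalently, the parameter rays landing on $\partial \mathcal{M}^*$ are linearly ordered by their angles in the arc $\mathbb{S}^1 \setminus [1/3, 2/3]$, and this induces a cyclic order on $\partial \mathcal{M}^*$ away from the cut. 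Since $\varphi$ is a homeomorphism of oriented surfaces (both $\Per_2(0)$ and the sphere underlying $\mathcal{M}^* \sqcup K_B^*$ are oriented, and $\varphi$ preserves puzzle pieces and hence orientation, by \cite[Theorem 1.2]{Dudko}), it carries this orientation to $\Per_2(0)$, and restricting to the non-escaping locus $\M_2 = \varphi(\mathcal{M}^* \sqcup J_B^*)$ gives a preferred boundary orientation there.

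The key steps in order: (1) Observe that $\partial \mathcal{M}^*$ carries a canonical counterclockwise orientation, coming either from the outward co-orientation inherited from $\C$ or, more robustly, from the cyclic order of external angles in $\mathbb{S}^1 \setminus [1/3, 2/3]$ under which the parameter rays land. (2) Note that $\varphi$ identifies a neighborhood of $\mathcal{M}^* \sqcup K_B^*$ minus the gluing locus with a neighborhood of $\M_2$ in $\Per_2(0)$, and that $\varphi$ respects the orientations, so the counterclockwise sense on $\partial \mathcal{M}^*$ is pushed forward to a well-defined local sense of turning around $\partial \M_2$. (3) Check this is globally consistent: the only place a potential ambiguity could arise is where rays of $K_B^*$ get glued in, but because the basilica lamination $\mathcal{L}_B$ is embedded into $\Per_1(0) \setminus \M$ in an orientation-compatible way (as used in \Cref{lem:semi-conj} and \Cref{lem:twist}), the gluing does not reverse orientation, so the cyclic order on $\partial\M_2 \cong \partial(\mathcal{M}^* \sqcup K_B^*)$ is unambiguous.

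The main obstacle I expect is making precise what "natural counterclockwise order around $\M_2$" should mean when $\M_2$ is not simply connected and its boundary is not a simple closed curve — one must argue that the relevant structure is really just a cyclic ordering on the set of primitive-component roots (equivalently, on the accessible boundary points we care about for the algorithm), rather than a genuine global orientation of a curve. I would phrase the conclusion so that "counterclockwise order" is the cyclic order induced on $R_{p,2}$ (or on the set of parabubble rays landing on $\partial\M_2$) via \Cref{prop:parabubble}, which says the characteristic bubble rays of a primitive center $g = f_{-1} \sqcup f$ have angles $-\theta_f^\pm$; thus the angular order on $\mathbb{S}^1 \setminus [1/3,2/3]$ (pulled back by $\theta \mapsto -\theta$ and then transported by $\varphi$) furnishes exactly the ordering that makes \Cref{algo-per1-angle} generalize. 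This reduces \Cref{lem:ccw} to the statement that $\varphi$ is orientation-preserving together with the combinatorial fact that $\mathbb{S}^1 \setminus [1/3, 2/3]$ is a single arc, hence canonically linearly (thus cyclically, after the endpoints) ordered.
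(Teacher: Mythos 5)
Your proposal is correct and is essentially the paper's own argument: the paper defines the counterclockwise order around $\M_2$ precisely as the image under the homeomorphism $\varphi:\M^*\sqcup K_B^*\to\Per_2(0)$ of \Cref{thm:per2-structure} of the counterclockwise order around $\M^*$ inherited from $\C$. Your additional care about orientation-compatibility of the gluing and the reformulation via the cyclic order of angles in $\mathbb{S}^1\setminus[1/3,2/3]$ is a reasonable elaboration of the same route, not a different one.
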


Now we are ready to state our main algorithm to produce the cell decomposition for the marked cycle curves over $\Per_2(0)$.

\begin{algorithm}[Telephone algorithm for $\Per_2(0)$]\label{algo-per2}
Let $p \geq 2$ be an integer. Then the cell decomposition for $\MC_p(\mathcal{F}_2)$ is obtained as follows:
    \begin{enumerate}
    	\item Write down the list of all primitive hyperbolic components of period $p$ in $\M_2$. The root of each such component is the landing point 
	of two bubble rays. 
	\item Associate to each angle of such a bubble ray a label in $C_{p, 2}$, by taking its itinerary under the partition $\mathcal{J} = \{J^{(k)}, k = 0, 1, 2\}$ 
	of \Cref{lem:semi-conj}. 
	\item Take one element of $C_{p, 2}$ and set it as the \emph{active label}. Then, starting with the external parameter angle $0$ 
	and going around $\M_2$ in counterclockwise order, look for the first angle pair that contains the active label. 
	\item When you encounter a ray pair whose labels contain the active label, 
	add a new edge to the current face with vertices the two labels of this ray pair, and edge labeled by the current hyperbolic component. 
	\item Then reset the active label to be the other label of the current ray pair, and keep going around $\M_2$. 
	\item The face is completed once you encounter the same edge you started with, with the same orientation. 
	\item Repeat the process starting from the other elements of $C_{p,2}$, until all available edges with both orientations have been listed. 
	\item Glue together edges with the same label, with opposite orientation.
    \end{enumerate}
\end{algorithm}

As an example, let us produce the tessellation for $\MC_5(\mathcal{F}_2)$. Let us consider the period $5$ abstract ternary cycles: 
\begin{align*}
A & \quad 02021 \qquad & B \quad 02012 \\
\bar{A} & \quad 10102 \qquad & \bar{B} \quad 10120 \\
\bar{\bar{A}} & \quad 21210 \qquad & \bar{\bar{B}} \quad 21201 
\end{align*}
Now, let us compute the ternary coding of the primitive hyperbolic components of period $5$. 
For instance, for the angle $3/31$ you get the ternary itinerary
$$3/31: 2 \qquad 6/31: 0 \qquad 12/31 : 1 \qquad 24/31: 2 \qquad 17/31 : 1$$
so its label is $20121 \sim 21201 = \bar{\bar{B}}$.
Computing all labels for all ray pairs landing at roots of hyperbolic components we have the following. 
\begin{align*}
3/31: \bar{\bar{B}} & \qquad 4/31: B \\
5/31: \bar{B} & \qquad 6/31: \bar{\bar{B}} \\
7/31: \bar{A} & \qquad 8/31: B \\
23/31: A & \qquad 24/31: \bar{\bar{B}} \\
25/31: \bar{A} & \qquad
26/31: \bar{\bar{A}} \\
27/31: A & \qquad
28/31: \bar{A}
\end{align*}
Applying the algorithm results in the tessellation from \Cref{fig:M52-ex}.  

\begin{theorem}
    \Cref{algo-per2}
    constructs a surface homeomorphic to $\MC_p(\mathcal{F}_2)$.
\end{theorem}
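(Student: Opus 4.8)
The plan is to follow the proof of \Cref{P:algo1works} almost verbatim, substituting for each ingredient over $\Per_1(0)$ its $\Per_2(0)$ counterpart developed in this section. Since $\Per_2(0)\setminus\cal{V}_{p,2}$ is open, connected and simply connected, and since by \Cref{lem:per2-faces} every connected component of $\pi_2^{-1}(\cal{E})$ is a disk, the cell complex $\Sigma_{p,2}$ is a genuine cell decomposition of the Riemann surface $\MC_p(\mathcal{F}_2)$; hence it suffices to exhibit an isomorphism between $\Sigma_{p,2}$ and the cell complex output by \Cref{algo-per2}. We would give the argument for $p\ge 3$; the cases $p\le 2$ are degenerate and handled directly via \Cref{prop:Per2-branching}.

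First I would identify the $0$- and $1$-cells. By \Cref{lem:semi-conj}, $\pi_2^{-1}(g_0)$ is in canonical bijection with $C_{p,2}$, which is precisely the vertex index set used by \Cref{algo-per2}. For edges, \Cref{prop:Per2-branching} identifies the branch points of $\pi_2$ with the roots of the primitive hyperbolic components of $\M_2$, each of local degree $2$ (as over $\Per_1(0)$, cf.\ \Cref{per1-edge-branching}), while the vertex-relation lemma preceding \Cref{lem:ccw} shows that at such a root the two bubble-ray labels $\nu^\pm\in C_{p,2}$ are joined by an edge, realized as a branched lift of the vein $V_2(a)$, which exists and embeds by \Cref{cor:veins-embed}. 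So the edges produced by the algorithm, one per primitive component, correspond bijectively to the branched lifted veins, i.e.\ to the $1$-cells of $\Sigma_{p,2}$.

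The heart of the proof, exactly as in \Cref{P:algo1works}, is matching the cyclic order of edges around faces. Using \Cref{lem:ccw} to make sense of ``counterclockwise around $\M_2$'', fix a basepoint $a^+\in\cal{E}$ (the point at angle $0$) and a loop $\gamma\subset\cal{E}$ bounding a tubular neighborhood of $\cal{V}_{p,2}$ and passing through $a^+$; each face of $\Sigma_{p,2}$ contains a unique lift $\widetilde\gamma$ of $\gamma$, oriented by pullback. The key geometric input is that the monodromy around $\gamma$ cyclically permutes the three fixed points of the quasicircle Julia set of $g_a$, $a\in\cal{E}$, hence cyclically permutes the three complementary arcs: this is the $\Z_3$-symmetry recorded in \Cref{lem:symmetry-of-new-param}, and it is why the labeling map $\Phi_{\mathcal{E}}$ of \eqref{E:face} descends to a locally constant map to $[C_{p,2}]$ on faces, with monodromy the rotation $\rho$. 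Next one needs the $\Per_2(0)$ analogue of \Cref{L:knead}: the edge labeled by a primitive component bounds exactly the two faces labeled by the cycle trios of its two bubble-ray itineraries $\nu^\pm$. This is proved by the same dynamical-plane picture as \Cref{L:knead}, now reading itineraries for a map $g_a$ with $a$ in the wake of the component with respect to the basilica partition $\mathcal{J}$ of \Cref{lem:semi-conj} and using basilica dynamics (\Cref{prop:basilica-dynamics}) in place of the partition $\Delta_c$. Granting this, following $\widetilde\gamma$ inside a face $F$ and recording the successive branched lifted veins it meets is exactly the loop of \Cref{algo-per2} with active label tracking which arc the marked cycle currently occupies: the ``next'' branched vein is the first ray pair in counterclockwise order one of whose $C_{p,2}$-labels equals the current active label, and a crossing of $\widetilde\gamma$ over a lift of $a^+$ is precisely where the algorithm resets the active label to a $\rho$-conjugate. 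Since every face meets a lift of $a^+$ and every edge bounds a face, running the algorithm from all starting elements of $C_{p,2}$ produces every face and every edge, so the two cell complexes, hence their geometric realizations, are isomorphic.

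I expect the main obstacle to be a rigorous proof of the $\Per_2(0)$ analogue of \Cref{L:knead} (equivalently, of the monodromy statement for $\Phi_{\mathcal{E}}$): unlike the polynomial case there is no literal Green's function, equipotential, or external ray on $\Per_2(0)$, $\M$ is not known to be locally connected, and the objects in play --- bubble rays, wakes, and the vein $\cal{V}_{p,2}$ --- are only available through the mating description of \Cref{thm:per2-structure}. The bit-tracking argument must therefore be carried out entirely in terms of bubble rays and their non-bifurcation (\Cref{lem:per2-dynam-ray-holomorphic-landing}, \Cref{cor:per2-co-landing-implies-primitive}), while carefully bookkeeping the three-fold symmetry and the basepoint crossings, where the two-fold bit flip of the $\Per_1(0)$ case is replaced by the order-three rotation $\rho$.
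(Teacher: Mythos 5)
Your proposal is correct and takes essentially the same approach as the paper, whose entire proof is the one-line remark that the argument of \Cref{P:algo1works} ``generalizes immediately, swapping $p$'s for $(p,2)$'s''; you have simply spelled out that generalization, correctly identifying the substitute ingredients (\Cref{lem:semi-conj}, \Cref{prop:Per2-branching}, \Cref{cor:veins-embed}, \Cref{lem:ccw}, \Cref{lem:per2-faces}) and the point where the most care is needed.
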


\begin{proof}
    The proof of \Cref{P:algo1works} generalizes immediately, swapping $p$'s for $(p,2)$'s.
\end{proof}

\definecolor{mygreen}{rgb}{0.0, 0.71, 0.0}
\definecolor{mymagenta}{rgb}{0.8, 0.0, 0.8}
\definecolor{myred}{rgb}{1.0, 0.0, 0.0}
\definecolor{myorange}{rgb}{0.8, 0.396, 0.0}
\definecolor{myskyblue}{rgb}{0.0, 0.556, 0.9}

\begin{figure}
    \centering
    \begin{subfigure}{0.8\textwidth}
        \centering
        \[
\begin{tikzcd}
                & {\bar{\bar{B}}} &&&& {\bar{\bar{B}}} \\
                {B} & \bar{B} & {A} &&& {[A]} & {} \\
                & {[B]} &&& {A} & \bar{\bar{A}} & {B} & {} \\
                & {\bar{A}} &&&& {\bar{A}} \\
                &&&& {B} \\
                {[B]} & \bar{B} & {\bar{\bar{B}}} & {[A]} & \bar{\bar{A}} & \bar{A} \\
                &&&& {A}
                \arrow["27"', color={rgb,255:red,214;green,92;blue,214}, no head, from=3-5, to=4-6]
                \arrow["3"', color={rgb,255:red,202;green,132;blue,104}, no head, from=3-7, to=1-6]
                \arrow["23"', color={rgb,255:red,92;green,214;blue,92}, no head, from=1-6, to=3-5]
                \arrow["23", color={rgb,255:red,92;green,214;blue,92}, no head, from=1-2, to=2-3]
                \arrow["7"', color={rgb,255:red,92;green,92;blue,214}, no head, from=2-1, to=4-2]
                \arrow["3"', color={rgb,255:red,214;green,153;blue,92}, no head, from=1-2, to=2-1]
                \arrow["5", color={rgb,255:red,80;green,180;blue,226}, shorten <=2pt, Rightarrow, no head, from=1-2, to=2-2]
                \arrow["27", color={rgb,255:red,214;green,92;blue,214}, no head, from=2-3, to=4-2]
                \arrow["7"', color={rgb,255:red,92;green,92;blue,214}, no head, from=4-6, to=3-7]
                \arrow["25", color={rgb,255:red,214;green,92;blue,92}, shorten >=2pt, Rightarrow, no head, from=4-6, to=3-6]
                \arrow["5"', color={rgb,255:red,80;green,180;blue,226}, shorten >=3pt, Rightarrow, no head, from=6-2, to=6-3]
                \arrow["23"', color={rgb,255:red,92;green,214;blue,92}, from=6-3, to=7-5]
                \arrow["3", color={rgb,255:red,214;green,153;blue,92}, no head, from=6-3, to=5-5]
                \arrow["7", color={rgb,255:red,92;green,92;blue,214}, no head, from=5-5, to=6-6]
                \arrow["25", color={rgb,255:red,214;green,92;blue,92}, shorten >=3pt, Rightarrow, no head, from=6-6, to=6-5]
                \arrow["27", color={rgb,255:red,214;green,92;blue,214}, no head, from=6-6, to=7-5]
            \end{tikzcd}
        \]
        \caption{Cell structure for the marked cycle curve $\MC_5(\mathcal{F}_2)$, which has genus $g=0$. Top: the faces $[A]$ and $[B]$ as generated by the Bubble Ray Algorithm; the colored edges correspond to the odd characteristic angle for the map $g$ within the mating $f_{-1}\sqcup g$ where a cycle branches; compare to Example 3.8. Bottom: gluing of the faces  $[A]$ and $[B]$.}
        \label{fig:M52-ex}
    \end{subfigure}\\
    \begin{subfigure}{0.8\textwidth}
        \centering
        \includegraphics[width=\textwidth]{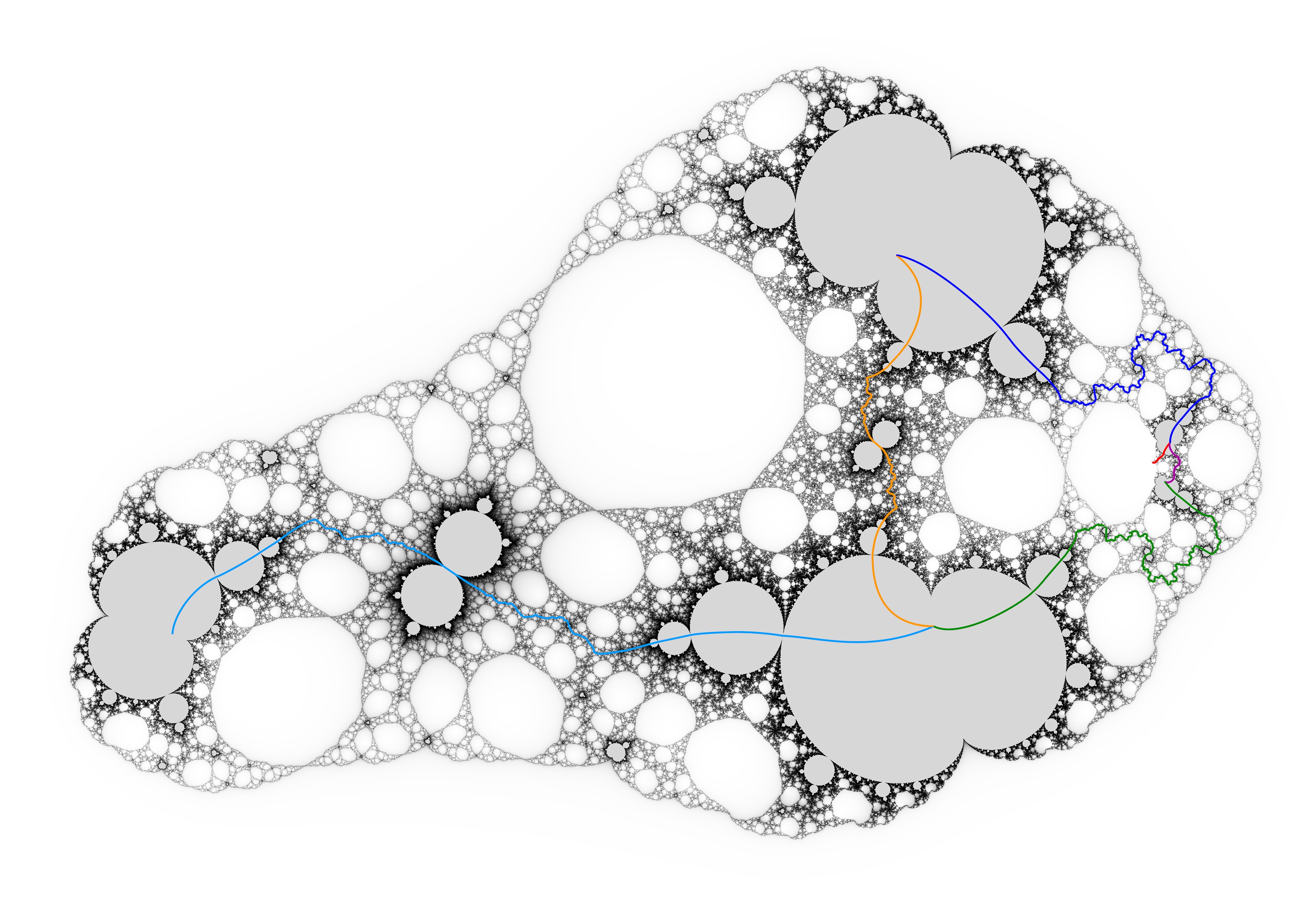}
        \caption{Cells visualized in $\MC_5(\mathcal{F}_2)$.}
    \end{subfigure}
    \caption{Comparison of combinatorial structure of $\MC_5(\mathcal{F}_2)$ to geometric structure.}
\end{figure}

\bigskip
We also obtain which faces have degree $3$, and which have degree $1$, as follows. 
Given a face, we write down the labels of the edges on its boundary, and we read them in clockwise order (note that the order \emph{is} important, 
as the canonical order is counterclockwise around $\M_2$, but it becomes clockwise as seen from the center of a face, which is a lift of $\infty$).

Given a vertex $v$ on a face $F$, let $\theta_1, \theta_2$ be the characteristic angles of the edges of $F$ incident to $v$, so that $\theta_1$ is 
the angle of the edge immediately before $v$ and $\theta_2$ is the angle of the edge immediately after $v$ in the orientation of $F$. 
We say that such vertex $v$ is \emph{median} if the positively oriented arc on the circle from $\theta_1$ to $\theta_2$ contains the angle $1/2$. 

The number of median vertices gives the local degree of the face, which can be either $1$ or $3$. 

\bigskip
Now, if \Cref{C:continuation} is true, we can also obtain from the previous algorithm the label in $[C_{p, 2}]$ of each face, using the following observation. 

\begin{lemma} \label{L:consistent}
If \Cref{C:continuation} is true, then for each face its label as defined in \Cref{lem:per2-faces} 
and \Cref{E:face} coincides with the abstract cycle trio associated to any of its median vertices.  
\end{lemma}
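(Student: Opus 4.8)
The plan is to reduce \Cref{L:consistent} to \Cref{C:continuation} by transporting the cycle marked in the face $F$ along the real segment joining $g_0$ to the quasicircle locus $\cal{E}$, and to recognize the combinatorial ``median'' condition as the assertion that this segment leaves $g_0$ in the direction of the puncture of $\Per_2(0)$.

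First I would unwind the two objects being compared. Fix a median vertex $v=[(g_0,\zeta)]$ of $F$. By \Cref{lem:per2-faces} and \eqref{E:face}, the label of $F$ is the class in $[C_{p,2}]$ of the itinerary of the cycle marked in $F$ with respect to the three-arc partition $\cal{I}_a$, computed at any $a\in\cal{E}$; by \Cref{lem:semi-conj}, the label of $v$ is the element of $C_{p,2}$ given by the itinerary of $\zeta$ with respect to the partition $\cal{J}$. So it suffices to produce a single parameter $a_0<-1$ and a point $[(g_{a_0},\zeta_{a_0})]$ lying in the \emph{open} face $F$ such that $\zeta_{a_0}$ is the analytic continuation of $\zeta$ along the real segment $[a_0,0]$: granting this, \Cref{C:continuation} gives that the $\cal{I}_{a_0}$-itinerary of $\zeta_{a_0}$ equals the $\cal{J}$-itinerary of $\zeta$, and passing to cycle trios finishes the proof. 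Here I would invoke that for $p\ge 3$ the $p$-cycles stay bounded away from the puncture (as in the proof of \Cref{prop:Per2-branching}), so the continuation of $\zeta$ along $[a_0,0]$ is well-defined across $a=-1$, and that $\pi_2$ is unbranched over $g_0$, so this continuation lifts to a well-defined arc in $\MC_p(\mathcal{F}_2)$ issuing from $v$.

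The heart of the argument — and the step I expect to be the main obstacle — is to show that this lifted real arc actually enters the face $F$. Since $\pi_2$ is a local homeomorphism near $v$, the germs at $v$ of the incident faces and edges correspond bijectively to the sectors cut out at $g_0$ by the veins of $\cal{V}_{p,2}$; as the arcs these edges cut out around $v$ partition the circle of directions and $1/2$ lies in exactly one of them, $v$ is a median vertex of exactly one incident face, and the content is that this face is the one the lifted arc enters. I would prove this by combining: (i) the identification of $\Per_2(0)$ with $\cal{M}^*\sqcup K_B^*$ underlying the vein construction (\Cref{thm:per2-structure}, \Cref{cor:veins-embed}), which carries $g_0$ to the center of the main cardioid of $\M$, carries $\cal{V}_{p,2}$ to the veins $\cal{V}_p\subset\cal{M}^*$, and sends the puncture to the collapse of the region $W_B$ containing the $1/2$-limb $\cal{L}_B$; (ii) the resulting fact that the negative real direction at $g_0$, pointing toward $a=-1$, is the internal-angle-$1/2$ direction of the main cardioid — i.e. the direction of $\cal{L}_B$; and (iii) the standard combinatorics of $\M$, whereby the veins emanating from the cardioid center inherit there the cyclic order of the internal angles of the limbs they enter, which agrees with the cyclic order of the corresponding external-angle intervals along $\partial\M$ — equivalently, by \Cref{prop:parabubble} (which negates angles, and $-1/2\equiv 1/2$), of the characteristic bubble-ray-angle intervals around $\M_2$ — with $\cal{L}_B$ occupying the interval $(1/3,2/3)\ni 1/2$. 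Hence the lifted real arc enters exactly the sector at $v$ whose two bounding edges have characteristic bubble-ray angles straddling $1/2$, which is by definition the face for which $v$ is median; so that face is $F$, and choosing $a_0$ close to $-\infty$ (so that $[(g_{a_0},\zeta_{a_0})]$ is near the lift of $z^{-2}$) places the endpoint in the open face.

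Finally I would note that ``any'' median vertex of $F$ gives the same answer. If $\pi_2\colon F\to\cal{E}$ has local degree $3$ then $F$ has three median vertices, and the order-$3$ deck group of this branched cover — generated, in the $p_\epsilon$-coordinates of \Cref{lem:per2-faces}, by $\epsilon\mapsto\omega^2\epsilon$ in the base, hence acting on a marked cycle by $\zeta\mapsto\omega^2\zeta$, which on itineraries is a cyclic permutation of the symbols $\{0,1,2\}$, i.e. a power of the rotation $\rho$ of \Cref{def:cycle_trio} — permutes these three vertices cyclically, so their $C_{p,2}$-labels form a single cycle trio; if the degree is $1$ there is a unique median vertex. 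In both cases the trio obtained is the one computed above, so it coincides with the label of $F$.
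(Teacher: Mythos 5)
Your proof follows the same route as the paper's: analytically continue the marked cycle along the negative real axis from $g_0$ to a parameter $a<-1$ in $\cal{E}$, and invoke \Cref{C:continuation} to match the $\mathcal{J}$-itinerary at the median vertex with the $\mathcal{I}_a$-itinerary that defines the face label via $\Phi_{\mathcal{E}}$. The paper simply asserts, ``by definition of median vertex,'' that the lifted negative real axis connects $v$ to the lift of $\infty$ inside $F$; your mating/$\tfrac12$-limb argument correctly fills in that step, and your deck-group observation about the independence of the choice of median vertex recovers what the paper records as a consequence of the lemma.
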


\begin{proof}
Let $v = (0, \zeta)$ be a median vertex on a face $F$, seen as a point on $\MC_p(\mathcal{F}_2)$. 
By definition of median vertex  there is a lift $\widetilde{\gamma}$ of the negative real axis $\gamma = (-\infty, 0]$ in $\Per_2(0)$ that connects $v$ to a lift of $\infty$. 

As described in \Cref{algo-per2} (2), the label of $v$ is the symbolic itinerary of the cycle $\zeta$ with respect to the partition $\mathcal{J}$ for $J_{g_0}$. Let us denote this label as $\omega \in C_{p, 2}$. By analytically following $\zeta$ as the parameter moves along $\gamma$, we then 
obtain a cycle $\zeta'$ for $g_a$, where we can take, say, $a = -100$. By  \Cref{C:continuation}, the coding of the cycle $\zeta'$ for $g_a$ under the partition $\mathcal{I}$ is also equal to $\omega$. 
By definition of $\Phi_\mathcal{E}$, this is the label associated to the face $F$ in \Cref{lem:per2-faces}.
\end{proof}

Thus, the algorithm to obtain the labels of a face is as follows: consider any one of its median vertices, and label the face 
with the cycle trio associated to the label of the vertex.
Note that as a consequence of \Cref{L:consistent}, all median vertices in the same face have labels that correspond to the same abstract cycle trio. 

\bigskip
For example in the left face in \Cref{fig:M52-ex} the labels in clockwise order read as 
$$(5/31, 23/31, 27/31, 7/31, 3/31, 5/31).$$
The threshold of $1/2$ is crossed going from $5/31$ to $23/31$, so at the vertex which has label $\bar{\bar{B}}$, hence we label the corresponding face as $[B]$. 
Similarly, the threshold is also crossed going from $7/31$ to $3/31$, as the angle is increasing, so it goes from $7/31$ to $1$, crossing $1/2$, and then from $0$ to $3/31$; the corresponding vertex has label $B$, which also gives rise to the class $[B]$ for the face. 
Finally, it is crossed a third time going from $5/31$ to $5/31$ (at a vertex with label $\bar{B}$). 
Since there are $3$ median vertices, the local degree of this face equals $3$. 

\bigskip
\Cref{L:consistent} shows how to obtain face relations for $\Sigma_{p, 2}$. Note that in $\Per_1(0)$ one could also obtain face relations via kneading data, as in \Cref{algo-per1-knead}. 

Analogously, in $\Per_2(0)$ we have defined ``kneading data" for parameters in either the external component $\mathcal{E}$ or the basilica component:  in the external component $\mathcal{E}$, one considers the symbolic itinerary with respect to the partition $\mathcal{I}_a$, and in the basilica component, one takes the symbolic itinerary with respect to the partition $\mathcal{J}$.

\begin{question}   \label{question:KD-for-p2} \label{question:labeling-hc}
Can we analogously define ``kneading data" for all hyperbolic components in $\Per_2(0)$?  
More precisely, let $\Omega \subseteq \Per_2(0)$ be the union of all hyperbolic components. For any hyperbolic component, 
is there a canonical partition of the Julia set into three parts such that  by taking the corresponding itinerary 
we obtain a map 
$$\Phi : \MC_p(\Omega) \to [C_{p, 2}]$$
that is constant on faces? This seems particularly challenging for the capture components. 
\end{question}

Sometimes a face in $\Sigma_{p, 2}$ will be degenerate, in the sense there may be some vertex $v\in C_{p,2}$ witnessed only by a single primitive component's bubble ray. See \Cref{fig:M52-ex} for an example in $p=5$.
In $\Per_1(0)$ we do not see the phenomenon because real primitive components in $\M$ provide edge connections between vertex pairs $v$ and $\overline{v}$. However, real polynomials do not correspond to any map in $\Per_2(0)$ as they form obstructed matings.

Note that $\MC_{3}(\mathcal{F}_2)$ is disconnected, as $C_{3}$ (and hence $C_{3,2}$) has two elements, but the unique primitive 3-cycle whose characteristic angles connects them is real (the airplane), so there is no corresponding edge joining them (see \Cref{fig:per2mc3}). 
However, this appears to be the only exception. If one were able to show there is a path between any two vertices in $C_{p, 2}$ corresponding to a concatenation of non-primitive polynomials outside the $1/2$-limb, then one would show the following conjecture:

\begin{conjecture}
    For $p\ne 3$, the marked cycle curve $\MC_p(\mathcal{F}_2)$ is connected.
\end{conjecture}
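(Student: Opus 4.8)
The plan is to reduce the statement to a purely combinatorial assertion about primitive components of period $p$ in the Mandelbrot set, and then to isolate the one step that does not yet go through.

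\emph{Step 1: reduction to connectivity of a graph.} Since $\MC_p(\mathcal F_2)$ is a smooth projective curve, it is connected if and only if its cell decomposition $\Sigma_{p,2}$ is connected, which holds if and only if the $1$-skeleton of $\Sigma_{p,2}$ is connected; call $G$ the graph with vertex set $C_{p,2}$ and one edge for each primitive hyperbolic component of period $p$ in $\M_2$. (Equivalently, one can phrase this via monodromy: by \Cref{prop:Per2-branching} the branch locus of $\pi_2$ for $p\ge 3$ is exactly the set of roots of primitive components, and a small loop about such a root induces the transposition of the two colliding $p$-cycles by \Cref{lem:bub-ray-props}, so the monodromy group of $\pi_2$ is generated by these transpositions and is transitive precisely when $G$ is connected.) The cases $p=1,2$ are treated separately: for $p=2$ the map $\pi_2$ is a homeomorphism by \Cref{prop:Per2-branching}, and $p=1$ must be handled by hand because of the extra puncture-branching recorded in \Cref{prop:Per2-branching}.

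\emph{Step 2: transport to $\Per_1(0)$.} By \Cref{thm:per2-structure}, \Cref{lem:bub-ray-props} and \Cref{prop:parabubble}, the primitive components of period $p$ in $\M_2$ are in bijection with the primitive hyperbolic components $H$ of period $p$ in $\M$ lying \emph{outside} the basilica ($1/2$-)limb $\mathcal{L}_B$ — exactly those for which the mating $f_{-1}\sqcup f_H$ is unobstructed — and, under the recoding bijection $C_p\cong C_{p,2}$ coming from \Cref{lem:semi-conj}, the edge from $H$ joins the cyclic classes of the two characteristic angles of $H$, just as in \Cref{P:vertex-relation}. The corresponding graph $G_1$ over $\Per_1(0)$, having \emph{all} primitive components of period $p$ as edges, is connected because $\MC_p(\mathcal F_1)$ is connected: it is birational to the dynatomic curve $\textup{Dyn}_p$, which is irreducible \cite{Bousch-thesis,LauSchleicher}. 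Hence $G$ is isomorphic to the graph obtained from $G_1$ by deleting the edges of the primitive components lying in the $1/2$-limb, and it suffices to prove that for $p\ne 3$ this deletion leaves a connected graph. One records two structural facts for later use: a primitive period-$p$ component lies in the $1/2$-limb if and only if both of its characteristic angles lie in $(1/3,2/3)$; and the collection of such components is invariant under complex conjugation, equivalently under $\theta\mapsto-\theta$, which fixes $(1/3,2/3)$ setwise.

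\emph{Step 3: re-routing around the $1/2$-limb.} The plan is to show every deleted edge can be bypassed. First, all real primitive components of period $p$ lie in the $1/2$-limb, and each joins a cyclic class $v$ to its conjugate $\overline v$; so in the quotient graph $\overline G_1 := G_1/(v\sim\overline v)$ these edges become loops and may be discarded harmlessly, and $\overline G_1$ stays connected. What remains to kill are the images of the \emph{non-real} primitive components in the $1/2$-limb, which occur in conjugation-symmetric pairs: for each one joining classes $[v]\ne[w]$ one must exhibit an alternate path in $\overline G_1$ through primitive components outside the $1/2$-limb. I would attempt this using the renormalization (tuning) structure — such a component sits in a little Mandelbrot copy inside the $1/2$-limb, and one tries to transport the $\Per_1(0)$ connectivity through the tuning homeomorphism while an explicit symbolic substitution pushes the relevant external angles out of $(1/3,2/3)$ — or, following the informal suggestion in the text, by first producing a chain of (not necessarily primitive) hyperbolic components all of whose angle intervals avoid $[1/3,2/3]$ connecting any two cyclic classes, and then upgrading such a chain to a path of primitive-component edges by perturbation. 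One must also re-establish, without real components, that $v$ and $\overline v$ lie in the same component of the deleted graph for every $v$ when $p\ne 3$ (the computations of $\MC_4(\mathcal F_2)$ and $\MC_5(\mathcal F_2)$ suggest that the non-real primitive components in the $1/q$-limbs with $q\ge 3$ already realize the conjugation).

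\emph{Main obstacle.} The genuine difficulty is the combinatorial re-routing of Step 3, and any argument for it must use $p\ne 3$ essentially: for $p=3$ the airplane is the unique primitive component of period $3$ and it lies in the $1/2$-limb, so deleting it disconnects $C_3=\{001,011\}$ — precisely the stated exception. Making ``there are enough primitive components of period $p\ge 4$ outside the $1/2$-limb'' precise appears to require either a period-preserving symbolic operation on characteristic angles that moves them out of $(1/3,2/3)$ while staying within primitive components, or a direct irreducibility statement for the period-$p$ cycle curve over $\Per_2(0)$ that parallels the Bousch / Lau--Schleicher irreducibility of $\textup{Dyn}_p$ for $p\ne 3$ but genuinely fails for $p=3$; at present we have neither.
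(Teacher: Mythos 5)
The statement you are proving is stated in the paper as a \emph{conjecture}: the paper offers no proof, only the one-line sufficient condition in the sentence immediately preceding it (``if one were able to show there is a path between any two vertices in $C_{p,2}$ corresponding to a concatenation of non-primitive polynomials outside the $1/2$-limb\dots''). So there is nothing to compare your argument against, and your proposal --- as you yourself say in the ``Main obstacle'' paragraph --- is not a proof. Your Steps 1--2 are essentially the reduction the authors have in mind: connectivity of $\MC_p(\mathcal{F}_2)$ comes down to connectivity of the graph on $C_{p,2}$ whose edges are the primitive components of period $p$ in $\M_2$, these edges are (via \Cref{thm:per2-structure}, \Cref{prop:parabubble}, \Cref{lem:bub-ray-props}) exactly the primitive components of $\M$ outside the $1/2$-limb, and the whole question is whether deleting the $1/2$-limb components from the connected $\Per_1(0)$ picture leaves a connected graph. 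Your Step 3 is precisely the open combinatorial problem, and your explanation of why $p=3$ must be exceptional (the airplane is the unique primitive component of period $3$ and is real, hence obstructed) matches the paper's. The gap is genuine, but it is the same gap the paper deliberately leaves open.

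Two technical points in your reduction deserve correction, and the first would actually make your Step 3 easier. First, the parenthetical monodromy claim in Step 1 is imprecise: the branch locus of $\pi_2$ also includes the lift of the orbifold point $z^{-2}$ (see \Cref{lem:per2-faces}), whose local monodromy is the rotation $\rho$ on $C_{p,2}$; equivalently, the closure of a face labelled by a ramified trio already contains all three vertices $\nu,\rho\nu,\rho^2\nu$. Consequently the curve can be connected even if the edge-graph $G$ on $C_{p,2}$ is not, and to prove the conjecture it suffices to connect cycle \emph{trios}, i.e.\ to prove connectivity of the induced graph on $[C_{p,2}]$. Second, and relatedly, the quotient you form in Step 3 is by the \emph{binary} conjugation $v\mapsto\overline{v}$ inherited from $\Per_1(0)$, but the identification that comes for free from the faces of $\Sigma_{p,2}$ is the \emph{ternary} rotation $\rho$; these are genuinely different equivalence relations on $C_p\cong C_{p,2}$ (for $p=5$ there are $3$ duos but only $2$ trios), so the residual task ``reconnect $v$ to $\overline{v}$ without real components'' should be re-examined --- part of it may be absorbed by the $\rho$-identification, and part of it is an honest extra obligation. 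For the same reason, your assertion that connectivity of $\MC_p(\mathcal{F}_1)$ (equivalently, irreducibility of the dynatomic curve) yields connectivity of $G_1$ on all of $C_p$, rather than merely on $[C_p]$, needs the corresponding argument on the $\Per_1(0)$ side; the cleaner input there is the Lau--Schleicher-type transitivity on kneading sequences rather than irreducibility alone. None of this closes the gap: the re-routing of primitive components around the $1/2$-limb for $p\ge 4$ remains open.
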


\begin{figure}
    \centering
    \includegraphics[height=6cm]{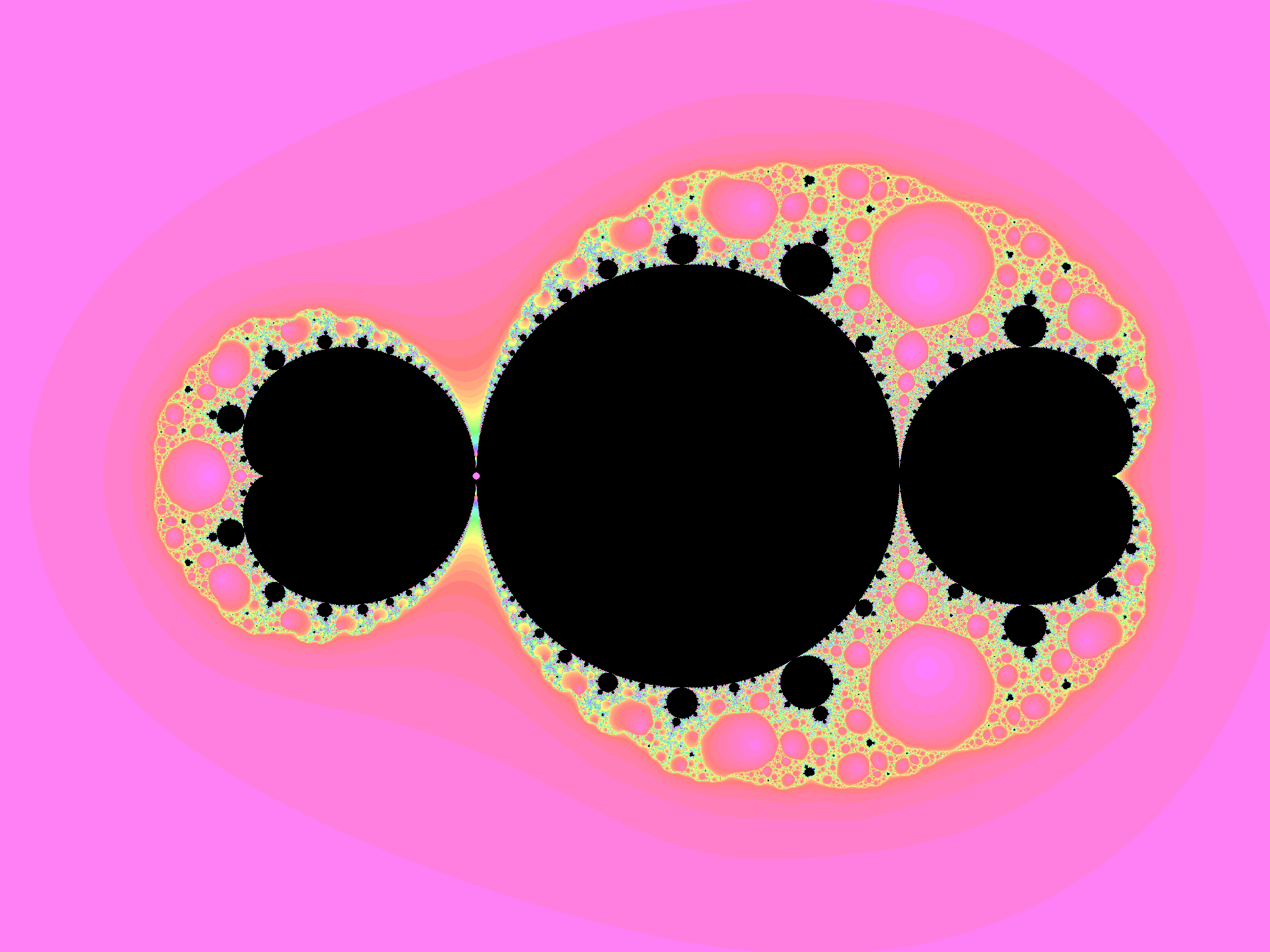}
    \includegraphics[height=6cm]{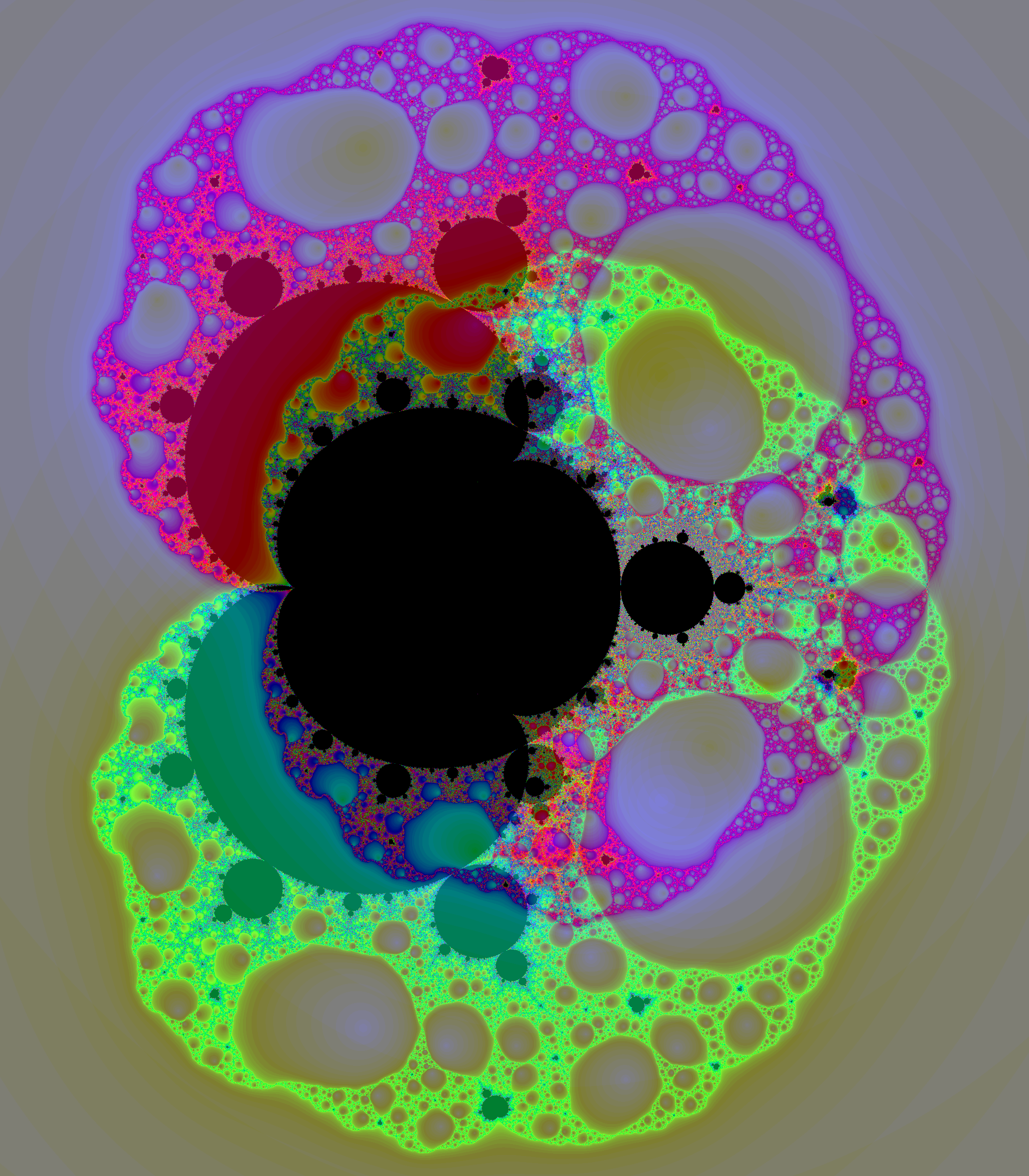}
    \caption{%
        Left: $\MC_{1}(\mathcal{F}_2)$: out of marking any periodic cycle, only marking fixed points yields monodromy around the singularity of $\Per_2(0)$.
        Right: $\MC_{3}(\mathcal{F}_2)$ is disconnected. Each of its sheets is isomorphic to $\Per_2(0)$.
    }\label{fig:per2mc3}
\end{figure}

\section{Combinatorics of Marked Cycle Curves}\label{sec:combo}
In this last section, we will address various combinatorial properties of the marked cycle curves.
In \Cref{sec:num_cells}, we derive formulas for the number of cells in our decompositions of $\MC_n(\Per_m(0))$ for $m=1,2$.
In \Cref{sec:props_cell_decomposition}, we present some properties and conjectures for the relations between cells.

\subsection{Number of cells}\label{sec:num_cells}
We begin by studying the dynamics of two maps:
\begin{itemize}
    \item $ h_1(z) = z^2$, which is the map at the origin in $\Per_1(0)$, and
    \item $h_2(z) = z^{-2}$, which is conjugate to the map at infinity in $\Per_2(0)$.
\end{itemize}

\begin{lemma}\label{lem:cycle_div_counting}
    The number of points on $\mathbb{S}^1$ of period dividing $n$ under $h_1(z) = z^2$ is
    \begin{align*}
        \tilde p_1(n) & = 2^n - 1.
        \intertext{
            The number of points on $\mathbb{S}^1$ of period dividing $n$ under $h_2(z) = z^{-2}$ is
        }
        \tilde p_2(n) & = 2^n - {(-1)}^n.
    \end{align*}
\end{lemma}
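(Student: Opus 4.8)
The plan is to reduce everything to counting solutions of a linear congruence on $\R/\Z$. Identify $\mathbb{S}^1$ with $\R/\Z$ via $z = e^{2\pi i\theta}$. Under this identification $h_1(z) = z^2$ becomes the doubling map $\theta \mapsto 2\theta \bmod 1$, so its $n$-th iterate is $\theta \mapsto 2^n\theta \bmod 1$. A point $\theta$ has period dividing $n$ precisely when $2^n\theta \equiv \theta \pmod 1$, i.e.\ $(2^n-1)\theta \in \Z$. Since $2^n - 1 \ne 0$, the number of $\theta \in [0,1)$ satisfying this is exactly $|2^n - 1| = 2^n - 1$, giving $\tilde p_1(n) = 2^n - 1$.

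For $h_2(z) = z^{-2}$, the same identification turns the map into the anti-doubling map $\theta \mapsto -2\theta \bmod 1$ (note $0$ and $\infty$, which form the excluded $2$-cycle, are not on $\mathbb{S}^1$, so there is nothing to remove). Its $n$-th iterate is $\theta \mapsto (-2)^n\theta \bmod 1$, and $\theta$ has period dividing $n$ iff $\big((-2)^n - 1\big)\theta \in \Z$. As before, since $(-2)^n - 1 \ne 0$, the number of such $\theta$ in $[0,1)$ is $\big|(-2)^n - 1\big|$.

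It remains to evaluate $\big|(-2)^n - 1\big|$ by parity. If $n$ is even then $(-2)^n - 1 = 2^n - 1 > 0$, so the count is $2^n - 1$. If $n$ is odd then $(-2)^n - 1 = -(2^n + 1) < 0$, so the count is $2^n + 1$. In both cases this equals $2^n - (-1)^n$, which gives $\tilde p_2(n) = 2^n - (-1)^n$, as claimed.

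I do not anticipate any real obstacle here: the argument is an elementary count once one observes that both maps are (signed) multiplication on $\R/\Z$. The only point requiring a word of care is that the number of solutions in $[0,1)$ of $m\theta \in \Z$ is $|m|$ for any nonzero integer $m$, together with the sign bookkeeping in $(-2)^n - 1$ according to the parity of $n$.
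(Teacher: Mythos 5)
Your proof is correct and follows essentially the same route as the paper: identify $\mathbb{S}^1$ with $\R/\Z$, reduce the periodicity condition to the congruence $(\pm 2)^n\theta \equiv \theta \pmod 1$, and count its $\lvert(\pm 2)^n - 1\rvert$ solutions. You merely spell out the parity bookkeeping for $(-2)^n - 1$ a bit more explicitly than the paper does.
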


\begin{proof}
    A point $z=\exp(2\pi i\theta)$ is periodic of period dividing $n$ under $h_1$
    if and only if $2^n \theta = \theta \pmod 1$, i.e.\ $\theta$ is of the form $\frac{j}{2^n-1}$.

    Similarly, a point $z=\exp(2\pi i\theta)$ is periodic of period dividing $n$ under $h_2$
    if and only if
    ${(-2)}^n \theta = \theta \pmod 1$, i.e.\ $\theta$ is of the form
    \[\frac{j}{\del{-2}^n-1} = \frac{\del{-1}^n j}{2^n-\del{-1}^n}.\]
\end{proof}
\noindent Applying the M\"obius inversion formula, we obtain the following:
\begin{corollary}
    For $m=1,2$, the number of points on $\mathbb{S}^1$ of period $n$ under $h_m(z)$ is
    \begin{align*}
        p_m(n) & = (\mu * \tilde{p}_m)(n) = \sum_{d\mid n} \mu\del{\frac{n}d}\tilde{p}_m(d),
    \end{align*}
    where $\mu$ is the M\"obius function and $*$ denotes Dirichlet convolution.
\end{corollary}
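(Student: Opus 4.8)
The plan is to deduce this from Möbius inversion applied to the elementary identity relating ``period dividing $n$'' to ``exact period $d$''. First I would recall the general fact that, for any self-map $f$ of a set and any point $x$ with $f^{\circ n}(x) = x$, the exact period $d$ of $x$ divides $n$: writing $n = qd + r$ with $0 \le r < d$ and using $x = f^{\circ n}(x) = f^{\circ r}(f^{\circ qd}(x)) = f^{\circ r}(x)$ forces $r = 0$ by minimality of $d$. Consequently, for each of the maps $h_m$, the set of points of $\mathbb{S}^1$ of period dividing $n$ decomposes as the \emph{disjoint} union, over the divisors $d \mid n$, of the sets of points of exact period $d$ — disjointness being automatic because every periodic point has a unique well-defined exact period. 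Taking cardinalities gives
\[
    \tilde p_m(n) = \sum_{d \mid n} p_m(d),
\]
that is, $\tilde p_m = \mathbf{1} * p_m$ in the ring of arithmetic functions under Dirichlet convolution, where $\mathbf{1}$ denotes the constant function $1$.

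Since $\mathbf{1}$ and the Möbius function $\mu$ are mutually Dirichlet-inverse (i.e.\ $\mu * \mathbf{1} = \delta$, the identity for convolution), convolving both sides with $\mu$ yields $p_m = \mu * \tilde p_m$, which is precisely the stated formula; the explicit values $\tilde p_1(n) = 2^n - 1$ and $\tilde p_2(n) = 2^n - {(-1)}^n$ are then inserted from \Cref{lem:cycle_div_counting} to make it fully explicit. I do not anticipate a real obstacle here: the only point that warrants an explicit sentence is the disjointness of the partition by exact period (handled above), and, in the $h_2$ case, the observation that $0$ and $\infty$ never intervene since they do not lie on $\mathbb{S}^1$, so the count $\tilde p_2$ is exactly the number of genuine $\mathbb{S}^1$-points with $(-2)^n\theta = \theta \pmod 1$.
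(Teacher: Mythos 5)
Your proposal is correct and matches the paper's (implicit) argument exactly: the paper simply invokes M\"obius inversion after \Cref{lem:cycle_div_counting}, and your write-up is the standard justification of that step via the disjoint decomposition by exact period. Nothing is missing.
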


\begin{corollary}
    The number of $n$-cycles under $h_m(z) = z^{\pm2}$ is
    \begin{align*}
        \cyc_m(n) & = \frac1n p_m(n) = \frac1n \sum_{d\mid n} \mu\del{\frac{n}d}\tilde{p}_m(d).
    \end{align*}
\end{corollary}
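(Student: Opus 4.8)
The plan is to reduce the enumeration of $n$-cycles to the enumeration of points of exact period $n$ on $\mathbb{S}^1$, which is exactly the content of the preceding corollary. First I would recall that, by the definition given in \Cref{S:setup}, a periodic cycle of (exact) period $n$ for $h_m$ is a set of cardinality $n$ on which $h_m$ acts bijectively and transitively; in particular every point of such a cycle has exact period $n$, and conversely every point of exact period $n$ lies in precisely one such cycle, namely its orbit, which has cardinality $n$. Hence the map sending a point of exact period $n$ to its orbit is exactly $n$-to-$1$, which gives $\cyc_m(n) = \frac1n p_m(n)$.

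Next I would substitute the formula $p_m(n) = (\mu * \tilde p_m)(n) = \sum_{d\mid n}\mu(n/d)\,\tilde p_m(d)$ from the previous corollary. That identity is itself just M\"obius inversion applied to the relation $\tilde p_m(n) = \sum_{d\mid n} p_m(d)$, which holds because every point whose period divides $n$ has a well-defined exact period $d$ with $d\mid n$, and then each such point is counted once on the right-hand side in the block indexed by its exact period. Combining the two displays yields $\cyc_m(n) = \frac1n\sum_{d\mid n}\mu(n/d)\,\tilde p_m(d)$, and plugging in $\tilde p_1(d) = 2^d-1$ and $\tilde p_2(d) = 2^d-(-1)^d$ from \Cref{lem:cycle_div_counting} makes the expression fully explicit.

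I do not anticipate a real obstacle here; the only point meriting a sentence of care is that the ``exactly $n$-to-$1$'' claim is purely combinatorial (an orbit of a point of exact period $n$ has $n$ elements) and so is insensitive to the difference between $h_1(z)=z^2$ and $h_2(z)=z^{-2}$, in particular to the fact that $z^{-2}$ is orientation-reversing on $\mathbb{S}^1$ and has its own low-period behavior; the fixed points $\{0,\infty\}$ excluded in \Cref{prop:anti-doubling-coding} play no role since we are counting cycles of exact period $n\ge 1$ directly from $p_m(n)$.
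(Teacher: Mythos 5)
Your argument is correct and is exactly the standard orbit-counting reduction the paper intends (the paper gives no proof, treating the corollary as immediate from the fact that each $n$-cycle consists of precisely $n$ points of exact period $n$). The remark about $h_2$ being orientation-reversing and about the excluded points $\{0,\infty\}$ is a reasonable sanity check but not needed, since $p_m(n)$ already counts only points on $\mathbb{S}^1$.
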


\begin{remark}\label{rem:cyc1cyc2}
    The values $\cyc_1(n)$ and $\cyc_2(n)$ are equal for $n\ge 3$.
    This is most easily seen since the M\"obius transform is additive, and the
    M\"obius transform of $\iota(n):=\tilde{p}_2(n)-\tilde{p}_1(n) = 1-\del{-1}^n$ is
    \[\del{\mu*\iota}(n) =
        \begin{cases}
            2  & n=1     \\
            -2 & n=2     \\
            0  & n\ge 3.
        \end{cases}
    \]
    Dynamically, the fact that these two quantities are eventually equal should
    come as no surprise.
    Indeed, for any degree $d$ branched self-cover $f$ of the sphere,
    there are only finitely many $n$ for which $f$ has a critical $n$-cycle.
    For all other $n$, the Lefschetz fixed point theorem implies that the
    number of $n$-cycles depends only on $d$.
\end{remark}

These cycle counts are closely related to the number of hyperbolic components in the corresponding parameter spaces.
Indeed, we have the following:
\begin{lemma}
 The number of hyperbolic components of period $n$ in the Mandelbrot set is
    \[
        \Hyp_1(n) = \frac12 p_1(n). 
    \]
    The number of hyperbolic components of period $n$ outside the $\intoo{\frac13,\frac23}$ limb, which is equal to the number of hyperbolic components of period $n$ in $\Per_2(0)$, is
    \[
        \Hyp_2(n) = \frac13 p_2(n).
    \]
\end{lemma}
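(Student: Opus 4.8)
For the first formula I would simply invoke the $2{:}1$ dictionary recalled in \Cref{S:setup}: there are $p_1(n)$ angles of exact period $n$ under doubling, each is a characteristic angle of a unique period-$n$ hyperbolic component of $\M$, and each such component has exactly two characteristic angles, giving $\Hyp_1(n)=\tfrac12 p_1(n)$ for $n\ge 2$ (the main cardioid makes $n=1$ an exception). The plan for the second formula is to first reduce it to a count inside $\M$, and then carry out that count by M\"obius inversion.

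\textbf{Reduction.} I would take the \emph{period} of a hyperbolic component of $\Per_2(0)$ to mean the period of the attracting cycle captured by the free critical point, so that such components lie in the non-escaping set $\M_2$. Then, using \Cref{thm:per2-structure} together with the fact that the hyperbolic PCF maps in $\M_2$ are exactly the matings $f_{-1}\sqcup f$ with $f$ a hyperbolic PCF polynomial in $\M^*=\M\setminus\mathcal{L}_B$, the operation $f\mapsto f_{-1}\sqcup f$ gives a period-preserving bijection between the hyperbolic components of $\M^*$ and those of $\M_2$. This would identify $\Hyp_2(n)$ with the number of period-$n$ hyperbolic components of $\M$ lying outside the $(1/3,2/3)$-limb $\mathcal{L}_B$, i.e.\ the part of $\M$ inside the wake $\cal{W}(1/3,2/3)$.

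\textbf{The count.} Next I would argue that for $n\ge 3$ a period-$n$ component of $\M$ lies in $\mathcal{L}_B$ iff both its characteristic angles lie in $(1/3,2/3)$ --- because the wake cut off by the two characteristic angles of a component is nested in or disjoint from $\cal{W}(1/3,2/3)$, so no component has exactly one characteristic angle in $(1/3,2/3)$, and hence those angles pair off among such components. So, with $q(n)$ the number of exact-period-$n$ doubling angles in $(1/3,2/3)$, the desired count is $\Hyp_1(n)-\tfrac12 q(n)$. To evaluate $q(n)$ I would count the angles of period dividing $n$, namely the fractions $j/(2^n-1)$: an elementary estimate (separating the cases $n$ even and $n$ odd, where $3\mid 2^n-1$ iff $n$ is even) shows that $\tfrac13\tilde p_2(n)-1$ of them lie in $(1/3,2/3)$. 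Since the M\"obius transform of the constant function $1$ vanishes for $n>1$, M\"obius inversion yields $q(n)=\tfrac13 p_2(n)$ for $n\ge 2$. Finally, plugging in and using $p_1(n)=p_2(n)$ for $n\ge 3$ (\Cref{rem:cyc1cyc2}),
\[
\Hyp_2(n)=\Hyp_1(n)-\tfrac12 q(n)=\tfrac12 p_2(n)-\tfrac16 p_2(n)=\tfrac13 p_2(n),
\]
while $n=1,2$ are checked by hand (there $\tfrac13 p_2(n)$ is $1$ and $0$, matching the main cardioid and, respectively, the lone period-$2$ component, which sits inside $\mathcal{L}_B$).

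\textbf{Expected main obstacle.} The M\"obius-inversion bookkeeping is routine; the substantive point is the reduction. I expect the hard part to be pinning down the notion of ``period'' for hyperbolic components of $\Per_2(0)$ and checking, via \Cref{thm:per2-structure}, that the mating dictionary matches period-$n$ components of $\Per_2(0)$ with period-$n$ components of $\M^*$ on the nose --- that the capture and bitransitive components of $\Per_2(0)\setminus\M_2$ (whose touching pattern mirrors $K_B^*$) never get labelled ``period $n$'' for the relevant $n$, and that nothing is gained or lost at the puncture of $\Per_2(0)$.
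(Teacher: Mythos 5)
Your proof is correct and follows essentially the same route as the paper's: count the period-$n$ characteristic angles relative to the interval $\intoo{\frac13,\frac23}$, pair them off two per component, and apply M\"obius inversion to pass from ``period dividing $n$'' to exact period $n$. The only differences are cosmetic --- you count the angles \emph{inside} the interval and subtract (which cleanly sidesteps the bookkeeping around the angle $0$ and the small periods $n=1,2$, handled by hand), and you spell out the mating reduction identifying components of $\Per_2(0)$ with components of $\M$ outside the limb, which the paper's proof leaves implicit.
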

\begin{proof}
This is straightforward for $n=1$. \\

\noindent    Every hyperbolic component in the Mandelbrot set of period $n\ge2$ is the landing point
    of two external rays of period $n$ under $f_1$.
    It follows that the number of hyperbolic
    components of period $n \geq 2$ is
    \[
        \Hyp_1(n) = \frac12 p_1(n).
    \]
    To count the number of hyperbolic components outside the $\intoo{\frac13,\frac23}$ limb,
    we must determine the number $N$ of multiples of $\frac1{2^n-1}$ (mod $1$) which lie outside
    $\intoo{\frac13,\frac23}$. It is easy to see that
    \begin{align*}
        N & = \begin{cases} \frac23\del{2^n+1} & n \text{ is odd} \\ \frac23\del{2^n-1} & n \text{ is even}\end{cases} \\
          & = \frac23 \tilde{p}_2(n).
    \end{align*}
    It follows that the number of hyperbolic components of period dividing $n$ outside the $\intoo{\frac13,\frac23}$ limb is
    \[
        \widetilde{\Hyp}_2(n) = \frac{N}{2} = \frac13 \tilde{p}_2(n).
    \]
    Applying the M\"obius inversion formula, the result follows.
\end{proof}

\begin{corollary}
Let $\vp$ denote Euler's totient function.
   For $n>2$, the number of \emph{primitive} hyperbolic components of period $n$ in the Mandelbrot set is 
   \[
   \Prim_1(n) = (2\Hyp_1 - \vp * \Hyp_1)(n).
   \]
  The number of primitive hyperbolic components of period $n$ outside the $\intoo{\frac13,\frac23}$ limb is
    \[
        \Prim_2(n) = (2\Hyp_2 - \vp * \Hyp_2)(n).
    \]
    
 \end{corollary}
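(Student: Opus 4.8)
The plan is to derive the formula from a recursion expressing $\Hyp_m(n)$ in terms of $\Prim_m$ via the satellite structure of hyperbolic components. Recall the standard combinatorics of the Mandelbrot set: every hyperbolic component of period $n$ is either primitive or a satellite, and each satellite component is attached at a well-defined internal angle $p/q$ (in lowest terms, $q\ge 2$) to a unique \emph{parent} component, whose period $d$ satisfies $dq=n$; conversely, for every component $H$ of period $d$ and every $p/q$ in lowest terms with $q\ge 2$ there is exactly one satellite of $H$ at internal angle $p/q$, and it has period $dq$. Since for a fixed $q$ there are $\vp(q)$ admissible numerators, the number of satellites of period $n$ whose parent has period $d$ equals $\Hyp_1(d)\,\vp(n/d)$. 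Partitioning the period-$n$ hyperbolic components into primitive ones and satellites and summing over proper divisors $d\mid n$ gives
\[
  \Hyp_1(n) = \Prim_1(n) + \sum_{\substack{d\mid n\\ d<n}} \Hyp_1(d)\,\vp(n/d).
\]
Writing the sum as $(\vp*\Hyp_1)(n) - \vp(1)\Hyp_1(n) = (\vp*\Hyp_1)(n) - \Hyp_1(n)$ and rearranging yields $\Prim_1(n) = 2\Hyp_1(n) - (\vp*\Hyp_1)(n)$, which is the first claim.

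For $\Per_2(0)$ I would run the identical argument inside $\M^* = \M\setminus\mathcal{L}_B$, the Mandelbrot set with the basilica limb $\mathcal{L}_B$ (the $(1/3,2/3)$-limb) removed: by \Cref{thm:per2-structure} and the definition of primitivity in $\M_2$, the quantities $\Hyp_2(n)$ and $\Prim_2(n)$ count precisely the period-$n$, resp.\ primitive period-$n$, hyperbolic components of $\M^*$. The only point to verify is that the parent/satellite bookkeeping restricts cleanly. On one hand, $\mathcal{L}_B$ is a union of limbs, so the wake of any component of $\mathcal{L}_B$ is contained in $\mathcal{L}_B$; hence if a satellite lies in $\M^*$ so does its parent. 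On the other hand, for $n\ge 3$ every satellite of period $n$ of a component $H\subseteq\M^*$ again lies in $\M^*$: if $H$ is the main cardioid, the satellite sits at an internal angle with denominator $n\ne 2$, so it avoids the angle $1/2$ whose satellite is the period-$2$ bulb (the root of $\mathcal{L}_B$) and instead lies in a wake disjoint from $\mathcal{W}(1/3,2/3)$; and if $H$ is not the main cardioid, then since $H\subseteq\M^*$ it lies in the wake of a satellite of the main cardioid \emph{other} than the period-$2$ bulb, so the entire wake of $H$ — which contains all satellites of $H$ — is disjoint from $\mathcal{L}_B$. Note also that the period-$2$ bulb, the one component all of whose satellites fall into $\mathcal{L}_B$, is itself removed, consistent with $\Hyp_2(2)=0$, so it never appears as a parent. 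Hence for $n\ge 3$,
\[
  \Hyp_2(n) = \Prim_2(n) + \sum_{\substack{d\mid n\\ d<n}} \Hyp_2(d)\,\vp(n/d),
\]
and the same rearrangement gives $\Prim_2(n) = 2\Hyp_2(n) - (\vp*\Hyp_2)(n)$.

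I expect the main obstacle to be the verification in the previous paragraph that no satellite of a component lying outside $\mathcal{L}_B$ can leak into the basilica limb. This is a statement about the nesting of wakes of satellite bulbs in parameter space, and I would make it precise by invoking the facts that distinct satellite bulbs of the main cardioid have pairwise disjoint wakes and that every hyperbolic component other than the main cardioid is contained in exactly one of these wakes — both standard, but worth recording carefully since the surgery defining $\Per_2(0)$ is exactly the removal of $\mathcal{L}_B$. (The cases $n=1,2$ are excluded from the statement; there the left-hand sides are degenerate — e.g.\ whether the main cardioid counts as primitive, and the separate treatment of $\Hyp_m(1)$ — so no extra work is needed.)
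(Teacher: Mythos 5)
Your proof is correct and follows essentially the same route as the paper's: count the period-$n$ satellites as $\sum_{d\mid n,\,d\ne n}\vp(n/d)\Hyp_m(d)=(\vp*\Hyp_m)(n)-\Hyp_m(n)$ and subtract from $\Hyp_m(n)$. Your extra verification that the parent--satellite correspondence restricts cleanly to $\M^*$ for $n\ge 3$ is a welcome detail that the paper's proof asserts without comment, but it does not change the argument.
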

\begin{proof}
    We first count the number of satellite components of period $n$.
    For every integer $d \in [1,n)$ that divides $n$, we observe that every hyperbolic component of period $d$ has $\vp(n/d)$ satellite tunings of period $n$. So 
    the total number of satellite hyperbolic components of period $n$ is
    \[
        \Sat_1(n) = \sum_{d\mid n, d\ne n}\vp\del{\frac{n}d}\Hyp_1(d) = \vp * \Hyp_1(n) - \Hyp_1(n),
    \]
    and the number outside the $\intoo{\frac13,\frac23}$ limb is, for $n > 2$, 
    \[
        \Sat_2(n) = \sum_{d\mid n, d\ne n}\vp\del{\frac{n}d}\Hyp_2(d) = \vp * \Hyp_2(n) - \Hyp_2(n).
    \]
    Subtracting $\Sat_m(n)$ from $\Hyp_m(n)$ yields the result.
\end{proof}

\begin{lemma}\label{lem:conj-order-divides-cycle-order}
    If an $n$-cycle $z_0\ra z_1\ra\cdots\ra z_n=z_0$ under a map $h$ is invariant under another map $\sigma$, where $\sigma$ commutes with $h$ and satisfies $\sigma^d = \mathrm{id}$, then either $\sigma$ acts as the identity on $\set{z_0,z_1,\cdots,z_{n-1}}$, or $\gcd(d, n) > 1$.
\end{lemma}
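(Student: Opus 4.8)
The plan is to reduce everything to the action on the finite set $Z:=\set{z_0,z_1,\dots,z_{n-1}}$ and to use that the centralizer of an $n$-cycle is cyclic. First I would record that $\sigma$ maps $Z$ bijectively to itself (it is injective and $\sigma(Z)=Z$), and that $h$ restricted to $Z$ is the cyclic permutation $\rho\colon z_i\mapsto z_{i+1}$ with indices taken mod $n$. Since $\sigma\circ h=h\circ\sigma$ globally, the restriction $\sigma|_Z$ commutes with $\rho$; as $\rho$ acts transitively on $Z$, this forces $\sigma|_Z$ to be determined by the image of a single point: writing $\sigma(z_0)=z_k$ for some $k\in\set{0,1,\dots,n-1}$, induction on $i$ using $\sigma(z_{i+1})=\sigma(h(z_i))=h(\sigma(z_i))$ gives $\sigma(z_i)=z_{i+k}$ for all $i$. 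In other words $\sigma|_Z=\rho^k$.

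Next I would handle the dichotomy. If $k\equiv 0\pmod n$ then $\sigma|_Z=\mathrm{id}$, which is the first alternative. Otherwise $k\not\equiv 0$, and the standard computation of the order of $\rho^k$ in the cyclic group $\langle\rho\rangle\cong\Z/n\Z$ shows that $\rho^k$ has exact order $n':=n/\gcd(k,n)$, with $n'>1$. Finally, restricting the hypothesis $\sigma^d=\mathrm{id}$ to $Z$ gives $(\sigma|_Z)^d=\rho^{kd}=\mathrm{id}$, hence $n'\mid d$; and since also $n'\mid n$ by construction, we obtain $n'\mid\gcd(d,n)$, so $\gcd(d,n)\ge n'>1$, which is the second alternative.

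I do not expect a genuine obstacle here: the argument is elementary finite group theory. The only points requiring (routine) care are that the hypothesis ``$\sigma$ commutes with $h$'' is used solely through its restriction to the invariant finite set $Z$, and the standard fact that an element $\rho^k$ of a cyclic group of order $n$ has order $n/\gcd(k,n)$. If one prefers a more structural phrasing, one can instead invoke directly that the centralizer of an $n$-cycle inside $\mathrm{Sym}(Z)$ is the cyclic group it generates, placing $\sigma|_Z$ in $\langle\rho\rangle$, and then read off the dichotomy from the subgroup structure of $\Z/n\Z$.
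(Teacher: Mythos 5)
Your proof is correct and follows essentially the same route as the paper: both write $\sigma(z_0)=z_k$, use commutativity to get $\sigma(z_i)=z_{i+k}$, and deduce the dichotomy from $kd\equiv 0\pmod n$ (the paper phrases this contrapositively, assuming $\gcd(d,n)=1$ to force $k=0$, while you compute the order of $\rho^k$ directly, but the content is identical).
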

\begin{proof}
    Since our cycle is invariant under $\sigma$, we have $\sigma(z_0) = z_k$ for some $k\in\set{0,1,\dots,n-1}$.
    Since $\sigma$ commutes with $h$, we have $\sigma(z_i) = z_{i+k}$ for all $i$, where the sum $i+k$ is computed mod $n$.
    Since $\sigma^d(z_0)=z_0$, it follows that $kd\equiv 0 \mod n$.
    If we assume $d$ is co-prime to $n$, then $k \equiv 0 \mod n$ .
    Since $k<n$, we have $k=0$, so that $\sigma$ fixes all points in the cycle.
\end{proof}
Given a period $n\geq 3$, let $Q_1(n)$ be the number of $n-$cycles of $h_1$ invariant under $\sigma_1(z) = z^{-1}$, and let $Q_2(n)$ be the number of $n-$cycles of $h_2$ invariant under $\sigma_2(z)  = \omega z$, where $\omega$ is a primitive cube root of unity. 
\begin{remark}\label{rem:period-1-and-2}
    The cases $n=1,2$ warrant special consideration.

    Note that $\sigma_1$ has two fixed points, namely $\pm1$.
    Among these, $1$ is fixed by $h_1$, and $-1$ is not periodic under $h_1$.
    In light of \Cref{lem:conj-order-divides-cycle-order},
    all cycles of period $n>1$ invariant under $\sigma_1$ have even period.
    The fixed point $1$ is thus the only $\sigma_1$-invariant $1$-cycle of $h_1$.

    In spite of this, the face of $\MC_1(\mathcal{F}_1)$ corresponding to this fixed point still maps to $\mathcal{F}_1 = \Per_1(0)$ with local degree 2.
    This is because the two fixed points of $h_1$ have the same kneading sequence, so even though they are each invariant under $\sigma_1$, they ultimately lie in the same cycle class.

    From another perspective, this exception arises because for $p>1$, the canonical path $c(t) = t$ is a path in the parameter space of the $f_c$  from $c=0$ to $\infty$ that contains no branch points of the covering  $\MC_p(\mathcal{F}_1) \longrightarrow \mathcal{F}_1$ except $\infty$.    However, when following the lift of this path in $\MC_1(\mathcal{F}_1)$, the fixed points collide at $c=\frac14$ and our argument breaks down.

    For this reason, we make an exception to the definition of $Q_1$ by setting
    \[Q_1(1) = 0.\]
Also note that $h_1$ has exactly one $2$-cycle:   $\omega \leftrightarrow \omega^2$, and this cycle is invariant under $\sigma_1$. Consistent to the definition of $Q_1$, we set \[Q_1(2) = 1.\] 
In the case of $\Per_2(0)$, the map $h_2$ has three fixed points, namely $1$, $\omega$ and $\ol{\omega}$, none of which are invariant under $\sigma_2$.
    In the coordinates $g_a(z) =  \frac{z^2+a}{1-z^2}$, two of these fixed points collide at the puncture $a=-1$, which, like in $\Per_1(0)$, lies on the canonical path from the basilica ``face center" $a=0$ to the ramification point $a=\infty$.
    Unlike in the case of $\Per_1(0)$, though, our analysis is unaffected, since the model map $h_2$ describes $g_a(z)$ near the ramification point $a=\infty$, not near the face center $a=0$.
Thus we set \[Q_2(1) = 0.\]
    Also in $\Per_2(0)$, the critical 2-cycle $0\leftrightarrow\infty$ of $h_2(z)=z^{-2}$ is invariant under $\sigma_2$.
    As $0$ and $\infty$ are the only fixed points of $\sigma_2$, \Cref{lem:conj-order-divides-cycle-order} implies that this 2-cycle is the unique $\sigma_2$-invariant cycle of period not divisible by $3$.
    Since $0\leftrightarrow\infty$ is the unique $2$-cycle of $g_a$ for all $g_a \in\Per_2(0)$, the marked cycle curve $\MC_2(\mathcal{F}_2)$ is canonically identified with $\mathcal{F}_1 = \Per_2(0)$ itself.
    Since the unique face (i.e.\ unbounded component of the complement of the bifurcation locus) maps to itself with degree $1$, it makes sense to define \[Q_2(2) = 1.\]

 \end{remark}

\begin{lemma}\label{lem:self-conj-face-count}
    Let $k$ be a positive integer.    \begin{align*}
Q_1(2k) & = \frac{1}{2k} \sum_{d\mid k,\;2\nmid \frac{k}d} \mu\del{\frac{k}d}2^d.\\
 Q_2(3k) & = \frac{2}{3k} \sum_{d\mid k,\;3\nmid \frac{k}d} \mu\del{\frac{k}d}\tilde p_2(d).
    \end{align*}
\end{lemma}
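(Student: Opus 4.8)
The plan is to pass to angle coordinates, parametrizing $\mathbb{S}^1$ by $\theta\in\R/\Z$ so that $h_1$ is the doubling map $\theta\mapsto2\theta$, $h_2$ the anti-doubling map $\theta\mapsto-2\theta$, while $\sigma_1$ becomes $\theta\mapsto-\theta$ and $\sigma_2$ becomes $\theta\mapsto\theta+\tfrac13$. Writing $G_1(k):=2k\,Q_1(2k)$ and $G_2(k):=3k\,Q_2(3k)$ for the number of \emph{points} (not cycles) on a $\sigma_1$-invariant $h_1$-cycle of exact period $2k$, respectively a $\sigma_2$-invariant $h_2$-cycle of exact period $3k$, the first step is to establish the divisor-sum identities
\[
2^k=\sum_{\substack{e\mid k\\ k/e\ \mathrm{odd}}}G_1(e),\qquad 2\tilde p_2(k)=\sum_{\substack{e\mid k\\ 3\,\nmid\,k/e}}G_2(e).
\]
Then I would Möbius-invert: fixing the $2$-adic valuation $a=v_2(k)$ and writing $k=2^ab$ with $b$ odd (resp.\ $a=v_3(k)$, $k=3^ab$ with $3\nmid b$), the divisors $e\mid k$ with $k/e$ odd (resp.\ $3\nmid k/e$) are precisely $\{2^ab':b'\mid b\}$ (resp.\ $\{3^ab':b'\mid b\}$), so the identities above are ordinary divisor-sum relations over the divisor lattice of $b$; classical Möbius inversion then gives $G_1(k)=\sum_{d\mid k,\,2\nmid k/d}\mu(k/d)2^d$ and $G_2(k)=2\sum_{d\mid k,\,3\nmid k/d}\mu(k/d)\tilde p_2(d)$, and dividing by $2k$, resp.\ $3k$, yields the two formulas. (For $k=1$ this recovers $Q_1(2)=1$ and $Q_2(3)=2$, matching the conventions of \Cref{rem:period-1-and-2}.)

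To prove the first identity I would apply \Cref{lem:conj-order-divides-cycle-order} with $\sigma=\sigma_1$ and $d=2$: a $\sigma_1$-invariant $h_1$-cycle has even period $2e$, with $\sigma_1$ acting as the shift by $e$, so $\theta$ lies on such a cycle exactly when $-\theta=2^e\theta$, i.e.\ $(2^e+1)\theta\equiv0\pmod1$. Consider $T_k:=\{\theta:(2^k+1)\theta\equiv0\}$, which has $2^k+1$ elements. For $\theta\in T_k$, $\theta\ne0$, with exact denominator $q>1$, one has $q\mid2^k+1$, so $2^k\equiv-1\pmod q$; hence $2^k\theta\equiv-\theta$ lies on the $h_1$-orbit of $\theta$ (so that orbit is $\sigma_1$-invariant) and the exact period $\ord_q(2)$ divides $2k$ but not $k$, forcing $\ord_q(2)=2e$ for the unique $e\mid k$ with $k/e$ odd; by the preceding observation $\theta\in T_e\subseteq T_k$. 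Conversely, using $2^e+1\mid2^k+1$, every point on a $\sigma_1$-invariant cycle of period $2e$ with $k/e$ odd lies in $T_k$. Thus $T_k\setminus\{0\}$ is the disjoint union, over such $e$, of the point-sets of $\sigma_1$-invariant $h_1$-cycles of period $2e$, and counting gives $2^k=|T_k|-1=\sum_{e\mid k,\,k/e\ \mathrm{odd}}G_1(e)$.

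For the second identity the idea is to use the degree-$3$ self-cover $\psi(\theta)=3\theta$, which is the quotient of $\mathbb{S}^1$ by the $\sigma_2$-rotation and satisfies $\psi\circ h_2=h_2\circ\psi$. The crucial observation is that $(-2)^i\equiv1\pmod3$ for all $i\ge0$, so $((-2)^k-1)\theta\bmod1$ is constant on each $\psi$-fiber $\{\phi,\phi+\tfrac13,\phi+\tfrac23\}$. Hence if $3\theta$ has $h_2$-period dividing $k$ (equivalently $3((-2)^k-1)\theta\equiv0$), then either $((-2)^k-1)\theta\equiv0$ and $\theta$ itself has period dividing $k$, or $((-2)^k-1)\theta\in\{\tfrac13,\tfrac23\}$, in which case $(-2)^k\theta$ equals $\theta+\tfrac13$ or $\theta+\tfrac23$ and (again using $(-2)^i\equiv1\pmod3$) the points $\theta,\theta+\tfrac13,\theta+\tfrac23$ all lie on the $h_2$-orbit of $\theta$, so the orbit is $\sigma_2$-invariant. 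Such an orbit meets each $\psi$-fiber in $0$ or $3$ points, so it has period $3d$ where $d\mid k$ is the $h_2$-period of $3\theta$; since $\theta$ does not have period dividing $k$, we get $v_3(3d)>v_3(k)$, i.e.\ $v_3(d)=v_3(k)$, i.e.\ $3\nmid k/d$ (here \Cref{lem:conj-order-divides-cycle-order} with $\sigma_2$, $d=3$ is what guarantees $\sigma_2$-invariant cycles have period divisible by $3$), and the converse direction is analogous. Therefore $\{\theta:3\theta\text{ has }h_2\text{-period}\mid k\}\setminus\{\theta:\theta\text{ has }h_2\text{-period}\mid k\}$ decomposes as the disjoint union, over $d\mid k$ with $3\nmid k/d$, of the point-sets of $\sigma_2$-invariant $h_2$-cycles of period $3d$; by \Cref{lem:cycle_div_counting} the two sets on the left have $|3((-2)^k-1)|=3\tilde p_2(k)$ and $|(-2)^k-1|=\tilde p_2(k)$ elements, so $2\tilde p_2(k)=\sum_{d\mid k,\,3\nmid k/d}G_2(d)$.

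I expect the main obstacle to be the bookkeeping in the $\Per_2(0)$ case: one must check carefully that the fiberwise constancy of $((-2)^k-1)\theta$ really does organize the $\sigma_2$-invariant cycles of period $3d$ over exactly the divisors $d\mid k$ with $3\nmid k/d$, with none omitted and none double-counted, and that the exact-period accounting is consistent with this restriction. The $\Per_1(0)$ identity is the same argument carried out with the cleaner anchor set $T_k$, and the Möbius inversion at the end is routine once the divisor-sum identities are in hand.
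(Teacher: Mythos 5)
Your proof is correct, and it arrives at the same congruence counts as the paper, but it organizes the Möbius inversion differently enough to be worth noting. The paper proves the recurrence $\tilde q_m(\ell k)=\tilde q_m(k)+\gt(k)$ for the ``period dividing $n$'' counts (with $\ell=2,3$) by splitting into the cases $h_m^{\circ k}(z_0)=\sigma$-translate of $z_0$ versus $h_m^{\circ k}(z_0)=z_0$, and then invokes a bespoke inversion lemma (\Cref{lem:mobius-recursion}) to pass to exact periods; you instead decompose an anchor set directly into exact-period classes over the sublattice $\set{e\mid k: \ell\nmid k/e}$ and apply ordinary Möbius inversion on the divisor lattice of the prime-to-$\ell$ part of $k$ --- these are equivalent (unrolling the paper's recurrence gives exactly your divisor-sum identities), and your version sidesteps the technical lemma at the cost of the order-of-$2$-mod-$q$ analysis needed to pin down the exact period $2e$ inside $T_k$. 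The one genuinely different ingredient is your treatment of the $\Per_2(0)$ count: rather than solving $\del{\del{-2}^k-1}m\equiv \pm M/3 \pmod M$ directly as the paper does, you use the commuting degree-$3$ cover $\psi(\theta)=3\theta$ and the fiberwise constancy of $\del{\del{-2}^k-1}\theta$ to exhibit the $2\tilde p_2(k)$ relevant points as a difference of two $\psi$-related sets; this is a clean structural explanation of why conditions (a) and (b) in the paper's proof each contribute $\tilde p_2(k)$ solutions. Two cosmetic points: the phrase ``the unique $e\mid k$ with $k/e$ odd'' should read ``the unique $e$ with $v_2(e)=v_2(k)$ determined by $\ord_q(2)=2e$'' (there are in general several divisors $e$ with $k/e$ odd), and in the first identity you should note explicitly that $q\ge 3$ (as $q\mid 2^k+1$ is odd and $q>1$), which is what guarantees $-1\not\equiv 1\pmod q$ and hence $\ord_q(2)\nmid k$.
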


The following technical lemma will also be useful in proving \Cref{lem:self-conj-face-count}:
\begin{lemma}\label{lem:mobius-recursion}
    Let $\ell$ be prime, let $\gt$ be an arbitrary function on $\N^*$, and suppose that $\tilde{q}$ is a function on $\N^*$ satisfying the recurrence
    \[
        \tilde{q}(n) = \begin{cases}
            \tilde{q}(k) + \gt(k) & \textnormal{if $n=\ell k$ is divisible by $\ell$}, \\
            0                     & \textnormal{otherwise}.
        \end{cases}
    \]
    Then the M\"obius transform of $\tilde{q}$ is given by
    \[
        q(n) = \begin{cases}
            \sum_{d\mid k,\;\ell\nmid d} \mu\del{\frac{k}{d}} \gt(d) & \textnormal{ if $n=\ell k$ is divisible by $\ell$}, \\
            0                                                        & \textnormal{otherwise}.
        \end{cases}
    \]
\end{lemma}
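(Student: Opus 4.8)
The plan is to run Möbius inversion backwards. The Möbius transform is characterised by $\tilde{q} = \mathbf{1} * q$, where $\mathbf{1}$ is the constant function $1$ and $*$ is Dirichlet convolution; equivalently, $\tilde{q}(n) = \sum_{d \mid n} q(d)$ for all $n$. So it suffices to check that the function $q$ given by the claimed formula satisfies this summation identity, after which convolving with $\mu$ recovers $q = \mu * \tilde{q}$. The one structural feature to record at the outset is that the claimed $q$ is supported on multiples of $\ell$.

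I would split on whether $\ell \mid n$. If $\ell \nmid n$, then no divisor of $n$ is a multiple of $\ell$, so $\sum_{d \mid n} q(d) = 0$; and the recurrence gives $\tilde{q}(n) = 0$, so the two sides agree. If $n = \ell^{a} m$ with $\ell \nmid m$ and $a \ge 1$, I would list the divisors of $n$ that are multiples of $\ell$ as $\ell^{j} d$ with $1 \le j \le a$ and $d \mid m$, so that $\sum_{e \mid n} q(e) = \sum_{j=1}^{a} \sum_{d \mid m} q(\ell^{j} d)$, and then substitute the claimed formula into each $q(\ell^{j} d)$ (whose ``$k$'' is $\ell^{j-1} d$). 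The key point is that the divisibility condition in that formula, together with the vanishing of $\mu$ on non-squarefree integers, kills every contribution except the one in which all $j-1$ surplus factors of $\ell$ have been absorbed into the argument of $\gt$; what survives is $q(\ell^{j} d) = \sum_{d' \mid d} \mu(d/d')\, \gt(\ell^{j-1} d')$. Summing over $d \mid m$ and using the standard identity $\sum_{d:\, d' \mid d \mid m} \mu(d/d') = 1$ if $d' = m$ and $0$ otherwise collapses this to $\gt(\ell^{j-1} m)$, whence $\sum_{e \mid n} q(e) = \sum_{i=0}^{a-1} \gt(\ell^{i} m)$.

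To close the verification, iterate the recurrence $a$ times: since $\tilde{q}(m) = 0$ (as $\ell \nmid m$), this yields $\tilde{q}(\ell^{a} m) = \sum_{i=0}^{a-1} \gt(\ell^{i} m)$, matching the previous count. The only genuinely fiddly step --- the ``hard part'', such as it is --- is the bookkeeping with powers of $\ell$ in the middle paragraph: one must track precisely which terms of the double sum survive the Möbius weighting, the rest being non-squarefree. Once this is done, the lemma is exactly what is needed to evaluate the two sums in \Cref{lem:self-conj-face-count}, namely the specialisations $(\ell, \gt) = (2,\ d \mapsto 2^{d})$ and $(\ell, \gt) = (3,\ \pt_{2})$; in applying it there I would also reconcile the divisibility condition printed in the displayed formula with the one that appears downstream, $\ell \nmid \tfrac{k}{d}$.
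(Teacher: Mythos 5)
Your proposal is correct, and it reaches the formula by a genuinely different route than the paper. You verify that the candidate $q$ has the right summatory function: you show $\sum_{e\mid n}q(e)=\sum_{i=0}^{a-1}\gt(\ell^i m)$ for $n=\ell^a m$ with $\ell\nmid m$ (and $0$ otherwise), match this against the $a$-fold iterate of the recurrence, and conclude by uniqueness of M\"obius inversion. The paper instead \emph{derives} the formula: it sets $\alpha(k)=\sum_{d\mid k,\;\ell\nmid\frac{k}{d}}\mu\del{\frac{k}{d}}\gt(d)$, uses linearity of the M\"obius transform to get $q(\ell k)=q(k)+g(k)$ where $g=\mu*\gt$, and proves $q(\ell k)=\alpha(k)$ by induction on $\ord_\ell(k)$ via the identity $\alpha(\ell k)-\alpha(k)=g(\ell k)$. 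The two arguments are of comparable length and difficulty; yours is a pure verification (so one must already know the answer), while the paper's induction shows where the formula comes from. Your bookkeeping in the middle paragraph checks out: with $k=\ell^{j-1}d$ and $\ell\nmid d$, the condition forces the dummy divisor to carry the full factor $\ell^{j-1}$, leaving $q(\ell^j d)=\sum_{d'\mid d}\mu(d/d')\,\gt(\ell^{j-1}d')$, and the sum over $d\mid m$ then collapses to $\gt(\ell^{j-1}m)$ by $\sum_{e\mid m/d'}\mu(e)=[m=d']$. One point worth making explicit rather than deferring to the application: your argument genuinely requires the condition $\ell\nmid\frac{k}{d}$, not the condition $\ell\nmid d$ printed in the lemma statement --- with the printed condition the verification fails already at $n=\ell^2$ (e.g.\ $\ell=2$, $\gt(d)=2^d$ gives $q(4)=4$, whereas the printed formula yields $-2$). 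The paper's own proof and the downstream uses in \Cref{lem:self-conj-face-count} work with $\ell\nmid\frac{k}{d}$, so the printed condition is a typo, and you are right to flag and correct it.
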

\begin{proof}
    For brevity, let us write
    \[
        \chi(d) = \begin{cases} 0 & \text{if $d\equiv 0 \mod \ell$}, \\
              1 & \text{otherwise}.\end{cases}
    \]
    Note that since $\ell$ is prime, $\mu(\ell d) = -\chi(d)\mu(d)$ for all $d\in\Z$.
    Define also
    \begin{align*}
        \alpha(k) & = \sum_{d\mid k} \chi\del{d}\mu\del{d}\gt(k/d)                   \\
                  & = \sum_{d\mid k,\;\ell\nmid \frac{k}d} \mu\del{\frac{k}d}\gt(d),
        \intertext{and let}
        g(k)      & = \sum_{d\mid k} \mu(d)\gt(k/d)
    \end{align*}
    be the M\"obius transform of $\gt$.

    By linearity of the M\"obius transform, $q(\ell k) = q(k) + g(k)$ for all $k$,
    and $q(n)=0$ for $n$ coprime to $\ell$.

    We will argue by induction on $r=\ord_\ell(k)$ that $q(\ell k) = \alpha(k)$ for all $k$.
    In the base case $r=0$, for $k\not\equiv 0 \pmod{\ell}$, we have
    \begin{align*}
        q(\ell k)
         & = q(k) + \sum_{d\mid k}\mu\del{\frac {k}{d}} \gt(d)                 \\
         & = 0 + \sum_{d\mid k}\chi\del{\frac{k}d}\mu\del{\frac {k}{d}} \gt(d) \\
         & = \alpha(k),
    \end{align*}
    where the second equality follows since all divisors of $k$ are coprime to $\ell$.

    Suppose now that for some $r\ge 0$, $\alpha(\ell^{r} n) = q(\ell^r n)$ for all $n$ coprime to $\ell$.
    For $n\not\equiv 0\pmod{\ell}$, putting $k=\ell^{r}n$, we then have
    \begin{align*}
        \alpha(\ell k) - \alpha(k)
         & = \sum_{d\mid \ell k} \chi(d)\mu(d) \gt(\ell k/d)
        - \sum_{d\mid k} \chi(d)\mu(d) \gt(k/d)              \\
         & = \sum_{d\mid \ell k} \chi(d)\mu(d) \gt(\ell k/d)
        + \sum_{d\mid k} \mu(\ell d) \gt(\ell k/\ell d)      \\
         & = \sum_{d\mid \ell k}\mu(d) \gt(\ell k/d)         \\
         & = g(\ell k).
    \end{align*}
    Thus,
    \[
        \alpha(\ell k) = \alpha(k) + g(\ell k) = q(\ell k) + g(\ell k) = q(\ell^2 k),
    \]
    so by induction on $r$, $q(\ell k)=\alpha(k)$ for all $k$.
\end{proof}

\begin{proof}[Proof of \Cref{lem:self-conj-face-count}, $\Per_1(0)$ case]\mbox{}
    A point $z_0$ is periodic under $h_1(z)=z^2$ with period dividing $n=2k$
    if and only if $z_0 = \exp\del{2\pi i \frac{m}{M}}$ for some $m \in \Z/M\Z$, with  $M := 2^{2k}-1$.
    The cycle containing $z_0$ is invariant under $\sigma_1(z) = z^{-1}$
    if and only if either
    \begin{enumerate}
        \item[(a)] $h_1^{\circ k}(z_0) = z_0^{-1}$, or
        \item[(b)] $h_1^{\circ k}(z_0)=z_0$ and the $k$-cycle $z_0\mapsto\cdots\mapsto z_k=z_0$ is invariant under $\sigma_1$.
    \end{enumerate}
    Condition (a) is equivalent to
    \begin{align*}
        (2^k+1)m \equiv 0 \mod M.
    \end{align*}
    Since $2^k+1$ divides $M=(2^k-1)(2^k+1)$, the above equation has exactly $2^k+1$ solutions in $\Z/M\Z$.
    One of these solutions, however, is the fixed point $m=0$ (corresponding to $z_0=1$), which we discount in light of \Cref{rem:period-1-and-2}.
    Thus, the number of relevant solutions to (a) is $2^k$.

    Adding in the solutions for (b), we find that the number $\tilde{q}_1(n)$ of points of period dividing $n$ whose cycle is invariant under $\sigma_1$ is given by the recurrence
    \[
        \tilde{q}_1(n) = \begin{cases}
            \tilde{q}_1(k) + 2^k & \textrm{if }n=2k \textrm{ is even}, \\
            0                    & \textrm{if $n$ is odd}.
        \end{cases}
    \]
    The number $q_1(n)$ of such points whose period is exactly $n$ is then given by the M\"obius transform of $\tilde{q}_1$.
    By \Cref{lem:mobius-recursion}, this is given by
    \[
        q_1(n) = \begin{cases}
            \sum_{d\mid k,\;2\nmid d} \mu\del{\frac{k}{d}}2^d & \textnormal{if $n=2k$ is even,} \\
            0                                                 & \textnormal{if $n$ is odd}.
        \end{cases}
    \]

    The number $Q_1(n) = \frac
    {1}{n}q_1(n)$, thus yielding the desired result.
\end{proof}
\begin{proof}[Proof of \Cref{lem:self-conj-face-count}, $\Per_2(0)$ case]\mbox{}
    A point $z_0\in \mathbb{S}^1$ is periodic under $h_2(z)=z^{-2}$ of period dividing $n=3k$
    if and only if $z_0 = \exp\del{2\pi i \frac{m}{M}}$,
    where $M := 2^{3k}-\del{-1}^k$ and $m \in \Z/M\Z$.
    The cycle is invariant under $\sigma_2(z) = \omega z$
    if and only if one of the following holds:
    \begin{enumerate}
        \item[(a)] $h_2^{\circ k}(z_0) = \omega z_0$,
        \item[(b)] $h_2^{\circ k}(z_0) = \ol{\omega} z_0$, or
        \item[(c)] $h_2^{\circ k}(z_0) = z_0$ and the $k$-cycle $z_0\mapsto\cdots\mapsto z_k=z_0$ is invariant under $\sigma_2$.
    \end{enumerate}
    Condition (a) is equivalent to
    \begin{align}
        \del{\del{-2}^k - 1} m   & \equiv \frac{M}3 \mod M,
        \intertext{or equivalently}
        \del{2^k - \del{-1}^k} m & \equiv \del{-1}^k \frac M3 \mod M. \label{eq:per2_selfconj_cond}
    \end{align}
    Note that $\pt_2(k) = 2^k-\del{-1}^k$ divides
    \[
        \frac{M}{3} = \frac{2^{2k} + \del{-2}^{k} + 1}{3} \del{2^k-\del{-1}^k},
    \]
    where the first factor above is an integer as seen by reducing the numerator mod 3.
    It follows that equation \eqref{eq:per2_selfconj_cond} has exactly $\tilde{p}_2(k) = 2^k-\del{-1}^k$ solutions in $\Z/M\Z$.

    Analogously, condition (b) produces another $\tilde{p}_2(k)$ solutions.

    Adding in the solutions for (c), we find that the number $\tilde{q}_2(n)$ of points on $\mathbb{S}^1$ of period dividing $n$ whose cycle is invariant under $\sigma_2$ is given by the recurrence
    \[
        \tilde{q}_2(n) = \begin{cases}
            \tilde{q}_2(k) + 2\tilde{p}_2(k) & \textrm{if }n=3k \textrm{ is divisible by 3}, \\
            0                                & \textrm{otherwise}.
        \end{cases}
    \]
    The number $q_2(n)$ of such points whose period is exactly $n$ is then given by the M\"obius transform of $\tilde{q}_2$.
    By \Cref{lem:mobius-recursion}, this is given by
    \[
        q_2(n) = \begin{cases}
            2\sum_{d\mid k,\;2\nmid d} \mu\del{\frac{k}{d}}\tilde{p}_2(d) & \textnormal{if $n=3k$,} \\
            0                                                             & \textnormal{otherwise}.
        \end{cases}
    \]

    The number $Q_2(n) = \frac{1}{n}q_2(n)$, and thus we have the desired result.
    As per \Cref{rem:period-1-and-2}, we also have $Q_2(2)=1$ due to the exceptional 2-cycle $0\leftrightarrow \infty$ off of $\mathbb{S}^1$.
\end{proof}

\begin{theorem} \label{thm:genus}
    For $m=1,2$, the combinatorics of the cell decompositions for $\MC_p(\Per_m(0))$ produced by \Cref{algo-per1-angle} and \Cref{algo-per2} are as follows:
    \begin{itemize}
        \item The number of vertices is $\cyc_m(p)$.
        \item The number of edges is $\Prim_m(p)$.
        \item The number of faces is $\frac1{m+1}\del{\cyc_m(n) + mQ_m(p)}$.
    \end{itemize}
    Thus, the genus $g_m(p)$ of $\MC_n(\Per_m(0))$ is given by
    \begin{align*}
        g_1(p) & = 1 + \frac14 \del{2\Prim_1 - 3\cyc_1 - Q_1}(p),  \\
        g_2(p) & = 1 + \frac16\del{3\Prim_2 - 4\cyc_2 - 2Q_2}(p).
    \end{align*}
\end{theorem}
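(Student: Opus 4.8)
The plan is to count the $0$-, $1$-, and $2$-cells of the complex produced by \Cref{algo-per1-angle} (resp.\ \Cref{algo-per2}), which by \Cref{P:algo1works} and its $\Per_2(0)$ analogue is exactly $\Sigma_{p,m}$, and then feed the three counts into Euler's formula. For the vertices: by \Cref{P:vertices} when $m=1$, and by \Cref{lem:semi-conj} when $m=2$, the vertex set of $\Sigma_{p,m}$ is in bijection with $C_p$, resp.\ $C_{p,2}$. Now $\#C_p$ counts the period-$p$ cycles of the shift, equivalently of $z\mapsto z^2$ on $\mathbb{S}^1$, so it equals $\cyc_1(p)$; and by \Cref{prop:anti-doubling-coding}, $\#C_{p,2}$ counts the period-$p$ cycles of $z\mapsto z^{-2}$, so it equals $\cyc_2(p)$. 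For the edges: by \Cref{P:vertex-relation} (together with \Cref{per1-edge-branching}) and the corresponding statements in the $\Per_2(0)$ section (together with \Cref{prop:Per2-branching}), the edges of $\Sigma_{p,m}$ are in bijection with the primitive hyperbolic components of period $p$ in $\M$, resp.\ $\M_2$, which were counted as $\Prim_m(p)$.

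For the faces: by \Cref{lem:faces_are_cyclepairs} the faces of $\Sigma_{p,1}$ correspond to cycle duos $[C_p]=C_p/\!\sim$, and by \Cref{lem:per2-faces} the faces of $\Sigma_{p,2}$ correspond to cycle trios $[C_{p,2}]=C_{p,2}/\!\sim$, where $\sim$ is generated by the bit-flip involution $\iota$ in the first case and by the order-$3$ digit rotation $\rho$ in the second. Counting orbits by Burnside's lemma, the number of faces is $\tfrac12\del{\#C_p+\#\mathrm{Fix}(\iota)}$ when $m=1$, and $\tfrac13\del{\#C_{p,2}+2\,\#\mathrm{Fix}(\rho)}$ when $m=2$ (using $\#\mathrm{Fix}(\rho)=\#\mathrm{Fix}(\rho^2)$, since $\langle\rho\rangle$ has prime order). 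It remains to identify $\#\mathrm{Fix}(\iota)=Q_1(p)$ and $\#\mathrm{Fix}(\rho)=Q_2(p)$. Under the conjugacy between the shift on $\set{0,1}^{\N}$ and $z\mapsto z^{2}$ on $\mathbb{S}^1$ via binary expansions, $\iota$ becomes $z\mapsto z^{-1}$, which commutes with $z\mapsto z^2$; hence $\mathrm{Fix}(\iota)$ is precisely the set of period-$p$ cycles of $z^2$ invariant under $z\mapsto z^{-1}$, which is $Q_1(p)$ by definition. Likewise, under \Cref{prop:anti-doubling-coding} the map $\rho$ --- adding $1$ mod $3$ to each ternary digit --- corresponds to $\theta\mapsto\theta+\tfrac13$, i.e.\ to $z\mapsto\omega z$ for a primitive cube root of unity $\omega$, which commutes with $z\mapsto z^{-2}$; hence $\mathrm{Fix}(\rho)$ is the set of period-$p$ cycles of $z^{-2}$ invariant under $z\mapsto\omega z$, which is $Q_2(p)$. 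In both cases the face count becomes $\tfrac1{m+1}\del{\cyc_m(p)+mQ_m(p)}$, and \Cref{lem:self-conj-face-count} supplies the closed form of $Q_m(p)$. The low-period anomalies $p=1,2$ are absorbed by the conventions fixed in \Cref{rem:period-1-and-2} and by \Cref{rem:cyc1cyc2}.

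Finally, since $\pi$ (resp.\ $\pi_2$) realizes $\MC_p(\mathcal{F}_m)$ as a branched cover of the sphere whose $1$-skeleton is the preimage of a topological tree and whose complementary $2$-cells are disks, $\Sigma_{p,m}$ is a genuine CW decomposition of the closed orientable surface $\MC_p(\mathcal{F}_m)$, so its Euler characteristic is $V-E+F=\cyc_m(p)-\Prim_m(p)+\tfrac1{m+1}\del{\cyc_m(p)+mQ_m(p)}$. Writing $\chi=2-2g$ (the surface is connected, hence $g$ is the usual genus, in every case except $m=2$, $p=3$, where one retains the convention $g=1-\chi/2$; see \Cref{fig:per2mc3}) and simplifying gives
\begin{align*}
g_1(p) &= 1 + \tfrac14\del{2\Prim_1 - 3\cyc_1 - Q_1}(p),\\
g_2(p) &= 1 + \tfrac16\del{3\Prim_2 - 4\cyc_2 - 2Q_2}(p).
\end{align*}

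The vertex and edge counts are bookkeeping on top of the structural results already in hand, and the final algebraic simplification into Euler's formula is routine. The step that needs genuine care is the face count: one must correctly translate the symbolic symmetries $\iota$ of $C_p$ and $\rho$ of $C_{p,2}$ into the concrete symmetries $z\mapsto z^{-1}$ and $z\mapsto\omega z$ of the model maps $z\mapsto z^{\pm2}$, so as to match the orbit counts $\#[C_p]$, $\#[C_{p,2}]$ with $Q_1(p)$, $Q_2(p)$ --- and, alongside this, to verify that the exceptional periods $p=1,2$ (and, for connectivity, $p=3$ when $m=2$) are correctly handled by the conventions of \Cref{rem:period-1-and-2}.
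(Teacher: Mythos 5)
Your proposal is correct and takes essentially the same route as the paper: vertices and edges are read off from the structural results behind the algorithms, faces are counted as orbits of $p$-cycles under the order-$(m+1)$ symmetry (your Burnside computation is the same as the paper's direct split into fixed and free orbits), and the genus follows from Euler's formula. The only difference is presentational — you spell out the identification of the symbolic involution/rotation with $z\mapsto z^{-1}$ and $z\mapsto\omega z$ and flag the disconnected case $(m,p)=(2,3)$, both of which the paper leaves implicit.
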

\begin{proof}
    The edge and vertex counts follow directly from the algorithms.

    To count the faces, recall that faces of $\MC_p(\Per_m(0))$ are in natural bijection with
    orbits under $\tau_{m+1}$ of $p$-cycles of $h_m$,
    where $\tau_{m+1}$ is element-wise incrementation by $1$ mod $(m+1)$  (recall the introduction of cycle duos and trios in \Cref{def:cycle_duo} and \Cref{def:cycle_trio} respectively).
    A given $p$-cycle $\zeta$ of $h_m$ has an orbit of size $m+1$ if it is not fixed by $\tau_{m+1}$,
    and $1$ otherwise. It follows that the total number of faces in $\MC_p(\Per_m(0))$ is
    \[
        \frac{1}{m+1}\del{\cyc_m(p) - Q_m(p)} + Q_m(p) = \frac1{m+1}\del{\cyc_m(p) + mQ_m(p)}
    \]
    as desired.
\end{proof}

\begin{corollary}
    In particular, if $p$ is an odd prime, then
    \begin{align*}
        g_1(p) & = \frac{1}{2p}\del{2^{p-1}(p-3) - p^2 + 2p + 3}, \\ 
        g_2(p) & = \frac{1}{6p}\del{2^{p}(p-4) - 3p^2 + 7p + 8}.  
    \end{align*}
\end{corollary}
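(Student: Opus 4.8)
The plan is to obtain both formulas by substituting the closed forms from the preceding lemmas into the genus expressions of \Cref{thm:genus}, using that when $p$ is prime the only divisors are $1$ and $p$, so every Dirichlet convolution appearing in the cell counts collapses to just two terms.

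First I would record, for $p$ an odd prime, the three ingredients $\cyc_m(p)$, $\Prim_m(p)$, and $Q_m(p)$. M\"obius inversion over the divisor set $\set{1,p}$ gives $\cyc_m(p) = \tfrac1p\del{\tilde p_m(p) - \tilde p_m(1)}$, so by \Cref{lem:cycle_div_counting}, and using that $p$ is odd, $\cyc_1(p) = \tfrac{2^p-2}{p}$ and $\cyc_2(p) = \tfrac{(2^p+1)-3}{p} = \tfrac{2^p-2}{p}$, consistent with \Cref{rem:cyc1cyc2}. Similarly $\Hyp_1(p) = \tfrac12 p_1(p) = 2^{p-1}-1$ and $\Hyp_2(p) = \tfrac13 p_2(p) = \tfrac13(2^p-2)$, while the $n=1$ cases (the ``straightforward'' part of the $\Hyp$ lemma) give $\Hyp_1(1) = \Hyp_2(1) = 1$. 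Since $(\vp*\Hyp_m)(p) = \vp(p)\Hyp_m(1) + \Hyp_m(p) = (p-1)\Hyp_m(1) + \Hyp_m(p)$, the $\Prim$-formula yields $\Prim_1(p) = \Hyp_1(p) - (p-1) = 2^{p-1} - p$ and $\Prim_2(p) = \Hyp_2(p) - (p-1) = \tfrac13(2^p - 3p + 1)$.

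Next I would dispose of the self-conjugate face counts. By \Cref{lem:self-conj-face-count} the quantity $q_1$ is supported on even integers and $q_2$ on multiples of $3$; hence for $p$ an odd prime $Q_1(p) = 0$, and $Q_2(p) = 0$ as well provided $p \neq 3$. (The prime $p=3$ is genuinely exceptional for $\Per_2(0)$, where $Q_2(3) = 2$ and $\MC_3(\mathcal{F}_2)$ is disconnected, so it is excluded from the statement; compare \Cref{lem:conj-order-divides-cycle-order} and \Cref{rem:period-1-and-2}.) Then I would substitute into $g_1(p) = 1 + \tfrac14\del{2\Prim_1 - 3\cyc_1 - Q_1}(p)$ and $g_2(p) = 1 + \tfrac16\del{3\Prim_2 - 4\cyc_2 - 2Q_2}(p)$ and clear denominators: over the common denominator $2p$ the first gives $g_1(p) = \tfrac{1}{2p}\del{2^{p-1}(p-3) - p^2 + 2p + 3}$, and over $6p$ the second gives $g_2(p) = \tfrac{1}{6p}\del{2^p(p-4) - 3p^2 + 7p + 8}$, exactly as claimed.

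I expect no real obstacle: once the lemmas are in hand the computation is elementary algebra. The only points that require care are the vanishing of $Q_m(p)$ (which rests precisely on the support of $q_m$ from \Cref{lem:self-conj-face-count}), the base values $\Hyp_m(1) = 1$, and remembering to exclude $p=3$ in the $\Per_2(0)$ case.
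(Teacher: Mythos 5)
Your proof is correct and is exactly the direct substitution the paper intends (the corollary is stated without proof in the paper); all your intermediate values ($\cyc_m(p)=\tfrac{2^p-2}{p}$, $\Prim_1(p)=2^{p-1}-p$, $\Prim_2(p)=\tfrac13(2^p-3p+1)$, $Q_m(p)=0$) check against the paper's table for $p=5,7$, as do the final genus formulas. Your observation that $p=3$ must be excluded from the $g_2$ formula --- since $Q_2(3)=2\neq 0$ and the displayed formula would give $-1/3$ rather than the true value $g_2(3)=-1$ --- is a legitimate caveat that the paper's statement of the corollary omits.
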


\noindent
The following table summarizes the combinatorics of $\MC_p(\Per_1(0))$ and $\MC_p(\Per_2(0))$ for $2\le p\le15$:
\[
    \resizebox{\textwidth}{!}{
        \begin{tabular}{r|r|l|r|l|r|l|r|l|r|l}
            $p$ & $\cyc_1(p)$     & $\cyc_2(p)$
                & $\Prim_1(p)$    & $\Prim_2(p)$
                & $Q_1(p)$        & $Q_2(n)$
                & $\text{F}_1(p)$ & $\text{F}_2(p)$
                & $g_1(p)$        & $g_2(p)$
            \\\hline
            2   & 1               & 1               & 0     & 0     & 1 & 1 & 1    & 1   & 0    & 0    \\\hline
            3   & 2               & 2               & 1     & 0     & 0 & 2 & 1    & 2   & 0    & -1   \\\hline
            4   & 3               & 3               & 3     & 2     & 1 & 0 & 2    & 1   & 0    & 0    \\\hline
            5   & 6               & 6               & 11    & 6     & 0 & 0 & 3    & 2   & 2    & 0    \\\hline
            6   & 9               & 9               & 20    & 14    & 1 & 0 & 5    & 3   & 4    & 2    \\\hline
            7   & 18              & 18              & 57    & 36    & 0 & 0 & 9    & 6   & 16   & 7    \\\hline
            8   & 30              & 30              & 108   & 72    & 2 & 0 & 16   & 10  & 32   & 17   \\\hline
            9   & 56              & 56              & 240   & 158   & 0 & 2 & 28   & 20  & 79   & 42   \\\hline
            10  & 99              & 99              & 472   & 316   & 3 & 0 & 51   & 33  & 162  & 93   \\\hline
            11  & 186             & 186             & 1013  & 672   & 0 & 0 & 93   & 62  & 368  & 213  \\\hline
            12  & 335             & 335             & 1959  & 1306  & 5 & 2 & 170  & 113 & 728  & 430  \\\hline
            13  & 630             & 630             & 4083  & 2718  & 0 & 0 & 315  & 210 & 1570 & 940  \\\hline
            14  & 1161            & 1161            & 8052  & 5370  & 9 & 0 & 585  & 387 & 3154 & 1912 \\\hline
            15  & 2182            & 2182            & 16315 & 10874 & 0 & 4 & 1091 & 730 & 6522 & 3982 \\\hline
        \end{tabular}
    }
\]

\subsection{Properties of the cell decomposition}\label{sec:props_cell_decomposition}

Having constructed the cell decompositions, it becomes a natural question to ask about its geometric and combinatorial properties. 

\subsubsection{Invariance by conjugation}

\begin{proposition}
    The complex conjugation function $(c,\zeta) \mapsto (\overline{c},\overline{\zeta})$ leaves the faces of the cell decomposition for $\MC_p(\mathcal{F}_1)$ invariant.
\end{proposition}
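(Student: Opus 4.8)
The plan is to realize complex conjugation as a cellular automorphism of $\Sigma_{p,1}$ and then to show that this automorphism is trivial on faces, using the labelling of faces by cycle duos from \Cref{lem:faces_are_cyclepairs}.

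\emph{Step 1: conjugation is a cellular automorphism.} Write $\iota$ for the map $(c,\zeta)\mapsto(\bar c,\bar\zeta)$. Since $f_{\bar c}(\bar z)=\overline{f_c(z)}$, conjugation carries a period-$p$ cycle of $f_c$ (with multiplier $\lambda$) to a period-$p$ cycle of $f_{\bar c}$ (with multiplier $\bar\lambda$), so $\iota$ is a well-defined anti-holomorphic involution of the affine model $\MC^+_p(\mathcal{F}_1)$ — equivalently of $\textup{M}_p$, which is cut out by a resultant of dynatomic polynomials and hence by polynomials with integer coefficients — and it therefore lifts to the normalization $\MC_p(\mathcal{F}_1)$, satisfying $\pi\circ\iota=\overline{(\cdot)}\circ\pi$ for conjugation $\overline{(\cdot)}$ on $\widehat{\mathbb{C}}$. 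Next I would note that every ingredient of \Cref{D:Sigma_p} is invariant under conjugation on $\widehat{\mathbb{C}}$: the basepoint $0$ is fixed; $\overline{R_p}=R_p$, since $\mathcal{M}$ and the notion of primitivity are conjugation-invariant; and $\overline{V(c)}=V(\bar c)$, so $\overline{V_p}=V_p$ and $\overline{U_\infty}=U_\infty$. Hence $\iota$ permutes the vertices $\pi^{-1}(0)$, permutes the branched lifts of the veins $V(c)$, and permutes the connected components of $\pi^{-1}(U_\infty)$; that is, $\iota$ is a cellular automorphism of $\Sigma_{p,1}$, and in particular it permutes the set of faces.

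\emph{Step 2: the induced permutation of faces is the identity.} By \Cref{lem:faces_are_cyclepairs}, faces correspond bijectively to cycle duos $[C_p]$ via the map $\varphi$ that sends $(c,\zeta)\notin\pi^{-1}(\mathcal{M})$ to $[\itin(f_c,z)]$ for $z\in\zeta$, so it suffices to prove that $\varphi$ is $\iota$-invariant. Given a face $F$, pick as in the proof of \Cref{lem:faces_are_cyclepairs} a point $(c_+,\zeta)$ in its interior with $c_+\in\mathbb{R}$, $c_+>1/4$; then the label of $F$ is $[\nu]$ with $\nu=\itin(f_{c_+},z)$ for $z\in\zeta$. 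Since $c_+$ is real, $\iota(c_+,\zeta)=(c_+,\bar\zeta)$, which again has $c_+\notin\mathcal{M}$ and so lies in the interior of the face $\iota(F)$. For $c_+>1/4$ the parameter lies on the external ray $R_M(0)$, so the partition $\Delta_{c_+}$ defining itineraries is the real axis, with upper half-plane labelled $0$ and lower half-plane labelled $1$, and $J_{c_+}\cap\mathbb{R}=\emptyset$ so no $\ast$ occurs (compare \Cref{KD-EA}). As $f_{c_+}$ commutes with conjugation and conjugation interchanges the two half-planes, $\itin(f_{c_+},\bar z)$ is the bitwise complement $\overline{\nu}$ of $\nu$. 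Therefore $\iota(F)$ has label $[\overline{\nu}]$, and $[\overline{\nu}]=[\nu]$ by the definition of a cycle duo (\Cref{def:cycle_duo}). Since a face is determined by its label, $\iota(F)=F$.

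\emph{Main obstacle.} The substance is Step 2, and within it the identification of the label of $\iota(F)$: one must use that along the real parameter ray $R_M(0)$ the itinerary partition is exactly the real axis — hence itself conjugation-symmetric, with no ambiguity for a cycle disjoint from $\mathbb{R}$ — and that complex conjugation exchanges its two sides, so conjugating a marked cycle replaces the itinerary by its bitwise complement, which lies in the same duo $[\nu]=\{\nu,\overline{\nu}\}$. The remaining ingredients — that an anti-holomorphic involution lifts through normalization, the conjugation-symmetry of $R_p$, $V_p$, and $U_\infty$, and that $\iota$ respects the cell structure — are routine given the results already established.
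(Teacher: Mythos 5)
Your proposal is correct and follows essentially the same route as the paper: label faces by cycle duos via \Cref{lem:faces_are_cyclepairs}, evaluate at a real point $c_+>1/4$ in each face, and check that conjugation preserves the duo. Your accounting is in fact slightly more careful than the paper's (which asserts the itineraries of $\zeta$ and $\overline{\zeta}$ are \emph{equal}, whereas under the $\theta=0$ half-plane convention conjugation complements the itinerary, as you say); either way the duo $[\nu]=[\overline{\nu}]$ is unchanged, so the conclusion stands.
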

\begin{proof}
    For any face $U$ (a lift of $\hat{\C } \setminus \mathcal{M}$) corresponding to a cycle duo $[\nu]$, there exists a point  $(c,\zeta) \in U$ with $c \in \R_{>0}$ and such that the points in the cycle $\zeta$ are the landing points of the orbit of $\theta \in \Q/\Z$, with $[\theta] = [\nu]$.
    Now, since the itinerary of the cycle $\overline{\zeta}$ in the dynamical plane of $f_{\overline{c}}$ equals
    the itinerary of $\zeta$ in the dynamical plane of  $f_c$, the map $\phi$ defined as in \Cref{lem:faces_are_cyclepairs} satisfies $\phi(c,\zeta) = \phi(\overline{c},\overline{\zeta}) = [\nu]$. Hence 
    $(\overline{c},\overline{\zeta}) \in U$.
\end{proof}

The same reasoning applied to parameters $a \in \mathcal{E} \subset \Per_2(0)$ yields the following result. 

\begin{proposition}
    The complex conjugation function $(a,\zeta) \mapsto (\overline{a},\overline{\zeta})$ leaves the faces of the cell decomposition for $\MC_p(\mathcal{F}_2)$ invariant.
\end{proposition}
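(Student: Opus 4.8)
The plan is to transcribe the proof of the $\Per_1(0)$ version. First I would record that $\iota\colon(a,\zeta)\mapsto(\overline a,\overline\zeta)$ is a well-defined anti-holomorphic involution of $\MC_p(\mathcal{F}_2)$ lying over the anti-holomorphic involution $a\mapsto\overline a$ of $\Per_2(0)$: this follows from the identity $\overline{g_a(z)}=g_{\overline a}(\overline z)$, which shows that complex conjugation carries a $p$-cycle of $g_a$ to a $p$-cycle of $g_{\overline a}$, inducing an automorphism of the affine marked cycle curve that extends to its normalization. The base involution $a\mapsto\overline a$ fixes the basilica parameter $g_0$, maps $\M_2$ to itself and hence the external component $\mathcal{E}$ to itself, and carries each vein $V_2(a)$ to $V_2(\overline a)$ while permuting the set $R_{p,2}$ of primitive roots. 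Consequently $\iota$ carries $\pi_2^{-1}(g_0)$, $\pi_2^{-1}(\mathcal{E})$, and the branched lifts of the veins to themselves, so $\iota$ is a cellular automorphism of $\Sigma_{p, 2}$; in particular it permutes the faces among themselves, which already yields the proposition in the set-theoretic reading, and it remains to pin down this permutation.

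To do so in the spirit of the $\Per_1(0)$ proof, fix a face $U$ carrying the cycle-trio label $[\nu]\in[C_{p, 2}]$ of \Cref{lem:per2-faces}, and choose a real parameter $a_0\in\mathcal{E}\cap\R$ (for instance a large real $a_0$, which lies in the escaping component $\mathcal{E}$). Since $g_{a_0}$ restricted to its quasicircle Julia set is topologically conjugate to $z\mapsto z^{-2}$ on $\mathbb{S}^1$, \Cref{prop:anti-doubling-coding} lets me select the $p$-cycle $\zeta_0$ of $g_{a_0}$ whose itinerary with respect to the counterclockwise three-arc partition $\mathcal{I}_{a_0}$ represents $\nu$, so that $(a_0,\zeta_0)\in U$. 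Then $\iota(a_0,\zeta_0)=(a_0,\overline{\zeta_0})$, and the face containing it is detected by the itinerary of $\overline{\zeta_0}$ in the same partition. Since complex conjugation is an anti-holomorphic symmetry of $g_{a_0}$ that preserves the Julia set and permutes its three fixed points, this itinerary is obtained from $\nu$ by the permutation of $\{0,1,2\}$ that conjugation induces on the three arcs; reading off the resulting trio class then identifies $\iota(U)$, and since $U$ is connected and $\iota$ is continuous this determines $\iota(U)$ completely.

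The step I expect to be the main obstacle is precisely this last identification. In $\Per_1(0)$ a face is labeled by a cycle \emph{duo} --- a class modulo the bit-flip $\nu\mapsto\overline\nu$ --- and complex conjugation of a point over a real parameter exactly bit-flips the itinerary, so the duo label is preserved. In $\Per_2(0)$ a face is labeled by a cycle \emph{trio}, a class modulo the \emph{cyclic} relabeling $\nu\mapsto\rho(\nu)$ of the ternary digits, whereas complex conjugation, being orientation-reversing, relabels the three arcs by a \emph{reflection}: it sends the itinerary $(\nu_i)$ to $(-\nu_i)$ up to rotation. The heart of the argument is therefore to analyze this digit-negation map on $[C_{p, 2}]$ and its compatibility with the face labeling of \Cref{lem:per2-faces}; this is the one place where the $\Per_1(0)$ proof does not transcribe mechanically, and everything else is routine.
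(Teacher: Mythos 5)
Your first paragraph is essentially all the paper itself supplies: the published justification for this proposition is only the remark that ``the same reasoning'' as in the $\Per_1(0)$ case applies, and that reasoning does show that $(a,\zeta)\mapsto(\overline a,\overline\zeta)$ is a cellular involution of $\Sigma_{p,2}$ and hence permutes the faces. But the obstacle you flag at the end is not merely the hard step of the argument --- it is a genuine obstruction, and the strong conclusion (each individual face mapped to itself), which is what the $\Per_1(0)$ proof actually delivers and what the identical wording of the statement suggests, fails in general. The $\Per_1(0)$ argument closes because the relation defining a face label (the duo relation $\nu\sim\overline\nu$ of \Cref{def:cycle_duo}) is generated by exactly the involution that complex conjugation induces on itineraries over a real base point. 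In $\Per_2(0)$ the face label of \Cref{lem:per2-faces} is a trio class, i.e.\ an orbit of the cyclic relabeling $\rho$, whereas conjugation over $a\in\mathcal{E}\cap\R$ induces an orientation-reversing relabeling $k\mapsto c-k\pmod 3$ of the three arcs; on trio classes it therefore acts by digit negation $[\nu]\mapsto[-\nu]$, which is not the identity on $[C_{p,2}]$.

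Already for $p=5$ this breaks: with $A=02021$ and $B=02012$ as in the paper's example, negating the digits of $A$ gives $01012$, which is a cyclic rotation of $10120=\rho(B)$, so $[-A]=[B]\neq[A]$. One can confirm the two faces really are exchanged without appealing to any labeling convention: conjugation of parameters sends the primitive component with characteristic angles $(\theta_1,\theta_2)$ to the one with angles $(1-\theta_2,1-\theta_1)$, hence pairs the six edges of $\Sigma_{5,2}$ as $3\leftrightarrow 27$, $5\leftrightarrow 25$, $7\leftrightarrow 23$; the paper records that one face of \Cref{fig:M52-ex} has edge multiset $\{3,5,5,7,23,27\}$, so the other has $\{3,7,23,25,25,27\}$, and the pairing above carries each multiset to the other. (Equivalently, a conjugation-invariant ramified trio class would have to contain a $5$-cycle of $g_a$, $a$ real in $\mathcal{E}$, symmetric about the real axis, and there is none.) So your proposal cannot be completed as written: only the weak reading --- conjugation maps faces to faces --- survives, and your diagnosis that the $\Per_1(0)$ proof ``does not transcribe mechanically'' at precisely this point identifies a real gap that the paper's own one-line justification also glosses over.
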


We know that the Riemann surface $\MC_p(\mathcal{F}_1)$ is connected for any $p \geq 1$, but it appears that 
even the following stronger statement holds.

\begin{conjecture}[Skewer conjecture]
    The lift of the real axis under $\pi$ has connected closure in $\MC_p(\mathcal{F}_1)$ for all $p\ge3$.
\end{conjecture}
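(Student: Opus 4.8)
The plan is to first reduce the conjecture to a purely \emph{real} question about the curve, and then to attack that with the combinatorics of the real quadratic family.

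\textbf{Step 1 (Reduction to the real locus).} Write $S := \R \cup \set{\infty} \subset \widehat{\C}$, a topological circle, so that the closure in question is exactly $\pi^{-1}(S)$. I would first note that every connected component of $\pi^{-1}(S)$ surjects onto $S$: since $\pi$ is a holomorphic branched cover, $\pi^{-1}(S)$ is a finite graph in which every vertex has valence $\geq 2$ (over a regular point the preimage of the arc $S$ is a single arc through the point; over a point of ramification index $e$ it is $e$ smooth curves crossing there, i.e.\ valence $2e$), so no component has an endpoint over a non-branch-point, and a component whose image were a proper subarc of $S$ would have to have such an endpoint. Next, for real $c < -2$ the filled Julia set $K_c$ is a Cantor set contained in $\R$, so \emph{every} $p$-cycle of $f_c$ consists of real points and is in particular invariant under $z \mapsto \bar z$; hence the whole fiber $\pi^{-1}(c)$ lies in the real locus $\operatorname{RL}$ of $\MC_p(\mathcal{F}_1)$, i.e.\ the fixed set of the real structure $(c,\zeta)\mapsto(\bar c,\bar\zeta)$. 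Combining the two facts, every component of $\pi^{-1}(S)$ meets $\operatorname{RL}$, so it suffices to prove that $\operatorname{RL}$ is connected. In other words, the Skewer conjecture is equivalent to: \emph{the real algebraic curve $\MC_p(\mathcal{F}_1)$ has a single oval}, the arithmetic minimum allowed by Harnack's inequality.

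\textbf{Step 2 (Tracing the real locus).} The idea is to follow $\operatorname{RL}$ as $c$ runs along $\R$ from $1/4$ down to $-2$ and then, outside $\mathcal{M}$, around through $\infty$. A point of $\operatorname{RL}$ over $c\in\R$ is a conjugation-symmetric $p$-cycle of $f_c$; over the arc $c<-2$ all $\cyc_1(p)$ cycles qualify, so there $\operatorname{RL}$ equals the full fiber and is a union of $\cyc_1(p)$ arcs, one per abstract cycle in $C_p$. Along each arc the combinatorial type of the cycle is locally constant and can change only at a point where the marked cycle stops being repelling-and-distinct; by \Cref{per1-edge-branching} the only branch points of $\pi$ on $\R$ are $\infty$ and the roots of \emph{real} primitive period-$p$ components, and by \Cref{L:knead} together with the symmetry $R_M(\theta)=\overline{R_M(1-\theta)}$ of parameter rays landing on $\R$, the two cycles that collide at such a root form a conjugate pair $\set{\nu,\bar\nu}$. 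Thus over $\R\cap\mathcal{M}$ the locus $\operatorname{RL}$ is a disjoint union of arcs indexed by $C_p$, glued in conjugate pairs at the points over real primitive period-$p$ roots; and over $\widehat{\C}\setminus\mathcal{M}$ it is the distinguished real lift of the outer arc $A:=(1/4,\infty)\cup\set{\infty}\cup(-\infty,-2)$ produced by \Cref{lem:faces_are_cyclepairs}, where in each ramified face the unique point over $\infty$ provides one further gluing of the two sheets $\nu,\bar\nu$ — and, as one sees already for $p=4$, of a self-conjugate sheet to a ramified face as well. So the whole of $\operatorname{RL}$ is encoded by a graph $\Gamma_p$ on the vertex set $C_p$ whose edges record exactly these two sources of gluings.

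\textbf{Step 3 (Connectivity --- the main obstacle).} What remains, and where essentially all the difficulty lies, is to prove $\Gamma_p$ is connected. This is a genuinely combinatorial assertion about which conjugate pairs of period-$p$ cycles are realized by \emph{real} primitive windows in $\mathcal{M}$, compounded by the fact that the self-conjugate cycles are never themselves a colliding pair at any real root yet are real for $c<-2$, so they can only be attached to the rest through real tunings accumulating at the ends of $\R\cap\mathcal{M}$ and through the ramified lifts over $\infty$. The natural tools are Milnor--Thurston kneading theory and the combinatorics of the real slice of $\mathcal{M}$ (admissible internal addresses, the tree of real tunings), used in the spirit of the perturbation-of-kneading-data bookkeeping of \Cref{algo-per1-knead}. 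Two concrete routes I would try: (i) induction on $p$ via renormalization --- a real primitive period-$q$ window with $q\mid p$ carries by tuning a faithful copy of the period-$(p/q)$ picture, and one shows these tuned subgraphs together with the "top-level" real period-$p$ windows already span $C_p$; or (ii) a direct construction of a single connected subgraph of $\pi^{-1}(S)$ that surjects onto $S$ with trivial monodromy, by exhibiting an explicit chain of real parameters whose marked real cycles run through every class of $C_p$. Either way the crux is a connectivity statement for the \emph{real} skeleton of the (irreducible, by Bousch and Lau--Schleicher) dynatomic curve of period $p$, and I would expect that to be the hard heart of the argument.
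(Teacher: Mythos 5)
This statement is the \emph{Skewer conjecture}: the paper states it as an open conjecture and offers no proof (only the picture of $\MC_6(\mathcal{F}_1)$ as evidence), so there is no argument of the authors to compare yours against. Judged on its own terms, your proposal is not a proof. Steps 1 and 2 are a sensible and essentially correct reduction: every component of $\pi^{-1}(\R\cup\{\infty\})$ does surject onto the circle $\R\cup\{\infty\}$, for $c<-2$ every $p$-cycle is real so the full fiber lies in the real locus, and hence connectedness of the real locus would imply the conjecture. (One caveat: this reduction is only one-directional -- the real locus could a priori be disconnected while $\pi^{-1}(\R\cup\{\infty\})$ is still connected through points $(c,\zeta)$ with $c$ real but $\zeta\ne\overline{\zeta}$ -- so by passing to ``one oval'' you may be trying to prove something strictly stronger than what is asked, and your Step 2 gluing graph $\Gamma_p$ should really be built from all of $\pi^{-1}(S)$, not just the conjugation-fixed part, if you want the reduction to be lossless.)

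The genuine gap is Step 3, and you identify it yourself: everything hinges on the connectivity of the combinatorial graph $\Gamma_p$ recording which conjugate pairs $\{\nu,\overline{\nu}\}$ of abstract cycles are glued at roots of \emph{real} primitive period-$p$ components and at the ramified lifts of $\infty$. You propose two routes (induction via real tunings, or an explicit spanning chain) but carry out neither, and this connectivity statement is exactly the hard content of the conjecture -- note in particular that, as the paper's Strong Douady Lemma shows, a self-conjugate cycle ($\nu=\overline{\nu}$) has no angle in its orbit landing at a real primitive component, so such vertices of $\Gamma_p$ can only be attached through the lifts over $\infty$ inside ramified faces, and verifying that this always suffices is nontrivial. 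A minor bookkeeping point: the fact that the two cycles colliding at a real primitive root are conjugate follows from the symmetry $\eta=1-\theta$ of the characteristic angles together with \Cref{P:vertex-relation} (vertex labels via binary expansions of characteristic angles), not from \Cref{L:knead}, which concerns face labels via kneading sequences. In summary: a correct and useful reduction, but the conjecture remains unproven.
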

\begin{figure}[H]
    \centering
    \resizebox{\textwidth}{!}{
        \begin{tikzpicture}
            \def\edgelength{1.8cm}

            \def\radfive{1.0047588}
            \def\radseven{1.7623757}
            \def\radeight{1.7623757}
            \def\radtwelve{2.7243972}

            \def\fbaseangle{-180/8}
            \def\anchorx{0}

            \node (face1) at (\anchorx, 0) {$\abr{1}$};
            \node (node-1-0) at ($(face1)+(\fbaseangle:1.9075818)$) {$\del{1}$};
            \node (node-1-1) at ($(node-1-0)+(247.5 + \fbaseangle:1.46)$) {$\del{3}$};
            \node (node-1-2) at ($(node-1-1)+(202.5 + \fbaseangle:1.46)$) {$\del{5}$};
            \node (node-1-3) at ($(node-1-2)+(157.5 + \fbaseangle:1.46)$) {$\del{13}$};
            \node (node-1-4) at ($(node-1-3)+(112.5 + \fbaseangle:1.46)$) {$\del{11}$};
            \node (node-1-5) at ($(node-1-4)+(67.5 + \fbaseangle:1.46)$) {$\del{23}$};
            \node (node-1-6) at ($(node-1-5)+(22.5 + \fbaseangle:1.46)$) {$\del{15}$};
            \node (node-1-7) at ($(node-1-6)+(337.5 + \fbaseangle:1.46)$) {$\del{31}$};
            \draw (node-1-0) -- (node-1-1);
            \draw (node-1-1) -- (node-1-2);
            \draw (node-1-2) -- (node-1-3);
            \draw[double,double distance=2pt] (node-1-3) -- (node-1-4);
            \draw (node-1-4) -- (node-1-5);
            \draw (node-1-5) -- (node-1-6);
            \draw (node-1-6) -- (node-1-7);
            \draw[double,double distance=2pt] (node-1-7) -- (node-1-0);

            \draw[dashed] (node-1-0) -- (face1) -- (node-1-7);

            \def\fbaseangle{9*180/12}
            \def\anchorx{\radeight+\radtwelve}

            \node (face3) at (\anchorx, 0) {$\abr{3}$};
            \node (node-3-0) at ($(face3)+(\fbaseangle:2.8205035)$) {$\del{3}$};
            \node (node-3-1) at ($(node-3-0)+(255 + \fbaseangle:1.46)$) {$\del{1}$};
            \node (node-3-2) at ($(node-3-1)+(225 + \fbaseangle:1.46)$) {$\del{7}$};
            \node (node-3-3) at ($(node-3-2)+(195 + \fbaseangle:1.46)$) {$\del{13}$};
            \node (node-3-4) at ($(node-3-3)+(165 + \fbaseangle:1.46)$) {$\del{5}$};
            \node (node-3-5) at ($(node-3-4)+(135 + \fbaseangle:1.46)$) {$\del{23}$};
            \node (node-3-6) at ($(node-3-5)+(105 + \fbaseangle:1.46)$) {$\del{11}$};
            \node (node-3-7) at ($(node-3-6)+(75 + \fbaseangle:1.46)$) {$\del{7}$};
            \node (node-3-8) at ($(node-3-7)+(45 + \fbaseangle:1.46)$) {$\del{31}$};
            \node (node-3-9) at ($(node-3-8)+(15 + \fbaseangle:1.46)$) {$\del{15}$};
            \draw (node-3-0) -- (node-3-1);
            \draw (node-3-1) -- (node-3-2);
            \draw (node-3-2) -- (node-3-3);
            \draw (node-3-3) -- (node-3-4);
            \draw[double,double distance=2pt] (node-3-4) -- (node-3-5);
            \draw (node-3-5) -- (node-3-6);
            \draw (node-3-6) -- (node-3-7);
            \draw (node-3-7) -- (node-3-8);
            \draw (node-3-8) -- (node-3-9);
            \draw (node-3-9) -- (node-1-0);
            \draw (node-1-7) -- (node-3-0);

            \draw[dashed] (node-3-0) -- (face3) -- (node-3-9);

            \def\fbaseangle{5*180/8}
            \def\anchorx{-2*\radeight}

            \node (face5) at (\anchorx, 0) {$\abr{5}$};
            \node (node-5-0) at ($(face5)+(\fbaseangle:1.9075818)$) {$\del{5}$};
            \node (node-5-1) at ($(node-5-0)+(247.5 + \fbaseangle:1.46)$) {$\del{3}$};
            \node (node-5-4) at ($(node-1-3)+(112.5 + \fbaseangle:1.46)$) {$\del{15}$};
            \node (node-5-5) at ($(node-5-4)+(67.5 + \fbaseangle:1.46)$) {$\del{23}$};
            \node (node-5-6) at ($(node-5-5)+(22.5 + \fbaseangle:1.46)$) {$\del{3}$};
            \node (node-5-7) at ($(node-5-6)+(337.5 + \fbaseangle:1.46)$) {$\del{15}$};
            \draw (node-5-0) -- (node-5-1);
            \draw (node-5-1) -- (node-1-4);
            \draw (node-1-3) -- (node-5-4);
            \draw (node-5-4) -- (node-5-5);
            \draw (node-5-5) -- (node-5-6);
            \draw[double,double distance=2pt] (node-5-6) -- (node-5-7);
            \draw (node-5-7) -- (node-5-0);

            \draw[dashed] (node-5-0) -- (face5) -- (node-5-5);

            \def\fbaseangle{5*180/5}
            \def\anchorx{-3*\radeight-\radfive}

            \node (face7) at (\anchorx, 0) {$\abr{7}$};
            \node (node-7-0) at ($(face7)+(\fbaseangle:1.2419503)$) {$\del{7}$};
            \node (node-7-1) at ($(node-7-0)+(234 + \fbaseangle:1.46)$) {$\del{1}$};
            \node (node-7-4) at ($(node-5-6)+(18 + \fbaseangle:1.46)$) {$\del{31}$};
            \draw (node-7-0) -- (node-7-1);
            \draw (node-7-1) -- (node-5-7);
            \draw (node-5-6) -- (node-7-4);
            \draw (node-7-4) -- (node-7-0);

            \draw[dashed] (node-7-0) -- (face7);

            \def\fbaseangle{-360/7}
            \def\anchorx{1.5158606+\radeight+2*\radtwelve}

            \node (face11) at (\anchorx, 0) {$\abr{11}$};
            \node (node-11-0) at ($(face11)+(\fbaseangle:1.6824783)$) {$\del{11}$};
            \node (node-11-1) at ($(node-11-0)+(244.2857 + \fbaseangle:1.46)$) {$\del{3}$};
            \node (node-11-4) at ($(node-3-4)+(90 + \fbaseangle:1.46)$) {$\del{15}$};
            \node (node-11-5) at ($(node-11-4)+(38.57141 + \fbaseangle:1.46)$) {$\del{13}$};
            \node (node-11-6) at ($(node-11-5)+(347.14285 + \fbaseangle:1.46)$) {$\del{7}$};
            \draw (node-11-0) -- (node-11-1);
            \draw (node-11-1) -- (node-3-5);
            \draw (node-3-4) -- (node-11-4);
            \draw (node-11-4) -- (node-11-5);
            \draw (node-11-5) -- (node-11-6);
            \draw (node-11-6) -- (node-11-0);

            \draw[dashed] (node-11-0) -- (face11) -- (node-11-5);

            \draw[dotted, line width=0.8pt] (face7) -- (face5) -- (face1) -- (face3) -- (face11) -- (node-11-6);
        \end{tikzpicture}
    }
    \caption{Cell structure for the marked cycle curve $\MC_6(\mathcal{F}_1)$, which has genus $g=4$. Recall that eg  $\langle 7\rangle$ denotes the binary rotation class of 7, $000111$.
        Note that in this case, the faces $\abr{7}$ and $\abr{11}$ both have an odd number of edges. The face $\abr{7}$ is reflexive, while the face $\abr{11}$ is not. 
    }
    \label{fig: MC_6_Per_1}
\end{figure}
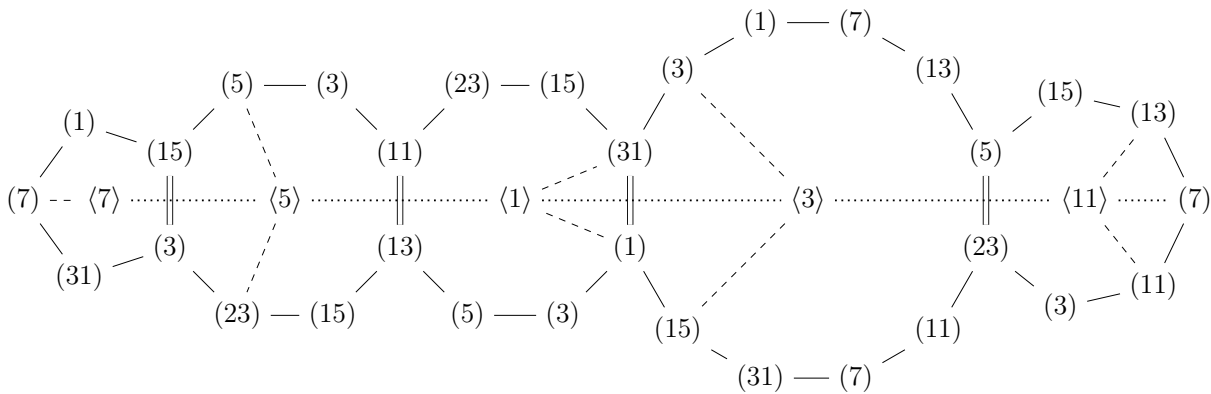

Consider for instance \Cref{fig: MC_6_Per_1}, where the closure of the lift of the real axis in  $\MC_p(\mathcal{F}_1)$ is indicated by the dotted and dashed lines. Clearly, this set is connected. 

\subsubsection{Numbers of sides and bigons}

We also discuss various combinatorial properties of the cells in the decomposition. 
For instance, one might ask what is the number of sides of the smallest face, or more in general what is the statistical distribution 
of the number of faces of a given size, where we measure size just by number of edges. 
Here we establish the much more modest fact that the smallest face is not a bigon. 

\begin{definition}[Bigon]
    A \emph{bigon} is a face that is bounded by two distinct edges --- in other words, it is a cell in the given decomposition of $\MC_p(\mathcal{F}_1)$ consisting of two vertices joined by two distinct edges and the face they bound.
\end{definition}

\begin{proposition}[The No Bigon Criterion] \label{P:no-bigons}
    There are no bigons in the cell decomposition of $\MC_p(\mathcal{F}_1)$ given by \Cref{algo-per1-angle}.
\end{proposition}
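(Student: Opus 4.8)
## Proof Plan for the No Bigon Criterion

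The plan is to argue by contradiction: suppose there is a bigon in $\Sigma_p$. By definition, a bigon consists of two vertices $\nu_1, \nu_2 \in C_p$ joined by two distinct edges $e, e'$, bounding a face $F$. Translating via \Cref{P:vertex-relation} and \Cref{L:knead}, each of $e$ and $e'$ corresponds to a primitive hyperbolic component of period $p$, say $H$ and $H'$, whose pairs of characteristic angles have binary expansions giving the cycles $\{\nu_1, \nu_2\}$. So the combinatorial heart of the matter is: can two \emph{distinct} primitive components of period $p$ have characteristic angle pairs $(\theta_1, \theta_2)$ and $(\theta_1', \theta_2')$ with $b(\theta_1) = b(\theta_1') = \nu_1$ and $b(\theta_2) = b(\theta_2') = \nu_2$? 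Equivalently, can one abstract $p$-cycle $\nu_1$ contain two distinct angles $\theta_1 \ne \theta_1'$ (of period $p$ under doubling) that land together, in their respective primitive components, with angles from the \emph{same} cycle class $\nu_2$?

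First I would pin down the structure of a face of $\Sigma_p$ more carefully. Recall from the proof of \Cref{P:algo1works} that the boundary of a face $F$ corresponding to $[\nu]$ is traced out by the Telephone \Cref{algo-per1-angle}: starting at the positive real axis with active label $\nu$, one reads primitive ray pairs in counterclockwise order around $\partial\mathcal{M}$, recording an edge each time the active label appears and switching the active label to the partner. The key observation is that the face has exactly two edges if and only if, going around, the algorithm sees exactly two relevant ray pairs before closing up — i.e. $\Theta_\nu \cup \Theta_{\overline\nu}$ (the set of angles whose primitive kneading sequences perturb to $\nu$ or $\overline\nu$) has exactly two elements, and moreover these two angles lie in \emph{the same} cycle class. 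But by \Cref{L:knead}, the two edges of such a face would be labeled by primitive components $H, H'$ whose kneading sequences $k(H), k(H')$ perturb to $\{\nu, \overline\nu\}$; and the two characteristic angles of $H$ already give one angle in $\Theta_\nu$ and one in $\Theta_{\overline\nu}$ (since a primitive component's two characteristic angles lie in distinct orbits, hence in distinct — in fact conjugate — cycle classes by the structure of $k_0(H), k_1(H)$). The same holds for $H'$. So a bigon would force $H = H'$, contradicting that $e \ne e'$ are distinct edges. I expect this to close the argument, but the step needs to be executed with care.

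The point requiring the most attention — and where I expect the main obstacle — is the transition from ``two edges of $F$'' to ``$H = H'$.'' One must rule out the possibility that $H \ne H'$ but the two characteristic angles happen to pair up across the two components: e.g. $\theta_1 \in \nu_1$ from $H$ paired with $\eta_1 \in \nu_2$, and $\theta_1' \in \nu_1$ from $H'$ paired with $\eta_1' \in \nu_2$, with $\theta_1 \ne \theta_1'$ and $\eta_1 \ne \eta_1'$. Here I would invoke \Cref{cor:co-landing-implies-primitive} and the combinatorial rigidity of primitive components: two characteristic rays of a primitive component of period $p$ bound a wake active at both angles, and if both $H$ and $H'$ were active at the (co-orbit-free) pair of cycle classes $(\nu_1, \nu_2)$, their wakes would be nested or disjoint by the standard wake structure of $\mathcal{M}$ — and a careful analysis of which angle of $\nu_1$ is the \emph{smaller} characteristic angle (using that $k_0(H), k_1(H)$ differ only in the last bit, so the characteristic angle in $\nu_i = k_j(H)$ is determined by $H$) shows the pair $(\theta_1, \eta_1)$ is uniquely determined by the cycle classes. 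Alternatively, and perhaps more cleanly, I would argue directly from \Cref{algo-per1-angle}: the face $F$ is built by a walk that alternates between a lift of the positive real axis and records each ray pair exactly once per full loop; for $F$ to be a bigon, the walk must return to its start after exactly two edges, which by the monodromy analysis in \Cref{lem:faces_are_cyclepairs} (the half-turn of $\Delta_\theta$) forces the two recorded primitive components to be the \emph{same} component traversed with its two characteristic angles, again contradicting $e \ne e'$. Finally, I would double-check the small-period base cases $p = 2, 3, 4$ by hand against the explicit rational models in \Cref{S:examples}, where $\MC_p(\mathcal{F}_1)$ is a sphere and the decomposition is visibly bigon-free.
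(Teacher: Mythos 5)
Your reduction of the statement to a combinatorial question about primitive components is reasonable, but the step you yourself flag as ``requiring the most attention'' is exactly where the proof is missing, and neither of your two strategies closes it. Strategy A rests on the claim that the pair of cycle classes of the characteristic angles determines the primitive component uniquely; this is unproven, and the analogous uniqueness certainly fails for the kneading-perturbation labels (in period $5$ the components at angles $3/31$ and $28/31$ have the same kneading sequence $1110*$, hence the same pair of adjacent faces). Your parenthetical justification is also incorrect: the two characteristic angles of a primitive component lie in distinct orbits but \emph{not} in conjugate cycle classes in general (the component $(3/31,4/31)$ has characteristic cycles $B=00011$ and $A=00001$, and $\overline{B}\ne A$); you are conflating the vertex labels $b(\theta_1),b(\theta_2)$ with the face labels $k_0(H),k_1(H)$ from \Cref{L:knead}, which are different data. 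Strategy B (``the monodromy analysis forces the two recorded components to be the same'') is an assertion, not an argument: the half-turn of $\Delta_\theta$ explains why a full loop flips the itinerary, but it does not by itself prevent a face from closing up after two distinct edges.

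The paper's proof uses an input that is entirely absent from your proposal: the real structure of $\mathcal{M}$. It first shows that complex conjugation $(c,\zeta)\mapsto(\overline{c},\overline{\zeta})$ preserves each face, so a bigon's vertex pair and edge pair are each either swapped or fixed by conjugation. It then combines Douady's Lemma (each cycle of angles contains at most one angle whose parameter ray lands on $\mathbb{R}$) with a strengthening (for any primitive component, at least one of its two characteristic orbits contains an angle landing at a \emph{primitive real} component) to derive a contradiction in both cases: in the swapped case the uniqueness of the real-landing angle forces the two edges to coincide with a single real primitive component, and in the fixed case a self-conjugate cycle cannot contain any angle landing on a real primitive component even though the self-conjugate edges must be real. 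Some such appeal to real symmetry appears unavoidable: the paper itself points out that the analogous degeneracies \emph{do} occur over $\Per_2(0)$ precisely because real polynomials form obstructed matings and contribute no edges there. A purely combinatorial walk-counting argument of the kind you sketch would not see this distinction, which is strong evidence that it cannot succeed as stated.
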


To prove this property, we resort to the following classical lemma of Douady \cite{Douady_exploringthe}.
\begin{lemma}[Douady's Lemma]
    Given an angle $\theta \in \mathbb{Q}/\mathbb{Z}$, there exists a unique integer $n\ge 0$ such that the parameter ray at angle $2^{n}\theta$ to the Mandelbrot set lands at a point in $\R$.
    This angle has the minimal distance from the angle $\frac{1}{2}$ out of all angles in the orbit of $\theta$ under angle-doubling.
\end{lemma}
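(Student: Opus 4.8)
The plan is to reformulate the statement dynamically and then read it off from two ingredients: the complex–conjugation symmetry of $M$, and the elementary fact that for a real quadratic polynomial the critical value is the \emph{minimum} of the critical orbit. I will write $\gamma_M(\theta)$ for the landing point of the parameter ray $R_M(\theta)$ (it exists for rational $\theta$, being the root of a hyperbolic component or a Misiurewicz parameter). First I would recall two classical facts (Douady--Hubbard; cf.\ \cite{Douady_exploringthe}): (i) $\Phi_M$ intertwines conjugation on $\widehat{\C}\setminus\overline{\D}$ with conjugation on $\widehat{\C}\setminus M$, so $R_M(1-\theta)$ is the reflection of $R_M(\theta)$ across $\R$; hence for $\theta\notin\{0,1/2\}$ the ray $R_M(\theta)$ meets $\R$ exactly when $\gamma_M(\theta)\in\R$, in which case $R_M(1-\theta)$ lands at the same point, while $R_M(0)$ and $R_M(1/2)$ are the real segments $(1/4,\infty)$ and $(-\infty,-2)$; and (ii) if $\gamma_M(\theta)=c$ with $\theta$ rational, then in the dynamical plane of $f_c$ the ray $R_c(\theta)$ lands at the critical value $c$ (at the root of the Fatou component of $c$ when $\theta$ is periodic), and the forward doubling orbit of $\theta$ corresponds to the forward $f_c$-orbit of $c$. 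Consequently, if $\gamma_M(2^k\theta)$ is real for some $k$, then $f_{\gamma_M(2^k\theta)}$ is a real polynomial with real critical orbit.

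Next I would set up the angle-to-real-number dictionary. Folding by $\phi\mapsto\hat{\phi}:=\min(\phi,1-\phi)\in[0,1/2]$, the distance $|\phi-1/2|=1/2-\hat{\phi}$ is strictly decreasing in $\hat{\phi}$; and the Chebyshev semiconjugacy $h(t)=2\cos(2\pi t)$ maps $[0,1/2]$ homeomorphically and order-reversingly onto $[-2,2]=J_{-2}$, with $h(t)=h(1-t)$ and $f_{-2}\circ h=h\circ(\text{doubling})$. Thus, for the $f_{-2}$-orbit of a point of $J_{-2}$, lying further left (closer to $-2$) corresponds precisely to the associated angle being closer to $1/2$. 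The crucial elementary point is the analogue for every real $f_c$, $c\in[-2,1/4]$: since $f_c(x)=x^2+c\ge c$ for all real $x$ with equality at $x=0$, the critical value $c$ is the minimum of the critical orbit. Transporting this along the real arc of $M$ by monotonicity of the kneading invariant in the real quadratic family (Milnor--Thurston; see also \cite{Douady_exploringthe}), one concludes that the external angle attached to the critical value of $f_c$ is exactly the angle closest to $1/2$ among the angles of the critical orbit.

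Assembling: given rational $\theta$ with doubling orbit $\mathcal{O}$, let $\theta_*\in\mathcal{O}$ realize $\min_{\phi\in\mathcal{O}}|\phi-1/2|$. For existence one takes the real parameter $c$ along the real arc whose critical value carries the minimal–distance angle of $\mathcal{O}$ (equal to $\theta_*$ by the previous paragraph), and identifies $\gamma_M(\theta_*)=c$ via fact (ii) and uniqueness of landing of rational parameter rays; hence $\gamma_M(\theta_*)\in\R$. Conversely, if $\gamma_M(2^k\theta)$ is real, then $f_{\gamma_M(2^k\theta)}$ is real with real critical orbit, its critical value carries the angle $2^k\theta$, and being the orbit minimum forces $2^k\theta$ to be the orbit angle closest to $1/2$, i.e.\ $|2^k\theta-1/2|$ is minimal. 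This gives the "minimal distance" characterization and therefore existence. For uniqueness: the minimizer of $|\cdot-1/2|$ on $\mathcal{O}$ is unique unless $\mathcal{O}$ contains a pair $\{\gamma,1-\gamma\}$ — which happens exactly when $\gamma=m/(2^j+1)$, i.e.\ on period-doubling–type orbits such as $\{1/3,2/3\}$ — and in that case both members are equidistant from $1/2$ and, by fact (i), land at the same real parameter; selecting either one yields the integer $n$, and the ambiguity affects neither $|2^n\theta-1/2|$ nor $\gamma_M(2^n\theta)$.

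The main obstacle is the middle step: showing that the angle attached to the critical value of a \emph{general} real $f_c$ is genuinely the orbit element closest to $1/2$. For $c=-2$ this is the transparent Chebyshev computation via $h$; for a general real $c$ one must transport the statement along the real arc of $M$, and the clean way to do this is the monotonicity of the real quadratic family's combinatorics (Milnor--Thurston kneading theory), which is exactly the order-reversing identification of real parameters with a Cantor set of angles that Douady packages as this lemma. I would carry out this step by the kneading-theory route; the symmetry reduction, the landing facts for rational rays, and the bookkeeping for the conjugate-pair exception are then routine.
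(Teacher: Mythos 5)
The paper offers no proof of this lemma --- it is quoted as a classical result of Douady with a citation --- so there is nothing internal to compare against; your proposal must stand on its own. Its architecture (conjugation symmetry of $\M$, the correspondence between the parameter argument of $c$ and dynamical rays landing on the forward orbit of the critical value, minimality of the critical value in the real critical orbit, and an order-reversing dictionary between real Julia points and folded angles) is indeed the standard skeleton, and the Chebyshev computation at $c=-2$ is the right anchor. However, two of the load-bearing steps are not actually in place. First, the step you flag as the main obstacle --- that for a \emph{fixed} real $c$ the map sending a real landing point to its folded angle $\hat\phi=\min(\phi,1-\phi)$ is order-reversing --- is a statement about a single dynamical plane, and Milnor--Thurston monotonicity is a statement comparing \emph{different} real parameters; it is not the right instrument, and "transporting along the real arc" does not obviously produce the claim. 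The correct (and elementary) argument is separation: for $\phi\notin\{0,1/2\}$ the conjugate rays $R_c(\phi)$ and $R_c(1-\phi)$ are disjoint from $\R$, so their union with their common real landing point $y$ is a Jordan curve meeting $\R\cup\{\infty\}$ only at $y$ and $\infty$; the component containing $(-\infty,y)$ is the wake with angles $(\phi,1-\phi)$, which forces any real landing point $x<y$ to have $\hat\phi_x>\hat\phi_y$. With this, your "main obstacle" disappears and no kneading theory is needed for this direction.

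Second, and more seriously, the existence direction is circular as written: "one takes the real parameter $c$ whose critical value carries the minimal-distance angle $\theta_*$" presupposes that such a real parameter exists, and that is precisely the nontrivial content of the lemma. What your argument actually proves is the converse inclusion (if $\gamma_M(2^k\theta)\in\R$ then $2^k\theta$ is the orbit minimizer); you still need that the minimizer $\theta_*$ \emph{is} the external argument of a real boundary point of $\M$, i.e.\ that every admissible real combinatorics is realized in the real quadratic family. This is where Milnor--Thurston (or an intermediate-value/realization argument for real kneading sequences, which is how Douady himself packages it) genuinely enters --- you have placed the heavy tool on the step that does not need it and omitted it from the step that does. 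A smaller point in the same vein: for periodic $\theta$ the dynamical rays $R_c(2^k\theta)$ land at the roots of the cycle of Fatou components, not at the points $f_c^{\circ k}(c)$ themselves, so "the critical value is the orbit minimum" must be upgraded to the corresponding statement for the orbit of roots (true, via the disjointness and linear ordering of the real traces of these components, but it needs saying). Finally, note that uniqueness of $n$ genuinely fails in the degenerate cases you mention (e.g.\ $\theta=1/3$, where both $1/3$ and $2/3$ land at $-3/4$), so your caveat there is apt rather than optional.
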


We first show we can strengthen Douady's Lemma:

\begin{lemma}[Strong Douady's Lemma]
    Given parameter rays at angles $\theta,\theta' \in \mathbb{Q}/\mathbb{Z}$ that land at the root of a primitive hyperbolic component, there exists an angle in the orbit of one of $\theta,\theta'$ that lands at a primitive real hyperbolic component.
\end{lemma}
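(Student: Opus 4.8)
The plan is to leverage the combinatorial self-similarity of the Mandelbrot set under tuning, together with the ordinary Douady's Lemma. Let $H$ be the primitive hyperbolic component of period $p$ with characteristic angles $\theta < \theta'$. Douady's Lemma applied to $\theta$ produces an iterate $2^n\theta$ whose parameter ray lands in $\R$; the point is to show that the component at whose root this ray lands can be taken to be \emph{primitive} and \emph{real}, after possibly passing to $\theta'$ instead. First I would recall that the two characteristic rays $\theta, \theta'$ of $H$ lie in distinct orbits under doubling (this is the characterization of primitivity), and that together they cut out the wake $\mathcal{W}(\theta,\theta')$; the root of $H$ is the landing point of both. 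I would then examine the orbit $\{2^k\theta, 2^k\theta'\}_{k \geq 0}$ as a single set of $2p$ angles on the circle (here using that the two orbits are disjoint, each of size $p$), and look at which of these $2p$ angles is closest to $1/2$.

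The key step is to identify the pair of angles in the combined orbit that is closest to $1/2$: by the strengthened form of Douady's observation, there is a well-defined ``real'' position in the orbit, and I claim the two angles flanking $1/2$ most closely, say $2^{n}\theta$ and $2^{n}\theta'$ for a common $n$ (or a pair coming from a single shift), are the characteristic angles of a component whose root ray lands on $\R$. Indeed, $2^n$ applied to the ray pair of $H$ gives the ray pair landing at $f^{\circ n}$ of the root, which is again a characteristic ray pair of some period-$p$ component (the periodicity of angles is preserved under doubling, and the ``rotation number zero'' landing pattern is preserved since the combinatorial rotation number of a primitive component is trivial). Choosing $n$ so that this image pair straddles $1/2$ forces the landing point into $\R^-$ region of $\M$, hence onto the real axis. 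The fact that the resulting component is again primitive follows because primitivity is encoded by the two angles lying in distinct doubling orbits, and this property is invariant under applying the doubling map to both angles simultaneously.

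The main obstacle I anticipate is making precise the claim that ``the ray pair straddling $1/2$ lands on the real axis,'' i.e.\ controlling the combinatorics carefully enough to know that after the shift by $2^n$ the component sits symmetrically about $\R$ rather than merely near it. Here I would use the real structure: complex conjugation acts on external angles by $\theta \mapsto -\theta = 1-\theta$, and a component with characteristic angles $\alpha, \beta$ has its root on $\R$ precisely when $\{\alpha, \beta\}$ is invariant under $\theta \mapsto 1-\theta$, equivalently when $\alpha + \beta = 1$. So the real task is: show that among the $2p$ angles $\{2^k\theta, 2^k\theta'\}$, there is an index $n$ with $2^n\theta + 2^n\theta' = 1$. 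Since $\theta + \theta'$ is a dyadic-rational-denominator angle of the form $\frac{m}{2^p-1}$ paired combinatorially to $H$, and doubling acts transitively enough on the relevant orbit, one tracks $2^k(\theta+\theta') \bmod 1$ and shows it hits $1$ (equivalently $0$) — wait, more carefully, one wants $2^k\theta$ and $2^k\theta'$ to become conjugate, which happens exactly when the ``midpoint'' angle $\frac{2^k\theta + 2^k\theta'}{2}$ (chosen in the short arc) equals $\frac12$. This reduces to a statement about the orbit of the midpoint angle of $H$ under doubling reaching $\frac12$, which is governed by (the proof of) Douady's Lemma applied to that midpoint. Once this reduction is set up, the remaining verification is essentially bookkeeping with dyadic arithmetic, which I would not grind through here.
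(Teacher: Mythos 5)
Your proposal rests on two claims that are both false, so the argument does not go through. First, you assert that applying $2^n$ to the characteristic ray pair $(\theta,\theta')$ of $H$ yields ``again a characteristic ray pair of some period-$p$ component.'' This is not so: while $2^n\theta$ and $2^n\theta'$ do land together in the \emph{dynamical} plane of parameters inside the wake of $H$, in \emph{parameter} space each of these angles is a characteristic angle of its own component, and the two need not pair with each other. Second, and fatally, your reduction to finding $n$ with $2^n\theta+2^n\theta'=1$ fails on concrete examples. Take the primitive period-$5$ component with characteristic angles $\theta=\frac{3}{31}$, $\theta'=\frac{4}{31}$: the successive sums $2^k(\theta+\theta')$ along the orbits are $\frac{7}{31},\frac{14}{31},\frac{28}{31},\frac{25}{31},\frac{19}{31}$, none of which equals $1$. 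Yet the lemma is true here: $\frac{17}{31}$, in the orbit of $\theta$, co-lands with $\frac{14}{31}$ at a real primitive component --- but $\frac{14}{31}$ lies in neither the orbit of $\theta$ nor that of $\theta'$. The real component produced by the lemma is simply not required to draw both its characteristic angles from the two given orbits, so no amount of bookkeeping with the midpoint angle can rescue this strategy.

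The paper's proof takes a different route that you should compare with. One applies the ordinary Douady Lemma to $\theta$ and to $\theta'$ \emph{separately}, obtaining real-landing angles $\phi=2^n\theta$ and $\phi'=2^m\theta'$ with possibly different shifts $n\neq m$. The only way the conclusion could fail is if both $\phi$ and $\phi'$ land at \emph{satellite} real components; a satellite real landing forces $2^m\theta=1-\theta$ and $2^m\theta'=1-\theta'$ for a common half-period $m$, hence $\lvert 2^m\theta-2^m\theta'\rvert=\lvert\theta-\theta'\rvert$. This contradicts the strict minimality of the characteristic gap $\lvert\theta-\theta'\rvert$ among all pairs $\lvert 2^i\theta-2^i\theta'\rvert$ in the orbit portrait, unless $2^m$ swaps $\theta$ and $\theta'$, which primitivity (the two angles lie in distinct doubling orbits) rules out. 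The ingredient you are missing is precisely this characteristic-gap minimality; the symmetry $\theta\mapsto 1-\theta$ that you correctly identify as encoding realness enters only through the satellite case analysis, not through a simultaneous shift of the pair.
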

\begin{proof}
    If $\theta$ and $\theta'$ land on the real line we are done.
    So we assume to the contrary.

    By Douady's Lemma, there exist angles $\phi = 2^n\theta$ and $\phi' = 2^m\theta'$ that land at points in $\mathbb{R}$.
    Suppose $\phi$ lands on a satellite  component.
    Then there exists a minimal integer $k>0$ such that \[2^k\phi = 1-\phi. \]
    Since $1-\theta$ and $1-\phi$ are in the same cyclic orbit under angle-doubling, there exists a minimal integer $\ell>0$ such that
    \begin{align*}
        2^\ell \theta & = 1-\theta.
    \end{align*}
    This also implies that the period of $\theta$ under angle-doubling divides $2\ell$,  is of the form $2m$ for some integer $m>0$, and 
    \begin{align*}
    2^{m}\theta & = 1-\theta
    \end{align*}
    If we are working in the setting of odd period, this forms a contradiction, and we are done.
    Now assume otherwise.
    If $\phi'$ also lands on a satellite real component, then $1-\theta'$ is in the same cycle as $\theta'$.
    But since $\theta'$ and $\theta$ have the same period $2m$, it must be true that
    \begin{align*}
        2^m \theta' = 1-\theta'.
    \end{align*}
    From this, we get
    \begin{align*}
        |\theta- \theta'| & = |2^m \theta - 2^m \theta'| .
    \end{align*}
    However, $\theta,\theta'$ are the characteristic angles in an orbit portrait, and the following holds:
    \begin{align*}
        |\theta - \theta'| & < |2^i\theta - 2^i\theta'| & \forall i \text{ such that }\{2^i\theta, 2^i\theta'\} \ne \{\theta, \theta'\}.
    \end{align*}
    Thus we have a contradiction unless $2^m \theta = \theta'$ and $2^m \theta' = \theta$.
    Since $\theta$ and $\theta'$ land on a primitive component,  this is not possible.
\end{proof}

Note that we have also proven that $\theta$ has an angle landing on a primitive real component in its orbit under angle-doubling if and only if $1-\theta$ is not in this orbit.

\begin{proof}[Proof of \Cref{P:no-bigons}]
    Suppose there exists a bigon with vertices $[A]$ and $[B]$, and edges $e$ and $f$.
    Since conjugation preserves faces, exactly one of the following is true.
    \begin{enumerate}
        \item $\Bar{A} = B$ and $\Bar{e} = f$, or
        \item $\Bar{A} = A$, $\Bar{B} = B$,  $\Bar{e} = e$ and $\Bar{f} = f$.
    \end{enumerate}
    In case (1), the cycles $A$ and $B$ both contain angles $\theta, \phi$ respectively that land on real primitive components, and by a uniqueness argument, we have
    \begin{align*}
        1-\theta & = \phi
    \end{align*}
    (otherwise, $\theta$ and $1-\phi$ are distinct angles in the cycle $A$ that land on the real line, which contradicts Douady's lemma).

    But $e$ and $f$ correspond to the components on which $\theta$ and $\phi$ land --- and since $1-\theta = \phi$, we have $e=f$, which is a contradiction.
    \vspace{0.5em}

    In case (2), we note that $e$, $f$ are primitive real components.
    But $\Bar{A} = A$ implies that there exists no angle in $A$ that lands on a primitive real component, a contradiction.
\end{proof}

Motivated by numerical experiments, we put forward the following conjectures regarding the sizes of the smallest and largest faces in our cell decomposition of $\MC_p(\mathcal{F}_1)$.

\begin{conjecture}
    For $p\ge 5$, the largest face of $\MC_p(\mathcal{F}_1)$ has $\Phi(p) + 2$ edges, where
    \[
        \Phi(p) = \sum_{0\le k<p} \vp(k),
    \]
    and where $\vp$ denotes Euler's totient function. This face is unique unless $p=7$.
\end{conjecture}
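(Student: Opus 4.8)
The plan is to convert the statement into a combinatorial extremal problem about abstract cycles, exhibit one cycle duo whose face attains $\Phi(p)+2$ sides, and then prove that no face can do better.

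\textbf{Step 1: a formula for the number of sides.} By \Cref{P:algo1works} (equivalently, by \Cref{L:knead} together with \Cref{algo-per1-knead}), the number of sides of the face of $\Sigma_p$ labelled by a cycle duo $[\nu]$ is
\[
    N([\nu]) \;=\; \sum_{H \in P_p} \#\bigl\{\, i\in\{0,1\} \;:\; [k_i(H)]=[\nu] \,\bigr\},
\]
where distinct primitive components sharing a kneading sequence contribute equal terms. Equivalently, $N([\nu])$ is the length of the $(\rho\circ\tau)$-orbit through $[\nu]$, where $\tau$ is the involution on the set of $2\,\Prim_1(p)$ ``slots'' $(H,\mu)$ (one for each primitive period-$p$ component $H$ and each of its two perturbation classes $\mu\in\{[k_0(H)],[k_1(H)]\}$) that swaps the two slots of each $H$, and $\rho$ advances, at fixed $\mu$, to the next slot carrying that label in the counterclockwise order around $\partial\M$. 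The first task is to make $N([\nu])$ explicit: since the first digit of every kneading sequence is $1$, a primitive component contributes through its $0$-perturbation to $[\nu]$ exactly when some cyclic rotation of $\nu$ or $\overline{\nu}$ has the form $1\sigma_2\cdots\sigma_{p-1}0$ and $1\sigma_2\cdots\sigma_{p-1}*$ is a primitive kneading sequence, and symmetrically for the $1$-perturbation; one then needs the multiplicity with which each such kneading word is realised, which is extracted from orbit portraits of characteristic angles (\cite{Milnor-External}, cf.\ \Cref{per1-edge-branching}).

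\textbf{Step 2: the extremal face.} Guided by \Cref{fig: MC_6_Per_1} and the tables, the candidate maximiser is $[\nu_p]$ with $\nu_p=b\!\left(\tfrac{3}{2^{p}-1}\right)=\underbrace{0\cdots0}_{p-2}11$, whose conjugate is $\overline{\nu_p}\sim 0\,0\,\underbrace{1\cdots1}_{p-2}$. I would compute $N([\nu_p])$ directly from Step 1. The components contributing a side to this face are governed by short block words of the form $1^{a}0^{b}$, and the enumeration should reduce --- via the correspondence between primitive real windows and Douady's Lemma / its strengthening proved above --- to counting the Farey fractions of order $p-1$; since $|F_{p-1}|=1+\sum_{1\le k\le p-1}\varphi(k)=1+\Phi(p)$, this yields $N([\nu_p])=\Phi(p)+2$, the extra $2$ accounting for the single surplus revolution forced by the ramification of the face over $\infty$ (\Cref{lem:faces_are_cyclepairs}).

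\textbf{Step 3: the upper bound and uniqueness --- the main obstacle.} It remains to prove $N([\nu])\le\Phi(p)+2$ for all $\nu$, with equality only at $[\nu_p]$ once $p\ge 8$. This is the hard part. The approach I expect to work is to bound the total contribution of primitive components to an arbitrary face by the same Farey count: each primitive ray pair has, in the angle-doubling orbit of either of its characteristic angles, a distinguished representative closest to $\tfrac12$ landing on the real axis (Douady's Lemma, in its strengthened form above), which ties primitive components to real windows and turns the estimate into kneading theory on $[-2,\tfrac14]$, where the extremality of the $1^{a}0^{b}$ pattern can be isolated; the $(\rho\circ\tau)$-orbit picture of Step 1 should keep the book-keeping honest. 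For uniqueness one tracks when a second cycle duo meets the bound, which small-period arithmetic should confine to $p=7$ --- although the tables already exhibit two maximal faces for $p=5$, so the uniqueness clause presumably needs $p\notin\{5,7\}$, and in any case the cases $p=5,6,7$ are checked directly against the explicit decompositions so that the general argument need only treat $p\ge 8$, where the asymptotic estimates are comfortable.
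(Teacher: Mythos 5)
The first thing to note is that this statement is one of the paper's \emph{conjectures}: the authors introduce it with the words ``motivated by numerical experiments'' and offer no proof, so there is no argument of theirs to compare yours against. Judged on its own terms, what you have written is a programme, not a proof. Step 1 (expressing the side count $N([\nu])$ of a face as the length of a $(\rho\circ\tau)$-orbit on the $2\Prim_1(p)$ slots) is a correct reformulation of the telephone algorithm. But Step 2 only \emph{asserts} that $N([\nu_p])=\Phi(p)+2$ for your candidate $\nu_p=0^{p-2}11$: the claimed reduction to the Farey set $F_{p-1}$ is introduced with ``should reduce to,'' no bijection between the primitive components contributing a side to $[\nu_p]$ and Farey fractions of order $p-1$ is constructed, and the provenance of the ``$+2$'' is left as an interpretive remark about ramification over $\infty$ rather than a computation. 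More seriously, Step 3 --- the upper bound $N([\nu])\le\Phi(p)+2$ for \emph{every} cycle duo together with the uniqueness of the maximiser --- carries the entire content of the conjecture, and you explicitly leave it open: you describe where you ``expect'' an argument to come from (Douady's Lemma and real kneading theory on $[-2,\tfrac14]$), but no inequality is established and no mechanism is identified that would force an arbitrary cycle duo to lose sides relative to $0^{p-2}11$. As it stands, the gap in your proposal coincides exactly with the open problem.

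One genuinely useful by-product of your attempt: your observation that the paper's own period-$5$ example already exhibits \emph{two} faces of maximal size (the faces $[B]$ and $[C]$ each have $8$ sides, and $\Phi(5)+2=8$) is a fair challenge to the uniqueness clause as literally stated, and suggests that the conjecture should either also except $p=5$ or be read up to some further identification. That is a worthwhile correction to the \emph{statement}, but it is not a substitute for a proof of it.
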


\begin{conjecture}
    For $p\ge 9$ odd (and for $p=5$), the largest face of $\MC_p(\mathcal{F}_2)$ has edge count $\Phi(p)$.
    For $p\ge 6$ even, the largest face of $\MC_p(\mathcal{F}_2)$ has edge count
    \[
        \Phi(p) + 1 - \frac12 \vp\del{\frac{p}2}.
    \]
    In all cases except for $p= 8$ and $p=10$, there is a unique largest face of $\MC_p(\mathcal{F}_2)$ up to complex conjugation.
\end{conjecture}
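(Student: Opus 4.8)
The plan is to reduce the statement to a finite combinatorial count of primitive hyperbolic components adjacent to a single face, and then to evaluate that count by a Farey/totient analysis, in parallel with the $\Per_1(0)$ companion conjecture. First I would record what governs the size of a face. By \Cref{algo-per2} (together with \Cref{lem:bub-ray-props} and the $\Per_2(0)$-analogue of \Cref{L:knead}), the number $E([\nu])$ of sides of the face of $\Sigma_{p,2}$ labelled by a cycle trio $[\nu]\in[C_{p,2}]$ equals the number of primitive hyperbolic components of period $p$ in $\M_2$ adjacent to that face, counted with multiplicity (the multiplicity absorbing the degenerate-face case, in which both sides of an edge lie on $[\nu]$). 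Invoking the mating description $\Per_2(0)\simeq\M^*\sqcup K_B^*$ of \Cref{thm:per2-structure} and \Cref{prop:parabubble}, every primitive component of period $p$ in $\M_2$ is $f_{-1}\sqcup f_c$ for a unique primitive component of period $p$ of $\M$ outside the $1/2$-limb, with characteristic angles negated, and its pair of bubble-ray labels is the image, under the twist coding of \Cref{lem:twist} and \Cref{lem:semi-conj}, of the kneading data of $f_c$. This converts the theorem into a statement about the Mandelbrot set: maximise, over faces, the number of primitive components outside the $1/2$-limb whose (twisted) kneading class equals a prescribed trio.

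Next I would identify the maximising trio $[\nu_{\max}]$. The analogy with $\Per_1(0)$ --- where the largest face is conjecturally the one whose cycle corresponds to the angle $3/(2^p-1)$, i.e.\ the kneading class reached first when one starts at parameter angle $0$ and moves along the positive real direction --- suggests that $[\nu_{\max}]$ is the twist-image of precisely that cycle. To pin this down I would strengthen Douady's Lemma in the spirit of the proof of the No Bigon Criterion (\Cref{P:no-bigons}): Douady's Lemma assigns to each rational angle a unique power landing on $\R$, and its strong form locates a real primitive component in the orbit of a co-landing pair; I would refine this to determine \emph{which} component of the principal vein borders a given face, thereby matching each trio to a window of primitive components along that vein, and showing the window is largest for $[\nu_{\max}]$.

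The main obstacle is the exact evaluation $E([\nu_{\max}])=\Phi(p)$ for odd $p\ge 9$ (and for $p=5$), and $E([\nu_{\max}])=\Phi(p)+1-\tfrac12\vp(p/2)$ for even $p\ge 6$. Since $\Phi(p)=\sum_{0\le k<p}\vp(k)$ is the number of Farey fractions of order $p-1$, the crux should be a bijection between the primitive components on $\partial[\nu_{\max}]$ and the fractions $a/b$ in lowest terms with $1\le b<p$, under which $b$ is the period of the satellite (rotation) datum of the orbit portrait attaching $H$ to the principal doubling cascade and $a$ is its rotation number; summing $\vp(b)$ over $b<p$ then gives $\Phi(p)$. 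For even $p=2k$ the correction $1-\tfrac12\vp(p/2)$ should come out of \Cref{lem:conj-order-divides-cycle-order}: the denominator-$k$ rotation data interact with the order-$2$ symmetry $\sigma_1(z)=z^{-1}$ (hence, after twisting, with the order-$3$ symmetry governing $\Per_2(0)$ of \Cref{lem:symmetry-of-new-param}), so that half of the denominator-$(p/2)$ fractions either fail admissibility or get identified with their conjugates, while a single self-symmetric window survives. The discrepancy between the $\Per_1(0)$ constant ($+2$) and the odd-$p$ value in $\Per_2(0)$ ($0$) is exactly the contribution of the real primitive components, all of which lie in the omitted $1/2$-limb. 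Making the bijection precise near the boundary denominators $b=p-1$ and $b=p-2$, where admissibility and the mating restriction are most delicate, is where I expect the argument to become genuinely hard --- presumably the reason the statement is only conjectural.

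Finally, for uniqueness I would prove $E([\nu])<E([\nu_{\max}])$ for every other trio by exhibiting an injection from the window attached to $[\nu]$ into the window attached to $[\nu_{\max}]$ that omits at least one element, and then settle the exceptional periods $p=8,10$ (together with $p=7$ for the $\Per_1(0)$ statement) by direct enumeration, where a second trio ties the maximum.
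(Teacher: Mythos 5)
This statement is presented in the paper as a \emph{conjecture}, supported only by numerical experiments; the paper contains no proof of it, so there is nothing to compare your argument against. What matters, then, is whether your proposal actually constitutes a proof, and it does not: it is a strategy outline in which every load-bearing step is deferred. Concretely: (i) the identification of the maximising trio $[\nu_{\max}]$ is asserted purely by analogy with the $\Per_1(0)$ case --- which is itself only a conjecture in the paper --- and your proposed refinement of Douady's Lemma to ``determine which component of the principal vein borders a given face'' is named but never formulated, let alone proved; (ii) the central count $E([\nu_{\max}])=\Phi(p)$ rests on a bijection between primitive components adjacent to the face and Farey fractions of order $p-1$ that you describe only by what its source and target should be, with no construction and no verification that the satellite/rotation data you invoke actually parameterize the boundary edges of a single face; (iii) the even-$p$ correction $1-\tfrac12\vp(p/2)$ is attributed to an interaction between \Cref{lem:conj-order-divides-cycle-order} and the order-$3$ symmetry of \Cref{lem:symmetry-of-new-param}, but no computation is given showing that exactly that many denominator-$(p/2)$ terms drop out; and (iv) uniqueness is reduced to ``exhibit an injection that omits at least one element,'' which is a restatement of the claim rather than an argument. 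You candidly flag the boundary-denominator analysis as the hard part and as the likely reason the statement is conjectural --- that is an accurate self-assessment, and it means the proposal should be read as a research plan, not a proof.

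One further caution on the reduction itself: you treat the edge count of a face as the number of primitive components of $\M_2$ adjacent to it ``counted with multiplicity,'' and you pass freely between that count and a count of primitive components of $\M$ outside the $1/2$-limb via the mating. This is plausible given \Cref{prop:parabubble} and \Cref{lem:bub-ray-props}, but the degenerate faces the paper explicitly warns about (a vertex witnessed by a single primitive component's bubble ray, as in $\MC_5(\mathcal{F}_2)$) mean that the correspondence between edges of a combinatorial face and primitive components is not a clean bijection; any eventual proof would have to control exactly how often both labels of a component fall in the same trio, and your multiplicity convention sweeps this into a single clause without analysis.
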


\begin{conjecture}
    For $m=1,2$, the size $\kappa_m(p)$ of the smallest face in $\MC_p(\Per_m(0))$ tends to infinity as $p\ra\infty$. Moreover, if $\kappa^+_m(p)\ge \kappa_m(p)$ denotes the size of the smallest \emph{irreflexive} face, then the limit
    \[
        \lim_{p\ra\infty} \frac{\kappa_1^+(p)}{\kappa_2^+(p)}
    \]
    exists and is equal to 1.
\end{conjecture}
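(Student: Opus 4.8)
The plan is to recast face size as a combinatorial count and then bound it below uniformly. By \Cref{L:knead} combined with \Cref{algo-per1-knead}, the face of a cycle duo $[\nu]$ in $\Sigma_{p,1}$ is bounded by exactly one edge for each primitive period-$p$ component $H$ whose kneading sequence $K(H)=\sigma_1\sigma_2\cdots\sigma_{p-1}*$ (with $\sigma_1=1$) has a perturbation $\sigma_1\cdots\sigma_{p-1}b$, $b\in\{0,1\}$, equal as a cyclic word to $\nu$ or to $\overline\nu$. Writing $\mathrm{tr}_j(w):=w_{j+1}\cdots w_{j+p-1}*$ for the truncation of a period-$p$ word $w$ after position $j$, this says $H$ borders $[\nu]$ exactly when $K(H)=\mathrm{tr}_j(\nu)$ or $K(H)=\mathrm{tr}_j(\overline\nu)$ for some $j$, so the number $|[\nu]|$ of edges of the face is
\[
    |[\nu]| \;=\; \sum_K m(K),
\]
where $K$ ranges over the admissible primitive $*$-periodic sequences arising this way and $m(K)\in\{1,2\}$ counts the primitive components (one real, or a complex-conjugate pair) realizing $K$. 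The same description holds over $\Per_2(0)$ via \Cref{lem:bub-ray-props}, with binary kneading sequences replaced by ternary itineraries under the partition $\mathcal J$ of \Cref{lem:semi-conj}, with $m(K)\in\{1,3\}$, and with the components realizing $K$ corresponding, under mating with the basilica, to primitive period-$p$ polynomials lying outside the $1/2$-limb of $\M$.

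\smallskip
\emph{Divergence of $\kappa_m$.} The number of candidate truncations is exactly $\#\{\text{ones of }\nu\}+\#\{\text{ones of }\overline\nu\}=p$, up to $o(p)$ repetitions among the windows. Only two phenomena stop a candidate from contributing. First, $\mathrm{tr}_j(\nu)$ may be non-primitive, which forces flipping the $j$-th symbol of $\nu$ to drop the period below $p$; this is possible for only $o(p)$ many $j$ --- for $p$ prime, at most one, and only for the two extremal duos. Second, $\mathrm{tr}_j(\nu)$ may be inadmissible, and bounding this uniformly is the crux. Invoking the admissibility criterion for $*$-periodic kneading sequences (Bruin--Schleicher; equivalently the monodromy description underlying \Cref{algo-per1-knead}), I would argue that an inadmissible truncation $\mathrm{tr}_j(\nu)$ forces an obstructing sub-block of $\nu$ of rigid shape anchored at position $j$, and that such blocks cannot all be present at once, giving $\#\{\text{inadmissible truncations}\}\le(1-c)p$ for a fixed $c>0$ (and, we expect, $o(p)$). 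Then every face of $\Sigma_{p,m}$ has at least $cp$ edges and $\kappa_m(p)\to\infty$; for $\Per_2(0)$ one checks in addition that passing to $\M_2$ discards at most a $(1-c')$-fraction of the surviving truncations, again from the structure of $1/2$-limb kneading data. The real obstacle is \emph{uniformity in $\nu$}: excluding a maliciously chosen cycle most of whose windows are inadmissible.

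\smallskip
\emph{The limit $\kappa_1^+/\kappa_2^+\to 1$.} When $\gcd(p,6)=1$ every face is ramified, so $\kappa_m^+(p)=\kappa_m(p)$; in general $\kappa_m^+(p)$ is the minimum over ramified faces, for which the $\approx p$ candidate truncations of $\nu$ and of $\overline\nu$ are genuinely distinct (reflexivity, which would roughly halve the count, being excluded). The twist conjugacy $\varphi$ of \Cref{lem:twist} identifies $C_p$ with $C_{p,2}$ and, via \Cref{lem:bub-ray-props}, identifies the primitive period-$p$ components of $\M_2$ with those of $\M$ outside the $1/2$-limb while matching kneading data; hence the face-size counts for $\Sigma_{p,2}$ are governed by those for $\Sigma_{p,1}$ after discarding the $1/2$-limb components. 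I would then show that a minimal ramified face of $\Sigma_{p,1}$, and symmetrically of $\Sigma_{p,2}$, can be taken with only $o(\kappa_m^+(p))$ of its edges coming from the $1/2$-limb --- heuristically because a near-minimal face corresponds to a cycle whose windows are ``extreme'' in a way incompatible with the long repeated blocks characteristic of deep $1/2$-limb kneading data --- so that $|\kappa_1^+(p)-\kappa_2^+(p)|=o(\min(\kappa_1^+,\kappa_2^+))$ and the ratio tends to $1$. I expect this comparison of the extremal configurations to be the hardest step; it will probably require first pinning down the precise (conjecturally linear) asymptotics of $\kappa_m(p)$, after which the $1/2$-limb content of the extremal faces can be estimated directly.
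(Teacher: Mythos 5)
First, a point of order: the paper does not prove this statement. It appears in \Cref{sec:props_cell_decomposition} as a conjecture, explicitly ``motivated by numerical experiments,'' and the only rigorously established fact in its vicinity is the far weaker \Cref{P:no-bigons}, which says that no face of the decomposition of $\MC_p(\mathcal{F}_1)$ is a bigon. So there is no proof in the paper to measure yours against; the only question is whether your proposal closes the conjecture, and it does not.

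Your recasting of face size as a counting problem is the right starting point and is consistent with \Cref{L:knead} and \Cref{algo-per1-knead}: the edges of the face of $[\nu]$ are indexed by the primitive components $H$ with $[k_0(H)]$ or $[k_1(H)]$ equal to $[\nu]$. But two of your steps are not proofs. First, the claim $m(K)\in\{1,2\}$ is wrong as stated: the paper notes that the correspondence between hyperbolic components and kneading sequences is neither injective nor surjective, so a single admissible $*$-periodic word can be realized by many primitive components, not merely one real component or one conjugate pair; the bookkeeping must run over components (equivalently, over characteristic angles), as \Cref{algo-per1-knead} does. This does not damage a lower bound, but your formula $|[\nu]|=\sum_K m(K)$ is not the correct count. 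Second, and decisively, the two quantitative assertions that carry all the weight --- that at most $(1-c)p$ (let alone $o(p)$) of the $p$ candidate truncations of $\nu$ and $\overline{\nu}$ are inadmissible or non-primitive, \emph{uniformly in $\nu$}, and that a near-minimal ramified face has only $o(\kappa_m^+(p))$ of its edges coming from the $1/2$-limb --- are exactly the content of the conjecture, and you supply only heuristics for them (``I would argue,'' ``heuristically because''). You correctly name uniformity in $\nu$ as the crux, but the crux is not overcome; nothing rules out a cycle $\nu$ most of whose windows fail admissibility. The second half of the plan is moreover explicitly conditional on first establishing the precise asymptotics of $\kappa_m(p)$, i.e.\ on resolving a strictly stronger open question. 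As it stands the proposal is a plausible research program, not a proof.
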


\begin{figure}[H]
    \centering
    \includegraphics[width=0.8\textwidth]{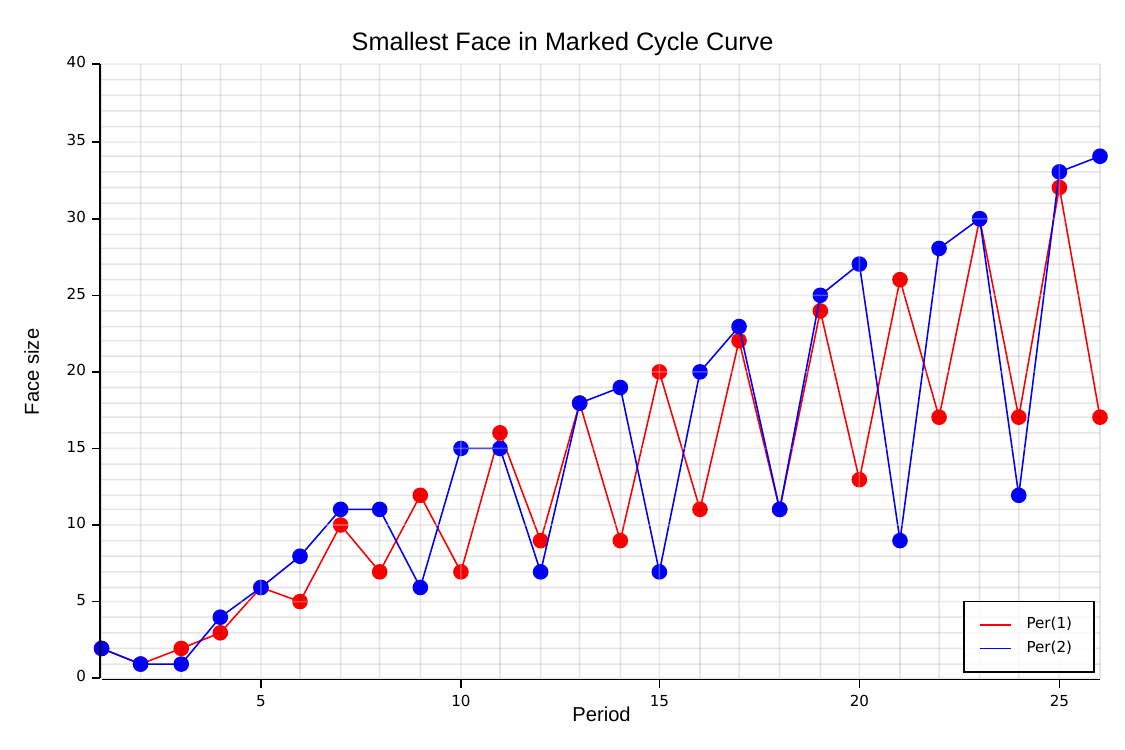}
    \caption{Number $\kappa_m(p)$ of (not necessarily distinct) edges bounding the smallest face in $\MC_p(\Per_m(0))$, $m=1,2$.}
\end{figure}

\begin{figure}[H]
    \centering
    \includegraphics[width=0.8\textwidth]{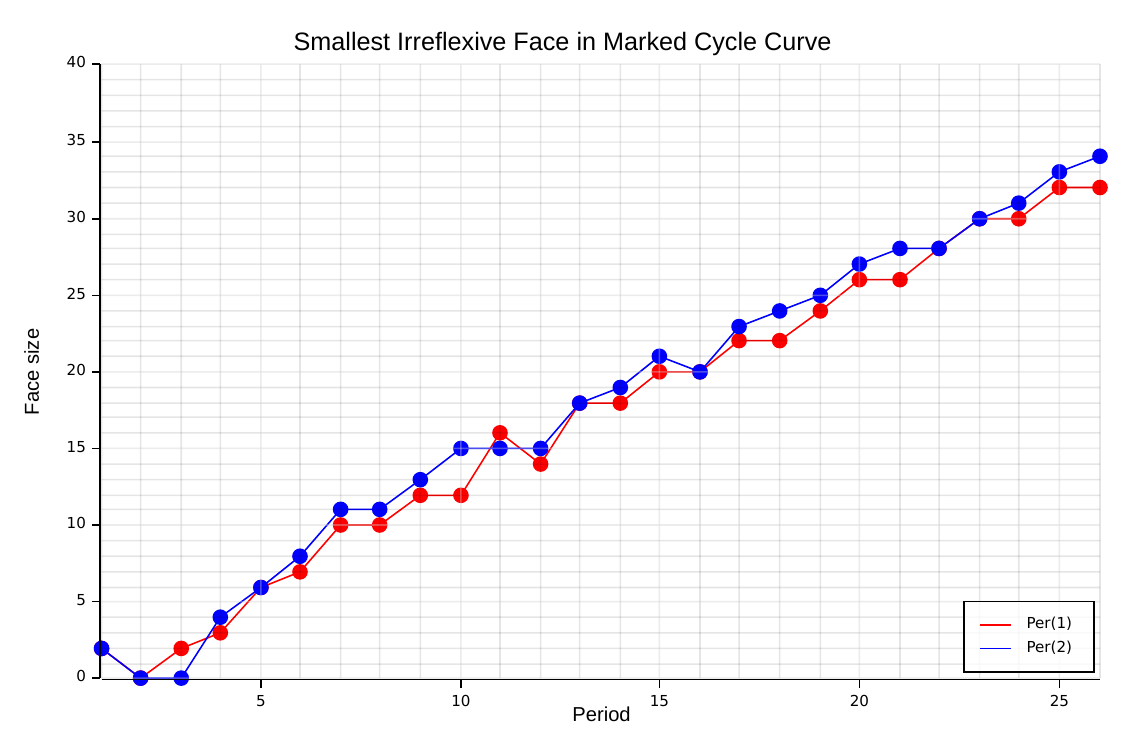}
    \caption{Number $\kappa_m^+(p)$ of (not necesssarily distinct) edges bounding the smallest irreflexive face in $\MC_p(\Per_m(0))$, $m=1,2$.}
\end{figure}

\bibliography{bibtemplate}
\bibliographystyle{amsalpha}
\end{document}